\newtheorem{theorem}{Theorem}[section]
\newtheorem{corollary}[theorem]{Corollary}
\newtheorem{lemma}[theorem]{Lemma}
\newtheorem{proposition}[theorem]{Proposition}
\theoremstyle{definition}
\newtheorem{definition}[theorem]{Definition}
\newtheorem{remark}[theorem]{Remark}
\newtheorem{conjecture}[theorem]{Conjecture}
\numberwithin{equation}{section}
\newcommand{\bC}{\mathbb{C}}
\newcommand{\bD}{\mathbb{D}}
\newcommand{\bR}{\mathbb{R}}
\newcommand{\bZ}{\mathbb{Z}}
\newcommand{\cC}{\mathcal{C}}
\newcommand{\cD}{\mathcal{D}}
\newcommand{\cI}{\mathcal{I}}
\newcommand{\cJ}{\mathcal{J}}
\newcommand{\cL}{\mathcal{L}}
\newcommand{\cM}{\mathcal{M}}
\newcommand{\cO}{\mathcal{O}}
\newcommand{\cP}{\mathcal{P}}
\newcommand{\cQ}{\mathcal{Q}}
\newcommand{\cR}{\mathcal{R}}
\newcommand{\cS}{\mathcal{S}}
\newcommand{\cT}{\mathcal{T}}
\newcommand{\cX}{\mathcal{X}}
\newcommand{\cY}{\mathcal{Y}}
\newcommand{\fb}{\mathfrak{b}}
\newcommand{\prl}{\mathrm{prl}}
\newcommand{\crit}{\mathrm{crit}}
\newcommand{\sing}{\mathrm{sing}}
\newcommand{\sym}{\mathrm{sym}}
\let\emptyset\varnothing
\newcommand{\eps}{\epsilon}
\newcommand{\pa}{\partial}
\newcommand{\Fix}{\mathrm{Fix}}
\newcommand{\ds}{\displaystyle}
\newcommand{\ccM}{\scalebox{0.8}{\rotatebox[origin=c]{90}{$\mathsf{M}$}}}
\newcommand{\crM}{\scalebox{0.8}{\rotatebox[origin=c]{270}{$\mathsf{M}$}}}
\newcommand{\sccM}{\scalebox{0.6}{\rotatebox[origin=c]{90}{$\mathsf{M}$}}}
\newcommand{\scrM}{\scalebox{0.6}{\rotatebox[origin=c]{270}{$\mathsf{M}$}}}
\begin{document}

\title{Homoclinic and heteroclinic intersections for lemon billiards}

\author[X. Jin]{Xin Jin}
\address{Math Department, Boston College, Chestnut Hill, MA 02467.}
\email{xin.jin@bc.edu}
\thanks{X. J. is supported in part by  the NSF Grant DMS-1854232.}

\author[P. Zhang]{Pengfei Zhang}
\address{Department of Mathematics, University of Oklahoma, Norman, OK 73019.}
\email{pengfei.zhang@ou.edu}

\subjclass[2020]{37C83 37C29 37E30}

\keywords{lemon billiards, elliptic periodic point, nonlinear stability, hyperbolic periodic point,
homoclinic intersection, heteroclinic intersection, topological entropy}

\begin{abstract}
We study the dynamical billiards on a symmetric lemon table $\cQ(b)$,
where $\cQ(b)$ is the intersection of two unit disks with center distance $b$. 
We show that there exists $\delta_0>0$ such that for all $b\in(1.5, 1.5+\delta_0)$ (except possibly a discrete subset),
the billiard map $F_b$ on the lemon table $\cQ(b)$ admits crossing homoclinic and heteroclinic intersections. In particular, such lemon billiards have positive topological entropy. 
\end{abstract}

\maketitle

\tableofcontents

\section{Introduction}\label{sec.int}

Topological entropy of a dynamical system measures the uncertainty of the system, 
where positive topological entropy usually means chaotic behaviors and zero topological entropy is associated
with integrable  behaviors. 
There have been many results on computing the topological entropy of a dynamical system,
see \cite{Kat07} for a survey of related results.
In this paper we will study the topological entropy of lemon billiards.

Let $\cQ(b)$ be the planar domain obtained as the intersection of two unit disks $D(O_{\ell},1)$ and $D(O_{r},1)$, where $b=|O_{\ell} O_{r}| \in (0,2)$ measures the distance between their centers, see Fig.~\ref{fig.ell.po}.  
The lemon shaped billiards have been studied by Heller and Tomsovic \cite{HeTo}, in which they demonstrated a clear connection between the classical mechanics and the quantum mechanics, a phenomenon also known as quantum scarring: the eigenstates of the quantum billiards are large only at where the periodic trajectories of the classical billiards go. Numerical results in \cite[Section IV]{CMZZ} suggest that

\begin{conjecture}\label{con.main}
Let $0<b<2$ and $F_b: M_b \to M_b$ be the billiard map on the lemon table $\cQ(b)$.
Then  $h_{top}(F_b)>0$.
\end{conjecture}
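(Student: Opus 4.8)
The goal is to show that for every $b\in(0,2)$ the map $F_b$ possesses a hyperbolic periodic point whose stable and unstable manifolds have a transverse (crossing) intersection; by the Smale--Birkhoff homoclinic theorem this produces an invariant horseshoe on which $F_b$ is conjugate to a full shift, and hence $h_{top}(F_b)>0$. No single periodic orbit can serve for the whole range: for instance the unique $2$-periodic orbit along the axis of symmetry, of length $2-b$ between the two unit arcs, has linearized return map of trace $(2b-2)^2-2\in[-2,2)$ for all $b\in(0,2)$, so it is never hyperbolic. The plan is therefore to partition $(0,2)$ into $(0,\eps_0)$, $[\eps_0,2-\eps_1]$, and $(2-\eps_1,2)$ and to treat each piece by a different mechanism. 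The feature that makes the statement genuinely hard is that it asks for positive entropy at \emph{every} parameter, with no exceptional set, whereas each mechanism below naturally produces only an open (or open-and-dense) set of good $b$.

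For $b$ near $0$ I would exploit that $\cQ(b)$ converges in $C^1$, away from its two corners, to the unit disk, whose billiard is integrable, and that both corner angles tend to $\pi$ as $b\to 0$. Thus $F_b$ is a singular perturbation of the circular billiard: on the whispering-gallery annulus adjacent to each arc it is $C^1$-close to an integrable twist map, while the two corners, which persist for every $b>0$, act as a localized scatterer that destroys integrability in an essential, non-removable way. Following orbits that graze one corner, run along an arc, and graze the other, one obtains a return map to a transversal that is approximately a twist map composed with a corner-scattering map; a subharmonic Melnikov computation at a low-order resonance of this return map should show that the separatrices of the associated hyperbolic periodic orbit split transversally for all small $b$ outside at most a discrete set, and the residual values can then be absorbed by a continuation argument for the resulting horseshoe.

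For $b$ near $2$ the table degenerates to a point and the two corners sharpen to cusps (corner angle $\to 0$). Here I would rescale the thin lens to a fixed-size neighborhood of the chord joining its tips and show that the renormalized billiard map near a cusp converges to an explicit one-dimensional piecewise-smooth map with an expanding branch; positive entropy of a robust sub-horseshoe of that limiting map, together with structural stability, transfers to $F_b$ for $b$ close to $2$. The cusp singularity --- unbounded reflection counts and degenerating hyperbolicity constants --- is the technical obstacle, handled by restricting attention to orbits that penetrate the cusp only to a controlled depth.

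On the remaining compact interval $[\eps_0,2-\eps_1]$ I would run the bifurcation scheme of the present paper: follow the relevant short periodic orbits on the axis of symmetry, locate the parameters at which one of them changes type (the trace of its linearization crossing $\pm 2$), and near each such bifurcation extract a hyperbolic periodic point with crossing stable and unstable manifolds, exactly as is done here for $b$ close to $3/2$. Each such bifurcation controls an open interval of parameters, so one covers $[\eps_0,2-\eps_1]$ by finitely many of them, verifying the requisite transversality inequality on each, in practice by rigorous interval arithmetic. The genuine obstruction is what remains: the (at worst finite) set of parameters lying between the good intervals, or at which every candidate axial orbit happens to be parabolic. At those $b$ one must either push the computation to the next order --- a Birkhoff normal form whose coefficient is generically nonzero still yields a homoclinic tangle --- or build a horseshoe out of near-corner orbits as above. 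Making this final step work at literally every $b\in(0,2)$ is where essentially all of the difficulty of the conjecture lies.
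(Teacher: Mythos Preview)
The statement you are addressing is labeled as a \emph{conjecture} in the paper, and the paper does not prove it. The paper's contribution is Theorem~\ref{thm.main}, a partial result: for $b$ in a short interval $(1.5,1.5+\delta_0)$, the set of parameters with $h_{top}(F_b)=0$ has no limit point in that interval. Even on this tiny interval the authors cannot rule out a discrete exceptional set. So there is no ``paper's own proof'' to compare against.

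Your proposal is not a proof either; it is an outline with several large gaps, and you acknowledge as much in your final sentence. Concretely: for $b$ near $0$ you appeal to a ``subharmonic Melnikov computation'' at a corner-induced resonance that ``should show'' transversal splitting outside a discrete set, and then invoke an unspecified ``continuation argument for the resulting horseshoe'' to absorb the residual values; neither step is carried out, and the second is exactly the kind of step that fails in this subject (compare the standard map conjecture). For $b$ near $2$ you posit convergence of a renormalized cusp map to an explicit expanding one-dimensional map and structural stability of a sub-horseshoe; no such limit is identified, and the unbounded reflection count near a cusp makes uniform hyperbolicity estimates delicate, as you note. On the compact middle interval you propose to cover $[\eps_0,2-\eps_1]$ by finitely many bifurcation windows handled ``exactly as is done here for $b$ close to $3/2$''; but what is done here produces only an \emph{open} good set with a possible discrete exceptional set inside a single small window, so stitching together finitely many such windows still leaves a residual set at every stage.

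In short, your text is a reasonable strategic overview of how one might attack the conjecture, but it does not close any of the gaps that make the conjecture open, and the paper itself does not claim to.
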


There are many results about the topological entropy and the metric entropy for dynamical systems with certain geometric or topological structure of the systems.
For example, the existence of transverse homoclinic intersections for some hyperbolic fixed point implies that the system has positive topological entropy \cite[Section 6.5]{KH95}, and the existence of an invariant and eventually strictly invariant cone-field implies that the system has positive metric entropy \cite{Woj85}. 
On the other hand, it is a rather difficult task to estimate the entropy of a specific dynamical system beyond aforementioned examples. 
One of the simplest models with chaotic behaviors is the standard map (also called Chirikov--Taylor map)
\begin{align}
f_{\lambda}: \bR/2\pi \times \bR \to \bR/2\pi \times \bR, 
\quad (x,y) \mapsto (x+y+\lambda \sin x, y+\lambda \sin x), \label{eq.stand}
\end{align}
where $\lambda\in \bR$ is a parameter.
It is conjectured that the map $f_{\lambda}$ has positive metric entropy for all $\lambda\neq 0$, see  \cite[Lecture 13]{Sin94}.
Despite its simplicity and much effort for the past thirty years, this conjecture is still open   
and is considered to be one of most resistant problems in dynamical systems \cite{Kat04}.
The metric entropy of the standard map is positive for sufficiently large $\lambda$ after a super-exponentially small random perturbation \cite{BXY17}
or after certain non-generic perturbations \cite{BeTu19}.

In this paper we obtain a partial result for Conjecture \ref{con.main}.
\begin{theorem}\label{thm.main}
Let $F_b: M_b \to M_b$ be the billiard map on the lemon table $\cQ(b)$.
Then there exists $\delta_0>0$ such that the set $\{b\in (1.5, 1.5+\delta_0): h_{top}(F_b)=0\}$
has no limiting point in $(1.5, 1.5+\delta_0)$.
\end{theorem}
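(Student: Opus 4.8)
The plan is to realize the theorem by producing, for the specific center distance $b=1.5$, an explicit hyperbolic periodic orbit of the billiard map $F_{1.5}$ together with a transverse (crossing) homoclinic intersection of its stable and unstable manifolds, and then to propagate this structure to nearby parameters by an analyticity/persistence argument. The first step is to locate a convenient periodic orbit. On a symmetric lemon table the natural candidates are the short periodic orbits lying along the axis of symmetry or the ones bouncing between the two circular arcs; at $b=1.5$ one expects a $2$-periodic orbit hitting the two arcs perpendicularly (the ``vertical'' diameter of the lemon) whose linearization one can compute in closed form from the mirror/defocusing formula for arcs of unit radius. I would compute the derivative $DF_{1.5}^2$ at this orbit explicitly, check that its trace exceeds $2$ in absolute value (so the orbit is hyperbolic with real eigenvalues), and identify the stable/unstable eigendirections. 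The sign of the curvature contributions for $b=1.5>1$ is what makes hyperbolicity plausible here, as opposed to the near-integrable regime $b\to 2$.

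The second and main step is to establish a crossing homoclinic intersection for $F_{1.5}$. Because the billiard map on $\cQ(b)$ is real-analytic away from the two corner points, the invariant manifolds $W^s,W^u$ of the hyperbolic orbit are real-analytic curves; the strategy is to show they are not equal and that some branch of $W^u$ crosses some branch of $W^s$ transversally. Concretely, I would track the unstable manifold numerically (or via a rigorous interval-arithmetic continuation) for a bounded number of iterates of $F_{1.5}$ until it returns near the periodic point, exhibit a point where it crosses the stable manifold, and verify that the crossing is topologically transverse (the two curves locally separate each other's complementary arcs), which is what ``crossing intersection'' means and what is needed for the forcing/entropy conclusion. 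A genuinely computer-assisted verification along the lines of rigorous numerics for billiard invariant manifolds would make this step a theorem rather than a picture. The same computation, or a symmetric variant, should also yield a heteroclinic cycle between this orbit and its mirror image under the reflection symmetry of $\cQ(1.5)$, giving the ``crossing heteroclinic intersection'' asserted in the abstract.

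The third step upgrades the single-parameter statement to the open-interval statement with a discrete exceptional set. The family $b\mapsto F_b$ depends real-analytically on $b$ (the boundary arcs, and hence the collision map, vary analytically in $b$), so the hyperbolic periodic orbit, its eigenvalues, and its local invariant manifolds persist and vary real-analytically for $b$ in a neighborhood of $1.5$; the finitely-many-iterate images of a piece of $W^u$ and of $W^s$ used to witness the crossing also vary real-analytically. The ``crossing'' condition at $b=1.5$ is an open, but a priori not everywhere-dense, condition; however, the \emph{failure} of a transverse crossing is the vanishing of a real-analytic function of $b$ (a suitable signed intersection/separation functional, or the discriminant measuring tangency of the two analytic arcs). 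A real-analytic function on the interval $(1.5,1.5+\delta_0)$ that does not vanish identically—and it does not, since it is nonzero at, or arbitrarily near, $b=1.5$—has a zero set with no accumulation point in the interval. Off that discrete set the crossing homoclinic/heteroclinic picture survives, so $F_b$ has a transverse homoclinic point, hence a horseshoe, hence $h_{top}(F_b)>0$ by the classical Smale/Katok theory cited in the introduction. Assembling these pieces gives precisely the claimed statement: $\{b\in(1.5,1.5+\delta_0):h_{top}(F_b)=0\}$ has no limit point in $(1.5,1.5+\delta_0)$.

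The hard part, I expect, is the middle step: rigorously certifying the crossing of the two invariant manifolds of $F_{1.5}$. Invariant manifolds of billiard maps are only piecewise-smooth globally (the singularity set coming from the two corners of $\cQ(b)$ can cut them), so one must control the itinerary of the tracked manifold piece and ensure it stays in the analyticity domain for the relevant number of iterates while still reaching back to the periodic point; and one must convert the numerically observed transversal crossing into a proof, either by an interval-arithmetic enclosure of the relevant arcs or by a clever use of the $\bZ/2$ symmetry and monotonicity (twist) properties of $F_b$ to force the crossing for geometric reasons. Establishing hyperbolicity of the chosen orbit and the real-analytic dependence on $b$ are comparatively routine, as is the final entropy conclusion.
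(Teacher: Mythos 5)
Your starting point is factually wrong, and your central step is not actually a proof. The lemon table $\cQ(b)$ has exactly one $2$-periodic orbit, the one bouncing along the axis $O_{\ell}O_r$, and its linearization is $D_PF_b=\begin{bmatrix} 1-b & 2-b\\ -b & 1-b\end{bmatrix}$, so $\mathrm{tr}\,D_PF_b^2=4(b-1)^2-2=-1$ at $b=1.5$: the orbit is \emph{elliptic} (indeed $3$-resonant but nonlinearly stable) for all $1<b<2$, not hyperbolic, and there is no ``vertical diameter'' orbit since the vertical chord through the symmetry axis hits the corners. The hyperbolic objects that actually carry the entropy in this problem are period-$6$ orbits which bifurcate from $\cO_2(b)$ and exist only for $b>1.5$ (the hyperbolic one for $1.5<b<\sqrt{3}$), degenerating into the elliptic $2$-orbit as $b\to 1.5^+$. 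So the orbit whose invariant manifolds you propose to track at $b=1.5$ does not exist, and hyperbolicity ``because $b>1$'' is exactly backwards: in this regime the defocusing mechanism fails and the center of the phase space is an elliptic island.

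Even setting that aside, your middle step --- certifying a topological crossing of $W^s$ and $W^u$ at a single parameter --- is precisely the hard content of the theorem, and you defer it to unspecified interval-arithmetic numerics that are never carried out; without it your analytic ``separation function'' is not known to be nonzero anywhere, so its zero set need not be discrete and the whole third step collapses. The paper's route is structurally different and avoids ever verifying a crossing at a fixed $b$: it classifies all period-$6$ orbits near the center (to know the fixed-point set of $(\cR_b\cL_b)^3$), introduces the $(d_\ell,d_r)$ coordinates and the parallel-orbit curves $\cC_{\Phi_b},\cC_{\Psi_b}$, and argues by contradiction --- if the set $Z$ of parameters with no homoclinic point had a limit point, then by Mather's prime-end theory, Franks' theorem and analytic dependence on $b$ the stable/unstable branches of the period-$6$ hyperbolic points would form heteroclinic connection cycles persisting for \emph{all} $b$ up to $b_{\max}=1+2^{-1/2}$, which is then shown to be geometrically impossible (the connections would have to cross the singularity curves or $\cC_{\prl,b}$, or map a diagonal segment outside the invariant region). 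If you want a salvageable version of your plan, you would have to work with the hyperbolic period-$6$ orbit for $b>1.5$, control its invariant manifolds away from the corner singularity set, and supply a genuine (computer-assisted or geometric) crossing certificate at one parameter together with an argument that the relevant manifold arcs continue analytically over the whole interval; none of that is present in the proposal.
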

In other words, $h_{top}(F_b)>0$ for most $b\in (1.5, 1.5+\delta_0)$. See Fig.~\ref{phase.pt}
for phase portraits of the lemon billiards with $b=1.51$ and $b=1.54$, respectively. Only the part on the right arc of the lemon table $\cQ(b)$ is shown here, since the phase portrait is the same on the left arc. See \eqref{eq.delta0} for the choice of the number $\delta_0$.

\begin{figure}[htbp]
\includegraphics[width=2in]{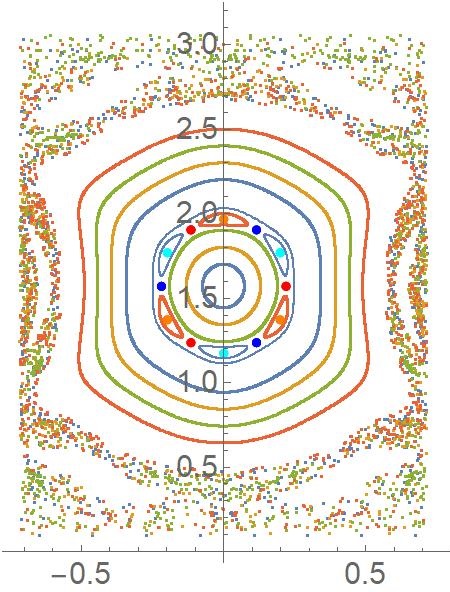}
\quad
\includegraphics[width=2in]{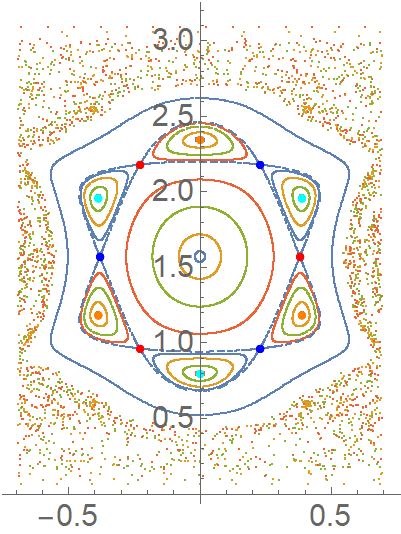}
\caption{Phase portraits of the lemon billiards for $b=1.51$ (left) and $b=1.54$ (right).}\label{phase.pt}
\end{figure}

What we actually prove is the existence of crossing  homoclinic intersections of hyperbolic periodic points of period $6$ for such lemon billiards.
Since the existence of crossing homoclinic intersections is an open property,
it follows that the  topological entropy for lemon-like billiards  is also positive after we make small perturbation of the boundary $\cQ(b)$.

There are several classes of billiards that are hyperbolic and have positive metric entropy, see \cite{CM06} for more details. The study of strictly convex billiards is rather limited.
For example, it is only proved recently \cite{XZ14} that $C^r$-generic convex planar billiards have positive topological entropy, and the first class of hyperbolic strictly convex billiards is obtained in \cite{BZZ, JZ21}.  
Unlike  \cite{XZ14}, here the object, the lemon billiards, is given, and \emph{no perturbation is allowed}.
With limited tools, we have to combine both algebraic and geometric aspects of the lemon
billiards to prove the existence of crossing homoclinic intersections. In \S~\ref{sec.gen.b.map} we introduce a new coordinate system under which the symmetries of the lemon billiards become more transparent. There are several places in later sections that we have to lean on the geometric meaning of certain sets of points that separate the phase space, since the algebraic formula for such sets are quite involved.

For comparison, it is well known that the standard map \eqref{eq.stand} has positive topological entropy.
In fact, Lazutkin \cite{Laz88} showed that the separatrices split for all hyperbolic periodic points of a globally defined complex analytic diffeomorphism. However, Lazutkin's result does not apply to the lemon billiards since the billiard map is not globally defined.
A closely related problem is the Cohen map $F(x,y)=(y, -x+\sqrt{y^2+1})$. The question about the integrability of the Cohen map was first conjectured by H. Cohen and communicated by Y. Colin de Verdiere to J. Moser in 1993. Rychlik and Torgerson \cite{RyTo98} showed that  the Cohen map $F$ does not have an algebraic integral. Based on Lowther's post \cite{Low12},  Cima, Gasull and Manosa \cite{CGM15} proved that the Cohen map is actually not $C^6$-locally integrable. It is still open whether the Cohen map has positive topological or metric entropy.

An advantage in the study of lemon billiards is the simplicity of its geometry, even though their dynamics can be rather complicated, as shown in Fig.~\ref{phase.pt}.
One could consider the asymmetric lemons with mixed phase portraits (see \cite[\S~IV]{CMZZ})
and more generally ellipse-hyperbola lens billiards (see \cite[\S~1.4]{JZ22}).
Many of the ideas in this paper could work, but with more involved analysis.

\noindent{\bf Organization of the paper.}
In Section~\ref{sec.pre}, we provide some preliminary results on homoclinic and heteroclinic intersections,
Moser's Twist Mapping Theorem, Brouwer's Fixed Point Theorem, and Mather's results on Prime-ends. Some elementary results about lemon billiards are also given.
In Section~\ref{sec.el.hy}, we give explicit formulas for  periodic orbits of period $6$ bifurcating
from the periodic orbit $\cO_2(b)$ of period $2$ of the lemon billiards $\cQ(b)$ when the parameter $b$ crosses $b=1.5$. A priori, there might be other such periodic orbits.
In Section~\ref{sec.topo} we obtain a characterization of the configurations of the periodic orbits of period $6$ that bifurcate from  $\cO_2(b)$. 
This leads to the geometric construction in Section~\ref{sec.prl} of a family of (not necessarily periodic) orbits that can be  used to connect the orbits in Section~\ref{sec.el.hy}. In Section~\ref{sec.gen.b.map}, we introduce a new coordinate system $(d_{\ell}, d_{r})$ for the lemon billiards under which one can take advantage of the various symmetries of the lemon billiards. Finally in Section~\ref{sec.homoclinic} we show that the periodic orbits found in Section \ref{sec.el.hy} are the only ones in a neighborhood of the orbit $\cO_2(b)$ 
for $b$ sufficiently close to $1.5$, and use this to prove the main theorem that $h_{top}(F_b)>0$ for most $b$ close to $1.5$.

\section{Preliminaries}\label{sec.pre}

Let $U\subseteq \bR^2$ be an open neighborhood of the origin $P=(0,0) \in \bR^2$,
$f:U\to \bR^2$ be  a symplectic embedding fixing the origin. 
The tangent map $D_{P}f$ is a $2\times 2$ matrix with determinant $1$ with two eigenvalues  $\lambda$ and  $1/\lambda$. Then the fixed point $P$ is said to be 
hyperbolic (parabolic or elliptic, respectively) if $|\lambda|>1$ ($\lambda=\pm 1$ or $\lambda\in S^1\backslash \{\pm 1\}$, respectively). If $p$ is either hyperbolic or elliptic, we say it is nondegenerate. If $p$ is hyperbolic with $\lambda>1$, then we say it is positive hyperbolic.

\subsection{Homoclinic intersections and topological entropy}
Let $p$ be a hyperbolic fixed point of $f$. Then the stable manifold $W^s(p)$ and the unstable manifold $W^u(p)$ are immersed curves in $M$ and are as smooth as the map $f$.
In particular, if $f$ is analytic, then both  $W^s(p)$ and $W^u(p)$ are analytic immersed curves.
The set $W^s(p)\backslash \{p\}$ has two components, each of which is called a stable branch of $p$. Consider the case that 
a stable branch $\Gamma^s(p)$ of $p$ intersects an unstable branch $\Gamma^u(q)$ of 
another hyperbolic fixed point $q$.
When $f$ is analytic, we have the following dichotomy for the intersection $\Gamma^s(p) \cap \Gamma^u(q)$:
\begin{enumerate}
\item either $\Gamma^s(p)=\Gamma^u(q)$: it is called a heteroclinic connection between $p$ and $q$;

\item or $\Gamma^s(p)\cap \Gamma^u(q)$ is countable: each point in $\Gamma^s(p)\cap \Gamma^u(q)$ is called a heteroclinic intersection between $p$ and $q$.
\end{enumerate}
A similar distinction can be made for the case that $q=p$, in which case we have a homoclinic
loop and a homoclinic intersection, respectively.
For brevity we will call it a saddle connection if it is either a homoclinic loop or a heteroclinic connection.

Poincar\'e discovered that the existence of a  transverse homoclinic intersection of a hyperbolic
fixed point leads to complicated dynamical behaviors. Later Smale \cite{Sma65} showed that it actually leads to the existence of an isolated hyperbolic set, the so-called horseshoe. In particular, such a dynamical system has positive topological entropy. The transversality condition can be replaced by a much weaker condition: topological crossing.
\begin{proposition}\label{pro.BW.crossing}  \cite[Theorem 2.1]{BW95}
Let $f$ be a diffeomorphism on a surface $M$ with a hyperbolic periodic point $p$.
Assume a stable branch of $p$ and an unstable branch of $p$ has a topological crossing. Then some power of $f$ admits a full-shift factor. 
\end{proposition}
Note that $h_{top}(f)>0$ whenever the map $f$ admits a full-shift factor.

\noindent{\bf Brouwer’s Fixed Point Theorem.}
Let $f: \bR^2 \to \bR^2$ be an orientation-preserving homeomorphism. Brouwer \cite{Brou12} proved the following dichotomy: either $f$ has a fixed point, or $f$ has no periodic point at all.
A slightly stronger result is the following:
\begin{proposition}\label{pro.franks} \cite[Corollary 1.3]{Fra92}
Suppose $f: \bR^2 \to \bR^2$ is a fixed point free orientation-preserving homeomorphism of the plane
and $D$ is an open topological disk such that $f(D)\cap D=\emptyset$.
Then $D$ is a wandering domain: $f^i(D) \cap f^j(D)=\emptyset$ whenever $i\neq j$.
\end{proposition}

\vskip.1in

\subsection{Twist coefficients and nonlinear stability}
An elliptic fixed point $P$ is said to be {\it non-resonant}
if its eigenvalue $\lambda$ satisfies $\lambda^n \neq 1$ for any $n \ge 3$.
More generally, an elliptic fixed point $P$ is non-resonant up to order $N$
if $\lambda^{n}\neq 1$ for each $3\le n \le N$.
If $P$ is non-resonant  up to order $(2N+2)$, then there exist an open neighborhood
$U_N\subset U$ of $P$ and a coordinate transformation
\begin{align}
h_N: U_N \to \bR^2, \begin{bmatrix} x \\ y \end{bmatrix}
\mapsto 
\begin{bmatrix} x+p_2(x,y) + \cdots + p_{2N+1}(x,y) \\ y+ q_2(x,y) + \cdots + q_{2N+1}(x,y) \end{bmatrix}
+O(r^{2N+2}), \label{formal}
\end{align}
where $p_n$ and $q_n$ are polynomials of degree $n$ for each $2\le n\le 2N+1$, 
such that
\begin{align}
h_N^{-1}\circ f\circ  h_N \Big(\begin{bmatrix} x \\ y \end{bmatrix} \Big)
=\begin{bmatrix} \cos \Theta(r^2) & -\sin\Theta(r^2) \\ \sin \Theta(r^2) & \cos\Theta(r^2)\end{bmatrix}\begin{bmatrix} x \\ y \end{bmatrix}
+ O(r^{2N+2}), \label{BNF}
\end{align} 
where  $r^2=x^2+y^2$,
$\Theta(r^2)=\theta+\tau_1 r^2+\tau_2 r^4+\cdots +\tau_N r^{2N}$,
and $\theta$ satisfies $\lambda=e^{i\theta}$. 
The function $\Theta(r^2)$ measures the amount of rotations 
of points around the fixed point $P$. 
The coordinate transformation \eqref{formal} is called Birkhoff transformation, 
the resulting form \eqref{BNF} is called Birkhoff Normal Form,
and the coefficient $\tau_k$ in the function $\Theta(r^2)$  is called
the $k$-th twist coefficient of $f$ at $P$ for each $1\le k\le N$.
See \cite{SiMo, Mos73} for more details.
Recall that an elliptic fixed point $P$ is said to be nonlinearly stable if there are nesting invariant circles accumulating on $P$.

\vskip.1in

\noindent{\bf Moser's Twist Mapping Theorem.} \cite[Theorem 2.13]{Mos73} 
Let $P$ be an elliptic fixed point of $f$ and is non-resonant up to order $(2N+2)$.
If $\tau_k \neq 0$ for some $1\le k\le N$, 
then $P$ is nonlinearly stable.

\vskip.1in

\subsection{Prime ends}
The concept of prime ends is introduced by Carath\'eodory to describe the boundary behavior of conformal maps in the complex plane in geometric terms.
Let $U\subset \bR^2=\bC$ be a bounded, simply connected domain.
The boundary $\pa U$ can be very complicated.
The prime-end compactification gives one way to compactify $U$ by adding a circle to it.
More precisely, pick a conformal map $h:\bD\to U$ from the unit disk $\bD\subset \bC$ to $U$. 
This induces a topology on the space  $\hat{U}\triangleq U\sqcup S^1$
via the extended homeomorphism  $\hat{h}:\overline{\mathbb{D}}\to\hat{U}$,
such that $\hat{h}|_{\mathbb{D}}=h$ and $\hat{h}|_{S^1}=\text{Id}$.
The topology on $\hat{U}$ is independent of the choice of $h$, and 
the space  $\hat{U}$ is called the prime-end compactification of $U$.
Let $f:U\to U$ be a homeomorphism. Then there exists  a unique 
extension $\hat{f}:\hat{U}\to \hat{U}$, which is called the prime-end extension of $f$. 
If $f$ is orientation-preserving,
then the restriction $\hat{f}|_{S^1}$ is an orientation-preserving circle homeomorphism.
Therefore, the rotation number of $\hat{f}|_{S^1}$ is well defined.
Such a number is denoted by  $\rho(f, U, \pa U)$ and is 
called the {\it Carath\'eodory rotation number} associated to the triple $(f,U, \pa U)$.

The construction of prime ends has been extended to open sets of finite type on general surfaces,
see \cite{Mat81, Mat82}. While Theorem 5.1 and Theorem 5.2 \cite{Mat81} are formulated for global surface homeomorphisms, it's worth noting that numerous propositions in \cite{Mat81} are cast within a broader context. The introductory paragraphs of both Section 8 and Section 9 shed light on this generality. More precisely, let $S$ be a closed surface,
$S_0 \subset S$ be an open subset and $f: S_0 \to S$ be an injective area-preserving homeomorphism
of $S_0$ onto an open subset $f(S_0)$ of $S$. 
Let $A \subset S_0$ be a continuum (that is, a compact and connected subset) in $S_0$ such that $f(A)=A$ and  $\widehat{S\backslash A}$ be the prime-end compactification of $S\backslash A$.  
Let $q \in A$ be a sectorial fixed point\footnote{Recall that sectorial fixed points for homeomorphisms can be viewed as a topological version of positive hyperbolic fixed points for diffeomorphisms that may allow more than 4 elementary sectors.}, $V$ be an open sector for $q$, $e\in \widehat{S\backslash A}$ a prime end.
Then $e$ is said to be a sector end associated to $V$ if there is a continuous local coordinate system
$(x, y)$ for $S$ centered at $q$ with the germ of $V$ at $q$ being defined by $y>0$, such that $V_i=\{(x,y) \in V: x^2 + y^2 \le i^{-1}\}$, $i\ge i_0$,
form a chain defining $e$ for $i_0$ sufficiently large. See the definition on Page 554 in \cite{Mat81}.

\begin{proposition}\label{pro.sec.end} \cite[Proposition 9.2]{Mat81}
Let $S_0 \subset S$ be an open subset, $f: S_0 \to S$ be an injective area-preserving homeomorphism
of $S_0$ onto its image, $A \subset S_0$ be an invariant continuum,  $e\in \widehat{S\backslash A}$ a prime end such that $\hat f(e) =e$, $q\in A$ a principal point of $e$.
Then $e$ is a sector end.
\end{proposition}

Let $L$ be an unstable branch of a positive hyperbolic fixed point $p$.
For each point $x\in L$, consider the unstable arc $[x,fx]_u\subset L$ and the limit set $\omega(L, f)$ of the sequence $f^n[x,fx]_u$, $n\ge 1$. Note that this limit set $\omega(L,f)$ is independent of the choices of 
$x\in L$ and is called the omega-set of the branch $L$. Then $L$ is said to be recurrent if $L\subset \omega(L, f)$.
For a stable branch $L$, we can define the limit set $\alpha(L,f)=\omega(L, f^{-1})$.
Then a stable branch $L$ is said to be recurrent if $L\subset \alpha(L, f)$.
A connection is a branch which is contained in the intersection of two invariant
manifolds (possibly of two different hyperbolic fixed points).

\begin{proposition}\label{pro.no.conn} \cite[Theorem 1.2. (2)]{OlCo}
Let $f: S\to S$ be an orientation-preserving and area-preserving homeomorphism.
Suppose $L$ is an invariant branch of $f$ and all fixed points contained in the closure $cl_S L$
are nondegenerate. Then either $L$ is a connection, or $L$ is recurrent and accumulates on both adjacent branches through the adjacent sectors.  
\end{proposition}

\subsection{Dynamical billiards}\label{sec.Bmap}
Let $Q\subset \bR^2$ be a connected and compact domain with (piecewise) smooth 
boundary $\pa Q$, $|\pa Q|$ be the arc-length of $\pa Q$ and
$s$ be an arc-length parameter of $\pa Q$, $0\le s < |\pa Q|$ (oriented in such a way
that $Q$ is on the left side of $\pa Q$).
Let $\gamma(s) \in \pa Q$ and $\dot\gamma(s)$ be the positive unit tangent vector of $\pa Q$
at $\gamma(s)$. Then for each $0<\theta <\pi$, $(s,\theta)$ determines a trajectory 
on $Q$ with initial position $\gamma(s)$ and initial velocity $R_{\theta}(\dot\gamma(s))$,
where $R_{\theta}$ is the rotation matrix of rotating $\theta$ counterclockwise.
Let $\gamma(s_1) \in \pa Q$ be the first intersection of the trajectory $(s,\theta)$ with the boundary $\pa Q$,
$\theta_1\in [0,\pi]$ be the angle from  $\dot\gamma(s_1)$ to the direction of the trajectory.
This defines a map $F:(s, \theta) \mapsto (s_1, \theta_1)$ on the space  $M=\pa Q\times [0, \pi]$, which is called a billiard map.  See \cite{CM06} for more details. 
Let $L(s, s_1)=|\gamma(s_1)-\gamma(s)|$ be the Euclidean distance between two points on the boundary $\pa Q$, which turns out to be  a generating function of the billiard map  \cite[Chapter 3]{Tab05}:
\begin{align*}
dL=-\cos\theta\ ds + \cos\theta_1\ ds_1.
\end{align*}
Since $d(dL)=0$, it follows that $\sin\theta_1 ds_1\wedge d\theta_1=\sin\theta ds\wedge d\theta$. That is, the billiard map 
$F$ preserves the $2$-form $\sin\theta ds\wedge d\theta$ and is  symplectic.
There is a convenient expression of the tangent map $DF$ when the domain $Q$ is convex \cite{LiWo95}. More precisely, let $L(s,\theta)= L(s, s_1(s,\theta))$ be the distance depending on $(s, \theta)$,
$R(s)$ the radius of curvature of $\pa Q$ at $\gamma(s)$, and $d(s,\theta)=R(s)\sin\theta$.
Then the tangent map $DF$ of the billiard map $F: (s,\theta)\mapsto (s_1, \theta_1)$ is given  by
\begin{align}
D_{(s,\theta)}F=\frac{1}{d(s_1, \theta_1) }
\begin{bmatrix} L(s,\theta) -d(s,\theta) & L(s,\theta) \\
L(s,\theta) -d(s,\theta) -d(s_1, \theta_1) & L(s,\theta) -d(s_1, \theta_1)
\end{bmatrix}. \label{eq.DF}
\end{align}

A tangent vector $v\in T_{x} M$ can be interpreted as a  beam of trajectories $\eta: (-\eps, \eps) \to M$ with $\eta(0)=p$ and $\dot \eta(0)=v$. 
Each line $\eta(t)$ intersects $\eta(0)$ at a point $P(t)$ (could be at infinity), $-\eps <t < \eps$.
Let $f^{+}(v)$ be the limit of the signed distance from the base point of $\eta(0)$ to $P(t)$ as $t\to 0$. Note that $f^{+}(v)$  is independent of the choice of the curve $\eta$
and is called the forward focusing distance of the tangent vector $v$.
Similarly one can define the backward focusing distance $f^{-}(v)$.
The mirror equation in optics states that
\begin{align}
\frac{1}{f^{+}(v)} + \frac{1}{f^{-}(v)} = \frac{2}{d(x)}. \label{eq.mirror}
\end{align}

\subsection{Lemon billiards}\label{pre.lemon}
Let $\cQ(b)$ be the lemon table, $M_b$ be the phase space and $F_b$ be the billiard map.
There exists a unique periodic orbit $\cO_2(b)=\{P, F_b(P)\}$ of period $2$, the orbit  bouncing back and forth on the table $\cQ(b)$ along the line $O_{\ell} O_r$.
Note that $D_P F_b =D_{F_b(P)}F_b=\begin{bmatrix} 1-b & 2-b \\ -b & 1-b \end{bmatrix}$.
In particular, the  orbit $\cO_2(b)$ is an elliptic periodic orbit for each $b\in (0,2)\backslash\{1\}$.
Moreover, for $1<b<2$, the tangent matrix $D_{P}F_b^2$ is conjugate to the rotation matrix $R_{\theta}$,
where $\theta=\arccos(2(1-b)^2 -1)$, which decreases from $\pi$ to $0$. 
It follows from \cite{KP05} that the first twist coefficient of the billiard map $F_{b}$ at the elliptic periodic orbit $\cO_2(b)$ satisfies $\tau_1(F_{b}^2,P)=\frac{1}{4}$. Note that
$\theta(1.5)=\frac{2\pi}{3}$, or equally, $\lambda^3=1$. So the periodic orbit $\cO_2(1.5)$ is $3$-resonant. Moser's theorem is still applicable to this resonance case due to the symmetry of the lemon table, see \cite{JZ22} for more details.
Therefore,  the periodic point $\cO_2(b)$ is nonlinear stable for every $1<b<2$. 
Pick a Diophantine number $\rho>\frac{1}{3}$ that is sufficient close to $\frac{1}{3}$ such that
there exists an $F_{1.5}^2$-invariant curve of with rotation number $\rho$. Then there exists a positive number $\delta_{\rho}>0$
such that for each $1.5\le b \le 1.5+\delta_{\rho}$, there exists an $F_b^2$-invariant curve $C_{\rho}(b)$ of the same rotation number $\rho$ surrounding the periodic point $P$. Moreover, the invariant curve $C_{\rho}(b)$ depends smoothly on the parameter $b$. 
Denote by $D_{\rho}(b)$ the disk bounded by $C_{\rho}(b)$, which also depends smoothly on the parameter $1.5\le b \le 1.5+\delta_{\rho}$.
For convenience, we formulate this fact as a proposition:
\begin{proposition}\label{pro.delta.rho}
Given a Diophantine number $\rho>\frac{1}{3}$ that is sufficiently close to $\frac{1}{3}$,
there exists a positive number $\delta_{\rho}>0$ 
such that for any $b\in (1.5, 1.5+\delta_{\rho})$, there exists an
$F_b^2$-invariant curve $C_{\rho}(b)$ of rotation number $\rho$ surrounding the elliptic periodic point $P\in \cO_2(b)$.
\end{proposition}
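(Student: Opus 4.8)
The plan is to derive the proposition from a KAM persistence argument anchored at $b=1.5$, where the curve is already in hand. First I would set up coordinates near the period-$2$ orbit. Since $\cO_2(b)=\{P,F_b(P)\}$, the point $P$ is a fixed point of $F_b^2$, and an $F_b$-invariant curve surrounding $\cO_2(b)$ decomposes, near $P$, into an $F_b^2$-invariant loop around $P$ together with its $F_b$-image around $F_b(P)$; so it suffices to control $F_b^2$ on a fixed small neighborhood of $P$, where I would pass to the Birkhoff normal form. As recalled above, $P$ is elliptic with linearized rotation $\theta(b)=\arccos\!\big(2(1-b)^2-1\big)$ and first twist coefficient $\tau_1(F_b^2,P)=\tfrac14$; at the endpoint $b=1.5$ one has $\theta(1.5)=\tfrac{2\pi}{3}$, so $P$ is $3$-resonant, but the reflection symmetries of the lemon table let the normal-form reduction go through to the needed order nonetheless (this is the content of \cite{JZ21b}). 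Hence, for $b$ near $1.5$, the normal-form rotation function is $\tfrac{1}{2\pi}\Theta(r^2;b)=\tfrac{\theta(b)}{2\pi}+\tfrac{\tau_1}{2\pi}\,r^2+O(r^4)$; since $\theta(b)\to\tfrac{2\pi}{3}$ while $\tfrac{\tau_1}{2\pi}>0$, on a fixed annulus $\{0<r\le r_0\}$ this function sweeps an interval of the form $\big(\tfrac13-c_1\delta,\ \tfrac13+c_2 r_0^2\big)$ with constants $c_1,c_2>0$ independent of $b\in[1.5,1.5+\delta]$ for $\delta$ small.

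Next I would fix, once and for all, a Diophantine number $\rho>\tfrac13$ close enough to $\tfrac13$ that $\rho$ lies strictly inside this interval for all $b\in[1.5,1.5+\delta_\rho]$ (shrinking $\delta_\rho$ as needed). At $b=1.5$, the construction underlying Moser's twist mapping theorem \cite{Mos73} --- legitimate here by the symmetry input \cite{JZ21b} --- yields the $F_{1.5}$-invariant curve $C_\rho(1.5)$ of rotation number $\rho$ surrounding $\cO_2(1.5)$ already asserted in the paragraph before the statement. Now $F_b$, hence $F_b^2$ and its restriction to our fixed neighborhood of $P$, depends analytically on $b$; the target frequency $\rho$ is Diophantine; and the twist $\partial_r\Theta(r^2;b)\approx 2r\,\tau_1$ is bounded away from $0$ on the relevant annulus uniformly in $b$. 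Applying the persistence theorem for invariant curves of area-preserving twist maps with Diophantine rotation number \cite{Mos73}, and shrinking $\delta_\rho$ once more, I would obtain for every $b\in(1.5,1.5+\delta_\rho)$ an $F_b$-invariant curve $C_\rho(b)$ of rotation number $\rho$, $C^\infty$-close to $C_\rho(1.5)$. Because the KAM scheme is of implicit-function type with analytic dependence on the parameter, $C_\rho(b)$ depends smoothly on $b$; and since $\cO_2(b)$ varies continuously with $b$ and sits inside $C_\rho(1.5)$ at $b=1.5$, each $C_\rho(b)$ still surrounds $\cO_2(b)$. Taking $D_\rho(b)$ to be the disk it bounds then also gives the continuous dependence of $D_\rho(b)$ on $b$ used after the statement.

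The hard part is not the abstract KAM step but the uniform frequency bookkeeping: one must check that the fixed $\rho$ stays inside the narrow, one-sided window of rotation numbers realized near $\cO_2(b)$ for every $b$ in the interval --- which is where the continuity $\theta(b)\to\tfrac{2\pi}{3}$, $\tau_1\to\tfrac14$, and above all the sign $\tau_1>0$ enter --- and that the normal-form reduction is valid uniformly in $b$ despite the borderline $3$-resonance at $b=1.5$, which is exactly what the symmetry argument of \cite{JZ21b} handles. With $\rho$ pinned down subject to these constraints, the remainder is a routine application of standard KAM persistence, and the constant $\delta_0$ of Theorem~\ref{thm.main} will ultimately be taken no larger than such a $\delta_\rho$ (cf. \eqref{eq.delta0}).
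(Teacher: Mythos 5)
Your proposal is correct and follows essentially the same route as the paper: the paper's own justification (the paragraph preceding the proposition) rests on the twist coefficient $\tau_1(F_b^2,P)=\tfrac14>0$, the applicability of Moser's theorem at the $3$-resonance $b=1.5$ via the table's symmetry (citing \cite{JZ21b}), the choice of a Diophantine $\rho>\tfrac13$ realized by an invariant curve of $F_{1.5}$, and KAM persistence of that Diophantine curve for $b$ close to $1.5$. Your write-up simply makes the frequency-window and uniform-twist bookkeeping explicit, which the paper leaves implicit.
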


\section{Elliptic and hyperbolic periodic orbits of period 6}\label{sec.el.hy}

In this section we show the existence of elliptic and hyperbolic periodic orbits of period 6
bifurcated from the periodic orbit $\cO_2(b)$ of period $2$ when $b$ passes the value $b=1.5$.
Note that such orbits bounce alternatively between the left and right arcs of the table $\cQ(b)$.
Let $\phi_A(b)$ be the position angle of the 
corner point $A$ of the table $\cQ(b)$ with respect to the center $O_{\ell}$. It is easy to see that $\cos\phi_A(b)=\frac{b}{2}$.

\subsection{The elliptic periodic orbit of period 6}\label{sec.ell.po}
Consider the periodic orbit of period $6$ given in Fig.~\ref{fig.ell.po}. 

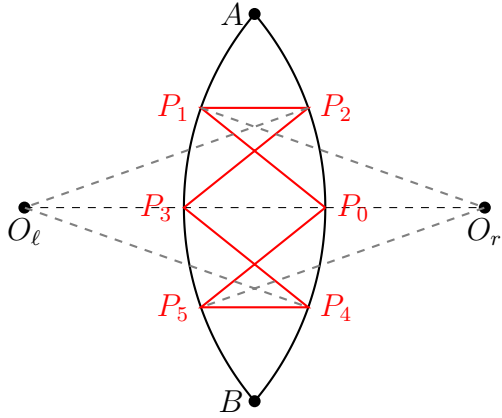
\begin{figure}[htbp]
\tikzmath{
\r=1;
\b=1.53;
\p=0.699755; 
\q=0.338071; 
}
\begin{tikzpicture}[scale=4]
\coordinate (O) at (0,0);
\coordinate (O1) at (\b, 0);
\fill (O) node[below]{$O_{\ell}$} circle[radius=0.02];
\fill (O1) node[below]{$O_{r}$} circle[radius=0.02];
\draw[dashed] (O) -- (O1);
\draw  [domain=-\p:\p, samples=100, thick] plot ({\r*cos(\x r)}, {\r*sin(\x r)});
\draw  [domain=-\p:\p, samples=100, thick] plot ({\b-\r*cos(\x r)}, {\r*sin(\x r)});
\coordinate (A) at  ({\r*cos(\p r)}, {\r*sin(\p r)});
\fill (A) node[left]{$A$} circle[radius=0.02];
\coordinate (B) at  ({\r*cos(\p r)}, {-\r*sin(\p r)});
\fill (B) node[left]{$B$} circle[radius=0.02];
\coordinate (P0) at ({\r}, {0});
\coordinate (P1) at  ({\b-\r*cos(\q r)}, {\r*sin(\q r)});
\coordinate (P2) at  ({\r*cos(\q r)}, {\r*sin(\q r)});
\coordinate (P3) at ({\b - \r}, {0});
\coordinate (P4) at ({\r*cos(\q r)}, {-\r*sin(\q r)});
\coordinate (P5) at  ({\b-\r*cos(\q r)}, {-\r*sin(\q r)});
\draw[thick, red] (P0)  node[right]{$P_0$} -- (P1) node[left]{$P_1$} -- (P2)  node[right]{$P_2$} -- (P3) node[left]{$P_3$} -- (P4)  node[right]{$P_4$} -- (P5) node[left]{$P_5$} -- (P0);
\draw[thick, gray, dashed] (P2) -- (O) -- (P4) (P1) -- (O1) -- (P5);
\end{tikzpicture}
\caption{A periodic orbit of period $6$ on the lemon table $\cQ(b)$ for $b=1.53$.}\label{fig.ell.po}
\end{figure}

To find an explicit formula for the periodic orbit showing in Fig.~\ref{fig.ell.po}, 
we set $P_2(\cos\phi, \sin\phi)$, where $\phi=\phi(b)$ is to be determined. 
Because of symmetry, $P_1P_2$ is parallel to the axis $O_{\ell} O_{r}$. Then  
$\angle P_2 O_{\ell} P_3 = \angle O_{\ell} P_2 P_1$.
Since $\angle O_{\ell} P_2 P_1 = \angle O_{\ell} P_2 P_3$, we have  $|P_2P_3|=|O_{\ell}P_3|=b-1$.
That is,
\begin{align*}
(b-1)^2 = (b-1 -\cos\phi)^2 + \sin^2\phi &=(b-1)^2 -2(b-1)\cos\phi +1.
\end{align*} 
It follows that $\cos\phi =\frac{1}{2(b-1)}$.
Note that $\cos\phi<1$ holds only if  $b>1.5$.
On the other hand, $\cos\phi>\cos\phi_A(b)=\frac{b}{2}$ holds only if
$b<\frac{1+\sqrt{5}}{2}$.
Therefore, the domain of existence  for this periodic orbit is 
\begin{align}
1.5 < b < \frac{1+\sqrt{5}}{2}.\label{eq.ell.dom}
\end{align}
Let $e_j(b)\in M_b$ be the point in the phase space corresponding to the trajectory
starting at $P_j$ and pointing to $P_{j+1}$, $0\le j \le 5$. 
This leads to a periodic orbit $\cO_6(b)=\{e_{j}(b): 0\le j \le 5\}$.
One can get another periodic orbit with exactly the same properties by reversing the direction of the trajectory. We only need to consider the orbit $\cO_{6}(b)$.
Now we show that the orbit $\cO_{6}(b)$ is elliptic. For this reason we will rename it as
$\cO_{6}^{e}(b)$.

\begin{remark}\label{rem.iterate}
Note that both the lemon table $\cQ(b)$ and the periodic orbit $\cO_{6}^{e}(b)$ are invariant with respect to the rotation of the table by angle $\pi$ around its center. It follows that the billiard maps $F_b: e_{j}(b) \to e_{j+1}(b)$ and $F_b: e_{j+3}(b) \to e_{j+4}(b)$ are the same for each $j=0, 1, 2$ (with respect to the local coordinates around these points). Therefore, the two nonlinear maps $F_b^3: e_{0}(b) \to e_{3}(b)$ and  $F_b^3: e_{3}(b) \to e_{0}(b)$ are the same, and the decomposition  $F_b^6 =(F_b^3)^2$ along the periodic orbit $\cO_{6}^{e}(b)$ can be viewed as an iterate of the nonlinear map $F_b^3$ around the point $e_{0}(b)$. It follows that a bifurcation at $e_{0}(b)$ happens under $F_b^6$ if and only if it happens under $F_b^3$. 
From this point of view, it is more natural to classify the periodic point $e_{0}(b)$ with respect to the iterate $F_b^3$. The same reasoning applies to the hyperbolic periodic orbit in \S~\ref{sec.hyp.po}
and to the periodic orbit of generalized maps in Lemma \ref{lem.E0.eph}.
\end{remark}

Let $L_j=|P_jP_{j+1}|$ and
$d_j=d(e_j(b))$ be the corresponding terms given in Section~\ref{sec.Bmap}. 
Note that $d_1=d_2=d_4=d_5=\cos\phi=\frac{1}{2(b-1)}$, 
$L_0=L_2=L_3=L_5= b-1$,
\begin{align}
d_0=d_3=\cos2\phi=\frac{1}{2(b-1)^2}-1,
\quad
L_1=L_4=2\cos\phi-b=\frac{1}{b-1}-b.
\end{align}
It follows that, as $2\times 2$-matrices, $D_{e_0(b)}F_b^6=(D_{e_0(b)}F_b^3)^2$, and
\begin{align*}
D_{e_0(b)}F_b^3&=\frac{1}{d_0^2d_1}
\begin{bmatrix}L_0 - d_1 & L_0 \\ L_0 - d_1-d_0 & L_0 -d_0\end{bmatrix}
\begin{bmatrix}L_1 - d_1 & L_1 \\ L_1 - 2d_1 & L_1 - d_1\end{bmatrix}
\begin{bmatrix}L_0 - d_0 & L_0 \\ L_0 - d_0 - d_1 & L_0 -d_1\end{bmatrix}.
\end{align*}
As mentioned in Remark~\ref{rem.iterate}, it is only necessary to consider the tangent matrix $D_{e_0(b)}F_b^3$, for which the following holds:
\begin{align}
f(b):=\frac{1}{2}\text{Tr}(D_{e_0(b)}F_b^3)
=16 b^5-48 b^4+40 b^3-4 b^2-b+1 + \frac{b}{2 b^2-4 b+1}.\label{eq.tr.e3}
\end{align}
Note that $|f(b)|<1$ for $1.5 < b < \frac{1+\sqrt{5}}{2}\approx 1.618$. It follows that the periodic orbit
is elliptic. For reasons that will be clear later, we will allow $b$ to go beyond $\frac{1+\sqrt{5}}{2}$
and consider the corresponding periodic orbit of some algebraically defined (generalized billiard) map in Section \ref{sec.gen.b.map}.
Let  $b_{\crit}\approx 1.63477$ be the solution of $f(b)=-1$, which is the critical parameter 
when the generalized orbit becomes parabolic. This parameter will come up frequently in the following sections, see Lemma \ref{lem.b.crit} and Lemma \ref{lem.E0.eph}.

\begin{figure}[htbp]
\begin{tikzpicture}
\begin{axis}[xmin=1.5, xmax=1.65, ymin=-1.2, ymax=1.2,  ytick={-1, 0, 1}]
\addplot[domain=1.5:1.64, color=red, smooth] {16*x^5 - 48*x^4 +40*x^3-4*x^2-x+1 +x/(2*x^2-4*x+1)};
\addplot[domain=1.5:1.65, color=black] {0};
\end{axis}
\end{tikzpicture}
\caption{The graph of the function $f(b)=\text{Tr}(D_{e_0(b)}F_b^3)/2$. }
\end{figure}
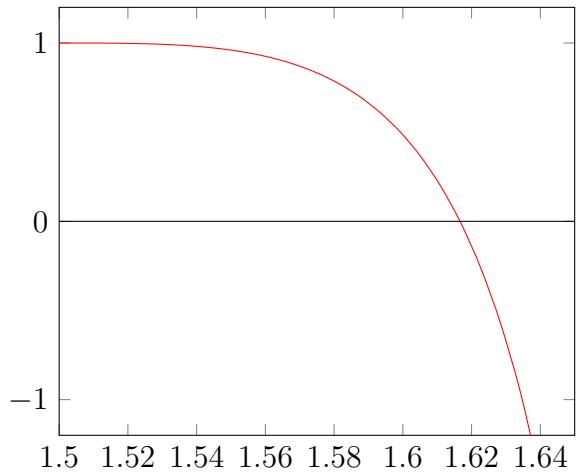

\subsection{The hyperbolic periodic orbit of period 6}\label{sec.hyp.po}
Consider the periodic orbit of period $6$ showing in Fig.~\ref{fig.hyp.po}. 

\begin{figure}[htbp]
\tikzmath{
\r=1;
\b=1.51;
\p=0.715142; 
\q=0.218837; 
\s=0.529245; 
}
\begin{tikzpicture}[scale=4]
\coordinate (O) at (0,0);
\coordinate (O1) at (\b, 0);
\fill (O) node[below]{$O_{\ell}$} circle[radius=0.02];
\fill (O1) node[below]{$O_{r}$} circle[radius=0.02];
\draw  [domain=-\p:\p, samples=100, thick] plot ({\r*cos(\x r)}, {\r*sin(\x r)});
\draw  [domain=-\p:\p, samples=100, thick] plot ({\b-\r*cos(\x r)}, {\r*sin(\x r)});
\coordinate (A) at  ({\r*cos(\p r)}, {\r*sin(\p r)});
\fill (A) node[left]{$A$} circle[radius=0.02];
\coordinate (B) at  ({\r*cos(\p r)}, {-\r*sin(\p r)});
\fill (B) node[left]{$B$} circle[radius=0.02];
\coordinate (P0) at  ({\r*cos(\q r)}, {\r*sin(\q r)});
\coordinate (P1) at ({\s*\r*cos(\q r)}, {\s*\r*sin(\q r)});
\coordinate (P2) at ({\b-\s*\r*cos(\q r)}, {-\s*\r*sin(\q r)});
\coordinate (P3) at  ({\b-\r*cos(\q r)}, {-\r*sin(\q r)});
\draw[thick, red] (P0)  node[right]{$P_0$} -- (P1) node[left]{$P_1$} -- (P2)  node[right]{$P_2$} -- (P3) node[left]{$P_3$};
\draw[thick, gray, dashed] (P1) -- (O) -- (P2) -- (O1) -- (P1);
\end{tikzpicture}
\caption{The trajectory of a hyperbolic orbit for $b=1.51$.}\label{fig.hyp.po}
\end{figure}
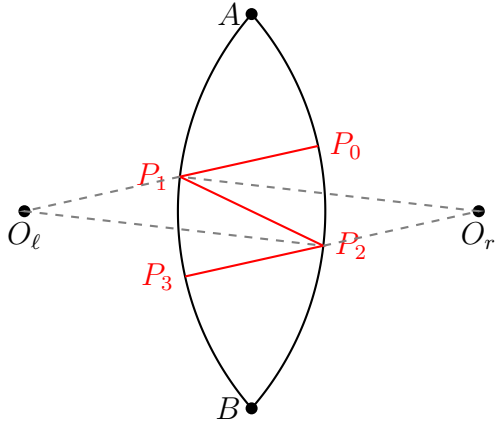

To find an explicit formula for the periodic orbit showing in Fig.~\ref{fig.hyp.po}, 
we set $P_0(\cos\phi, \sin\phi)$ for some $\phi\in (0, \phi_A(b))$, 
and $P_1(r\cos\phi, r\sin\phi)$ for some $r\in(0,1)$.
Then the remaining two points are
$P_2(b-r\cos\phi, -r\sin\phi)$ and $P_3(b-\cos\phi, -\sin\phi)$. 
The point $P_1$  satisfies $|O_r P_1|=1$, or equally,
\begin{align}
(r\cos\phi-b)^2 + r^2\sin^2\phi =r^2 - 2br\cos\phi +b^2 =1. \label{rP1}
\end{align}
Note that $\angle P_1 O_{\ell} P_2 = \angle P_1 P_2 O_{\ell}$ and hence
$|O_{\ell} P_1|= |P_1P_2|$, which leads to
\begin{align}
3r^2-4br\cos\phi + b^2=0. \label{rP1b}
\end{align} 
Eliminating $\cos\phi$ in \eqref{rP1} using \eqref{rP1b}, we have $r^2=b^2 - 2$.
Plugging $r=\sqrt{b^2 -1}$ back into  \eqref{rP1}, we have
\begin{align}
\cos\phi = \frac{r^2+b^2 -1}{2br} = \frac{2b^2 -3}{2b\sqrt{b^2 -2}}.
\end{align}

Note that $\cos\phi>\frac{b}{2}$ when $1.5<b<\sqrt{3}$.
So the domain of existence of this periodic orbit is
\begin{align}
1.5<b<\sqrt{3}. \label{eq.hyp.dom}
\end{align}
Denote $h_j(b) \in M_b$ the point starting at $P_j$ and pointing at $P_{j+1}$,
 and $h_{3+j}(b) \in M_b$ the point starting at $P_{3-j}$ and pointing at $P_{2-j}$,
for $0\le j \le 2$. This leads to a periodic orbit $\cO_{6}(b)=\{h_{j}(b): 0\le j \le 5\}$.
One can get another periodic orbit with exactly the same properties by reflecting the trajectory about the axis $AB$ through the two corners $A$ and $B$.
We only need to consider the orbit $\cO_{6}(b)$.

Next we will show that this periodic orbit $\cO_h(b)$ is hyperbolic.
For this reason we will rename it as $\cO_{6}^{h}(b)$.
Since $P_1=r(\cos\phi, \sin\phi)=(b-\cos\phi_1,\sin\phi_1)$, we have
\begin{align}
\sin\phi_1=r\sin\phi=\sqrt{b^2 - 2}\cdot \frac{\sqrt{4b^2 -9}}{2b\sqrt{b^2 - 2}}
=\sqrt{1 -\frac{9}{4b^2}}.
\end{align}
That is, $\cos\phi_1=\frac{3}{2b}$.
The orbit lengths are $L_0:= |P_0 P_1| =1-r= 1-\sqrt{b^2 - 2}$ and
$L_1 :=|P_1P_2| = |O_{r} P_1| =r = \sqrt{b^2 - 2}$.
Moreover, $d_0=d_3=1$ while 
\begin{align}
d_1 & =d_2=d_4=d_5=\cos\angle P_0 O_{\ell} P_2
=\frac{2-|P_0P_2|^2}{2} \nonumber \\
&=1-\frac{1}{2}(b^2-2(1+r)b\cos\phi +(1+r)^2)
=\frac{1}{2\sqrt{b^2 - 2}}. \label{eq.hangle.O}
\end{align}
It follows that the tangent matrix $D_{h_0(b)}F_b^6=(D_{h_0(b)}F_b^3)^2$, where
\begin{align}
D_{h_0(b)}F_b^3&=\frac{1}{d_1^2}
\begin{bmatrix}L_0 - d_1 & L_0 \\ L_0 - 1 - d_1 & L_0 - 1\end{bmatrix}
\begin{bmatrix}L_1 - d_1 & L_1 \\ L_1 - 2d_1 & L_1 -d_1\end{bmatrix}
\begin{bmatrix}L_0 - 1 & L_0 \\ L_0 - 1 - d_1 & L_0 -d_1\end{bmatrix}.
\label{eq.tan.hyp}
\end{align}
After simplifications, we have
\begin{align}
\frac{1}{2}\text{Tr}(D_{h_0(b)}F_b^3) -1
&=-16 b^4+68 b^2-72 + (16 b^4-64 b^2+63) \sqrt{b^2-2} \nonumber \\
&=(2 b-3) (2 b+3)\sqrt{b^2-2}(4 b^2-7 -4\sqrt{b^2-2})>0 \label{eq.hyp.tr}
\end{align}
on the domain $1.5 < b < \sqrt{3}$.
It follows that the periodic orbit $\cO_6^h(b)$ is hyperbolic.
In fact, it says something a little bit stronger. 
Note that the hyperbolic periodic orbit
$\cO_6^h(b)$ can only be positive hyperbolic because of the symmetry $D_{h_0(b)}F_b^6=(D_{h_0(b)}F_b^3)^2$.
Eq.~\eqref{eq.hyp.tr} says that, modulo the symmetry, $h_0(b)$ is already positive hyperbolic
with respect to $F_b^3$.

We will show in Proposition \ref{pro.no.new.po6} that there exists $\delta>0$ such that
the periodic orbits we have obtained in this section are the  only periodic orbits of period $6$ that are contained in a small neighborhood of the periodic orbit $\cO_2(b)=\{P, F_b(P)\}$ for $b\in (1.5, 1.5 +\delta_0)$. 
In \cite{Mey70} Meyer studied the generic $k$-bifurcations of periodic points for a smooth family of maps.  
We will only state the case for $k=3$. More precisely, let $f_s:U\to \bR^2$ be a family of symplectic maps 
fixing the origin $P=(0,0)$, $\lambda(s)$ be an eigenvalue of $D_{P}f_s$. Suppose $\lambda(0)=e^{2\pi i l/3}$, where $l=1$ or $2$.
Then $P$ is a $3$-bifurcation point for $f_s$ at $s=0$ if there are constants $\alpha\neq 0$ and $\gamma\neq 0$,
and symplectic action-angle coordinates $(I, \phi)$ such that
\begin{align}
f_s(I, \phi) = (I-\frac{2\gamma}{3} I^{3/2}\sin 3\phi+\cdots, \phi+\frac{2\pi l}{3}+\alpha s + \gamma I^{1/2}\cos 3\phi+\cdots). \label{eq.Mey}
\end{align}
Meyer showed that for a $3$-bifurcation point $P$, there exists exactly one  periodic orbit of period $3$ that bifurcates from it.
Moreover, this periodic orbit is hyperbolic. See also \cite[Section 11.1]{MeOf}.
Consider the periodic orbit  $\cO_2(b)=\{P, F_b(P)\}$ for the lemon billiards.
Note that $d=1$ and $L=2-b$. It follows from \eqref{eq.DF} that 
\[D_{P}F_b^2=\begin{bmatrix}2(b-1)^2 -1 & 2(b-1)(b-2) \\ 2b(b-1)  & 2(b-1)^2 -1 \end{bmatrix}.\]
When $b=1.5$, the trace $4(b-1)^2 -2=-1$ and hence the eigenvalue $\lambda(F_b^2,P)=e^{2\pi i/3}$. 
Since the table is symmetric about the horizontal axis, all second-order terms in the Taylor expansion of the billiard map $F_b$ around the point $P$ vanish, see also \cite[\S 2.1]{JZ22}.
It follows that  the constant $\gamma$ is zero in Eq.~\eqref{eq.Mey}, and the periodic point $P$ is  not a $3$-bifurcation point for the family $F_b$ at $b=1.5$ in the sense of Meyer.

\section{Periodic 6 orbits near the center of the phase space}\label{sec.topo}

In this section we obtain some  results about the possible configurations
of  periodic orbits with period $6$ that are contained in a small neighborhood of the periodic orbit $\cO_2(b)$ on the lemon table $\cQ(b)$ for $b$ close to $1.5$. 
Recall that the lemon table $\cQ(b)$ is the intersection of two unit disks centered at $O_\ell=(0,0)$ and $O_r=(b,0)$.
We will call the two circular arcs of the boundary $\pa \cQ(b)$ as $\Gamma_\ell$ (the one on the left) and $\Gamma_r$ (the one on the right), respectively. Note that there is a mismatch of the labeling between the circular arcs and their centers.

A periodic orbit of period $6$ in a small neighborhood of $\cO_2(b)$ has three reflections on each arc of the table and alternates between the two arcs. 
Denote by $\cO_6$ an oriented $6$-gon that has three (possibly repeated) points $P^\ell_1, P^\ell_2, P^\ell_3$ (indexed from top to bottom) on the left arc $\Gamma_\ell$ and three (possibly repeated) points $P^r_1, P^r_2, P^r_3$ on the right arc $\Gamma_r$ (indexed from top to bottom). 
Let  $\varphi_i^r$ be the position angle of the point $P_{j}^r$ on the arc $\Gamma_r$ with respect to the center $O_{\ell}$, $1\le j \le 3$.
Similarly, let $\varphi_j^\ell$ be the  position angle of the point $P_{j}^{\ell}$ on the arc $\Gamma_\ell$,  $1\le j \le 3$. For convenience, we introduce $\hat{\varphi}_j^\ell:=\pi-\varphi_j^\ell$. Then $\varphi_j^r>0$ (or $\hat{\varphi}_j^\ell>0$) means the point $P_j^{r}$ (or $P_{j}^{\ell}$) has positive $y$-coordinate.

The segment $O_{\ell}O_r$ divides the table $\cQ(b)$ into two parts: the upper half and the lower half.
A broken segment $P_{j_1}^\ell P_{k_1}^r P_{j_2}^\ell$ on the lemon table is said to be of the type
$(+, -, +)$ if $P_{k_1}^r$ is on the lower half while both  $P_{j_1}^\ell$ and $P_{j_2}^\ell$ are on the upper half of the table. 
Similarly one can define segments of type $(-, +, -)$.
It is easy to see that trajectories on the lemon table cannot be of these two types unless all three points are on the segment $O_{\ell}O_r$.
The following gives a characterization of the three points on $\Gamma_{\ell}$ (or $\Gamma_{r}$).
\begin{lemma}\label{lem.varphi.pm}
The periodic orbit $\cO_6$ satisfies the following dichotomy: 
\begin{enumerate}[label = (\roman*)]
\item either  $\varphi_1^{r}>0>\varphi_3^r$ and $\hat\varphi_1^{\ell}>0>\hat\varphi_3^{\ell}$;

\item or $\varphi_{j}^{r}=\hat\varphi_{j}^{\ell}=0$ for all $1\le j\le 3$: it is the periodic orbit $\cO_2(b)$ repeated three times.
\end{enumerate}
\end{lemma}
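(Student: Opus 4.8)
The plan is to argue by a symmetry/convexity analysis of the broken line making up $\cO_6$. First I would exploit the reflection symmetry of the lemon table across the horizontal axis $O_\ell O_r$. A periodic $6$-gon that stays near $\cO_2(b)$ has its three vertices on $\Gamma_r$ with position angles $\varphi_1^r \ge \varphi_2^r \ge \varphi_3^r$ (using the convention that these are measured from $O_\ell$, with decreasing values going from top to bottom), and similarly $\hat\varphi_1^\ell \ge \hat\varphi_2^\ell \ge \hat\varphi_3^\ell$. I would set up the orbit as alternating $P_1^\ell, P_1^r, P_2^\ell, P_2^r, P_3^\ell, P_3^r$ (with the correct reading of which left vertex is hit between two consecutive right vertices, dictated by the orientation of $\cO_6$), and I want to show that the first and third right vertices lie on opposite sides of the axis, and likewise for the left arc — unless the whole orbit degenerates onto the segment $O_\ell O_r$, which is exactly $\cO_2(b)$ traversed three times.

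The key step is the observation that a billiard trajectory on $\cQ(b)$ reflecting off the circular arc $\Gamma_r$ cannot have a broken segment of type $(+,-,+)$ or $(-,+,-)$ unless all three points lie on $O_\ell O_r$: this is the elementary fact stated just before the Lemma, and it follows because the arc $\Gamma_r$ is part of a circle centered at $O_r$, so the reflection law at a point $P_k^r$ with, say, negative $y$-coordinate forces the outgoing ray to stay (weakly) below the chord through $P_k^r$ in the direction determined by the inward normal at $P_k^r$; combined with the incoming ray coming from above, the only way both the previous and next left-arc vertices are in the upper half is if the reflection point itself is on the axis and the trajectory is the $2$-periodic one. I would make this precise by writing the reflection at $P_k^r$ in coordinates and checking the sign of the $y$-component of the velocity before and after, using that the radius of curvature is $1$ and the center $O_r$ lies on the axis. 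Once this "no sign change through a single vertex that jumps across the axis" fact is in hand, I would run it around the whole $6$-cycle: if, say, all three $\varphi_j^r$ had the same sign (all $\ge 0$, or all $\le 0$), then near $\cO_2(b)$ the $\hat\varphi_j^\ell$ would be forced to the opposite configuration by the alternation, and tracing one full loop would produce a forbidden $(+,-,+)$ or $(-,+,-)$ broken segment, unless everything collapses to the axis. That gives case (ii); otherwise the $\varphi_j^r$ are not all of one sign, and since they are monotone ($\varphi_1^r \ge \varphi_2^r \ge \varphi_3^r$) this means $\varphi_1^r \ge 0 \ge \varphi_3^r$; ruling out the boundary case $\varphi_1^r = 0$ or $\varphi_3^r = 0$ (which again, by the monotonicity and the same sign-propagation argument, would push the orbit onto the axis) upgrades this to the strict inequalities $\varphi_1^r > 0 > \varphi_3^r$, and symmetrically $\hat\varphi_1^\ell > 0 > \hat\varphi_3^\ell$. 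That is case (i).

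I expect the main obstacle to be the careful bookkeeping of which left-arc vertex sits between two consecutive right-arc vertices, i.e. pinning down the combinatorial type of the $6$-gon $\cO_6$ so that the alternation argument is actually valid — a priori the orbit could visit the three left vertices in an order that does not interleave cleanly with the right vertices, and one has to use that the orbit lies in a small neighborhood of $\cO_2(b)$ (so the vertices are all close to the two axis points $P$ and $F_b(P)$) together with the orientation of $\cO_6$ to fix the cyclic order. Once the cyclic structure is fixed, the sign-propagation lemma is essentially a one-vertex computation and the rest is a finite case check. A secondary, more routine point is verifying the elementary "no $(+,-,+)$" claim rigorously from the reflection law rather than just citing it as "easy to see"; I would include a short computation with the inward normal $\tfrac{1}{1}(O_r - P_k^r)$ at a right-arc vertex to nail down the sign of the $y$-component of the reflected velocity.
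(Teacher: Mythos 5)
There is a genuine gap at the heart of your argument: the sign-propagation step is backwards, and the case it is supposed to handle is exactly where the real work lies. Suppose all three $\varphi_j^r\ge 0$. The no-$(+,-,+)$ fact then forces every left-arc vertex to lie \emph{weakly in the same (upper) half}, not in the "opposite configuration": a left vertex strictly below the axis would be the middle point of a broken segment whose two right-arc endpoints are weakly above, which is forbidden. So you end up with all six vertices weakly in the upper half, a configuration that contains no $(+,-,+)$ or $(-,+,-)$ segment at all; "tracing one full loop" therefore produces no contradiction, and your argument stalls precisely where the paper's proof begins its substantive step. What is needed (and what the paper supplies) is an argument that an orbit with all six vertices weakly in the upper half must lie on the axis: take the lowest vertex, say $P_3^r$; since the vertex it receives from is at least as high, the reflection at $P_3^r$ sends the trajectory to a left vertex at most as high, and minimality forces a horizontal segment joining the two lowest vertices at equal height; reflecting a horizontal chord at a point strictly above the axis of a circle centered on the axis then goes strictly lower, a contradiction unless that height is $0$, after which perpendicular reflections trap the whole orbit on $O_\ell O_r$. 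Nothing of this kind appears in your proposal, so the dichotomy is not established.

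Two further remarks. The obstacle you single out — pinning down the interleaving/cyclic order of the six vertices — is a red herring: the paper's proof never fixes the combinatorial type (that classification comes later and \emph{uses} this lemma); it only uses that each vertex carries two orbit segments ending on the opposite arc, plus the lowest-point argument, so no bookkeeping of the permutations is needed, and trying to fix the cyclic order at this stage is not actually possible. Finally, in your proposed verification of the elementary $(+,-,+)$ fact, the inward normal at a vertex of $\Gamma_r$ is $O_\ell-P_k^r$, not $O_r-P_k^r$ (the arcs and their centers are cross-labeled), and the claim that the outgoing ray at a below-axis vertex simply "stays below" is not correct as stated — the $y$-component of the reflected direction need not be negative; the fact genuinely uses that the adjacent vertices lie on the opposite arc, so this step too needs a more careful argument than the one you sketch.
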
 
\begin{proof}
It suffices to prove that  (ii) holds if one of the two statements in (i) fails. 
Without loss of generality, we assume $\varphi_1^{r}>0>\varphi_3^r$ does not hold. 
Then we have either $\varphi_3^r \ge 0$ or $\varphi_1^{r}\le 0$. We assume $\varphi_3^r \ge 0$. Our argument works the same in the case $\varphi_1^{r}\le 0$. 
Note that $\hat\varphi_3^{\ell} \ge 0$ (otherwise the two segments containing $P_3^{\ell}$ would be of type $(+,-,+)$).
There are two cases: $\hat \varphi_3^{\ell} \ge \varphi_3^{r}$ or $\hat \varphi_3^{\ell} \le \varphi_3^{r}$. We assume $\hat \varphi_3^{\ell} \ge \varphi_3^{r}$. Pick an index $1\le j\le 3$ such that $P_{j}^{\ell}P_{3}^{r}$ belongs to $\cO_6$. Since $\hat \varphi_j^{\ell} \ge \hat \varphi_3^{\ell} \ge \varphi_3^{r}$, the reflection at $P_{3}^{r}$ of the trajectory $\vv{P_{j}^{\ell}P_{3}^{r}}$ intersects $\Gamma_{\ell}$ at a point  $P_{k}^{\ell}$ with  $\hat \varphi_k^{\ell} \le \varphi_3^{r}$. Since $P_{3}^{r}$ is a lowest point among all six points, it follows that $k=3$ and $\hat \varphi_3^{\ell} = \varphi_3^{r}$. Then $\vv{P_{3}^{r}P_{3}^{\ell}}$ is part of the the orbit $\cO_6$, is horizontal and lies in the upper half of the table. Since $P_{3}^{r}$ and $P_{3}^{\ell}$ are the lowest points on each side, the reflection of $\vv{P_{3}^{r}P_{3}^{\ell}}$ cannot be any lower than these two points. This can only happen when $\hat \varphi_3^{\ell} = \varphi_3^{r}=0$, and $\vv{P_{3}^{\ell}P_{3}^{r}}$ is along the segment $O_{\ell}O_{r}$. 
That is, $P_{3}^{\ell}P_{3}^{r}$ is part of the orbit $\cO_2(b)$ and $\cO_6$ is the periodic orbit $\cO_2(b)$ (repeating three times). 
\end{proof}

\subsection{The combinatorial type of the periodic orbit}\label{sec.comb.type}
The trajectories of the orbit $\cO_6$ from the three points on $\Gamma_r$ to the three points on $\Gamma_\ell$ (resp. $\Gamma_\ell$ to $\Gamma_r$) gives a permutation $\sigma_{r\ell}\in S_3$ (resp. $\tau_{\ell r}\in S_3$). Note that the composition $\mu:=\tau_{\ell r}\sigma_{r\ell}$ must be a 3-cycle since they form one periodic orbit of period $6$. Therefore, $\mu=(123)\text{ or }(132)$. Alternatively, given $\sigma_{r\ell}\in S_3$, then $\tau_{\ell r}$ is determined by $\mu$ and $\sigma_{r\ell}$. So there are $|S_3|\times 2=12$ possibilities in total. On the other hand, there are three symmetries of the lemon billiards:
\begin{enumerate} 
\item the orientation reversal of the trajectory, which amounts to  
\begin{align*}
(\sigma_{r\ell}, \tau_{\ell r})&\mapsto (\sigma_{r\ell}'=\tau_{\ell r}^{-1},  \tau_{\ell r}'=\sigma_{r\ell}^{-1}),\\
\mu &\mapsto \mu'=\mu^{-1};
\end{align*}

\item the reflection about the line $x=\frac{b}{2}$, which amounts to exchanging $\tau_{\ell r}$ and $\sigma_{r\ell}$;

\item the reflection about the $x$-axis, which amounts to reversing the order of $\{1,2,3\}$, or equally, taking conjugate by $(13)$ on both $\sigma_{r\ell}$ and $\tau_{\ell r}$. 
\end{enumerate}

Modulo the above symmetries, we have $4$ different configurations classified by the length of $\sigma_{r\ell}$ and $\tau_{\ell r}$ with respect to the transpositions $(12)$ and $(23)$ (i.e. the minimal length of the expression of $\sigma_{r\ell}$ and $\tau_{\ell r}$ using only $(12)$ and $(23)$):
\begin{align}\label{eq: 4 config}
&(\sigma_{r\ell}, \tau_{\ell r})=\begin{cases}&\text{(O, II)}: (id, (123))\sim ((123), id)\sim (id, (132))\sim ((132), id), \\
&\text{(I, I)}:((12), (23))\sim ((23), (12)), \\
&\text{(I, III)}:((12), (13))\sim ((13), (12)) \sim ((23), (13))\sim ((13), (23)), \\
&\text{(II, II)}:((123), (123))\sim ((132), (132)).
\end{cases}
\end{align}

Note that certain degeneracy of $\cO_6$  can happen in the above configurations: two neighboring points on $\Gamma_\ell$ and/or $\Gamma_r$ collide. The degeneracy cases are easier to dealt with. It follows directly from Lemma \ref{lem.varphi.pm} that
\begin{lemma}
If the orbit $\cO_6$ has a triple collision on either side,  then $\cO_6$ is the periodic orbit $\cO_2(b)$ (repeating three times). 
\end{lemma}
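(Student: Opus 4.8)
The plan is to obtain this as an immediate corollary of Lemma~\ref{lem.varphi.pm}, exactly as the sentence preceding the statement anticipates. First I would record what a triple collision means for the position angles: if $\cO_6$ has a triple collision on one arc, say the left one, then $P_1^\ell = P_2^\ell = P_3^\ell$ is a single point of the boundary circle, so the three position angles coincide, $\hat\varphi_1^\ell = \hat\varphi_2^\ell = \hat\varphi_3^\ell$; in particular $\hat\varphi_1^\ell = \hat\varphi_3^\ell$. Next I would observe that this equality is incompatible with the strict chain $\hat\varphi_1^\ell > 0 > \hat\varphi_3^\ell$ appearing in alternative (i) of Lemma~\ref{lem.varphi.pm}. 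Hence alternative (i) fails, and the dichotomy of that lemma forces alternative (ii): $\varphi_j^r = \hat\varphi_j^\ell = 0$ for all $1 \le j \le 3$, which by the lemma's own description means precisely that $\cO_6$ is the period-$2$ orbit $\cO_2(b)$ traversed three times.

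To finish, I would treat the case of a triple collision on the right arc $\Gamma_r$. The cleanest route is to invoke the reflection symmetry about the vertical line $x = \tfrac{b}{2}$, listed among the symmetries of the lemon billiard in Section~\ref{sec.comb.type}: it interchanges $\Gamma_\ell$ and $\Gamma_r$ and carries periodic orbits to periodic orbits, so a triple collision on the right is turned into a triple collision on the left and the previous paragraph applies. Alternatively one simply repeats the argument verbatim, using the chain $\varphi_1^r > 0 > \varphi_3^r$ in alternative (i) in place of the one for $\Gamma_\ell$.

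I do not anticipate a genuine obstacle here: all the real work is already encapsulated in Lemma~\ref{lem.varphi.pm}, and the present statement is essentially its contrapositive specialized to the degenerate configuration. The only point worth stating carefully is the innocuous fact that a triple collision literally forces the three position angles on the affected arc to be equal, so that the strict inequality in alternative (i) cannot occur; once this is noted, the conclusion is immediate.
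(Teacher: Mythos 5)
Your proof is correct and matches the paper's approach: the paper simply asserts that the lemma "follows directly from Lemma~\ref{lem.varphi.pm}," and your write-up supplies exactly the intended details — a triple collision forces equal position angles on one arc, which rules out the strict chain in alternative (i), so alternative (ii) of the dichotomy must hold. The handling of the right-arc case by the reflection symmetry (or by repeating the argument with $\varphi_1^r>0>\varphi_3^r$) is also fine.
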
 

Next we consider the double collisions that have no triple collisions.
\begin{lemma}
Suppose  $P_1^{\ell}=P_2^{\ell} \neq P_3^{\ell}$. Then $P_1^{r}\neq P_2^{r}=P_3^{r}$, and reflections at the points
$P_{3}^{\ell}$ and $P_{1}^{r}$ are perpendicular.
\end{lemma}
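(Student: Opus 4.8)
The plan is to exploit the combinatorial/geometric classification already built up in this section, together with Lemma \ref{lem.varphi.pm}, to pin down exactly which configuration a $\cO_6$ with $P_1^\ell = P_2^\ell \neq P_3^\ell$ must fall into, and then to read off the perpendicularity from elementary circle geometry. First I would observe that the hypothesis $P_1^\ell = P_2^\ell$ means the orbit hits the left arc at only two distinct points, one of which ($P_1^\ell = P_2^\ell$) is visited twice in succession along the $6$-gon; since $\cO_6$ is a genuine orbit of period $6$ (not $\cO_2(b)$ repeated, as triple collisions are excluded), the two consecutive chords emanating from the doubled vertex must be the same chord traversed in both directions, i.e. the orbit reflects off $P_1^\ell=P_2^\ell$ along a \emph{perpendicular} incidence (the only way a billiard trajectory can return along the incoming chord). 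Tracing this: the doubled left vertex forces the orbit to bounce back, so on the right arc the two endpoints of that back-and-forth chord coincide, giving $P_2^r = P_3^r$ (after matching the top-to-bottom indexing with the cyclic order of the orbit and using Lemma \ref{lem.varphi.pm} to fix which of $P_1^r, P_2^r, P_3^r$ is the doubled one). That $P_1^r \neq P_2^r$ then follows because otherwise we would have a triple collision on the right, contradicting the standing assumption.

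**Identifying the configuration and the chord structure.**

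More carefully, I would argue via the permutation bookkeeping of Section \ref{sec.comb.type}. A double collision $P_1^\ell = P_2^\ell$ means $\sigma_{r\ell}$ and $\tau_{\ell r}$ are constrained: two of the right-to-left arrows land on the same left point. Running through the four configurations in \eqref{eq: 4 config}, the only ones compatible with a single double collision on the left (and none on the right being forced by it to be triple) are the ones where the collided left vertex is an endpoint of a chord whose reversal is also in the orbit — this is the (I,I) or (I,III)-type situation. In all admissible cases the local picture at $P_1^\ell=P_2^\ell$ is: an incoming chord from some right point $P^r$, a reflection, and an outgoing chord back to the \emph{same} right point $P^r$. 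For a billiard reflection to send a chord back onto itself, the chord must be perpendicular to the boundary at the reflection point, i.e. it passes through the center $O_\ell$ of the left arc. Hence the chord $P_1^\ell P^r$ lies along a radius of $\Gamma_\ell$, and since its right endpoint is hit twice in a row we get $P^r = P_2^r = P_3^r$, with $P_1^r$ the remaining (distinct) right vertex.

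**The perpendicularity at $P_3^\ell$ and $P_1^r$.**

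It remains to show the reflections at $P_3^\ell$ and $P_1^r$ are perpendicular — i.e. the chord(s) meeting those vertices are normal to the respective arc. By the symmetry analysis and Lemma \ref{lem.varphi.pm}(i), once one left vertex ($P_1^\ell = P_2^\ell$) and one right vertex ($P_2^r = P_3^r$) are doubled, the orbit is forced to close up as follows: the full $6$-gon degenerates into the union of the radial chord through $O_\ell$ (traversed twice), a chord from $P_1^\ell$ to $P_1^r$, and a radial chord through $O_r$ from $P_1^r$ to $P_2^r$ (traversed twice). Applying the mirror symmetry of Section \ref{sec.comb.type}(2) (reflection about $x = b/2$), the roles of the two arcs swap, so by the argument of the previous paragraph applied on the right the chord at $P_1^r$ hitting $P_2^r = P_3^r$ must also be radial, i.e. perpendicular at $P_1^r$. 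Finally, the remaining vertex $P_3^\ell$: the two chords at $P_3^\ell$ go to $P_1^r$ and $P_2^r = P_3^r$; since both of the chords at $P_1^r$ and at $P_2^r$ are already determined to be radial/back-and-forth, the two chords at $P_3^\ell$ must coincide as a single chord traversed both ways, which again forces it to be normal to $\Gamma_\ell$ at $P_3^\ell$. Thus the reflections at $P_3^\ell$ and $P_1^r$ are each perpendicular; in particular they are perpendicular to each other in the sense that both chords are normals.

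**Main obstacle.**

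The genuinely delicate point, and the one I would spend the most care on, is the matching of the two indexing conventions: the "top-to-bottom" labeling of the three left and three right points versus the cyclic order in which the orbit visits them (encoded by $\sigma_{r\ell}, \tau_{\ell r}$). One must check, using Lemma \ref{lem.varphi.pm}(i) to control signs of the position angles, that the double collision $P_1^\ell = P_2^\ell$ cannot instead be a collision of, say, the top and bottom left points with the middle one separate, or force a configuration in which the orbit is not $\cO_2(b)$ yet no perpendicular bounce occurs. Ruling out these alternative combinatorial types — essentially a finite case check over the list \eqref{eq: 4 config} combined with the ordering constraint — is where the argument has to be written out with care, though each individual case is elementary.
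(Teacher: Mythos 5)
There is a genuine error at the heart of your argument. You assert that the doubled vertex $P_1^{\ell}=P_2^{\ell}$ must itself be a perpendicular bounce ("the two consecutive chords emanating from the doubled vertex must be the same chord traversed in both directions"). This is false, and the hyperbolic orbit $\cO_6^h(b)$ of Section \ref{sec.hyp.po} — which is exactly the configuration the lemma describes — is a counterexample: at its doubled points the incoming and outgoing chords go to \emph{different} points of the other arc and the reflection there is not perpendicular; the perpendicular reflections occur at the two \emph{simple} vertices. What alternation plus the double collision actually force is that between the two passages through $P^{\ell}=P_1^{\ell}=P_2^{\ell}$ the orbit visits exactly one right point and returns, so the bounce-back (hence the perpendicular incidence) sits at that intermediate \emph{right} point, not at the doubled left vertex. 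Your subsequent structural description inherits this misplacement and becomes incoherent: you produce "a radial chord through $O_r$ from $P_1^r$ to $P_2^r$", a chord joining two points of the same arc, which is impossible since the orbit alternates between $\Gamma_{\ell}$ and $\Gamma_r$; you also identify the endpoint of the bounce-back chord as the doubled right point, whereas in the true configuration the perpendicular chord from the doubled left point ends at the \emph{simple} right point $P_1^r$, and it is $P_3^{\ell}$ whose both chords go to the doubled right point.

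Beyond this, the parts of the statement that genuinely require work are missing. The paper's proof first shows the four segments at $P^{\ell}$ reach all three right points and, since both reflections at $P^{\ell}$ share the same bisector (the normal at $P^{\ell}$), an adjacent right pair must collide; the nontrivial steps are then (i) excluding $P_1^r=P_2^r\neq P_3^r$ (its cases (a1)–(a2), using Lemma \ref{lem.varphi.pm} to force all six points into the lower half, and a multiplicity-three edge argument), and (ii) excluding the possibility that $P_3^{\ell}P_1^r$ is an orbit edge (case (b2)), after which the perpendicularity at $P_3^{\ell}$ and $P_1^r$ is immediate. You acknowledge an indexing "obstacle" but never carry out these exclusions, and your appeal to the reflection symmetry about $x=b/2$ cannot substitute for them: that symmetry maps the given orbit to a possibly different orbit, so it yields no constraint on a single, not-necessarily-symmetric $\cO_6$. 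As written, the proposal both rests on a false key claim and omits the case analysis that constitutes the actual content of the lemma.
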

\begin{proof}
Denote $P^{\ell}:=P_1^{\ell}=P_2^{\ell}$.
Then the two reflections at the double point $P^{\ell}$ reach all three points $P_j^{r}$, $1\le j \le 3$.
The bisecting lines of these two reflections intersect at both $P^{\ell}$ and $O_{r}$ and hence coincide. 
It follows that either $P_1^{r}=P_2^{r}$ or $P_2^{r}=P_3^{r}$.
We just need to show that the first case is impossible.
Suppose on the contrary that $P_1^{r}=P_2^{r}=:P^r$. Then there are two cases:
\begin{enumerate}
\item[(a1)] the segment $P_{3}^{\ell}P_{3}^{r}$ is not part of the orbit $\cO_6$:
then the reflections at both $P_{3}^{\ell}$ and $P_{3}^{r}$ bounce right back and hence are perpendicular.
So the points $P^{r}$ and $P^{\ell}$ are on the radii $O_r P_3^{\ell}$ and $O_{\ell}P_3^r$,
respectively. It follows that all six points are contained in the lower half of the lemon table, 
which can only be a triple collision along the horizontal axis by Lemma \ref{lem.varphi.pm},  contradicting the assumption that $P^{\ell} \neq P_3^{\ell}$.

\item[(a2)]  the segment $P_{3}^{\ell}P_{3}^{r}$ is part of the orbit $\cO_6$: among the six trajectory segments of $\cO_6$, exactly three of them are connected to $P_{3}^{\ell}$ or $P_{3}^{r}$ (maybe both). It follows that $P^{\ell}P^{r}$ is a multiple edge with multiplicity $3$.
This can happen only when the reflections at both points $P^{\ell}$ and $P^{r}$ are perpendicular, which leads to the periodic orbit $\cO_2(b)$ and hence a triple collision, contradicting the assumption that $P^{\ell} \neq P_3^{\ell}$.
\end{enumerate} 
In the following we assume $P^r:=P_2^{r}=P_3^{r}$. 
There are two cases:
\begin{enumerate}
\item[(b1)]  the segment $P_{3}^{\ell}P_{1}^{r}$ is not part of the orbit $\cO_6$:
then the trajectory $\vv{P^{r}P_3^{\ell}}$ has a reflection at $P_3^{\ell}$ and bounces right back to $P^{r}$. It follows that the reflections at $P_3^{\ell}$ is perpendicular. So is the reflection at $P_1^{r}$.

\item[(b2)]  the segment $P_{3}^{\ell}P_{1}^{r}$ is part of the orbit $\cO_6$: 
this case is similarly to Case (a2): there are exactly three edges connected to $P_{3}^{\ell}$ or $P_{1}^{r}$ (maybe both), and hence $P^{\ell}P^{r}$ is a multiple edge with multiplicity $3$. 
Again it leads to a triple collision, contradicting the assumption that $P^{\ell} \neq P_3^{\ell}$.
\end{enumerate} 
So the only possible degeneracy with double collisions is of the type with a double point and a perpendicular point  on the one arc and a perpendicular point  and a double point  on the other arc.
This completes the proof.
\end{proof}
An example of the above double degeneracy configuration is the hyperbolic periodic orbit $\cO_6^{h}(b)$ in Section \ref{sec.hyp.po}. It can be viewed as the limit of configuration type (I, I).
In the following we will deal with the non-degenerate cases.

An orbit segment $P_{j_1}^\ell P_{k_1}^r P_{j_2}^\ell P_{k_2}^r$ on a lemon table is said to be ``going-up'' if  $P_{j_2}^\ell$ is above $P_{j_1}^\ell$, $P_{k_2}^r$ is above $P_{k_1}^r$, and all four points are contained in the upper (or lower) half of lemon table. 
See Fig.~\ref{fig.going.up} for an illustration. The illustration looks pathological 
and we are going to  show it is impossible.

\begin{figure}[htbp]
\begin{tikzpicture}
\draw (1.6,0) circle (2);
\draw (-1.6,0) circle (2);
\coordinate (Ol) at  (-1.6,0);
\coordinate (Or) at  (1.6,0);
\coordinate (Bl) at (-0.4,0);
\coordinate (Pr1) at  ({-1.6+2*cos(15)},{2*sin(15)});
\coordinate (Pl1) at  ({1.6-2*cos(6)},{2*sin(6)});
\coordinate (Pr2) at  ({-1.6+2*cos(23)},{2*sin(23)});
\coordinate (Pl2) at  ({1.6-2*cos(25)},{2*sin(25)});
\draw[dashed] (Ol)--(Pr1) node[right] {$P_{k_1}^r$};
\draw[dashed] (Ol) node[left] {$O_\ell$}--(Or) node[right] {$O_r$};
\draw[dashed] (Or)--(Pr1) coordinate[pos=1.46](Ql);
\draw[blue] (Bl) node[below left] {$B_\ell$} -- (Pr1)--(Ql) node [left] {$Q^\ell$};
\draw[red] (Pl1) -- (Pr1) -- (Pl2) -- (Pr2);
\fill (Ol) circle[radius=0.05];
\fill (Or) circle[radius=0.05];
\fill (Bl) circle[radius=0.05];
\fill (Pl1) circle[radius=0.05];
\fill (Pr1) circle[radius=0.05];
\fill (Pl2) circle[radius=0.05];
\fill (Pr2) circle[radius=0.05];
\fill (Ql) circle[radius=0.05];
\end{tikzpicture}
\caption{An illustration of a ``going-up'' orbit $P_{j_1}^\ell P_{k_1}^r P_{j_2}^\ell P_{k_2}^r$ (red).}\label{fig.going.up}
\end{figure}
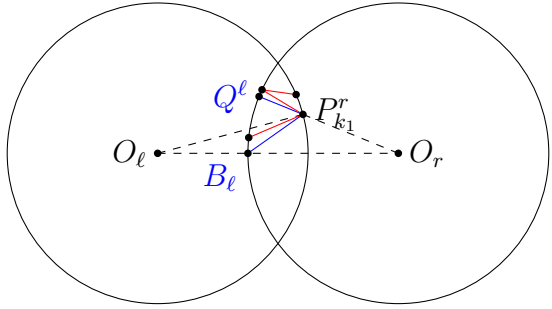

\begin{lemma}\label{lem.no.go.up}
There is no ``going-up" orbit on the lemon billiards for any $1.5 \le b \le 1+ 2^{-1/2}$.
\end{lemma}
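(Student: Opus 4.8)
I would argue by contradiction. Suppose $P^\ell_{j_1}P^r_{k_1}P^\ell_{j_2}P^r_{k_2}$ is a ``going-up'' orbit segment. Using the reflection of $\cQ(b)$ in the line $O_\ell O_r$ we may assume all four points lie in the closed upper half-plane $\{y\ge 0\}$. Write $A_1=P^\ell_{j_1}$, $B_1=P^r_{k_1}$, $A_2=P^\ell_{j_2}$, $B_2=P^r_{k_2}$, so that $A_2$ is strictly higher than $A_1$ and $B_2$ strictly higher than $B_1$; since $\cQ(b)$ (hence $\cQ(b)\cap\{y\ge 0\}$) is convex, the three chords $A_1B_1$, $B_1A_2$, $A_2B_2$ lie in the upper half of the table. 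Throughout, for a point $X$ I write $y(X)$ for its ordinate.

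The main tool is an elementary reflection fact. For $B\in\Gamma_r$ with $y(B)>0$, the ray from $O_\ell$ through $B$ (which carries the inward normal at $B$) enters $\cQ(b)$ through $\Gamma_\ell$ at a single point $N(B)$, since $O_\ell\notin\cQ(b)$. Intersecting that ray with the unit circle about $O_r$ gives a closed form for the position angle of $N(B)$ on $\Gamma_\ell$, and one reads off that $N(B)$ lies strictly between the axis and $B$ in height: $0<y(N(B))<y(B)$. Defining, for $A\in\Gamma_\ell$ with $y(A)>0$, the point $N(A)\in\Gamma_r$ at which the ray from $O_r$ through $A$ enters $\cQ(b)$, one has symmetrically $0<y(N(A))<y(A)$, and $N(\cdot)$ is strictly monotone in the height of its argument. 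Finally, the ``one-bounce map'' $X\mapsto X'$ on $\Gamma_\ell$ sending the entry point to the exit point of a billiard trajectory $X\to B\to X'$ (through the fixed reflection point $B$) is an orientation-reversing involution of the relevant sub-arc, with $N(B)$ its unique fixed point: it is the reflection of $\Gamma_\ell$ across the mirror line $O_\ell B$, followed by central projection from $B$ back onto $\Gamma_\ell$, and $N(B)$ is the one point of $\Gamma_\ell$ on $O_\ell B$. The symmetric statement holds for bounces at points of $\Gamma_\ell$.

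Now I would locate the four points. An orientation-reversing involution fixing $N(B_1)$ carries points below $N(B_1)$ to points above it and vice versa; since the one-bounce map at $B_1$ swaps $A_1$ and $A_2$, and $A_1$ is lower than $A_2$, this forces $A_1$ strictly below $N(B_1)$ and $A_2$ strictly above $N(B_1)$. Similarly, the one-bounce map at $A_2$ swaps $B_1$ and $B_2$, forcing $B_1$ strictly below $N(A_2)$ and $B_2$ strictly above $N(A_2)$. Combined with $y(N(\cdot))<y(\cdot)$, this gives the chain
\[
y(A_1)\;<\;y(N(B_1))\;<\;y(B_1)\;<\;y(N(A_2))\;<\;y(A_2).
\]
Next, let $Q\in\Gamma_\ell$ be the point at which the ray from $O_r$ through $B_1$ meets the unit circle about $O_r$ (so $B_1$ lies between $O_r$ and $Q$, and $Q$ is in the upper half of $\Gamma_\ell$ as $B_1$ ranges over the upper half of $\Gamma_r$). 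Since this is the ray from $O_r$ through $Q$, and it enters $\cQ(b)$ at $B_1$, we have $N(Q)=B_1$. On the other hand, let $\rho(B_1)\in\Gamma_\ell$ be the image of the axial point $(b-1,0)$ of $\Gamma_\ell$ under the one-bounce map at $B_1$. Since $A_1$ lies strictly above $(b-1,0)$ (it is in the upper half) and strictly below $N(B_1)$, its image $A_2$ lies strictly below $\rho(B_1)$.

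To conclude I would prove that $\rho(B_1)$ lies at or below $Q$ on $\Gamma_\ell$ for every $B_1$ in the upper half of $\Gamma_r$ and every $1.5\le b\le 1+2^{-1/2}$ --- that is, the one-bounce involution at $B_1$ does not spread the lower sub-arc (from $(b-1,0)$ up to $N(B_1)$) beyond $Q$. Granting this, monotonicity of $N(\cdot)$ yields $y(N(A_2))<y(N(\rho(B_1)))\le y(N(Q))=y(B_1)$, which contradicts $y(B_1)<y(N(A_2))$ from the chain above; hence no going-up orbit exists. The step I expect to be the main obstacle is exactly this bound $\rho(B_1)\le Q$: it is a quantitative estimate on how far the one-bounce involution can spread points, it is where the hypothesis $1.5\le b\le 1+2^{-1/2}$ is consumed, and since the closed-form expressions for $\rho(B_1)$ and $Q$ are cumbersome, it is likely cleanest --- as elsewhere in the paper --- to compare the relevant chords and circle intersections directly rather than to grind through the trigonometry.
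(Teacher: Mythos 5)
There is a genuine gap, and it sits exactly where the content of the lemma lives. Your reduction is fine in outline: the chain $y(A_1)<y(N(B_1))<y(B_1)<y(N(A_2))<y(A_2)$, the orientation-reversing involution argument, and the identity $N(Q)=B_1$ together reduce the lemma to the single quantitative claim $\rho(B_1)\le Q$, i.e.\ (by the reflection law at $B_1$, whose mirror is $O_\ell B_1$) to the angle inequality $\angle O_\ell B_1 B_\ell \le \angle O_\ell B_1 Q$. But this claim is precisely the step you announce you ``would prove'' and then defer as ``the main obstacle''; it is the only place where the hypothesis $1.5\le b\le 1+2^{-1/2}$ enters, so without it nothing about the parameter range has actually been used and the lemma is not proved. (Your auxiliary assertions --- monotonicity of $N(\cdot)$ in height and global orientation-reversal of the one-bounce map --- are also stated without proof, though these are comparatively routine.)

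For comparison, your unproven claim is equivalent to the inequality the paper derives and then refutes: the paper shows that a going-up segment forces $\angle O_\ell P_{k_1}^r Q^\ell < \angle B_\ell P_{k_1}^r O_\ell$ (its Eq.~\eqref{eq.angle}, obtained from the two observations that $P_{j_2}^\ell$ lies above $Q^\ell$ and $P_{j_1}^\ell$ below it), and then closes the argument by direct computation: writing both cosines with the vectors $\vv{O_rP_{k_1}^r}$, $\vv{P_{k_1}^rO_\ell}$, $\vv{P_{k_1}^rB_\ell}$, the inequality simplifies to \eqref{eq: ineq Pr3}, whose validity requires $\cos\varphi_{k_1}^r>\tfrac{2b-1}{2b(b-1)}$; since $\tfrac{2b-1}{2b(b-1)}\ge 1$ exactly when $2b^2-4b+1\le 0$, i.e.\ for all $1.5\le b\le 1+2^{-1/2}$, this is impossible. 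So the ``cumbersome trigonometry'' you hoped to avoid is in fact a short and clean computation, and it is indispensable: to complete your proposal you would have to carry out essentially this same calculation (or an equivalent chord/circle comparison) to establish $\rho(B_1)\le Q$, at which point your argument collapses onto the paper's, with the extra machinery of $N(\cdot)$ and the one-bounce involution serving only as a longer route to the paper's observations (i)--(ii).
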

The number $1+ 2^{-1/2}$ will appear several times later in this paper. See the discussion right after \eqref{bsquare} and the discussion right before Lemma \ref{lem.E0.eph}.
\begin{proof}
Suppose on the contrary that $P_{j_1}^\ell P_{k_1}^r P_{j_2}^\ell P_{k_2}^r$ 
is a ``going-up" orbit that is contained in the upper half of the lemon table $\cQ(b)$.
Let  $B_{\ell}=(b-1, 0)$ and $Q^{\ell}$ be the extension of the segment $O_{r}P_{k_1}^{r}$ to the  arc $\Gamma_{\ell}$.
To admit such an orbit, we must have 
\begin{enumerate}[label = (\roman*)]
\item $P_{j_2}^{\ell}$ is above $Q^{\ell}$: otherwise $P_{k_2}^r$ would be below or at $P_{k_1}^r$ after a reflection at $P_{j_2}^{\ell}$, a contradiction;

\item $P_{j_1}^{\ell}$ is below $Q^{\ell}$, since it is the reflection of the segment $P_{j_2}^{\ell}P_{k_1}^{r}$ with respect to the line $O_{\ell}P_{k_1}^r$.
\end{enumerate}
It follows that
\begin{align}
\angle O_\ell P_{k_1}^r  Q^\ell < \angle O_\ell P_{k_1}^r  P_{j_2}^{\ell}
=\angle O_\ell P_{k_1}^r  P_{j_1}^{\ell}  < \angle B_\ell P_{k_1}^r  O_\ell.
\label{eq.angle}
\end{align}
For convenience, we introduce the following three vectors:
\begin{align*}
&v_1=\vv{O_rP_{j_1}^r}=(\cos\varphi_{j_1}^r-b, \sin\varphi_{j_1}^r); \\
&v_2=\vv{P_{j_1}^r O_\ell}=(-\cos\varphi_{j_1}^r, -\sin\varphi_{j_1}^r); \\
&v_3=\vv{P_{j_1}^r B_\ell}=(-\cos\varphi_{j_1}^r+b-1, -\sin\varphi_{j_1}^r).
\end{align*}
Then it follows from \eqref{eq.angle} that 
\begin{align*}
\cos\angle O_\ell P_{k_1}^r  Q^\ell= \frac{v_1\cdot v_2}{|v_1|\cdot |v_2|} > \cos\angle  B_\ell P_{k_1}^r  O_\ell  =\frac{v_2\cdot v_3}{|v_2|\cdot |v_3|}.
\end{align*}
Simplifying the above expression, we obtain
\begin{align*}
&\frac{-1+b\cos\varphi_{j_1}^r}{(1+b^2-2b\cos\varphi_{j_1}^r)^{1/2}} 
> \frac{1-(b-1)\cos\varphi_{j_1}^r}{(1+(b-1)^2-2(b-1)\cos\varphi_{j_1}^r)^{1/2}},
\end{align*}
which is equivalent to
\begin{align}
\label{eq: ineq Pr3}(1-\cos^2\varphi_{j_1}^r)(1+2b^2\cos\varphi_{j_1}^r-2b(1+\cos\varphi_{j_1}^r))> 0.
\end{align}
Since $\cos^2\varphi_{j_1}^r \le 1$ and $b>1.5$, a necessary condition for  (\ref{eq: ineq Pr3}) is
\begin{align*}
\cos\varphi_{j_1}^r > \frac{2b-1}{2b(b-1)}.
\end{align*}
However, $\frac{2b-1}{2b(b-1)} \ge 1$ for any $1.5\le b \le 1+2^{-1/2}$, a contradiction.
This completes the proof.
\end{proof}

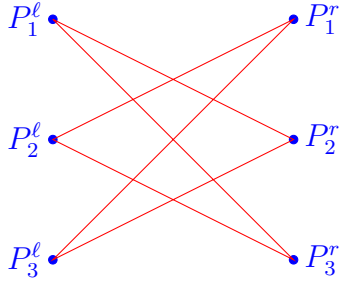
\begin{figure}[htbp]
\begin{tikzpicture}[scale=0.8]
\coordinate (l1) at  (-2,2);
\coordinate (l2) at  (-2,0);
\coordinate (l3) at  (-2,-2);
\coordinate (r1) at  (2,2);
\coordinate (r2) at  (2,0);
\coordinate (r3) at  (2,-2);
\filldraw[blue] (l1) circle (2pt) node [left] {$P_1^\ell$}; 
\filldraw[blue] (l2)  circle (2pt) node [left] {$P_2^\ell$}; 
\filldraw[blue] (l3)  circle (2pt) node [left] {$P_3^\ell$}; 
\filldraw[blue] (r1)  circle (2pt) node [right] {$P_1^r$}; 
\filldraw[blue] (r2)  circle (2pt) node [right] {$P_2^r$}; 
\filldraw[blue] (r3)  circle (2pt) node [right] {$P_3^r$}; 
\draw[red] (l1)--(r3)--(l2)--(r1)--(l3)--(r2)--(l1);
\end{tikzpicture}
\caption{A type (II, II) configuration.}\label{figure: 123,123}
\end{figure}

\begin{lemma}\label{lem.II.II}
It is impossible to have a periodic orbit of type (II, II)  on the lemon table.
\end{lemma}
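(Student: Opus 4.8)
The plan is to show that a hypothetical type (II,II) orbit must be an extremely rigid configuration, and then to rule that configuration out.

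\emph{Reducing to the non-degenerate case.} If $\cO_6$ has a triple collision then, by the foregoing lemma on triple collisions, it is the period-$2$ orbit $\cO_2(b)$ traversed three times, which is not of type (II,II). If $\cO_6$ has a double (but not triple) collision, then by the foregoing lemma on double collisions it has a double point and a perpendicular reflection point on each arc; but tracing the type (II,II) incidences of Fig.~\ref{figure: 123,123} through such a collision, the alleged perpendicular simple point on one of the arcs turns out to be a vertex that the orbit enters from, and leaves toward, two \emph{distinct} vertices, so the reflection there cannot reverse the trajectory. Hence no double collision is compatible with type (II,II), and $\cO_6$ may be assumed non-degenerate. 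After applying the symmetries of \S\ref{sec.comb.type}, I take $\cO_6$ to be the oriented hexagon $P_1^\ell\to P_3^r\to P_2^\ell\to P_1^r\to P_3^\ell\to P_2^r\to P_1^\ell$ of Fig.~\ref{figure: 123,123}; by Lemma~\ref{lem.varphi.pm}(i) the vertices $P_1^\ell,P_1^r$ lie strictly above the axis $O_\ell O_r$ and $P_3^\ell,P_3^r$ strictly below it.

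\emph{Forcing two vertices onto the axis.} Since no trajectory contains a subsegment of type $(+,-,+)$ or $(-,+,-)$, the consecutive triples $P_1^\ell P_3^r P_2^\ell$ and $P_2^r P_1^\ell P_3^r$ show that neither $P_2^\ell$ nor $P_2^r$ can lie strictly above the axis, while $P_2^\ell P_1^r P_3^\ell$ and $P_1^r P_3^\ell P_2^r$ show that neither can lie strictly below it. Hence $P_2^\ell$ and $P_2^r$ lie on the segment $O_\ell O_r$; that is, $P_2^\ell=B_\ell=(b-1,0)$ and $P_2^r=(1,0)$, the innermost points of $\Gamma_\ell$ and $\Gamma_r$, where the inward boundary normal is horizontal. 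The reflection at each of these two vertices therefore just reverses the horizontal component of the velocity, and together with the reflection symmetry of $\cQ(b)$ in $O_\ell O_r$, the reflection at $B_\ell$ forces $P_3^r$ to be the mirror image of $P_1^r$ and the reflection at $(1,0)$ forces $P_3^\ell$ to be the mirror image of $P_1^\ell$, across $O_\ell O_r$. Thus $\cO_6$ is invariant under reflection in $O_\ell O_r$ followed by time reversal, and for each fixed $b$ it is completely determined by the position angle $\alpha$ of $P_1^\ell$.

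\emph{Excluding the rigid configuration.} Writing $P_1^\ell=(b-\cos\alpha,\sin\alpha)$ and applying the reflection law at $P_1^\ell$ to the incoming segment $\vv{P_2^r P_1^\ell}$, one computes that the outgoing segment has vertical component proportional to $\sin\alpha\,(2(b-1)\cos\alpha-1)$. Hence the trajectory leaving $P_1^\ell$ is horizontal precisely when $\cos\alpha=\tfrac1{2(b-1)}$ — which reproduces the elliptic orbit $\cO_6^e(b)$, of type (I,I) rather than (II,II) — and it descends across the axis, as is needed for $P_3^r$ to lie strictly below it, only when $\cos\alpha<\tfrac1{2(b-1)}$; since $\tfrac b2<\tfrac1{2(b-1)}$ for $b<\tfrac{1+\sqrt5}{2}$ (cf.~\eqref{eq.ell.dom}), such $\alpha$ exist, but a genuine type (II,II) configuration in fact forces $\alpha$ into an interval $(\alpha_*(b),\phi_A(b))$ with $\alpha_*(b)>\arccos\tfrac1{2(b-1)}$. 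It then remains to prove that, for $b$ close to $1.5$ and $\alpha$ in this interval, the trajectory leaving $(1,0)$ and bouncing at $P_1^\ell$ and at $P_3^r$ never returns to $\Gamma_\ell$ at $B_\ell$; equivalently, that the resulting closing equation in $\alpha$ has no admissible root other than the degenerate value giving $\cO_6^e(b)$. I would establish this either by showing that the $\Gamma_\ell$-return point of the twice-reflected trajectory is a strictly monotone function of $\alpha$ on $\bigl(\arccos\tfrac1{2(b-1)},\phi_A(b)\bigr)$ — so that it meets $B_\ell$ only at the endpoint value — or by eliminating the position angle of $P_3^r$ from the two reflection equations to reduce the closing condition to an explicit polynomial relation between $\cos\alpha$ and $b$ whose only solution in the admissible range is the excluded one. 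This last verification, which (as elsewhere in the paper) is done by appealing to the geometric meaning of $\alpha_*(b)$ and of the curve it cuts out rather than to the unwieldy algebraic formulas, is the main obstacle; everything before it is purely combinatorial or a one-line normal-form computation.
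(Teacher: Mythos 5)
There is a genuine gap. Your combinatorial first half is, in substance, the paper's proof: by Lemma~\ref{lem.varphi.pm} the four outer points satisfy $\varphi_1^r>0>\varphi_3^r$ and $\hat\varphi_1^\ell>0>\hat\varphi_3^\ell$, and the forbidden types $(+,-,+)$, $(-,+,-)$ applied to the consecutive triples constrain $P_2^\ell$ and $P_2^r$. But you only extract the non-strict conclusion ``neither strictly above nor strictly below'', and so you land on the configuration $P_2^\ell=(b-1,0)$, $P_2^r=(1,0)$ instead of a contradiction. Used the way the paper uses it (and the way it must be read, since the exceptional case in the $(+,-,+)/(-,+,-)$ principle is ``all three points on $O_\ell O_r$''), the principle gives strict conclusions: from the triple $P_2^\ell P_1^r P_3^\ell$, with $P_1^r$ strictly above and $P_3^\ell$ strictly below, one gets $\hat\varphi_2^\ell>0$ outright, and then the triple $P_1^\ell P_3^r P_2^\ell$ forces $\varphi_3^r\ge 0$, contradicting $\varphi_3^r<0$. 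In particular your ``rigid'' on-axis configuration is already excluded by the same triples you invoke (with $P_2^\ell$ on the axis and $P_1^\ell$ strictly above, $P_1^\ell P_3^r P_2^\ell$ is of the forbidden type and not all three points lie on $O_\ell O_r$), so the entire second and third parts of your argument are a detour. (Incidentally, the pairing of triples to conclusions in your text is scrambled: $P_2^r P_1^\ell P_3^r$ rules out $P_2^r$ strictly \emph{below}, and $P_1^r P_3^\ell P_2^r$ rules out $P_2^r$ strictly \emph{above}, not as you state; the union of the four constraints is what you claim, but the attribution matters if one wants the strict versions.)

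The decisive defect, however, is that your proof is not finished: the exclusion of the one-parameter ``rigid configuration'', which you yourself identify as the main obstacle, is only sketched (``I would establish this either by \dots or by \dots'') and neither monotonicity of the return point nor the polynomial closing relation is actually established. As written, the proposal therefore does not prove the lemma; it reduces it to an unproven claim, and that reduction is unnecessary because the strict form of the forbidden-triple argument closes the proof in two lines, exactly as the paper does.
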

\begin{proof}
Suppose on the contrary we have a periodic orbit of type (II, II), see Fig.~\ref{figure: 123,123}
for an illustration. 
It follows from Lemma \ref{lem.varphi.pm} that $\varphi_1^r>0 > \varphi_3^r$
and  $\hat{\varphi}_1^\ell > 0 > \hat{\varphi}_3^\ell$.
Then $\hat{\varphi}_2^\ell>0$ since $P_2^\ell P_1^r P_3^\ell$ cannot be of type $(-, +, -)$. 
It implies that $\varphi_3^r>0$ since  $P_1^\ell P_3^r P_2^\ell$ cannot be  $(+, -, +)$, a contradiction. This completes the proof.
\end{proof}

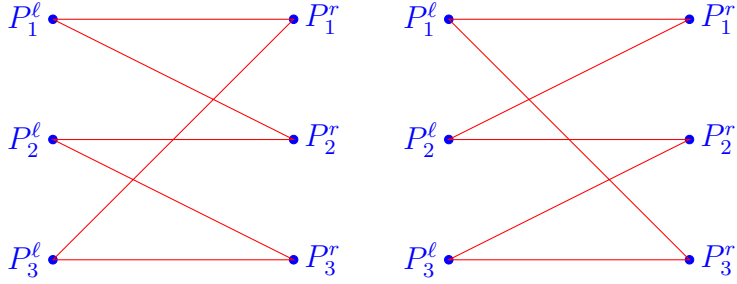
\begin{figure}[htbp]
\begin{tikzpicture}[scale=0.8]
\coordinate (l1) at  (-2,2);
\coordinate (l2) at  (-2,0);
\coordinate (l3) at  (-2,-2);
\coordinate (r1) at  (2,2);
\coordinate (r2) at  (2,0);
\coordinate (r3) at  (2,-2);
\filldraw[blue] (l1) circle (2pt) node [left] {$P_1^\ell$}; 
\filldraw[blue] (l2)  circle (2pt) node [left] {$P_2^\ell$}; 
\filldraw[blue] (l3)  circle (2pt) node [left] {$P_3^\ell$}; 
\filldraw[blue] (r1)  circle (2pt) node [right] {$P_1^r$}; 
\filldraw[blue] (r2)  circle (2pt) node [right] {$P_2^r$}; 
\filldraw[blue] (r3)  circle (2pt) node [right] {$P_3^r$}; 
\draw[red] (l1)--(r1)--(l3)--(r3)--(l2)--(r2)--(l1);
\end{tikzpicture}
\quad
\begin{tikzpicture}[scale=0.8]
\coordinate (l1) at  (-2,2);
\coordinate (l2) at  (-2,0);
\coordinate (l3) at  (-2,-2);
\coordinate (r1) at  (2,2);
\coordinate (r2) at  (2,0);
\coordinate (r3) at  (2,-2);
\filldraw[blue] (l1) circle (2pt) node [left] {$P_1^\ell$}; 
\filldraw[blue] (l2)  circle (2pt) node [left] {$P_2^\ell$}; 
\filldraw[blue] (l3)  circle (2pt) node [left] {$P_3^\ell$}; 
\filldraw[blue] (r1)  circle (2pt) node [right] {$P_1^r$}; 
\filldraw[blue] (r2)  circle (2pt) node [right] {$P_2^r$}; 
\filldraw[blue] (r3)  circle (2pt) node [right] {$P_3^r$}; 
\draw[red] (l1)--(r1)--(l2)--(r2)--(l3)--(r3)--(l1);
\end{tikzpicture}
\caption{Type (O, II) configurations: $(id, (123))$ (left) and $(id, (132))$ (right)}\label{fig.O.II}
\end{figure}

Using a similar argument, we can exclude the Type (O, II) configurations. See Fig.~\ref{fig.O.II} for an illustration. 

\begin{lemma}\label{lem.O.II}
It is impossible to have a periodic orbit of type (O, II)  on the lemon table.
\end{lemma}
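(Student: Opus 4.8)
\textbf{Proof plan for Lemma \ref{lem.O.II}.}
The plan is to run the same combinatorial argument used for Lemma \ref{lem.II.II}, exploiting the fact that a type (O, II) configuration forces an edge between two neighboring vertices on the same side with the wrong vertical ordering, which will eventually contradict either Lemma \ref{lem.varphi.pm} or Lemma \ref{lem.no.go.up}. Concretely, a type (O, II) orbit has either $(\sigma_{r\ell}, \tau_{\ell r}) = (id, (123))$ or $(id, (132))$ (up to the symmetries, the other two representatives $((123), id)$ and $((132), id)$ are obtained by reflecting about $x = \tfrac b2$). Since $\sigma_{r\ell} = id$, the orbit contains the three horizontal-index-preserving edges $P_j^r P_j^{\ell}$ for $j = 1, 2, 3$; by Lemma \ref{lem.varphi.pm} (applied to the non-degenerate case) we may assume $\varphi_1^r > 0 > \varphi_3^r$ and $\hat\varphi_1^{\ell} > 0 > \hat\varphi_3^{\ell}$, and the type-$(+,-,+)$/type-$(-,+,-)$ exclusion then forces $\hat\varphi_2^{\ell} = \varphi_2^r$ and in fact pins down the sign of $\varphi_2^r$.

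First I would reduce to the left picture, $(id, (123))$: here $\tau_{\ell r} = (123)$, so the orbit reads $P_1^{\ell} \to P_1^r \to P_3^{\ell} \to P_3^r \to P_2^{\ell} \to P_2^r \to P_1^{\ell}$. Tracking the four consecutive reflections $P_1^{\ell} P_1^r P_3^{\ell} P_3^r$ and $P_3^r P_2^{\ell} P_2^r P_1^{\ell}$, one of these two length-four segments must lie entirely in the upper half (or entirely in the lower half) of the table and must be ``going-up'' in the sense of the definition preceding Lemma \ref{lem.no.go.up}: the index pattern $1 \to 3 \to$ on one side paired with the pattern on the other side forces a monotone climb on both arcs within one half of the table, once we feed in the sign constraints on $\varphi_j^r$ and $\hat\varphi_j^{\ell}$ coming from Lemma \ref{lem.varphi.pm}. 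This contradicts Lemma \ref{lem.no.go.up}, since $1.5 \le b \le 1 + 2^{-1/2}$ is within our range (indeed $1.5 < 1+\sqrt 5/2$ and we will only ever need $b$ near $1.5$). The right picture $(id, (132))$ is handled identically, or simply by applying the orientation-reversal symmetry, which sends $(id,(123))$ to $(id,(132))$ as in the symmetry list \eqref{eq: 4 config}.

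The main obstacle I anticipate is bookkeeping the vertical ordering of the six vertices carefully enough to be sure that some length-four subsegment of the orbit is genuinely ``going-up'' and stays within a single half of the table — this is where the earlier lemmas (Lemma \ref{lem.varphi.pm} to fix the signs of $\varphi_1^r, \varphi_3^r, \hat\varphi_1^{\ell}, \hat\varphi_3^{\ell}$, and the $(+,-,+)$ exclusion to trap $P_2^{\ell}, P_2^r$) must be invoked in exactly the right order. Once the vertical positions are sorted, the contradiction with Lemma \ref{lem.no.go.up} is immediate, and the degenerate (collision) subcases are already excluded by the preceding lemmas in Section \ref{sec.comb.type}, so no separate treatment is needed here.
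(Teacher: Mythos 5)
There is a genuine gap: your plan rests on the claim that, after sorting signs with Lemma \ref{lem.varphi.pm}, some length-four subsegment of the $(id,(123))$ orbit is automatically ``going-up'', so that Lemma \ref{lem.no.go.up} finishes the proof. That is only true in two of the three subcases. The going-up lemma does exclude $\hat\varphi_2^\ell\ge 0$ (which forces $\varphi_2^r\ge 0$ and makes $P_2^\ell P_2^r P_1^\ell P_1^r$ going-up) and, symmetrically, $\varphi_2^r\le 0$ (which makes $P_3^\ell P_3^r P_2^\ell P_2^r$ going-up). But the remaining configuration $\hat\varphi_2^\ell<0<\varphi_2^r$ (the one drawn in Fig.~\ref{fig.id.123}: $P_2^\ell$ in the lower half, $P_2^r$ in the upper half) is perfectly consistent with Lemma \ref{lem.varphi.pm} and with the $(+,-,+)$/$(-,+,-)$ exclusion, and in it no four consecutive reflection points lie in a single half of the table, so no subsegment is going-up and Lemma \ref{lem.no.go.up} gives nothing. (Relatedly, your assertion that the type exclusions ``force $\hat\varphi_2^\ell=\varphi_2^r$'' is not correct; they only constrain signs, not equality.)

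To close this last case the paper uses a different, and necessary, idea: monotonicity of the distance from $O_\ell$ to successive chords of the orbit, exploiting that a reflection at a point of the right arc $\Gamma_r$ (which is centered at $O_\ell$) preserves the distance from $O_\ell$ to the trajectory line. Concretely, reflection at $P_1^r$ gives $d(O_\ell,P_3^\ell P_1^r)=d(O_\ell,P_1^\ell P_1^r)$ and reflection at $P_2^r$ gives $d(O_\ell,P_2^\ell P_2^r)=d(O_\ell,P_1^\ell P_2^r)$, while the vertical ordering ($P_2^r$ below $P_1^r$, $P_3^\ell$ below $P_2^\ell$) forces $d(O_\ell,P_1^\ell P_2^r)>d(O_\ell,P_1^\ell P_1^r)$ and $d(O_\ell,P_3^\ell P_1^r)>d(O_\ell,P_2^\ell P_2^r)$; these four relations are incompatible. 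Without some argument of this kind (or an equivalent replacement), your proof does not rule out the Fig.~\ref{fig.id.123} configuration, so the proposal as written is incomplete. The reduction of $(id,(132))$ to $(id,(123))$ by symmetry and the dismissal of collision cases are fine.
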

\begin{proof}
We only need to consider the configuration of the type $(id, (123))$, see the left figure in Fig.~\ref{fig.O.II} for an illustration.
It follows from Lemma \ref{lem.varphi.pm} that $\varphi_1^r>0 > \varphi_3^r$
and  $\hat{\varphi}_1^\ell > 0 > \hat{\varphi}_3^\ell$.
If $\hat{\varphi}_2^\ell\geq 0$, then $\varphi_2^r\geq 0$ (otherwise $P_{1}^{\ell}P_{2}^{r}P_{1}^{\ell}$ would be of type $(+,-,+)$). Then $P_2^\ell P_2^r P_1^\ell P_1^r$ is a going-up orbit, contradicting Lemma \ref{lem.no.go.up}. It follows that $\hat{\varphi}_2^\ell<0$. Similarly, we have $\phi_2^{r} >0$ (otherwise $P_3^\ell P_3^r P_2^\ell P_2^r$ would be a going-up orbit, contradicting Lemma \ref{lem.no.go.up}). An illustration of such an orbit on the billiard table is given in Fig.~\ref{fig.id.123}.
Note that 
\begin{enumerate}[label = {(\alph*)}]
\item the distance $d(O_{\ell}, P_1^{\ell}P_2^r) > d(O_{\ell}, P_1^{\ell}P_1^r)$ since $P_2^r$ is below $P_1^r$;

\item the distance $d(O_{\ell}, P_3^{\ell}P_1^r) > d(O_{\ell}, P_2^{\ell}P_2^r)$ since $P_2^r$ is below $P_1^r$and  $P_3^\ell$ is below $P_2^\ell$;

\item the distances $d(O_{\ell}, P_3^{\ell}P_1^r) =d(O_{\ell}, P_1^{\ell}P_1^r)$ and 
$d(O_{\ell}, P_2^{\ell}P_2^r)= d(O_{\ell}, P_1^{\ell}P_2^r)$, since they are reflections at $P_1^r$ and $P_2^r$, respectively.
\end{enumerate}
Collecting terms, we have a contradiction between (a), (b) and (c). This completes the proof.
\end{proof}

\begin{figure}[htbp]
\begin{tikzpicture}
\draw (1.6,0) circle (2);
\draw (-1.6,0) circle (2);
\coordinate (Ol) at  (-1.6,0);
\coordinate (Or) at  (1.6,0);
\coordinate (Pr1) at  ({-1.6+2*cos(15)},{2*sin(15)});
\coordinate (Pr2) at  ({-1.6+2*cos(5)},{2*sin(5)});
\coordinate (Pr3) at  ({-1.6+2*cos(18)},{-2*sin(18)});
\coordinate (Pl1) at  ({1.6-2*cos(18)},{2*sin(18)});
\coordinate (Pl2) at  ({1.6-2*cos(5)},{-2*sin(5)});
\coordinate (Pl3) at  ({1.6-2*cos(15)},{-2*sin(15)});
\draw[red] (Pr1) -- (Pl1) -- (Pr2) -- (Pl2) -- (Pr3) -- (Pl3) -- cycle; 
\draw[dashed]  (Ol) -- (Or);
\draw[dashed, gray] (Pr1) -- (Ol) -- (Pr2) (Ol) -- (Pr3) (Pl1) -- (Or) -- (Pl2) (Or) -- (Pl3);
\fill (Ol) circle[radius=0.05] node [below] {$O_\ell$};
\fill (Or) circle[radius=0.05] node [below] {$O_r$};
\fill (Pl1) circle[radius=0.05] node [left] {$P_1^\ell$};
\fill (Pl2) circle[radius=0.05] node [left] {$P_2^\ell$};
\fill (Pl3) circle[radius=0.05] node [left] {$P_3^\ell$};
\fill (Pr1) circle[radius=0.05] node [right] {$P_1^r$};
\fill (Pr2) circle[radius=0.05] node [right] {$P_2^r$};
\fill (Pr3) circle[radius=0.05] node [right] {$P_3^r$};
\end{tikzpicture}
\caption{A possible configuration of type $(id, (123))$ on the lemon table}\label{fig.id.123}
\end{figure}
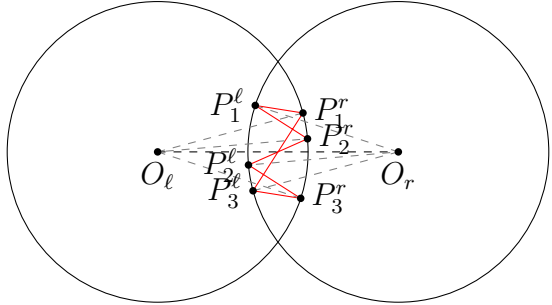

Now consider the type (I, III) configurations, see Fig.~\ref{fig.I.III} for an illustration. 

\begin{lemma}\label{lem.I.III}
It is impossible to have a periodic orbit of type (I, III) on the lemon table.
\end{lemma}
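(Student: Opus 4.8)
The plan is to assume a periodic orbit $\cO_6$ of type $(\mathrm{I},\mathrm{III})$ exists and derive a contradiction, following the same template as the proof of Lemma~\ref{lem.O.II}: normalize the combinatorics, locate the six reflection points relative to the axis $O_\ell O_r$, and then run a ``center‑distance chase'' around the orbit. First I would invoke the three symmetries of §\ref{sec.comb.type} to reduce to the representative $(\sigma_{r\ell},\tau_{\ell r})=((23),(13))$, so that $\cO_6$ is the closed hexagon
\[
P_1^\ell \to P_3^r \to P_2^\ell \to P_2^r \to P_3^\ell \to P_1^r \to P_1^\ell .
\]
The degenerate cases need not be treated separately: by the collision lemmas of §\ref{sec.comb.type} any orbit with a double (or triple) collision is either $\cO_2(b)$ repeated three times or of the $\cO_6^h(b)$‑type, and neither of these has combinatorial type $(\mathrm{I},\mathrm{III})$, so a type $(\mathrm{I},\mathrm{III})$ orbit is automatically non‑degenerate. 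Since $\cO_6$ is not $\cO_2(b)$ thrice, Lemma~\ref{lem.varphi.pm}(i) gives $\varphi_1^r>0>\varphi_3^r$ and $\hat\varphi_1^\ell>0>\hat\varphi_3^\ell$. Applying the ``no $(+,-,+)$/$(-,+,-)$'' restriction (the paragraph preceding Lemma~\ref{lem.varphi.pm}) to the trajectory triples $P_1^\ell P_3^r P_2^\ell$ and $P_2^r P_3^\ell P_1^r$ — whose middle vertices $P_3^r,P_3^\ell$ lie strictly below the axis while $P_1^\ell,P_1^r$ lie strictly above it — forces $\hat\varphi_2^\ell\le 0$ and $\varphi_2^r\le 0$. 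Hence on each arc exactly one vertex ($P_1^\ell$ on $\Gamma_\ell$, $P_1^r$ on $\Gamma_r$) lies strictly in the upper half and the other two lie in the closed lower half, with $P_2^\ell$ weakly above $P_3^\ell$ and $P_2^r$ weakly above $P_3^r$.

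The main step is then a distance chase. Write $\ell_i$ for the line carrying the $i$‑th segment and set $a_i=d(O_\ell,\ell_i)$, $c_i=d(O_r,\ell_i)$. Because of the label mismatch noted in §\ref{sec.topo} — $\Gamma_r$ is an arc of the unit circle centered at $O_\ell$ and $\Gamma_\ell$ an arc of the unit circle centered at $O_r$ — a reflection at a point of $\Gamma_r$ preserves the distance to $O_\ell$ and a reflection at a point of $\Gamma_\ell$ preserves the distance to $O_r$; reading off the reflection vertices of $\cO_6$ gives
\[
a_1=a_2,\qquad a_3=a_4,\qquad a_5=a_6,\qquad c_2=c_3,\qquad c_4=c_5,\qquad c_6=c_1 .
\]
Against these I would play off the strict inequalities obtained by sliding the free endpoint of a segment toward or away from the axis: since for $b$ close to $1.5$ the orbit lies in a thin neighborhood of the waist $AB$ of $\cQ(b)$, moving a vertex downward (for a vertex below the axis) or upward (for one above it) changes $d(O_\ell,\cdot)$, resp. $d(O_r,\cdot)$, monotonically, with a sign fixed by which side of the line the relevant center lies on — exactly the kind of fact used in items (a), (b), (c) of the proof of Lemma~\ref{lem.O.II} together with the orderings $P_2^\ell$ above $P_3^\ell$, $P_2^r$ above $P_3^r$. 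Chaining the equalities above with these inequalities around the $6$‑cycle then forces one of the $a_i$'s (or $c_i$'s) to be simultaneously strictly smaller and strictly larger than another, a contradiction. Lemma~\ref{lem.no.go.up} may be invoked along the way to rule out stray subcases, but I expect the vertical configuration found above to be rigid enough that the distance chase alone suffices.

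The step I expect to be the main obstacle is precisely this distance chase. Of the six segments, four lie entirely on one side of the axis ($P_1^r P_1^\ell$ above; $P_3^r P_2^\ell$, $P_2^\ell P_2^r$, $P_2^r P_3^\ell$ below), so for those the sign of the relevant monotonicity is clear; but the two ``diagonal'' segments $P_1^\ell P_3^r$ and $P_3^\ell P_1^r$ cross the axis, and the behavior of $d(O_\ell,\cdot)$ and $d(O_r,\cdot)$ along them is not automatic — it has to be read off from the concrete geometry of the lemon rather than from a clean algebraic formula, exactly the sort of situation the introduction warns about. Fixing the signs of those two inequalities, and arranging that the resulting cycle of relations is genuinely inconsistent, is the delicate part of the argument.
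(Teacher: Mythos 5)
Your reduction to the representative $((23),(13))$ and the sign determination $\hat\varphi_2^\ell\le 0$, $\varphi_2^r\le 0$ (via Lemma \ref{lem.varphi.pm} and the no-$(+,-,+)$/$(-,+,-)$ restriction) are correct, but the heart of the proof is missing. Everything after that is a plan rather than an argument: the ``distance chase'' only yields the six equalities coming from the reflections ($a_1=a_2$, $c_2=c_3$, etc.), and to close the cycle you would need strict inequalities comparing, say, $d(O_\ell,\ell_2)$ with $d(O_\ell,\ell_3)$ along consecutive segments. You never derive these; indeed you concede that for the two segments crossing the axis ($P_1^\ell P_3^r$ and $P_3^\ell P_1^r$) the monotonicity signs are ``not automatic'' and are ``the delicate part.'' Without those signs no inconsistent cycle of relations is exhibited, so no contradiction is actually obtained. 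You also lean on an extra hypothesis (``for $b$ close to $1.5$ the orbit lies in a thin neighborhood of the waist'') that is not part of the statement and is not needed.

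For comparison, the paper finishes in two lines once the signs of the middle points are fixed: working with the representative $((12),(13))$ it shows $\hat\varphi_2^\ell>0$ and $\varphi_2^r>0$, and then observes that the orbit contains the two broken trajectories $P_2^r P_2^\ell P_1^r$ and $P_2^\ell P_2^r P_1^\ell$, each lying entirely in the upper half of the table; the first forces $\varphi_2^r<\hat\varphi_2^\ell$ and the second forces $\hat\varphi_2^\ell<\varphi_2^r$, an immediate contradiction valid for every admissible $b$. In your mirrored representative the same comparison applies to the triples through $P_2^\ell$ and $P_2^r$ in the lower half plane. So the missing idea is precisely this pair of opposite height comparisons at the two ``middle'' reflection points; replacing your open-ended distance chase by it both fills the gap and removes the unnecessary smallness assumption.
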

\begin{proof}
By symmetry we only need to consider the configuration ((12), (13)), see  the left-side picture in Fig.~\ref{fig.I.III}. 
It follows from Lemma \ref{lem.varphi.pm} that $\varphi_1^r>0 > \varphi_3^r$
and  $\hat{\varphi}_1^\ell > 0 > \hat{\varphi}_3^\ell$.
Then $\hat{\varphi}_2^\ell > 0$ (otherwise $P_2^\ell P_1^r P_3^\ell$  would be of the type $(-,+,-)$). Using the same argument we can show $\varphi_2^r> 0$.
The trajectory $P_2^r P_2^\ell P_1^r$ being on the upper half plane implies that $\varphi_2^r<\hat{\varphi}_2^\ell$. 
Similarly,  the trajectory $P_2^{\ell} P_2^r P_1^{\ell}$ being on the upper half plane implies that  $\hat{\varphi}_2^\ell<\varphi_2^r$, a contradiction. This completes the proof.
\end{proof}

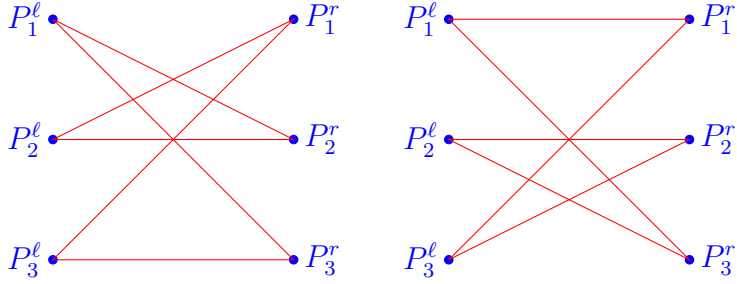
\begin{figure}[htbp]
\begin{tikzpicture}[scale=0.8]
\coordinate (l1) at  (-2,2);
\coordinate (l2) at  (-2,0);
\coordinate (l3) at  (-2,-2);
\coordinate (r1) at  (2,2);
\coordinate (r2) at  (2,0);
\coordinate (r3) at  (2,-2);
\filldraw[blue] (l1) circle (2pt) node [left] {$P_1^\ell$}; 
\filldraw[blue] (l2)  circle (2pt) node [left] {$P_2^\ell$}; 
\filldraw[blue] (l3)  circle (2pt) node [left] {$P_3^\ell$}; 
\filldraw[blue] (r1)  circle (2pt) node [right] {$P_1^r$}; 
\filldraw[blue] (r2)  circle (2pt) node [right] {$P_2^r$}; 
\filldraw[blue] (r3)  circle (2pt) node [right] {$P_3^r$}; 
\draw[red] (l1)--(r2)--(l2)--(r1)--(l3)--(r3)--(l1);
\end{tikzpicture}
\quad
\begin{tikzpicture}[scale=0.8]
\coordinate (l1) at  (-2,2);
\coordinate (l2) at  (-2,0);
\coordinate (l3) at  (-2,-2);
\coordinate (r1) at  (2,2);
\coordinate (r2) at  (2,0);
\coordinate (r3) at  (2,-2);
\filldraw[blue] (l1) circle (2pt) node [left] {$P_1^\ell$}; 
\filldraw[blue] (l2)  circle (2pt) node [left] {$P_2^\ell$}; 
\filldraw[blue] (l3)  circle (2pt) node [left] {$P_3^\ell$}; 
\filldraw[blue] (r1)  circle (2pt) node [right] {$P_1^r$}; 
\filldraw[blue] (r2)  circle (2pt) node [right] {$P_2^r$}; 
\filldraw[blue] (r3)  circle (2pt) node [right] {$P_3^r$}; 
\draw[red] (l3)--(r2)--(l2)--(r3)--(l1)--(r1)--(l3);
\end{tikzpicture}
\caption{Type (I,III):  $((12), (13))$ (left) and $((23), (13))$ (right)}\label{fig.I.III}
\end{figure}

Collecting results from Lemma \ref{lem.II.II}, \ref{lem.O.II}, \ref{lem.I.III}, we have excluded the possibilities of three of the four types defined in \eqref{eq: 4 config},
with  Type (I, I) being the only remaining type, see Fig.~\ref{fig.I.I}. 
Both periodic orbits described in Section \ref{sec.el.hy} are of this type, 
see Fig.~\ref{fig.ell.po} and \ref{fig.hyp.po} (with double collisions $P_{1}^{\ell}=P_{2}^{\ell}=P_1$ and $P_{2}^{r}=P_{3}^{r}=P_2$).

\begin{figure}[htbp]
\begin{tikzpicture}[scale=0.8]
\coordinate (l1) at  (-2,2);
\coordinate (l2) at  (-2,0);
\coordinate (l3) at  (-2,-2);
\coordinate (r1) at  (2,2);
\coordinate (r2) at  (2,0);
\coordinate (r3) at  (2,-2);
\filldraw[blue] (l1) circle (2pt) node [left] {$P_1^\ell$}; 
\filldraw[blue] (l2)  circle (2pt) node [left] {$P_2^\ell$}; 
\filldraw[blue] (l3)  circle (2pt) node [left] {$P_3^\ell$}; 
\filldraw[blue] (r1)  circle (2pt) node [right] {$P_1^r$}; 
\filldraw[blue] (r2)  circle (2pt) node [right] {$P_2^r$}; 
\filldraw[blue] (r3)  circle (2pt) node [right] {$P_3^r$}; 
\draw[red] (l1)--(r1)--(l2)--(r3)--(l3)--(r2)--(l1);
\end{tikzpicture}
\caption{An illustration of periodic orbit of type (I, I).}\label{fig.I.I}
\end{figure}
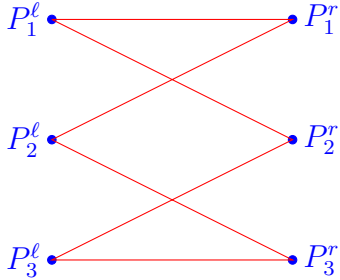

Now consider a periodic orbit of type (I, I). It follows from Lemma \ref{lem.varphi.pm} that 
$\varphi_1^r > 0 > \varphi_3^r $ and $\varphi_1^{\ell} > 0 > \varphi_3^{\ell}$.
Since the segment $P_1^{\ell}P_1^{r}$ is part of the orbit $\cO_6$ and the reflections at 
both points cannot go higher, both centers $O_{\ell}$ and $O_r$ are below (or on)
the line $P_1^{\ell}P_1^{r}$. Similarly, both centers $O_{\ell}$ and $O_r$ are above (or on)
the line $P_3^{\ell}P_3^{r}$. Let $d_{\ell, 1}$ and $d_{r,1}$ be the (signed) distance from $O_{\ell}$ and $O_{r}$ to the line $P_1^{\ell}P_1^{r}$, respectively. 
Similarly, we define the distances $d_{\ell, 3}$ and $d_{r, 3}$ from $O_{\ell}$ and $O_r$ to the line $P_3^{\ell}P_3^{r}$, respectively.  It follows that 
\begin{align}\label{eq.dlrj0}
d_{\ell, j}\ge 0, \quad d_{r,j} \ge 0, \quad j=1, 3.
\end{align}

\section{Trajectories with two parallel segments}\label{sec.prl}

In this section we will consider a special class of billiard trajectories as showing in Fig.~\ref{figure:parallel0}. This construction plays an important role
in Sections \ref{sec.gen.b.map} and \ref{sec.homoclinic}.
For convenience, we denote
\begin{align}
D=\Big\{(\alpha,\beta): 0\le \alpha, \beta \le \frac{\pi}{2}, \alpha+\beta < \frac{\pi}{2}\Big\}\backslash \{(0,0)\}.
\label{def.domD}
\end{align}
For each point $(\alpha,\beta)\in D$, we  construct a trajectory on a lemon table $\cQ(|O_{\ell}O_{r}|)$ with the left center $O_{\ell}$ at $(0,0)$ and the to-be-determined right center $O_r$.
Denote by $P_1^r$ the point on $\Gamma_r$ with coordinate angle $\alpha$ and by $P_3^r$  the point on $\Gamma_r$ with coordinate angle $-\beta$. Let $Q_1$ and $Q_3$ be the projections of $O_{\ell}$ to the horizontal lines through $P_1^r$ and $P_3^r$, respectively.
See Fig.~\ref{figure:parallel0}.

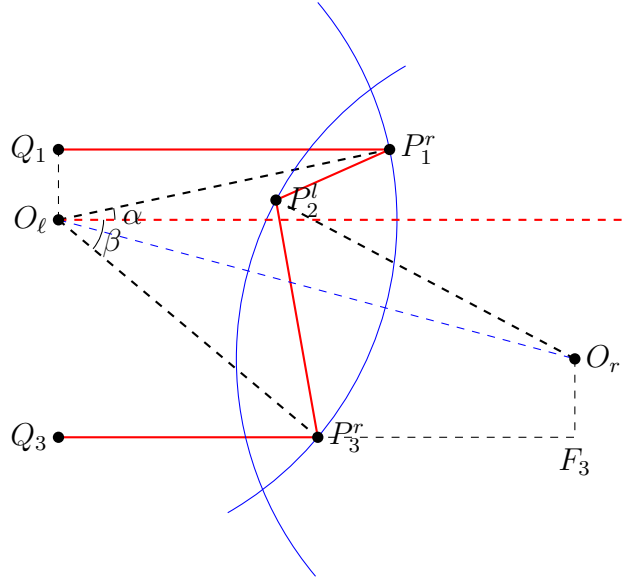
\begin{figure}[htbp]
\begin{tikzpicture}[scale=1.5]
\tikzmath{
\r=3;
\a=12;
\b=40;
\t=0.36752; 
}
\coordinate (Ol) at (-2,0);
\coordinate (P1r) at ({\r*cos(\a)-2},{\r*sin(\a)});
\coordinate (P3r) at ({\r*cos(\b)-2},{-\r*sin(\b)});
\coordinate (Q1) at (-2,{\r*sin(\a)});
\coordinate (Q3) at (-2,{-\r*sin(\b)});
\coordinate (P2l) at ({\r*cos(\a)-2-\r*\t*cos(2*\a)}, {\r*sin(\a)-\r*\t*sin(2*\a)}); 
\coordinate (Or) at ({\r*cos(\a)-2-\r*\t*cos(2*\a)+\r*cos(\a-\b)}, {\r*sin(\a)-\r*\t*sin(2*\a)+\r*sin(\a-\b)});
\coordinate (Fr) at ({\r*cos(\a)-2-\r*\t*cos(2*\a)+\r*cos(\a-\b)}, {-\r*sin(\b)});
\draw[red, thick] (Q1)--(P1r);
\draw[red, thick] (Q3)--(P3r);
\draw[thick, dashed] (Ol)--(P1r);
\draw[thick, dashed] (Ol)--(P3r);
\draw[thick, dashed] (Or)--(P2l);
\draw[red, thick] (P1r)--(P2l)--(P3r);
\draw[blue] (Ol)+({\r*cos(60)},{-\r*sin(60)})  arc (-60:40:\r);
\draw[blue] (Or)+({-\r*cos(60)},{\r*sin(60)})  arc (120:220:\r);
\draw[blue, dashed] (Ol)--(Or);
\draw[red, thick, dashed] (Ol)--(3,0);
\draw[dashed] (Ol)--(Q1); 
\draw[dashed] (Or)--(Fr) node[below] {$F_3$}--(P3r);
\draw (Ol)+(0.5,0) arc (0: \a:0.5) node[xshift=0.1 in, yshift=-0.05in]{$\alpha$};
\draw (Ol)+(0.4,0) arc (0: -\b:0.4) node[xshift=0.1 in, yshift=0.03in]{$\beta$};
\fill (Ol) circle[radius=0.05] node [left]{$O_\ell$};
\fill (P1r) circle[radius=0.05] node [right]{$P_1^{r}$};
\fill (P2l) circle[radius=0.05] node [right]{$P_2^{l}$};
\fill (P3r) circle[radius=0.05] node [right]{$P_3^{r}$};
\fill (Q1) circle[radius=0.05] node [left]{$Q_1$};
\fill (Q3) circle[radius=0.05] node [left]{$Q_3$};
\fill (Or) circle[radius=0.05] node [right]{$O_r$};
\end{tikzpicture}
\caption{The construction of an orbit segments with parallel incoming and outgoing trajectories.}\label{figure:parallel0}
\end{figure}

Suppose that $P_3^r Q_3$ is the post-collision trajectory of the incoming trajectory $Q_1 P_1$
for some lemon table $\cQ(|O_{\ell} O_r|)$ with the to-be-determined right center $O_r$. 
Then the intermediate reflection point $P_2^\ell$ is the intersection of 
the post-collision ray $\gamma_1(t)$ of $Q_1P_1^r$ and the pre-collision ray $\gamma_2(s)$ of $P_3^rQ_3$, where
\begin{align*}
\gamma_1(t)=(\cos\alpha-t\cos 2\alpha, \sin\alpha-t\sin 2\alpha)\text{ and }\gamma_2(s)=(\cos\beta-s\cos 2\beta, -\sin\beta+s\sin 2\beta),
\end{align*}
respectively. The intersection of the two rays $\gamma_1(t)$ and $\gamma_2(s)$ is 
characterized by
\begin{align*}
\begin{bmatrix}
-\cos 2\alpha&\cos 2\beta\\
-\sin 2\alpha&-\sin 2\beta
\end{bmatrix}\begin{bmatrix}t\\s
\end{bmatrix}=\begin{bmatrix}\cos\beta-\cos\alpha\\ 
-\sin\beta-\sin\alpha
\end{bmatrix},
\end{align*}
which leads to the following
\begin{align}
\begin{bmatrix}
t\\ s
\end{bmatrix}
=\frac{1}{\sin(2\alpha+2\beta)}\begin{bmatrix}\sin(\alpha+2\beta) - \sin \beta \\\sin(2\alpha+\beta) - \sin \alpha
\end{bmatrix}
=\frac{2\sin\frac{\alpha+\beta}{2}}{\sin(2\alpha+2\beta)}\begin{bmatrix}\cos\frac{\alpha+3\beta}{2}\\
\cos\frac{3\alpha+\beta}{2}
\end{bmatrix}. \label{eq.ts}
\end{align}
Note that $t+s=\frac{\cos\frac{\alpha-\beta}{2}}{\cos\frac{\alpha+\beta}{2}} \ge 1$.
It follows from \eqref{eq.ts} that the point $P_2^{\ell}$ is given by
\begin{align*}
P_2^\ell=\frac{1}{\sin(2\alpha+2\beta)}(\sin\beta\cos2\alpha+\sin\alpha\cos2\beta, \sin2\alpha\sin\beta-\sin\alpha\sin2\beta).
\end{align*}
We have the following observations:
\begin{enumerate}
\item  the $x$-component of $P_2^\ell$ is always positive: this is clear if both $\alpha$ and 
$\beta$ are less than $\pi/4$. If one of them is larger than $\pi/4$, say $\beta> \pi/4$,
then $\alpha<\pi/4$ and $\beta< \pi/2 -\alpha$, which implies
\begin{align*}
\sin\beta\cos2\alpha+\sin\alpha\cos2\beta 
\ge \sin\beta\cos2\alpha - \sin\alpha\cos2\alpha >0.
\end{align*}

\item for the $y$-component of $P_2^\ell$, note that
\begin{align*}
\sin2\alpha\sin\beta-\sin\alpha\sin2\beta=2\sin\alpha\sin\beta(\cos\alpha-\cos\beta),
\end{align*}
which is non-negative if $\alpha\leq \beta$, non-positive if $\alpha\geq \beta$,
and  zero if  either $\alpha=0$, $\beta=0$ or $\alpha=\beta$.
 \end{enumerate}

It is possible that some of the reflection points are not contained in 
the table constructed.
More precisely, on the domain $D$ given in \eqref{def.domD}, we have 
\begin{enumerate}
\item $\alpha+3\beta< \pi$ and $3\alpha+\beta< \pi$: both 
$t>0$ and $s>0$, and the trajectory is generated by regular reflections.  

\item $\alpha+3\beta= \pi$: then $t=0<s$. It means $P_2^{\ell}=P_1^r$,
and a reflection at $P_1^r$ is followed immediately by a reflection at $P_2^{\ell}$. 

\item $\alpha+3\beta> \pi$: then $3\alpha+\beta< \pi$ and hence $t<0<s$.
It means that the trajectory travels in the opposite direction by $|t|$ distance after a regular reflection at $P_1^r$,
have a regular reflection at $P_2^{\ell}$ and travels in the opposite direction by $s$ distance to reach $P_3^r$.

\item $3\alpha+\beta \ge \pi$: it is similar to the previous cases.
\end{enumerate}
See the dashed segments in Fig.~\ref{fig.3ab}.
Note that all three lines $\alpha+\beta=\frac{\pi}{2}$, $\alpha+3\beta= \pi$ and $3\alpha+\beta= \pi$
go through the point $(\frac{\pi}{4},\frac{\pi}{4})$, which is an essential singularity of 
our construction.

The center $O_r$ lies on the ray starting at $P_2^{\ell}$ and bisecting the angle $\angle P_1^r P_2^{\ell}  P_3^r$.
Therefore, the coordinate of the center $O_r$ is given by
\begin{align}\label{eq.OlOr}
(X_{\ell r},Y_{\ell r}):=\begin{bmatrix}
\frac{1}{\sin(2\alpha+2\beta)}(\sin\beta\cos2\alpha+\sin\alpha\cos2\beta)+\cos(\alpha-\beta)\\
\frac{1}{\sin(2\alpha+2\beta)}(\sin2\alpha\sin\beta-\sin\alpha\sin2\beta)+\sin(\alpha-\beta)
\end{bmatrix}.
\end{align}
Let $\fb(\alpha,\beta):=|O_\ell O_r|$. After some simplification, we get
\begin{align}
\fb(\alpha,\beta)^2=
1+\frac{\sin \alpha+\sin \beta}{\sin (\alpha+\beta)}
+\frac{\sin \alpha \sin \beta}{\sin^2(\alpha+\beta)}
+\frac{(\sin \alpha-\sin \beta)^2}{\sin^2(2 \alpha+2 \beta)}. \label{bsquare}
\end{align}
Among the four terms in the function $\fb^2$, the second term is bounded from below by $1$ and the sum of the last two terms
is bounded from below by $1/4$. It follows that $\fb(\alpha, \beta) \ge 1.5$ for $(\alpha, \beta)\in D$.
Using the linear approximation $\sin x \approx x$, we have $\fb(\alpha, \beta) \to 1.5$ as $(\alpha, \beta)\in D\to (0,0)$. 
It is easy to see that $\fb(\alpha, \beta) \to \infty$ when $\alpha+ \beta \nearrow \frac{\pi}{2}$ 
(except the point $(\frac{\pi}{4},\frac{\pi}{4})$, at where the limit is path-dependent).
In fact, along the diagonal $\alpha=\beta$, we have $\fb(\beta,\beta) =1+\frac{1}{2\cos\beta}\to b_{\max}:=1+ \frac{1}{\sqrt{2}}$ when $\beta\nearrow \frac{\pi}{4}$. Note that $b_{\max}$ also appears in Lemma \ref{lem.no.go.up} and in Section \ref{sec.gen.b.map}, see the discussion right before Lemma \ref{lem.E0.eph}.
See Fig.~\ref{fig.3ab} for two level curves
with $\fb=1.6$ (red), $\fb=b_{\max}$ (blue) and $b=2$ (brown). Note that the level curve
$\fb^{-1}(b)$ is smooth for $1.5<b< b_{\max}$
and has a singularity at the point $(\frac{\pi}{4},\frac{\pi}{4})$ for $b\ge b_{\max}$. 
For reasons that will clear later, we only need to consider the part of domain with $\fb\le b_{\max}$, on which we do have $t>0$ and $s>0$.

\begin{remark}\label{rem.origin}
Note that we have excluded the origin $(\alpha, \beta)=(0,0)$ from the definition \eqref{def.domD} of the domain $D$. In principle, one can extend our construction to the case $(\alpha, \beta)=(0,0)$ by taking limit $(\alpha, \beta)\in D\to (0,0)$. Note that this would correspond to the trajectory of the periodic orbit $\cO_2(b)$ for one particular parameter $b=1.5$. The trajectory of
$\cO_{2}(b)$ for $b\neq 1.5$ does not come from the above construction.
We have excluded $(0,0)$ from consideration to avoid possible ambiguity.
\end{remark}

\begin{figure}[htbp]
\begin{overpic}[width=2in]{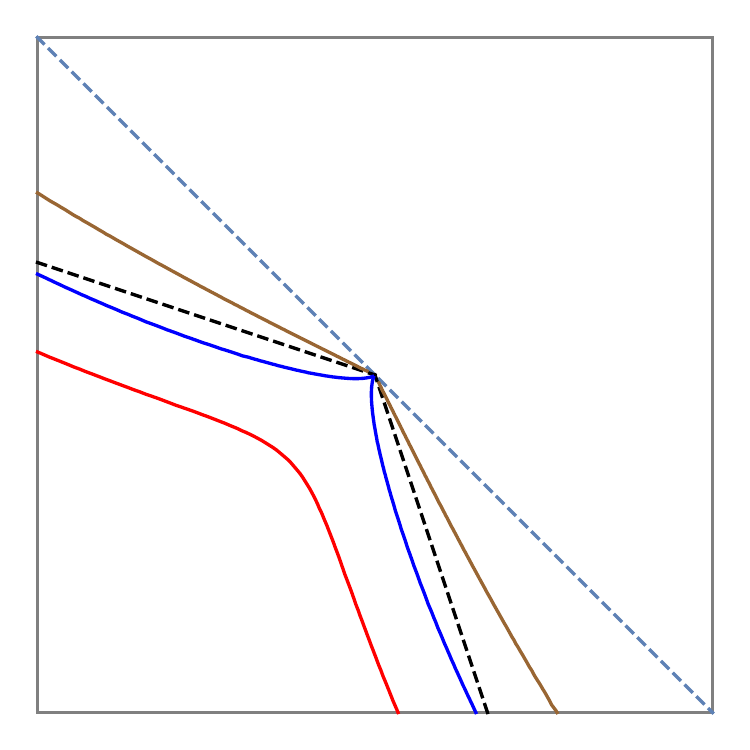}
\put (0, 0) {$0$} %
\put (67, -3) {$\frac{\pi}{4}$} %
\put (-1, 67) {$\frac{\pi}{4}$} %
\put (135,-3) {$\frac{\pi}{2}$} %
\put (-1, 135) {$\frac{\pi}{2}$} %
\end{overpic}
\caption{Plots of the level curves of $\fb(\alpha, \beta)=b$ with $b=1.6$ (red), $b=1+2^{-1/2}$ (blue) and $b=2$ (brown) on the $(\alpha, \beta)$-plane, with two segments  $\alpha+ 3\beta =\pi$ and $3\alpha+ \beta =\pi$  (both dashed).} \label{fig.3ab}
\end{figure}

The above construction can be viewed as a deformation between the elliptic and hyperbolic periodic orbits of period $6$ in Section~\ref{sec.el.hy}:
\begin{lemma}\label{lem.ab.special}
The $\crM$-type segment is part of the elliptic periodic orbit $\cO_{6}^e(b)$ when $\alpha=\beta$, and is part of the hyperbolic periodic orbit $\cO_{6}^h(b)$ when $\alpha=0$ or $\beta=0$.
\end{lemma}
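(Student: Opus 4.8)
The plan is to verify the two assertions by direct coordinate matching, handling the diagonal $\alpha=\beta$ and the edges $\alpha=0$, $\beta=0$ of $D$ separately; in each case the constructed $\crM$-type segment is to be identified, reflection point by reflection point, with an explicitly known sub-arc of the relevant period-$6$ orbit of Section~\ref{sec.el.hy}.

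For $\alpha=\beta$, substituting into \eqref{bsquare} and \eqref{eq.OlOr} gives $\fb(\beta,\beta)=1+\frac{1}{2\cos\beta}$ and $Y_{\ell r}=0$, so the constructed lemon table $\cQ(\fb(\beta,\beta))$ is already in standard position, with $O_\ell=(0,0)$ and $O_r=(b,0)$, $b:=\fb(\beta,\beta)$. The identity $b-1=\frac{1}{2\cos\beta}$ forces $\cos\phi=\frac{1}{2(b-1)}=\cos\beta$, i.e. $\phi=\beta$, where $\phi$ is the angle describing $\cO_6^e(b)$ in Section~\ref{sec.ell.po}. I would then check that the three reflection points $P_1^r,P_2^\ell,P_3^r$ of the $\crM$-segment are exactly the points $P_2=(\cos\beta,\sin\beta)$, $P_3=(b-1,0)$, $P_4=(\cos\beta,-\sin\beta)$ of $\cO_6^e(b)$ — for the middle point this uses $P_2^\ell=\gamma_1(t)$ with $t=\frac{1}{2\cos\beta}$ by \eqref{eq.ts} — and that the incoming and outgoing horizontal rays of the $\crM$-segment lie on the lines through $P_1P_2$ and $P_4P_5$. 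Hence the $\crM$-segment is precisely the orbit arc $P_1\to P_2\to P_3\to P_4\to P_5$ of $\cO_6^e(b)$.

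For $\alpha=0$ (the case $\beta=0$ being its mirror image under reflection in the $x$-axis, since \eqref{bsquare} is symmetric in $\alpha,\beta$), substituting into \eqref{bsquare} gives $\fb(0,\beta)^2=2+\frac{1}{4\cos^2\beta}$, so with $b:=\fb(0,\beta)$ one has $b^2-2=\frac{1}{4\cos^2\beta}$, matching the quantity $r=\sqrt{b^2-2}=\frac{1}{2\cos\beta}$ in the description of $\cO_6^h(b)$ in Section~\ref{sec.hyp.po}. The crucial point is that at $\alpha=0$ the incoming ray meets $\Gamma_r$ at $P_1^r=(1,0)$ along its normal $O_\ell P_1^r$, hence perpendicularly, and by the symmetry of the construction the prolongation of the outgoing ray meets $\Gamma_\ell$ perpendicularly as well. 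Therefore the $\crM$-segment, completed by these two straight-back bounces, closes up into a genuine period-$6$ orbit of $\cQ(b)$ with exactly the shape of $\cO_6^h(b)$: two perpendicular reflections together with a double collision on each arc. Since the derivation in Section~\ref{sec.hyp.po} shows that for each admissible $b$ the equations \eqref{rP1} and \eqref{rP1b} force $r^2=b^2-2$, such an orbit is unique up to reflection about the axis $AB$, so the completed orbit must coincide with $\cO_6^h(b)$; as a consistency check, $|P_1^rP_2^\ell|=1-\frac{1}{2\cos\beta}=1-r=|P_0P_1|$ and $|P_2^\ell P_3^r|=\frac{1}{2\cos\beta}=r=|P_1P_2|$.

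The trigonometric simplifications above are routine; the one genuine subtlety — and the step I expect to require the most care — is that in the edge cases the constructed table is \emph{tilted} (by \eqref{eq.OlOr} the center $O_r$ has $y$-coordinate $-\sin\beta\neq0$), so the identification with $\cO_6^h(b)$, which is written in standard position, holds only after a rigid rotation. One can either perform this rotation explicitly — it is the rotation by the angle $\phi$ of Section~\ref{sec.hyp.po}, and the verification reduces to confirming that $\cos\phi=\frac{2\cos^2\beta+1}{2b\cos\beta}$ and $\sin\phi=\frac{\sin\beta}{b}$ agree with the formulas there — or bypass it altogether via the uniqueness of the hyperbolic orbit as above. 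I would take the uniqueness route, to keep the computation to a minimum.
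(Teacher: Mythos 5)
Your argument is in substance the paper's own: the printed proof consists of nothing more than the two parameter matchings you start from --- on the diagonal $\fb(\beta,\beta)=1+\frac{1}{2\cos\beta}$ gives $\cos\beta=\frac{1}{2(b-1)}=\cos\phi$, and at $\alpha=0$, $\fb(0,\beta)^2=2+\frac{1}{4\cos^2\beta}$ gives $\cos\beta=\frac{1}{2\sqrt{b^2-2}}$, which is the cosine of the angle $\angle P_0O_\ell P_2$ in Section~\ref{sec.hyp.po} --- and the extra verifications you add are correct. In particular, on the diagonal the identification $P_1^r=P_2$, $P_2^\ell=(b-1,0)=P_3$, $P_3^r=P_4$ checks out, and you are right that at $\alpha=0$ the constructed table is tilted (by \eqref{eq.OlOr}, $Y_{\ell r}=-\sin\beta$), a point the paper passes over silently; your rotation data $\cos\phi=\frac{2\cos^2\beta+1}{2b\cos\beta}$, $\sin\phi=\frac{\sin\beta}{b}$ do agree with the formulas of Section~\ref{sec.hyp.po}, and the rotation carries $P_1^r,P_2^\ell,P_3^r$ to $P_0,P_1,P_2$.

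One caveat about the shortcut you say you would actually take. The uniqueness you invoke is not what Section~\ref{sec.hyp.po} establishes: there, $r^2=b^2-2$ is derived from \eqref{rP1}--\eqref{rP1b} only after positing the symmetric ansatz $P_2=(b-r\cos\phi,-r\sin\phi)$, $P_3=(b-\cos\phi,-\sin\phi)$, so it constructs one orbit of this combinatorial type rather than classifying all period-$6$ orbits with a perpendicular reflection and a double collision on each arc. To argue via uniqueness you would first have to show that any such orbit is symmetric under the point reflection about the table's center (or otherwise reduce to the ansatz), which is an extra step. The clean fix is simply to take the explicit-rotation route you already sketched (its identities are correct), which is also what the paper's terse proof implicitly relies on. Finally, the step ``by the symmetry of the construction the prolongation of the outgoing ray meets $\Gamma_\ell$ perpendicularly'' is better justified by the computation $Y_{\ell r}(0,\beta)=-\sin\beta$: the outgoing horizontal line through $P_3^r=(\cos\beta,-\sin\beta)$ then passes through $O_r$, hence is normal to $\Gamma_\ell$ (equivalently, $d_{r,3}=0$ at $\alpha=0$, as recorded later in Lemma~\ref{lemma: dl leq dr}).
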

\begin{proof}
Suppose $\alpha=\beta$. Then $b=\fb(\beta,\beta)=1+\frac{1}{2\cos\beta}$, or equally,
$\cos\beta=\frac{1}{2(b-1)}$. This is exactly the parameter for the elliptic periodic orbit $\cO_{6}^e(b)$
found in Section~\ref{sec.ell.po} with $\beta=\phi$.

Next we assume $\alpha=0$. Then $b^2=\fb(\beta,\beta)^2=2+\frac{1}{4\cos^2\beta}$, or equally,
$\cos\beta=\frac{1}{2\sqrt{b^2-2}}$. This is exactly the parameter for the hyperbolic periodic orbit $\cO_{6}^h(b)$
found in Section~\ref{sec.hyp.po}  with $\beta=\angle P_0 O P_2 $, see also Eq.~\eqref{eq.hangle.O}. The case $\beta=0$ is the same. This completes the proof.
\end{proof}

Flipping the table about the horizontal line through $O_{\ell}$ if necessary, we may assume
$\alpha \le \beta$. In the following we will frequently restrict our discussion to the sub-domain 
$D'=\{(\alpha, \beta) \in D: \alpha \le \beta\}$.
All of our conclusions hold for the domain $D\backslash D'$ with by switching the variables.
One can even rotate the table and consider orbits coming from the right-hand side, see the trajectory colored in cyan in Fig.~\ref{figure:parallel}.

At the end of Section \ref{sec.topo}, we have introduced the following signed distances, see also \eqref{eq.dlrj0}. More precisely, let $d_{\ell, 1}$ and $d_{r,1}$ be the (signed) distances from $O_{\ell}$ and $O_{r}$ to the line $Q_1 P_1^{r}$,  respectively.
Similarly, we define the distances $d_{\ell, 3}$ and $d_{r, 3}$ from $O_{\ell}$ and $O_r$ to the line $P_3^{r} Q_3$, respectively. 
Note that they are functions of $(\alpha,\beta)$,  $d_{\ell, 1}=\sin\alpha$ and $d_{\ell, 3}=\sin\beta$. 
Moreover, $d_{\ell, 1} +d_{\ell, 3} =d_{r, 1} +d_{r, 3}$, which is the distance
between the two lines $Q_1 P_1^{r}$ and $P_3^r Q_3$. 
Alternatively, we have 
\begin{align}
d_{\ell, 3} -d_{r, 3} =d_{r, 1} -d_{\ell, 1}. \label{eq.equal.diff}
\end{align} 
This  characterizes the trajectory corresponding to each $(\alpha, \beta) \in D$.

\begin{lemma}\label{lemma: dl leq dr}
$d_{\ell, 1}\leq d_{r,3}$ for each $(\alpha, \beta)\in D'$,
and the equality holds if and only if one of the following holds: 
\begin{enumerate}
\item $\alpha=0$: then $d_{\ell ,1} =d_{r,3}=0$;

\item $\alpha=\beta$: then $d_{\ell ,1}=d_{r,3}=\frac{1}{2}|Q_1Q_3|$.
\end{enumerate}
\end{lemma}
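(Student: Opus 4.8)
The plan is to compute the difference $d_{r,3}-d_{\ell,1}$ explicitly from the coordinates \eqref{eq.OlOr} of the right center $O_r$ and to display it as a product of two factors, each of which has a definite sign on $D'$.

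First I would pin down the lines and the two centers involved. Since $P_1^r=(\cos\alpha,\sin\alpha)$, $P_3^r=(\cos\beta,-\sin\beta)$, and $Q_1,Q_3$ are the orthogonal projections of $O_\ell=(0,0)$ onto the horizontal lines through $P_1^r$ and $P_3^r$, the line $Q_1P_1^r$ is $\{y=\sin\alpha\}$ and the line $P_3^rQ_3$ is $\{y=-\sin\beta\}$. With the sign convention from the end of Section~\ref{sec.topo} (under which $d_{\ell,1}=\sin\alpha$ and $d_{\ell,3}=\sin\beta$, as recorded above), this identifies $d_{r,3}=Y_{\ell r}+\sin\beta$, where $Y_{\ell r}$ is the $y$-coordinate of $O_r$ in \eqref{eq.OlOr}. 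Using the identity $\sin2\alpha\sin\beta-\sin\alpha\sin2\beta=2\sin\alpha\sin\beta(\cos\alpha-\cos\beta)$ already noted in this section, this gives
\[
d_{r,3}-d_{\ell,1}=\frac{2\sin\alpha\sin\beta(\cos\alpha-\cos\beta)}{\sin(2\alpha+2\beta)}+\bigl(\sin(\alpha-\beta)+\sin\beta-\sin\alpha\bigr).
\]

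Next I would rewrite both pieces in half-angle form. By sum-to-product the bracketed piece is $\sin(\alpha-\beta)+\sin\beta-\sin\alpha=4\sin\frac\alpha2\sin\frac\beta2\sin\frac{\alpha-\beta}{2}$, and expanding $\sin\alpha$, $\sin\beta$, $\cos\alpha-\cos\beta$, $\sin(2\alpha+2\beta)$ in half-angles and cancelling turns the first term into $-\,4\sin\frac\alpha2\sin\frac\beta2\sin\frac{\alpha-\beta}{2}\cdot\frac{\cos\frac\alpha2\cos\frac\beta2}{\cos\frac{\alpha+\beta}{2}\cos(\alpha+\beta)}$. Hence
\[
d_{r,3}-d_{\ell,1}=4\sin\tfrac\alpha2\sin\tfrac\beta2\sin\tfrac{\alpha-\beta}{2}\left(1-\frac{\cos\frac\alpha2\cos\frac\beta2}{\cos\frac{\alpha+\beta}{2}\cos(\alpha+\beta)}\right).
\]
On $D'$ one has $0\le\alpha\le\beta$ and $\alpha+\beta<\frac\pi2$, so $\sin\frac\alpha2,\sin\frac\beta2\ge0$ and $\sin\frac{\alpha-\beta}{2}\le0$, making the first factor $\le0$. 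For the parenthesised factor, $\cos\frac{\alpha+\beta}{2}>0$ and $\cos(\alpha+\beta)>0$, so it is enough to check $\cos\frac{\alpha+\beta}{2}\cos(\alpha+\beta)\le\cos\frac\alpha2\cos\frac\beta2$; applying $2\cos A\cos B=\cos(A-B)+\cos(A+B)$ to both sides, this is equivalent to $\cos\frac{3(\alpha+\beta)}{2}\le\cos\frac{\alpha-\beta}{2}$, which holds because $\cos$ is even and decreasing on $[0,\pi]$, both $\frac{3(\alpha+\beta)}{2}$ and $\frac{|\alpha-\beta|}{2}$ lie in $[0,\pi]$, and $\frac{3(\alpha+\beta)}{2}\ge\frac{\alpha+\beta}{2}\ge\frac{|\alpha-\beta|}{2}$. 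Therefore the parenthesised factor is also $\le0$, the product is $\ge0$, and $d_{\ell,1}\le d_{r,3}$.

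For the equality clause I would observe that on $D'\setminus\{(0,0)\}$ we have $\alpha+\beta>0$, so $\frac{3(\alpha+\beta)}{2}>\frac{|\alpha-\beta|}{2}$ strictly and the parenthesised factor is strictly negative; thus $d_{\ell,1}=d_{r,3}$ forces $\sin\frac\alpha2\sin\frac\beta2\sin\frac{\alpha-\beta}{2}=0$, i.e.\ (since $\beta>0$ on $D'$) $\alpha=0$ or $\alpha=\beta$. If $\alpha=0$ then $d_{\ell,1}=0=d_{r,3}$; if $\alpha=\beta$ the two horizontal lines are $\{y=\pm\sin\alpha\}$, so $|Q_1Q_3|=2\sin\alpha$ and $d_{\ell,1}=d_{r,3}=\sin\alpha=\tfrac12|Q_1Q_3|$. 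I do not expect a genuine obstacle: the computation is routine, and the only care needed is in tracking the sign of $\sin\frac{\alpha-\beta}{2}$ on $D'$ and in checking that the cosines in the denominator stay positive on all of $D$, both immediate from $0\le\alpha\le\beta$ and $\alpha+\beta<\frac\pi2$.
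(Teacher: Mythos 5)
Your proof is correct and follows essentially the same route as the paper: both start from $d_{r,3}=\sin\beta+Y_{\ell r}$ and, after half-angle manipulations, reduce the claim to the inequality $\cos(\alpha+\beta)\cos\frac{\alpha+\beta}{2}\le\cos\frac{\alpha}{2}\cos\frac{\beta}{2}$ on $D'$, with equality traced back to $\alpha=0$ or $\alpha=\beta$. Your explicit factorization of $d_{r,3}-d_{\ell,1}$ as a product of two nonpositive factors, and the product-to-sum verification of the key cosine inequality, merely spell out steps the paper leaves as "easy to see."
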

\begin{proof}
It follows from \eqref{eq.OlOr} that 
\begin{align}
d_{r,3}& =\sin\beta+Y_{\ell r}=\sin\beta+\frac{2\sin\alpha\sin\beta(\cos\alpha-\cos\beta)}{\sin(2\alpha+2\beta)}+\sin(\alpha-\beta). \label{eq.dr.prl}
\end{align}
Then the inequality $d_{\ell ,1}\leq d_{r,3}$ is equivalent to
\begin{align}
\sin(2\alpha+2\beta)(\sin(\beta-\alpha)+\sin\alpha -\sin\beta)
&\leq 2\sin\alpha\sin\beta(\cos\alpha-\cos\beta), \nonumber\\
\sin(2\alpha+2\beta)\sin\frac{\beta-\alpha}{2}(\cos\frac{\beta-\alpha}{2}-\cos\frac{\beta+\alpha}{2})
&\leq 2\sin\alpha\sin\beta \sin\frac{\beta-\alpha}{2}\sin\frac{\beta+\alpha}{2}.  
\label{eq.dldrab}
\end{align}
It is easy to see that the equality in \eqref{eq.dldrab} holds if and only if $\alpha=0$ or $\alpha=\beta$.
In the following we assume $0<\alpha< \beta$. Then the inequality \eqref{eq.dldrab} is equivalent to
\begin{align*}
\sin(2\alpha+2\beta)\sin\frac{\alpha}{2}\sin\frac{\beta}{2}
&\leq\sin\alpha\sin\beta\sin\frac{\beta+\alpha}{2},\\
\cos(\alpha+\beta)\cos\frac{\alpha+\beta}{2} & \le \cos\frac{\alpha}{2}\cos\frac{\beta}{2}.
\end{align*}
The last one does hold for all $(\alpha, \beta) \in D'$.
This completes the proof. 
\end{proof}

Since $\sin(2\alpha +2\beta)>0$ on $D$, the $y$-component of the center $O_{r}$ satisfies $Y_{\ell r}=0$ if and only if 
\begin{align*}
2\sin\alpha\sin\beta(\cos\alpha-\cos\beta)-\sin(2\alpha+2\beta)\sin(\beta-\alpha)=0,
\end{align*}
or equally,
\begin{align}
\sin\frac{\alpha+\beta}{2}\sin\frac{\alpha-\beta}{2}\big((\cos\alpha+\cos\beta)\cos(\alpha+\beta)-\sin\alpha\sin\beta\big) =0. \label{meaningJ}
\end{align}
In particular, $Y_{\ell r}=0$ if either $\alpha=\beta$ or 
\begin{align}\label{eq.green.curve}
F_{\cJ}(\alpha,\beta):=(\cos\alpha+\cos\beta)\cos(\alpha+\beta)-\sin\alpha\sin\beta=0.
\end{align}
Denote by $\cJ$ the curve defined by \eqref{eq.green.curve}.
It intersects the diagonal $\alpha=\beta$ at $\alpha=\alpha_0$, where $\alpha_0$ is the solution of 
$2\cos\alpha \cos 2\alpha-\sin^2\alpha=0$. Note that
$\alpha_0\approx 0.663742$, which is larger than $\pi/5$.
We record the following simple fact for easy reference.
Recall the number $b_{\crit}\approx 1.63477$ is the solution of the equation $f(b)=-1$,
where  $f(b)$ is given in \eqref{eq.tr.e3}.
\begin{lemma} \label{lem.b.crit} 
Let $(\alpha_0, \alpha_0)$ be the intersection point of the curve $\cJ$ with the diagonal.
Then $\fb(\alpha_0, \alpha_0)=b_{\crit}$. 
\end{lemma}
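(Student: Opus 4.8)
The plan is to reduce the asserted identity to an elementary polynomial computation. First I will make both sides explicit. On the diagonal, $\fb$ was computed right after \eqref{bsquare} to be $\fb(\beta,\beta)=1+\frac{1}{2\cos\beta}$, so writing $c:=\cos\alpha_0$ and $b_0:=\fb(\alpha_0,\alpha_0)$ gives $b_0=1+\frac{1}{2c}$, i.e. $c=\frac{1}{2(b_0-1)}$. On the other hand, $\alpha_0$ lies on the curve $\cJ$ as well as on the diagonal, so by \eqref{eq.green.curve} it satisfies $0=F_{\cJ}(\alpha_0,\alpha_0)=2\cos\alpha_0\cos2\alpha_0-\sin^2\alpha_0$; substituting $\cos2\alpha_0=2c^2-1$ and $\sin^2\alpha_0=1-c^2$ turns this into the cubic relation $4c^3+c^2-2c-1=0$, which, after the substitution $c=\frac{1}{2(b_0-1)}$, becomes in the variable $u=b_0-1$ the relation $4u^3+4u^2-u-2=0$.

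The heart of the argument is then the identity
\[ f(b)+1=\frac{(4u^3+4u^2-u-2)\,(8u^4+8u^3-6u^2-2u+2)}{2u^2-1},\qquad u=b-1, \]
where $f$ is the function \eqref{eq.tr.e3}. I would obtain this by substituting $b=u+1$ into \eqref{eq.tr.e3} — so that the polynomial part becomes $16u^5+32u^4+8u^3-12u^2-u+4$ and the rational term becomes $\frac{u+1}{2u^2-1}$ (note that $2b^2-4b+1=2u^2-1$) — putting everything over the common denominator $2u^2-1$, and dividing the resulting degree-$7$ numerator by $4u^3+4u^2-u-2$; the division comes out exact. Granting the identity, at $b=b_0$ the first factor in the numerator vanishes by the previous paragraph, while the second factor and the denominator $2u^2-1$ (which vanishes only at $b=1+2^{-1/2}=b_{\max}>b_0$) are nonzero there; hence $f(b_0)=-1$.

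Finally I will identify $b_0$ with $b_{\crit}$. Since $f(1.5)=1$ and $f$ is strictly decreasing on the interval of interest (visible from its graph, the figure following \eqref{eq.tr.e3}), the equation $f(b)=-1$ has a unique root there, which by definition is $b_{\crit}$; as $f(b_0)=-1$ and $1.5<b_0<b_{\max}$ lies in this interval, we conclude $b_0=\fb(\alpha_0,\alpha_0)=b_{\crit}$.

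I do not expect a genuine obstacle: everything is elementary once the trace formula \eqref{eq.tr.e3} is in hand, and the computation is driven by the observation that the relation cutting out $\cJ$ on the diagonal is, after clearing denominators, exactly the factor of $f+1$ that we need to vanish. The only place requiring care is the bookkeeping in the polynomial division, and in particular keeping track of the denominator $2b^2-4b+1$ so that its zero at $b_{\max}$ is seen to fall outside the relevant range and hence not to interfere with the vanishing of $f+1$ at $b_0$.
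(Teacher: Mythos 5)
Your proposal is correct and follows essentially the same route as the paper: reduce the diagonal intersection with $\cJ$ to the cubic relation in $\cos\alpha_0$, substitute $b=1+\frac{1}{2\cos\alpha}$ into \eqref{eq.tr.e3}, and observe that this cubic is precisely a factor of $f(b)+1$, forcing $f(\fb(\alpha_0,\alpha_0))=-1=f(b_{\crit})$. The only difference is cosmetic — you factor in the variable $u=b-1$ while the paper factors in $x=\cos\alpha_0$ — and your factorization and bookkeeping check out.
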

\begin{proof}
Note that $\alpha_0$ satisfies the equation
$2\cos\alpha\cos2\alpha -\sin^2\alpha=0$, or equally, 
$x_0=\cos\alpha_0$ satisfies $4 x^3+x^2-2 x-1=0$.
On the other hand, plugging $b=\fb(\alpha, \alpha)=1+\frac{1}{2\cos\alpha} =1+\frac{1}{2x}$ into the function $f(b)$ in Eq.~\eqref{eq.tr.e3}, we have
\begin{align*}
f(1+\frac{1}{2x})+1=-\frac{(4 x^3+x^2-2 x-1) (4 x^4-2 x^3-3 x^2+2 x+1)}{2 x^5(1-2x^2)}.
\end{align*}
Hence $x_0$ satisfies $f(1+\frac{1}{2x})+1=0$. It follows that $\fb(\alpha_0, \alpha_0)=1+\frac{1}{2x_0}=b_{\crit}$. 
\end{proof}

\begin{lemma}\label{lem.cJ}
The curve $\cJ$ is a smooth curve contained in $\{(\alpha, \beta)\in D: \alpha+\beta\ge \frac{\pi}{3}\}$ with an endpoint in $\{\beta=0\}$ (resp. $\{\alpha=0\}$) with coordinate $(0,\frac{\pi}{2})$ (resp. $(0,\frac{\pi}{2})$). 
Moreover, the normal directions of $\cJ$ (defined by the gradient of $F_\cJ$ along $\cJ$) is contained in the positive  open cone generated by the rays $[-1,-1]$ and $[-1,1]$ (resp. $[1,-1]$) for $\alpha<\beta$ (resp. $\alpha>\beta$). 
\end{lemma}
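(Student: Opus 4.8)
## Proof Plan for Lemma 4.8 (on the curve $\cJ$)

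The plan is to study the zero set of $F_{\cJ}(\alpha,\beta)=(\cos\alpha+\cos\beta)\cos(\alpha+\beta)-\sin\alpha\sin\beta$ directly, exploiting its symmetry in $\alpha\leftrightarrow\beta$. First I would verify the endpoint claim: setting $\beta=0$ gives $F_{\cJ}(\alpha,0)=(\cos\alpha+1)\cos\alpha$, whose only zero in $[0,\tfrac{\pi}{2}]$ is $\alpha=\tfrac{\pi}{2}$; by symmetry the other endpoint is $(\tfrac{\pi}{2},0)$ on $\{\alpha=0\}$ — wait, more carefully, $(0,\tfrac{\pi}{2})$ lies on $\{\alpha=0\}$ and $(\tfrac{\pi}{2},0)$ lies on $\{\beta=0\}$, and these are the two boundary points of $\cJ$. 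Next, for the containment $\cJ\subset\{\alpha+\beta\ge\tfrac{\pi}{3}\}$, I would rewrite $F_{\cJ}$ using product-to-sum identities. Writing $\sin\alpha\sin\beta=\tfrac12(\cos(\alpha-\beta)-\cos(\alpha+\beta))$ and $(\cos\alpha+\cos\beta)=2\cos\tfrac{\alpha+\beta}{2}\cos\tfrac{\alpha-\beta}{2}$, one gets
\begin{align*}
F_{\cJ}(\alpha,\beta)=2\cos\tfrac{\alpha+\beta}{2}\cos\tfrac{\alpha-\beta}{2}\cos(\alpha+\beta)+\tfrac12\cos(\alpha+\beta)-\tfrac12\cos(\alpha-\beta).
\end{align*}
On the diagonal this is $2\cos\alpha\cos2\alpha+\tfrac12\cos2\alpha-\tfrac12$, and its unique zero $\alpha_0$ in $(0,\tfrac{\pi}{2})$ was already identified (with $\alpha_0+\alpha_0\approx1.33>\tfrac{\pi}{3}$); I then want to show $\alpha+\beta$ stays $\ge\tfrac{\pi}{3}$ off the diagonal as well. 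The cleanest route is to check the sign of $F_{\cJ}$ on the line $\alpha+\beta=\tfrac{\pi}{3}$: there $\cos(\alpha+\beta)=\tfrac12$, so $F_{\cJ}=\tfrac12(\cos\alpha+\cos\beta)-\sin\alpha\sin\beta$, and using $\cos\alpha+\cos\beta=2\cos\tfrac{\pi}{6}\cos\tfrac{\alpha-\beta}{2}=\sqrt3\cos\tfrac{\alpha-\beta}{2}$ together with $\sin\alpha\sin\beta=\tfrac12(\cos(\alpha-\beta)-\tfrac12)$, one checks this is strictly positive for all admissible $\alpha-\beta$; combined with $F_{\cJ}>0$ near the origin (where it $\to 2$) and the sign change happening only past the diagonal point, this forces $\cJ$ into the region $\alpha+\beta>\tfrac{\pi}{3}$ except possibly at an endpoint.

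For smoothness of $\cJ$, I would apply the implicit function theorem: it suffices that $\nabla F_{\cJ}\ne 0$ at every point of $\cJ$. I would compute
\begin{align*}
\partial_\alpha F_{\cJ}=-\sin\alpha\cos(\alpha+\beta)-(\cos\alpha+\cos\beta)\sin(\alpha+\beta)-\cos\alpha\sin\beta,
\end{align*}
and the symmetric expression for $\partial_\beta F_{\cJ}$. On the relevant region $\alpha+\beta\ge\tfrac{\pi}{3}$ (so $\alpha+\beta\in[\tfrac{\pi}{3},\tfrac{\pi}{2})$, since we are inside $D$), every term in $\partial_\alpha F_{\cJ}$ is $\le 0$, and the middle term $-(\cos\alpha+\cos\beta)\sin(\alpha+\beta)$ is strictly negative; hence $\partial_\alpha F_{\cJ}<0$ and likewise $\partial_\beta F_{\cJ}<0$ throughout. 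This simultaneously gives smoothness (gradient nonvanishing) and — crucially — the statement about normal directions: $\nabla F_{\cJ}$ lies in the open third quadrant (both components negative), i.e. in the positive cone generated by $[-1,-1]$ and the axes. To sharpen this to the cone generated by $[-1,-1]$ and $[-1,1]$ for $\alpha<\beta$, I need $|\partial_\alpha F_{\cJ}|\le|\partial_\beta F_{\cJ}|$, equivalently $\partial_\alpha F_{\cJ}\le\partial_\beta F_{\cJ}$ (both negative), i.e. $\partial_\alpha F_{\cJ}-\partial_\beta F_{\cJ}\le 0$. Computing the difference, the symmetric terms cancel and one is left with
\begin{align*}
\partial_\alpha F_{\cJ}-\partial_\beta F_{\cJ}=(\sin\beta-\sin\alpha)\cos(\alpha+\beta)+\cos\beta\sin\alpha-\cos\alpha\sin\beta=(\sin\beta-\sin\alpha)\cos(\alpha+\beta)-\sin(\beta-\alpha),
\end{align*}
which for $\alpha<\beta$ and $\alpha+\beta<\tfrac{\pi}{2}$ is $(\sin\beta-\sin\alpha)\cos(\alpha+\beta)-\sin(\beta-\alpha)<0$ because $\sin\beta-\sin\alpha<\sin(\beta-\alpha)\cdot(\text{something})$... this last inequality is the delicate one and I would reduce it via $\sin\beta-\sin\alpha=2\cos\tfrac{\alpha+\beta}{2}\sin\tfrac{\beta-\alpha}{2}$ and $\sin(\beta-\alpha)=2\sin\tfrac{\beta-\alpha}{2}\cos\tfrac{\beta-\alpha}{2}$, so after dividing by $2\sin\tfrac{\beta-\alpha}{2}$ the claim becomes $\cos\tfrac{\alpha+\beta}{2}\cos(\alpha+\beta)\le\cos\tfrac{\beta-\alpha}{2}$, which holds since $\cos(\alpha+\beta)\le 1$ and $\cos\tfrac{\alpha+\beta}{2}\le\cos\tfrac{\beta-\alpha}{2}$. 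The case $\alpha>\beta$ is symmetric and puts the normal in the cone generated by $[-1,-1]$ and $[1,-1]$.

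I expect the main obstacle to be the containment $\cJ\subset\{\alpha+\beta\ge\tfrac{\pi}{3}\}$ together with the endpoint analysis: one must rule out a second branch of the zero set of $F_{\cJ}$ inside $D$ lurking near the origin or hugging a different part of the boundary, and this requires knowing that $F_{\cJ}>0$ on all of $\{\alpha+\beta<\tfrac{\pi}{3}\}\cap D$ and that $\cJ$ is connected. The connectedness follows once we know $\partial_\alpha F_{\cJ},\partial_\beta F_{\cJ}$ are both strictly negative on $\{\alpha+\beta\ge\tfrac{\pi}{3}\}\cap D$ (which I established above): then $F_{\cJ}$ is strictly decreasing along any ray in the positive $(1,1)$-direction, so each such ray meets $\cJ$ at most once, exhibiting $\cJ$ as a graph over (a subinterval of) the anti-diagonal and hence a single smooth arc with its two endpoints on the two coordinate axes. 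The positivity on the small-sum region is then just the sign computation on $\alpha+\beta=\tfrac{\pi}{3}$ sketched above, extended inward using that $F_{\cJ}\to 2$ at the origin and has no critical points (so no interior sign change) there; I would present this via the explicit lower bound $F_{\cJ}\ge(\cos\alpha+\cos\beta)\cos(\alpha+\beta)-\sin\alpha\sin\beta\ge\tfrac12(\cos\alpha+\cos\beta)-\tfrac14>0$ valid once $\cos(\alpha+\beta)\ge\tfrac12$, i.e. exactly on $\alpha+\beta\le\tfrac{\pi}{3}$.
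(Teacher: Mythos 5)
Your proposal is correct, and its analytic core coincides with the paper's: the paper likewise controls $\nabla F_{\cJ}$ through directional derivatives, showing $(\partial_\alpha+\partial_\beta)F_{\cJ}<0$ on $D$ (hence smoothness) and factoring $(\partial_\alpha-\partial_\beta)F_{\cJ}=2\sin\frac{\alpha-\beta}{2}\big(\cos\frac{\alpha-\beta}{2}-\cos\frac{\alpha+\beta}{2}\cos(\alpha+\beta)\big)$, which is exactly your reduction to $\cos\frac{\alpha+\beta}{2}\cos(\alpha+\beta)<\cos\frac{\beta-\alpha}{2}$; together these give the same cone confinement of the normal. Where you genuinely diverge is in the global part: the paper introduces the unit vector field $V_F$ orthogonal to $\nabla F_{\cJ}$, views $\cJ$ as an integral curve of $V_F$ whose direction stays in the complementary cone, and concludes that $\cJ$ meets the diagonal only at $(\alpha_0,\alpha_0)$ and lies on the far side of the tangent line $\alpha+\beta=2\alpha_0$ there (so the $\alpha+\beta\ge\frac{\pi}{3}$ containment follows from $2\alpha_0>\frac{\pi}{3}$, and the axis endpoints are left implicit). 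You instead obtain containment by the direct positivity bound $F_{\cJ}\ge\frac{1}{2}(\cos\alpha+\cos\beta)-\frac{1}{4}>0$ on $\{\alpha+\beta\le\frac{\pi}{3}\}$ (for which you should record $\sin\alpha\sin\beta=\frac{1}{2}(\cos(\alpha-\beta)-\cos(\alpha+\beta))\le\frac{1}{4}$ there), and you obtain the single-arc structure from strict negativity of both partials, so that $F_{\cJ}$ is strictly monotone along each line $\alpha-\beta=c$ and $\cJ$ is a graph over the anti-diagonal; to get existence (not just uniqueness) of the zero on each such line, add the one-line observation that $F_{\cJ}=-\sin\alpha\sin\beta<0$ on $\{\alpha+\beta=\frac{\pi}{2}\}$ away from the corners. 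Your route is more explicit on the containment, the connectedness, and the endpoints — and you correctly identify the endpoint on $\{\beta=0\}$ as $(\frac{\pi}{2},0)$, the statement's $(0,\frac{\pi}{2})$ there being a typo — whereas the paper's cone-field argument yields the sharper localization $\alpha+\beta\ge 2\alpha_0$ in one stroke.
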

\begin{proof}
It is easy to see that 
for each $(\alpha,\beta)\in D$,
\begin{align*}
(\partial_{\alpha}+\partial_\beta)F_{\cJ}&=\frac{1}{2} (-2 \sin (\alpha+\beta)-3 \sin (2 \alpha+\beta)-3 \sin (\alpha+2 \beta)-\sin \alpha-\sin \beta)<0.
\end{align*}
It follows that $\text{grad}F_\cJ\neq 0$ on $D$ and hence
the level curve $\cJ$ is smooth. 
Similarly, we have
\begin{align}
(\partial_{\alpha}-\partial_\beta)F_{\cJ}&=\sin(\alpha-\beta)+(\sin \beta-\sin \alpha) \cos (\alpha+\beta)\nonumber\\
&=2\sin(\frac{\alpha-\beta}{2})\big(\cos(\frac{\alpha-\beta}{2})-\cos(\frac{\alpha+\beta}{2})\cos(\alpha+\beta)\big)<0, \label{eq.v2FJ}
\end{align}
for each $(\alpha,\beta)\in D' $. 
It follows that $\text{grad}F_\cJ$ is contained in the cone 
$\{v=(a,b): a+b<0, a-b<0\}$ on $D'$.
Let $V_F$ be the unit vector field on $[0, \frac{\pi}{2}]^2\cap \{\alpha+\beta\leq \frac{\pi}{2}\}-\{(0,0)\}$ defined by that $\{\text{grad}F_\cJ, V_F\}$ is everywhere orthogonal and gives the positive orientation. Then $\cJ$ is an integrating curve of $V_F$ and is symmetric about the diagonal $\{\alpha=\beta\}$. Since $V_F$ is everywhere contained in the open positive cone generated by $[1,-1]$ and $[-1,-1]$ (resp. $[1,1]$) for $\alpha<\beta$ (resp. $\alpha>\beta$), we get $\cJ$ intersects the diagonal exactly once at $(\alpha_0, \alpha_0)$, and is contained in the upper right corner cut out by the tangent line at $(\alpha_0, \alpha_0)$.
\end{proof}

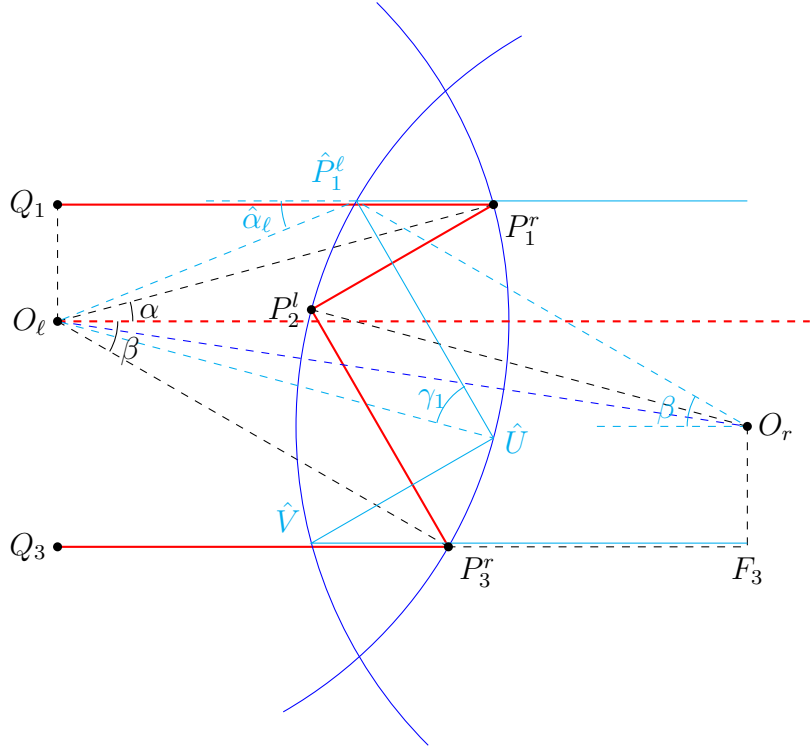
\begin{figure}[htbp]
\begin{tikzpicture}[scale=2]
\tikzmath{
\r=3;
\a=15;
\b=30;
\t=0.465926; 
}
\coordinate (Ol) at (-2,0);
\coordinate (P1r) at ({\r*cos(\a)-2},{\r*sin(\a)});
\coordinate (P3r) at ({\r*cos(\b)-2},{-\r*sin(\b)});
\coordinate (Q1) at (-2,{\r*sin(\a)});
\coordinate (Q3) at (-2,{-\r*sin(\b)});
\coordinate (P2l) at ({\r*cos(\a)-2-\r*\t*cos(2*\a)}, {\r*sin(\a)-\r*\t*sin(2*\a)}); 
\coordinate (Or) at ({\r*cos(\a)-2-\r*\t*cos(2*\a)+\r*cos(\a-\b)}, {\r*sin(\a)-\r*\t*sin(2*\a)+\r*sin(\a-\b)});
\coordinate (Fr) at ({\r*cos(\a)-2-\r*\t*cos(2*\a)+\r*cos(\a-\b)}, {-\r*sin(\b)});
\coordinate (Q3p) at ({\r*cos(\a)-2-\r*\t*cos(2*\a)+\r*cos(\a-\b)}, {\r*sin(\a)-\r*\t*sin(2*\a)+\r*sin(\a-\b)-\r*sin(\a)});
\coordinate (Q1p) at ({\r*cos(\a)-2-\r*\t*cos(2*\a)+\r*cos(\a-\b)}, {\r*sin(\a)-\r*\t*sin(2*\a)+\r*sin(\a-\b)+\r*sin(\b)});
\coordinate (P3lp) at ({\r*cos(\a)-2-\r*\t*cos(2*\a)+\r*cos(\a-\b)-\r*cos(\a)}, {\r*sin(\a)-\r*\t*sin(2*\a)+\r*sin(\a-\b)-\r*sin(\a)});
\coordinate (P1lp) at ({\r*cos(\a)-2-\r*\t*cos(2*\a)+\r*cos(\a-\b)-\r*cos(\b)}, {\r*sin(\a)-\r*\t*sin(2*\a)+\r*sin(\a-\b)+\r*sin(\b)});
\coordinate (P2rp) at ({\r*cos(\a)-2-\r*\t*cos(2*\a)+\r*cos(\a-\b)-\r*cos(\a)+\r*\t*cos(2*\a)}, {\r*sin(\a)-\r*\t*sin(2*\a)+\r*sin(\a-\b)-(\r*sin(\a)-\r*\t*sin(2*\a))}); 
\draw[red, thick] (Q1)--(P1r);
\draw[red, thick] (Q3)--(P3r);
\draw[dashed] (Ol)--(P1r);
\draw[dashed] (Ol)--(P3r);
\draw[dashed] (Or)--(P2l);
\draw[red, thick] (P1r)--(P2l)--(P3r);
\draw[blue] (Ol)+({\r*cos(60)},{-\r*sin(60)})  arc (-60:45:\r);
\draw[blue] (Or)+({-\r*cos(60)},{\r*sin(60)})  arc (120:225:\r);
\draw[blue, dashed] (Ol)--(Or);
\draw[red, thick, dashed] (Ol)--(3,0);
\draw[dashed] (Ol)--(Q1); 
\draw[dashed] (Or)--(Fr) node[below] {$F_3$}--(P3r);
\draw[cyan] (Q3p)--(P3lp) node[above left]{$\hat V$} --(P2rp) node[right]{$\hat U$} --(P1lp) node[above left]{$\hat P_1^{\ell}$} --(Q1p);
\draw [cyan, dashed]  (Or)+(-1,0) -- (Or) -- (P1lp);
\draw[cyan] (Or)+(-0.4,0) arc (180: 150:0.4) node[xshift=-0.15 in, yshift=-0.07in]{$\beta$};
\draw [cyan, dashed]  (P1lp)+(-1,0) -- (P1lp) -- (Ol) -- (P2rp) node[pos=0.87](gamma){};
\draw[cyan] (P1lp)+(-0.5,0) arc (180: 200:0.5) node[xshift=-0.15 in, yshift=0.04in]{$\hat\alpha_{\ell}$};
\draw[cyan] (gamma) arc (165: 120:0.4) node[xshift=-0.17 in, yshift=-0.07in]{$\gamma_1$};
\draw (Ol)+(0.5,0) arc (0: \a:0.5) node[xshift=0.1 in, yshift=-0.05in]{$\alpha$};
\draw (Ol)+(0.4,0) arc (0: -\b:0.4) node[right]{$\beta$};
\fill (Ol) circle[radius=0.03] node [left]{$O_\ell$};
\fill (P1r) circle[radius=0.03] node [below right]{$P_1^{r}$};
\fill (P2l) circle[radius=0.03] node [left]{$P_2^{l}$};
\fill (P3r) circle[radius=0.03] node [below right]{$P_3^{r}$};
\fill (Q1) circle[radius=0.03] node [left]{$Q_1$};
\fill (Q3) circle[radius=0.03] node [left]{$Q_3$};
\fill (Or) circle[radius=0.03] node [right]{$O_r$};
\end{tikzpicture}
\caption{The $\crM$-orbit from left (red) and the $\ccM$-orbit from the right (cyan)}\label{figure:parallel}
\end{figure}

In Fig.~\ref{figure:parallel} we have two orbits: the $\ccM$-orbit on right side (cyan) is obtained from the $\crM$-orbit on the left (red)
by rotating the table around the midpoint of $O_{\ell}O_r$ by $\pi$. So these two orbits have exactly the same characterizations, and the following result works for both orbits. 
For later application, we will work with the $\ccM$-orbit. 
For any point $p\in M_b$, let $v_p \in T_{p} M_b$ be the tangent vector with backward
focusing distance $f^{-}(v_p)=\infty$, see \S \ref{sec.Bmap} for more details.

\begin{lemma}\label{lem.foc.infty}
Let $(\alpha,\beta)\in D$, $b=\fb(\alpha, \beta)$, $p=p(\alpha, \beta)\in M_b$
be the point with first reflection at $\hat P_1^{\ell}$ coming from its right side, $v_p \in T_{p}M_b$ be the vector with $f^{-}(v_p)=\infty$, and $f_{j}^{\pm}(v_p)=f^{\pm}(DF_b^{j-1}(v_p))$, $1\le j \le 3$.
Then $f_3^{+}(v_p)=\infty$ if and only if 
\begin{align}
F_{\cT}(\alpha, \beta):=
&\cos^2(\alpha+\beta)\big((\cos\alpha+\cos\beta)\cos(\alpha+\beta)-\sin\alpha\sin\beta)\big) \nonumber \\ &+(1+\cos(\alpha+\beta))(\sin\alpha-\sin\beta)^2=0. \label{eq.lem.focal.infty}
\end{align}
\end{lemma}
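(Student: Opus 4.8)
The plan is to set up the billiard trajectory through the three reflection points $\hat P_1^\ell$, $\hat U$ (a point on $\Gamma_\ell$), $\hat V$ (a point on $\Gamma_\ell$), following the $\ccM$-orbit from the right as in Fig.~\ref{figure:parallel}, and to track the focusing distance through three applications of the mirror equation \eqref{eq.mirror}. The initial vector $v_p$ has $f^-(v_p)=\infty$, meaning the incoming beam is a parallel pencil. First I would record the geometric data along this orbit: the radius of curvature at each reflection is $1$ (both arcs are unit circles), so $d(x)=\sin\theta_j$ at each collision, where $\theta_j$ is the reflection angle; the angles $\theta_j$ and the segment lengths $L_j$ between consecutive reflections are exactly the quantities $\alpha,\beta$ and the $t,s$ computed in \eqref{eq.ts}, together with $|P_1^rP_2^\ell|$-type lengths, all of which are already explicit rational-trigonometric functions of $(\alpha,\beta)$. (By the symmetry of the construction — the $\ccM$-orbit is the $\crM$-orbit rotated by $\pi$ — these are the same data that appear in Lemma~\ref{lem.ab.special} and in Section~\ref{sec.el.hy}.)

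Next I would iterate the mirror equation. Starting from $f_1^-(v_p)=\infty$, the mirror equation at the first reflection gives $f_1^+(v_p) = d_1/2 = \tfrac12\sin\theta_1$. Then $f_2^-(v_p) = f_1^+(v_p) - L_1$ (the focusing point measured from the next reflection point along the trajectory), and the mirror equation at the second reflection yields $f_2^+$; one more step gives $f_3^-$, and the final mirror equation at the third reflection gives $f_3^+$. The condition $f_3^+(v_p)=\infty$ is then $\dfrac{1}{f_3^-(v_p)} = \dfrac{2}{d_3}$, i.e. $\dfrac{1}{f_2^+(v_p)-L_2} = \dfrac{2}{\sin\theta_3}$. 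Unwinding this recursion symbolically produces a single rational equation in $\sin\theta_j$, $\cos\theta_j$ and $L_j$; substituting the explicit formulas for these in terms of $(\alpha,\beta)$ and clearing denominators should collapse — after the trigonometric simplifications that the paper has been using throughout (product-to-sum, half-angle) — to the stated polynomial identity $F_{\cT}(\alpha,\beta)=0$. I would present the chain of substitutions and then assert the final algebraic simplification, since it is a routine (if lengthy) computation of the same flavor as \eqref{bsquare}, \eqref{eq.hyp.tr}, and \eqref{meaningJ}.

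A useful consistency check to include: along the diagonal $\alpha=\beta$ the orbit is (part of) the elliptic periodic orbit $\cO_6^e(b)$ by Lemma~\ref{lem.ab.special}, so $F_{\cT}(\alpha,\alpha)=0$ should be equivalent to the eigenvalue of $D F_b^3$ at $e_0(b)$ being $\pm 1$, i.e.\ to $f(b)=-1$ with $b=\fb(\alpha,\alpha)=1+\tfrac1{2\cos\alpha}$ — and indeed when $\alpha=\beta$ the second line of \eqref{eq.lem.focal.infty} vanishes and the first reduces to $\cos^2(2\alpha)\bigl(2\cos\alpha\cos2\alpha-\sin^2\alpha\bigr)$, whose nontrivial zero $\alpha=\alpha_0$ gives $\fb(\alpha_0,\alpha_0)=b_{\crit}$ by Lemma~\ref{lem.b.crit}. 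Likewise, on the boundary $\alpha=0$ (or $\beta=0$) the orbit is part of the hyperbolic orbit $\cO_6^h(b)$, and the focusing condition should correspondingly fail to produce extra solutions there. These limiting cases both give sanity checks on the sign conventions in the mirror equation and on the bookkeeping of which side of each reflection point the focusing point lies.

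**The main obstacle** will be the bookkeeping of signs in the focusing distances — $f^\pm$ are signed, and at each reflection one must be careful whether the virtual focal point lies before or after the reflection point along the (possibly "backward-traveling", cf. cases (3)–(4) after \eqref{bsquare}) trajectory segment — compounded by the sheer size of the intermediate rational expressions before they simplify. I would mitigate the first by fixing orientation conventions once, tied to Fig.~\ref{figure:parallel}, and checking them against the two limiting cases above; the second is unavoidable algebra, but the final answer $F_{\cT}$ is given, so the computation is a verification rather than a search.
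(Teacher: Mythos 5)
Your plan is essentially the paper's proof: both apply the mirror equation along the three reflections of the parallel-beam $\ccM$-orbit, express the two intermediate segment lengths by a linear system analogous to \eqref{eq.ts} (with $\alpha,\beta$ switched), and reduce $f_3^{+}(v_p)=\infty$ to the stated trigonometric identity by routine simplification. The only differences are organizational and notational (under the paper's convention $\frac{1}{f^+}+\frac{1}{f^-}=\frac{2}{d}$ one has $f_2^-=\tau_1-f_1^+$, not $f_1^+-L_1$, and your $\hat U$ lies on the right arc, which is harmless since both arcs have curvature $1$): the paper uses $f_3^{+}=\infty$ to fix $f_3^{-}=\frac{\cos\alpha}{2}$ directly and imposes the mirror equation only at the middle reflection, obtaining $\frac{1}{\tau_1-\frac{\cos\beta}{2}}+\frac{1}{\tau_2-\frac{\cos\alpha}{2}}=\frac{2}{\cos(\alpha+\beta)}$, rather than propagating forward through all three bounces.
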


\begin{proof}
Let $\tau_1=|\hat{P}_1^\ell\hat{U}|$ and $\tau_2=|\hat{U}\hat{V}|$, see Fig.~\ref{figure:parallel}.
Suppose $f_3^{+}(v_p)=\infty$. It follows from \eqref{eq.mirror} that $f_3^-(v_p)=\frac{\cos\alpha}{2}$, and  
\begin{align*}
&f_2^-(v_p)=\tau_1-f_1^+(v_p)=\tau_1 -\frac{\cos\beta}{2}, \\
&f_2^+(v_p)=\tau_2-f_3^-(v_p) =\tau_2 -\frac{\cos\alpha}{2}.
\end{align*} 
It follows that \eqref{eq.mirror} again, we get
\begin{align}\label{eq: focal infty}
\frac{1}{\tau_1-\frac{\cos\beta}{2}}+\frac{1}{\tau_2-\frac{\cos\alpha}{2}}=\frac{2}{\cos(\alpha+\beta)}.
\end{align}
On the other hand, we have (similarly to Eq.~\eqref{eq.ts}, just with $\alpha$ and $\beta$ switched) 
\begin{align*}
&\tau_1\sin2\beta+\tau_2\sin2\alpha=\sin\alpha+\sin\beta, \\
&\cos\beta-\tau_1\cos2\beta=\cos\alpha-\tau_2\cos2\alpha,
\end{align*}
from which we solve for $\tau_1,\tau_2$:
\begin{align*}
\begin{bmatrix}
\tau_1\\ \tau_2
\end{bmatrix}
=&\frac{1}{\sin(2\alpha+2\beta)}
\begin{bmatrix}
\sin(2\alpha+\beta) -\sin\alpha \\ \sin(2\beta+\alpha) -\sin\beta
\end{bmatrix}.
\end{align*}
Plugging $\tau_1$ and $\tau_2$ into \eqref{eq: focal infty} and simplifying it, we have
\begin{align*}
&4 \sin ^4\left(\frac{\alpha+\beta}{2}\right) \big(4 (\cos \alpha+\cos \beta) \cos ^3(\alpha+\beta)\\
&-(\sin (2 \alpha+\beta)+2 \sin \alpha-3 \sin\beta) (\sin (\alpha+2 \beta)-3 \sin \alpha+2 \sin\beta)\big)=0. 
\end{align*}

In the prescribed range of $\alpha$ and $\beta$, it is equivalent to 
\begin{align}\label{ineq: focal a,b}
&4 (\cos \alpha+\cos \beta) \cos ^3(\alpha+\beta)\\
&-(\sin (2 \alpha+\beta)+2 \sin \alpha-3 \sin\beta) (\sin (\alpha+2 \beta)-3 \sin \alpha+2 \sin\beta)=0. 
\end{align}
Using
\begin{align*}
&\sin (2 \alpha+\beta)+2 \sin \alpha-3 \sin\beta=2\sin\alpha\cos(\alpha+\beta)+2\sin\alpha-2\sin\beta\\
&\sin (2 \alpha+\beta)+2 \sin \beta-3 \sin\alpha=2\sin\beta\cos(\alpha+\beta)+2\sin\beta-2\sin\alpha, 
\end{align*}
we get exactly Eq.~\eqref{eq.lem.focal.infty}. 
\end{proof} 

Let $\cT$ denote for the curve defined by Eq.~\eqref{eq.lem.focal.infty}. It is of the shape of a ``teardrop" contained in $D$, see Figure \ref{figure:teardrop}, in which it has been extended to be a nodal curve inside the square $[0,\frac{\pi}{2}]^2$. 

\begin{lemma}\label{lem.CTcJ}
The curve $\cT$ is a closed curve contained in the region bounded by $\cJ$ and $\{\alpha+\beta=\frac{\pi}{2}\}$, and intersects the curve $\cJ$ tangentially at a single point $(\alpha_0, \alpha_0)$. 
\end{lemma}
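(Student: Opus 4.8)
The plan is to compare the two curves $\cJ = \{F_\cJ = 0\}$ and $\cT = \{F_\cT = 0\}$ directly, using the algebraic relation between $F_\cJ$ and $F_\cT$ that is visible in \eqref{eq.green.curve} and \eqref{eq.lem.focal.infty}. Writing $c = \cos(\alpha+\beta)$ and $\delta = (\sin\alpha - \sin\beta)^2 \ge 0$, we have
\begin{align*}
F_\cT(\alpha,\beta) = c^2\, F_\cJ(\alpha,\beta) + (1+c)\,\delta.
\end{align*}
On $D$ we have $0 \le \alpha + \beta < \tfrac{\pi}{2}$, so $c = \cos(\alpha+\beta) \in (0,1]$ and $1 + c > 0$. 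Hence $F_\cT = 0$ forces $c^2 F_\cJ = -(1+c)\delta \le 0$, i.e. $F_\cJ(\alpha,\beta) \le 0$ on $\cT$, with equality exactly when $\delta = 0$, i.e. when $\alpha = \beta$. By Lemma \ref{lem.cJ}, on the side of $\cJ$ toward the line $\{\alpha+\beta = \tfrac{\pi}{2}\}$ we have $F_\cJ < 0$ (the gradient of $F_\cJ$ points away from that line, into the region where $\alpha+\beta$ is smaller, since $(\partial_\alpha + \partial_\beta)F_\cJ < 0$), while $F_\cJ > 0$ on the side toward the origin. Therefore $\cT$ lies in the closed region between $\cJ$ and $\{\alpha+\beta = \tfrac{\pi}{2}\}$, and the only points of $\cT$ on $\cJ$ itself are those with $\alpha = \beta$. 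Along the diagonal, $F_\cT(\alpha,\alpha) = c^2 F_\cJ(\alpha,\alpha)$ since $\delta = 0$ there, so $\cT$ meets $\cJ$ on the diagonal precisely at the zeros of $F_\cJ(\alpha,\alpha)$ with $c \ne 0$; by Lemma \ref{lem.cJ} the only such point in $D$ is $(\alpha_0,\alpha_0)$.

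Next I would show $\cT$ is a closed curve contained in (the interior of) $D$, and that it does not run off to $\{\alpha+\beta = \tfrac{\pi}{2}\}$. Near that boundary, $c \to 0$, so $F_\cT \to (1+c)\delta \to \delta$, which is strictly positive unless $\alpha = \beta = \tfrac{\pi}{4}$; thus $\cT$ stays away from $\{\alpha+\beta = \tfrac{\pi}{2}\}$ except possibly near the corner $(\tfrac{\pi}{4},\tfrac{\pi}{4})$, which is the essential singularity of the construction and is excluded. On the two edges $\{\alpha = 0\}$ and $\{\beta = 0\}$ one checks $F_\cT > 0$ directly: for $\alpha = 0$, $F_\cT(0,\beta) = \cos^2\beta\,(1+\cos\beta)\cos\beta + (1+\cos\beta)\sin^2\beta = (1+\cos\beta)(\cos^3\beta + \sin^2\beta) > 0$. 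So $\cT$ avoids the entire boundary of $D$ and, being the zero set of the real-analytic function $F_\cT$, is a compact union of real-analytic curves inside $D$; that it is a single simple closed curve (a ``teardrop'') follows by the same vector-field argument as in Lemma \ref{lem.cJ}, computing $(\partial_\alpha - \partial_\beta)F_\cT$ and $(\partial_\alpha + \partial_\beta)F_\cT$ to pin down the direction of $\mathrm{grad}\,F_\cT$ in each half $\{\alpha < \beta\}$, $\{\alpha > \beta\}$, together with the symmetry of $F_\cT$ about the diagonal, which is manifest from \eqref{eq.lem.focal.infty}.

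Finally, the tangency at $(\alpha_0,\alpha_0)$: from $F_\cT = c^2 F_\cJ + (1+c)\delta$ and $\delta = (\sin\alpha - \sin\beta)^2$, at a point where $\alpha = \beta$ both $\delta = 0$ and $\mathrm{grad}\,\delta = 0$ (since $\delta$ has a double zero along the diagonal), so $\mathrm{grad}\,F_\cT = c^2\,\mathrm{grad}\,F_\cJ$ at $(\alpha_0,\alpha_0)$ because $F_\cJ(\alpha_0,\alpha_0) = 0$ kills the term where the derivative hits $c^2$. Hence $\cT$ and $\cJ$ have the same tangent line at $(\alpha_0,\alpha_0)$, i.e. they are tangent there. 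Since, as shown above, this is the only common point, the intersection is a single tangential contact, as claimed.

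I expect the main obstacle to be the global topological claim that $\cT$ is a \emph{single} simple closed loop rather than several analytic components: the local gradient computations control the direction of $\cT$ but one must rule out extra ovals of $\{F_\cT = 0\}$ inside $D$. I would handle this by the same integrating-curve-of-$V_F$ argument used for $\cJ$ in Lemma \ref{lem.cJ}, exploiting the diagonal symmetry and the sign of $(\partial_\alpha - \partial_\beta)F_\cT$ to show $\cT$ crosses the diagonal exactly twice (once at $(\alpha_0,\alpha_0)$ and once at a second diagonal point where $F_\cJ(\alpha,\alpha) < 0$), forcing connectedness; the positivity of $F_\cT$ on $\partial D$ then confines the loop to the interior.
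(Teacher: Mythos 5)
Your proposal is correct and rests on the same key observation as the paper's own (much terser) proof: the identity $F_\cT=\cos^2(\alpha+\beta)\,F_\cJ+(1+\cos(\alpha+\beta))(\sin\alpha-\sin\beta)^2$, which forces any common zero of $F_\cT$ and $F_\cJ$ onto the diagonal (hence onto $(\alpha_0,\alpha_0)$) and shows $\cT$ can reach $\{\alpha+\beta=\tfrac{\pi}{2}\}$ only at $(\tfrac{\pi}{4},\tfrac{\pi}{4})$; you additionally make explicit the containment (via $F_\cJ\le 0$ on $\cT$ and the sign structure of $F_\cJ$ from Lemma \ref{lem.cJ}), the boundary positivity of $F_\cT$, and the tangency (via $\mathrm{grad}\,F_\cT=\cos^2(\alpha+\beta)\,\mathrm{grad}\,F_\cJ$ at $(\alpha_0,\alpha_0)$), all of which the paper leaves implicit. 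One correction to your deferred connectedness plan: since $F_\cT(\alpha,\alpha)=\cos^2(2\alpha)\,F_\cJ(\alpha,\alpha)$, the curve $\cT$ meets the diagonal inside $D$ only at $(\alpha_0,\alpha_0)$ — the teardrop's second diagonal point is the corner $(\tfrac{\pi}{4},\tfrac{\pi}{4})$ on the line $\{\alpha+\beta=\tfrac{\pi}{2}\}$, not an interior point with $F_\cJ<0$ — and the cone-confinement argument of Lemma \ref{lem.cJ} cannot transfer verbatim to a closed loop, whose normal direction must sweep a half-turn along each half; note, however, that the paper's proof does not establish connectedness of $\cT$ either, so your proposal covers everything the paper actually proves.
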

\begin{proof}
It follows from the definition of the functions $F_{\cJ}$ in Eq.~\eqref{eq.green.curve} and $F_{\cT}$ in Eq.~\eqref{eq.lem.focal.infty} that the intersection of $\cJ$ and $\cT$ happens only at their intersection with the diagonal $\alpha=\beta$, which corresponds to the point $(\alpha_0, \alpha_0)$. To show that $\cT$ is closed and is contained in the region bounded by $\cJ$ and the line $\alpha+\beta=\frac{\pi}{2}$, we only need to compute the intersection of $\cT$ with the line $\alpha+\beta=\frac{\pi}{2}$. Then we get  $\alpha=\beta$, and there is exactly one solution $(\alpha,\beta)=(\frac{\pi}{4}, \frac{\pi}{4})$. 
\end{proof}

\begin{figure}[htbp]
\begin{overpic}[height=3in,unit=0.2in]{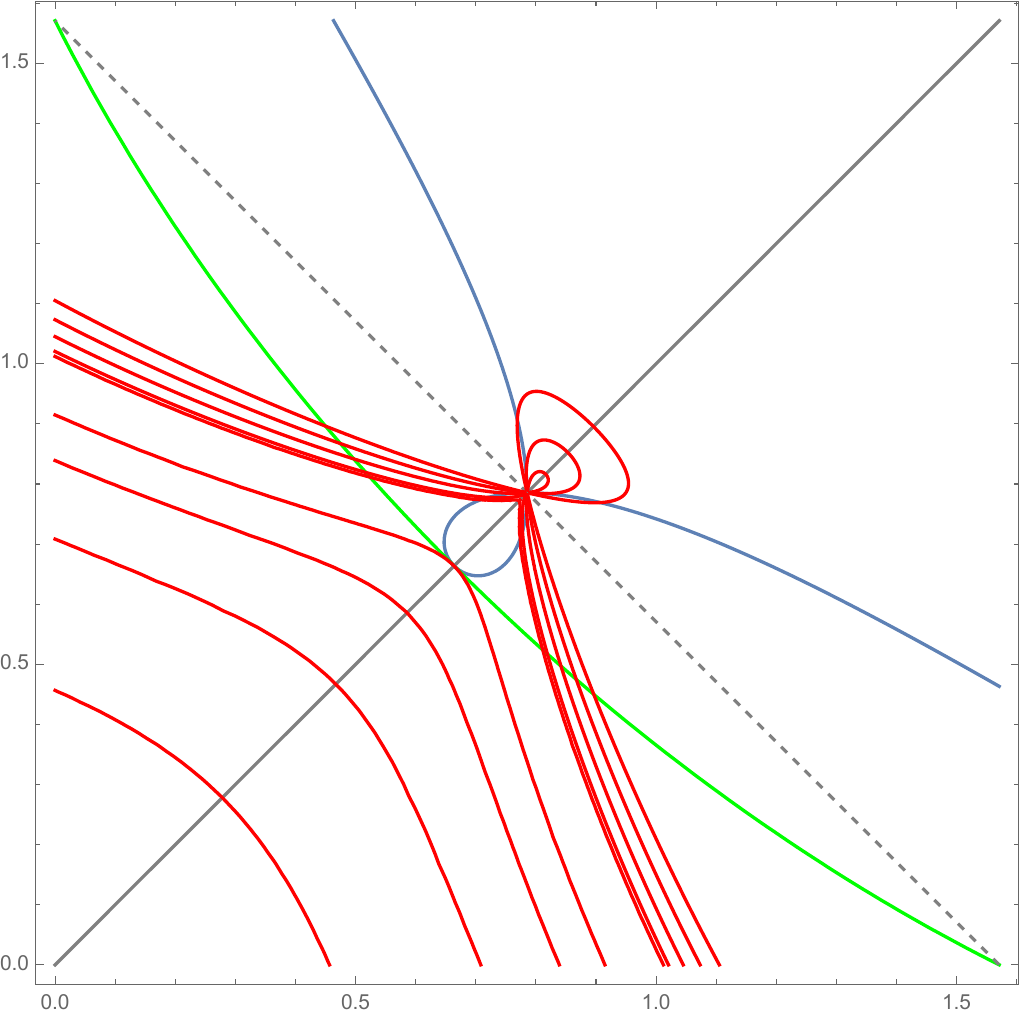}
\put (7,3) {\color{blue} $Y_{\ell r}>0$}
\put (3, 7) {\color{blue} $Y_{\ell r}<0$}
\put (4.6, 10.5) {\color{blue}$Y_{\ell r}>0$}
\put (10.5,4.6) {\color{blue}$Y_{\ell r}<0$}
\end{overpic}
\caption{The blue curve is $\cT$ which is defined by (\ref{eq.lem.focal.infty}). The green curve is $\cJ$ which is defined by (\ref{eq.green.curve}). The function $Y_{\ell r}$ vanishes on the union of $\cJ$ and the diagonal. The red curves are some level curves of $b$.
} \label{figure:teardrop}
\end{figure}

Next we will show that the level curves of the function $\fb=\fb(\alpha, \beta)$ are smooth.

\begin{lemma}\label{lem.partial.a-b}
$(\pa_{\alpha}-\pa_{\beta})\fb^2>0$ for $\alpha>\beta$ 
and $(\pa_{\alpha}-\pa_{\beta})\fb^2<0$ for $\alpha<\beta$.
\end{lemma}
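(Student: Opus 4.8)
\emph{Proof proposal.} The plan is to differentiate \eqref{bsquare} directly. First note that $\fb^2$ is symmetric in $(\alpha,\beta)$, so the operator $V:=\pa_\alpha-\pa_\beta$ sends it to an antisymmetric function that vanishes on the diagonal $\alpha=\beta$; hence it suffices to prove $V\fb^2>0$ for $\alpha>\beta$, and the case $\alpha<\beta$ follows. When differentiating \eqref{bsquare}, every summand whose numerator and denominator are functions of $\alpha+\beta$ alone is annihilated by $V$, so only the numerators $\sin\alpha+\sin\beta$, $\sin\alpha\sin\beta$ and $(\sin\alpha-\sin\beta)^2$ contribute, and one obtains
\begin{align*}
V\fb^2=\frac{\cos\alpha-\cos\beta}{\sin(\alpha+\beta)}-\frac{\sin(\alpha-\beta)}{\sin^2(\alpha+\beta)}+\frac{2(\sin\alpha-\sin\beta)(\cos\alpha+\cos\beta)}{\sin^2(2\alpha+2\beta)}.
\end{align*}
For $\alpha>\beta$ the first two terms are negative and only the last is positive, so the crux is to show that the last term dominates.

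To make the cancellation transparent I would pass to the half-angle variables $u=\tfrac{\alpha-\beta}{2}$ and $w=\tfrac{\alpha+\beta}{2}$, which for $\alpha>\beta$ in the relevant domain satisfy $0<u\le w<\tfrac\pi4$. Using $\cos\alpha-\cos\beta=-2\sin w\sin u$, $\sin\alpha-\sin\beta=2\cos w\sin u$, $\cos\alpha+\cos\beta=2\cos w\cos u$ together with $\sin(\alpha+\beta)=2\sin w\cos w$ and $\sin(2\alpha+2\beta)=4\sin w\cos w\cos 2w$, each term simplifies and the common factor $\sin u>0$ pulls out, giving
\begin{align*}
V\fb^2=\sin u\left(-\frac1{\cos w}+\frac{\cos u}{2\sin^2 w}\Big(\frac1{\cos^2 2w}-\frac1{\cos^2 w}\Big)\right).
\end{align*}
Next, using the identity $\cos^2 w-\cos^2 2w=\sin w\,\sin 3w$, one rewrites the bracket, multiplies through by $\cos w>0$, and reduces the inequality $V\fb^2>0$ to $\cos u\,\sin 3w>\sin 2w\,\cos^2 2w$.

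Finally, since $0<u\le w<\tfrac\pi4$ forces $\cos u\ge\cos w>0$, it is enough to prove $\cos w\,\sin 3w>\sin 2w\,\cos^2 2w$ for $0<w<\tfrac\pi4$. Substituting $\sin 3w=\sin w\,(2\cos 2w+1)$ and $\sin 2w=2\sin w\cos w$ and cancelling the positive factor $\sin w\cos w$, this becomes $2\cos 2w+1>2\cos^2 2w$; with $c=\cos 2w\in(0,1)$ this is the quadratic inequality $2c^2-2c-1<0$, which holds on all of $(0,1)$ because the roots $\tfrac{1\pm\sqrt3}{2}$ of $2c^2-2c-1$ lie outside $[0,1]$. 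This completes the argument. I do not expect a genuine obstacle: the proof is a finite trigonometric computation, and the only real care needed is the sign bookkeeping — two of the three surviving terms have the ``wrong'' sign — together with the half-angle substitution that collapses everything to an elementary inequality.
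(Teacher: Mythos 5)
Your proposal is correct and follows essentially the same route as the paper: you differentiate \eqref{bsquare} term by term (obtaining the paper's first displayed line verbatim) and then factor out $\sin\frac{\alpha-\beta}{2}$; your half-angle expression in $u=\frac{\alpha-\beta}{2}$, $w=\frac{\alpha+\beta}{2}$ is exactly the paper's factored form, and your reduced inequality $\cos u\,\sin 3w>\sin 2w\,\cos^2 2w$ is precisely the positivity of the parenthesized term that the paper asserts. The only difference is that you actually verify that positivity (via $\cos u\ge\cos w$ on $0<u\le w<\tfrac{\pi}{4}$ and the quadratic $2c^2-2c-1<0$ in $c=\cos 2w$), where the paper merely states it; your verification is sound.
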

\begin{proof}
By direct computation, we have
\begin{align*}
(\pa_{\alpha}-\pa_{\beta})\fb^2
&=-\frac{\sin (\alpha-\beta)}{\sin^2(\alpha+\beta)}+\frac{2 (\sin \alpha-\sin \beta) (\cos \alpha+\cos \beta)}{\sin^2 2 (\alpha+\beta)}+\frac{\cos \alpha-\cos \beta}{\sin (\alpha+\beta)} \\
&= \frac{8 \sin \frac{\alpha+\beta}{2}\sin \frac{\alpha-\beta}{2}}{\sin^2 (2\alpha+2\beta)}
\left(\sin \frac{3\alpha+3\beta}{2} \cos\frac{\alpha-\beta}{2}-\sin (\alpha+\beta) \cos ^2(\alpha+\beta)\right).
\end{align*}
Note that the term in the parentheses of the last term is strictly positive.
Then the lemma follows.
\end{proof}

\begin{corollary}
The gradient $\nabla \fb^2$ is  nowhere vanishing on the domain $D$,
and the level curves of $\fb^2$ are smooth and  tangent to the lines $\alpha+\beta=c$ exactly when crossing the diagonal $\alpha=\beta$.
\end{corollary}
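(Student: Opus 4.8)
The plan is to reduce everything to the single claim that the gradient $\nabla\fb^2$ is nowhere zero on $D$; once that is known, the level curves $\{\fb^2=b^2\}$ are regular level sets of a real-analytic function and hence smooth one-dimensional submanifolds, and the tangency statement becomes a statement about the \emph{direction} of $\nabla\fb^2$. Since the line $\{\alpha+\beta=c\}$ has direction vector $(1,-1)$, a level curve of $\fb^2$ is tangent to such a line at a point $p$ precisely when its tangent line there is parallel to $(1,-1)$, i.e.\ precisely when $\nabla\fb^2(p)$ is parallel to $(1,1)$, i.e.\ precisely when $(\pa_\alpha-\pa_\beta)\fb^2(p)=0$. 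So the whole corollary will follow from (i) $\nabla\fb^2\neq 0$ on $D$ and (ii) the identification of the zero set of $(\pa_\alpha-\pa_\beta)\fb^2$ with the diagonal.

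First I would dispose of the off-diagonal points. By Lemma~\ref{lem.partial.a-b}, $(\pa_\alpha-\pa_\beta)\fb^2\neq 0$ whenever $\alpha\neq\beta$; in particular the directional derivative of $\fb^2$ in the direction $(1,-1)$ is nonzero there, so $\nabla\fb^2\neq 0$ at every off-diagonal point, and moreover $\nabla\fb^2$ is \emph{not} parallel to $(1,1)$ there, so the level curve is transverse (not tangent) to the lines $\alpha+\beta=c$. It remains to treat points on the diagonal $\{(\beta,\beta):0<\beta<\tfrac{\pi}{4}\}$ (the point $(0,0)$ being excluded from $D$). Here Lemma~\ref{lem.partial.a-b} gives no information, so I would use the symmetry of $\fb^2$ in $(\alpha,\beta)$: this makes $(\pa_\alpha-\pa_\beta)\fb^2$ an antisymmetric function, hence it vanishes identically on the diagonal, and therefore on the diagonal $\nabla\fb^2=\tfrac12\big((\pa_\alpha+\pa_\beta)\fb^2\big)(1,1)$. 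To see this vector is nonzero I would use the explicit restriction to the diagonal already recorded in Section~\ref{sec.prl}, namely $\fb(\beta,\beta)=1+\tfrac{1}{2\cos\beta}$, whence
\[
(\pa_\alpha+\pa_\beta)\fb^2\big|_{(\beta,\beta)}=\frac{d}{d\beta}\Big(1+\frac{1}{2\cos\beta}\Big)^2
=2\Big(1+\frac{1}{2\cos\beta}\Big)\frac{\sin\beta}{2\cos^2\beta}>0
\qquad\text{for }0<\beta<\tfrac{\pi}{4}.
\]
Thus $\nabla\fb^2\neq 0$ on the diagonal as well, and there it is a positive multiple of $(1,1)$, which is exactly the condition for the level curve to be tangent to $\{\alpha+\beta=c\}$.

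Combining the two cases: $\nabla\fb^2$ is nowhere vanishing on $D$, so each level set $\fb^{-1}(b)^2$ is a smooth curve; and a point of a level curve is a tangency point with some line $\alpha+\beta=c$ if and only if $(\pa_\alpha-\pa_\beta)\fb^2=0$ there, which by Lemma~\ref{lem.partial.a-b} and the symmetry argument happens exactly on the diagonal $\alpha=\beta$. I do not anticipate a serious obstacle here: the only point requiring care is that Lemma~\ref{lem.partial.a-b} is silent on the diagonal, which is why the separate computation along $\alpha=\beta$ (using the symmetry to kill the antisymmetric part of the gradient and the formula $\fb(\beta,\beta)=1+\tfrac{1}{2\cos\beta}$ to control the symmetric part) is the substantive extra ingredient.
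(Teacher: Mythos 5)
Your proposal is correct and follows essentially the same route as the paper: use Lemma \ref{lem.partial.a-b} to handle all off-diagonal points, and then check $\nabla\fb^2\neq 0$ on the diagonal via the restriction $\fb^2(\beta,\beta)=\bigl(1+\tfrac{1}{2\cos\beta}\bigr)^2$, whose $\beta$-derivative is $(\pa_\alpha+\pa_\beta)\fb^2$ there. Your explicit remarks that $(\pa_\alpha-\pa_\beta)\fb^2$ vanishes on the diagonal by symmetry and that tangency to $\{\alpha+\beta=c\}$ is equivalent to $(\pa_\alpha-\pa_\beta)\fb^2=0$ simply spell out what the paper leaves implicit.
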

\begin{proof}
By Lemma \ref{lem.partial.a-b}, only need to check if $\nabla \fb^2\neq 0$ along the diagonal $\alpha=\beta$.
Recall that $\fb^2(\beta, \beta)=(1+\frac{1}{2\cos\beta})^2$.
In particular, $(\pa_{\alpha}+\pa_{\beta})\fb^2 >0$ along the diagonal.
Collecting terms, we have $\nabla \fb^2\neq 0$ on $D$. The remaining properties follows from the characterizations of $(\pa_{\alpha}-\pa_{\beta})\fb^2$.
This completes the proof.
\end{proof}

Recall $b_{\max}=1+ 2^{-1/2}=\lim\limits_{\beta\nearrow \frac{\pi}{4}}\fb(\beta, \beta)$
and $b_{\crit}=\fb(\alpha_0, \alpha_0)$, see Lemma \ref{lem.b.crit}.
\begin{lemma}\label{lem.fb.level}
The level curve $\fb(\alpha,\beta)=b$ does not intersect the curve $\cJ$ for $1.5<b< b_{\crit}$,
and intersects  $\cJ$ exactly once in the upper wedge
$\alpha<\beta$, crosses the diagonal and then intersects $\cJ$ exactly once in the lower wedge $\alpha>\beta$ for $b_{\crit} < b < b_{\max}$.
\end{lemma}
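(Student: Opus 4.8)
First I would reduce the statement to a one-variable problem along the diagonal together with a transversality argument off the diagonal. The key structural input is Lemma~\ref{lem.partial.a-b} and its corollary: the level curve $\{\fb=b\}$ is a smooth curve crossing the diagonal $\{\alpha=\beta\}$ exactly once (since $(\partial_\alpha+\partial_\beta)\fb^2>0$ there), tangent to a line $\alpha+\beta=c$ at that crossing, and off the diagonal it is a graph over the direction $\alpha+\beta$ because $(\partial_\alpha-\partial_\beta)\fb^2$ has a definite sign on each wedge. Similarly Lemma~\ref{lem.cJ} tells us $\cJ$ is a smooth curve symmetric about the diagonal, meeting it exactly once at $(\alpha_0,\alpha_0)$, and contained in the corner region cut off by its tangent line there. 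So both curves are ``convex-looking'' arcs straddling the diagonal, and the question of how many times they meet is essentially governed by (i) how the diagonal point $\fb(\beta,\beta)=1+\tfrac{1}{2\cos\beta}$ compares to $b_{\crit}=\fb(\alpha_0,\alpha_0)$, and (ii) a transversality statement that forces the intersection number in each wedge to be exactly one once $b>b_{\crit}$.

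Second, for the regime $1.5<b<b_{\crit}$: on the diagonal, $\fb(\beta,\beta)$ is strictly increasing in $\beta$, so $\fb(\beta,\beta)<b_{\crit}=\fb(\alpha_0,\alpha_0)$ forces $\beta<\alpha_0$, i.e.\ the diagonal crossing of $\{\fb=b\}$ lies strictly below the diagonal crossing of $\cJ$ in the ordering along the diagonal. I would then use the geometric containment from Lemma~\ref{lem.cJ}: $\cJ$ lies in the ``upper-right'' region bounded by its tangent line at $(\alpha_0,\alpha_0)$, whereas the normal directions of $\{\fb=b\}$ (from Lemma~\ref{lem.partial.a-b}) point in a cone that keeps the sublevel set $\{\fb<b\}$ on the origin side; combining these, the curve $\{\fb=b\}$ stays strictly inside the region $\{F_\cJ>0\}$ (the side of $\cJ$ containing the origin), so the two curves are disjoint. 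Concretely: along $\{\fb=b\}$, restrict the function $F_\cJ$; show it has no zero by checking it is positive at the diagonal point (because $\beta<\alpha_0$ and $F_\cJ>0$ below the $\cJ$-crossing on the diagonal) and monotone in the appropriate sense as one moves along $\{\fb=b\}$ away from the diagonal — the monotonicity coming from comparing $\mathrm{grad}\,F_\cJ$ against the tangent of $\{\fb=b\}$ using the cone estimates $(\partial_\alpha-\partial_\beta)F_\cJ<0$ and $(\partial_\alpha+\partial_\beta)F_\cJ<0$ from Lemma~\ref{lem.cJ} together with Lemma~\ref{lem.partial.a-b}.

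Third, for the regime $b_{\crit}<b<b_{\max}$: now $\fb(\beta,\beta)=b$ has its solution $\beta>\alpha_0$, so the diagonal crossing of $\{\fb=b\}$ lies strictly \emph{above} that of $\cJ$ along the diagonal, hence $F_\cJ<0$ at that diagonal point. I would then track $F_\cJ$ restricted to $\{\fb=b\}$ as one runs from the diagonal outward into the wedge $\alpha<\beta$: it starts negative, and as $\alpha+\beta\nearrow\tfrac{\pi}{2}$ the curve approaches the boundary line where (by Lemma~\ref{lem.CTcJ} and the analysis near $(\tfrac\pi4,\tfrac\pi4)$) $F_\cJ$ becomes positive (indeed $\cJ$ terminates at $(0,\tfrac\pi2)$, which is on the far side), so by the intermediate value theorem there is at least one crossing; uniqueness in that wedge follows from the same monotonicity/cone comparison as in Step~2 — the sign of $\tfrac{d}{dt}F_\cJ$ along $\{\fb=b\}$ is constant on the open wedge because the tangent vector of $\{\fb=b\}$ lies in a fixed half-plane relative to $\mathrm{grad}\,F_\cJ$ throughout. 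By the symmetry of both curves about the diagonal, the mirror crossing occurs exactly once in the wedge $\alpha>\beta$, and there is no crossing on the diagonal itself (since $F_\cJ\ne0$ there unless $b=b_{\crit}$). This gives exactly the claimed count.

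\textbf{Main obstacle.} The delicate point is the uniqueness (``exactly once'') in each wedge for $b_{\crit}<b<b_{\max}$, i.e.\ ruling out extra oscillations of $F_\cJ|_{\{\fb=b\}}$. This requires a clean comparison between the tangent direction of the level curve $\{\fb=b\}$ and $\mathrm{grad}\,F_\cJ$ that is valid on the whole open wedge $\{\alpha<\beta\}\cap D$, not just near the diagonal. I expect the cleanest route is to show that $\mathrm{grad}\,\fb^2$ and $\mathrm{grad}\,F_\cJ$ are never positively proportional on the open wedge (equivalently, the Jacobian $\partial(\fb^2,F_\cJ)/\partial(\alpha,\beta)$ has a definite sign there), which would make $F_\cJ$ strictly monotone along each level curve of $\fb$; verifying that sign condition is the one genuinely computational lemma, and one may need to lean on the geometric meaning of $\cJ$ (the locus $Y_{\ell r}=0$) and of the level sets of $\fb$ (constant center-distance) rather than pushing the trigonometric identity through directly, in the spirit of the remarks in the introduction about leaning on geometry when the algebra is unwieldy.
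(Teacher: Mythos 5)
Your overall plan (compare diagonal crossings, get existence by an intermediate-value argument, get uniqueness/disjointness from a monotonicity statement) is sound, and the existence part for $b_{\crit}<b<b_{\max}$ is fine (modulo the minor slip that for $b<b_{\max}$ the level curve does not approach $\{\alpha+\beta=\tfrac{\pi}{2}\}$ but terminates on $\{\alpha=0\}$ at a point where $F_{\cJ}(0,\beta)=\cos\beta(1+\cos\beta)>0$, which still gives the sign change). The genuine gap is the monotonicity step on which both your Step 2 and the uniqueness in Step 3 rest. You claim that $F_{\cJ}$ restricted to $\{\fb=b\}$ is monotone ``by comparing $\mathrm{grad}\,F_\cJ$ against the tangent of $\{\fb=b\}$ using the cone estimates from Lemma \ref{lem.cJ} together with Lemma \ref{lem.partial.a-b}.'' Those inputs do not determine the sign: Lemma \ref{lem.cJ} puts $\nabla F_{\cJ}$ in the cone $\{a+b<0,\ a-b<0\}$, while Lemma \ref{lem.partial.a-b} controls only the $(1,-1)$-component of $\nabla\fb^2$, hence only a half-plane for the tangent $T$ of the level curve; for instance if $\nabla F_{\cJ}\propto(-1,0)$ the sign of $\nabla F_{\cJ}\cdot T$ is that of $\partial_\beta\fb^2$, which is not controlled on the whole wedge. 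You acknowledge this and defer to the claim that $\nabla\fb^2$ and $\nabla F_{\cJ}$ are never positively proportional on the entire open wedge, but that is a new, unverified computational statement (the paper's Lemma \ref{lem.partial.a+b} controls $(\partial_\alpha+\partial_\beta)\fb^2$ only on $D_2$, not on all of $D'$), and it is stronger than what the lemma needs. So as written the proof of both the disjointness for $b<b_{\crit}$ and the ``exactly once'' count is incomplete.

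The repair is to reverse the roles of the two functions: restrict $\fb$ to $\cJ$ rather than $F_{\cJ}$ to $\{\fb=b\}$. Since $\cJ\subset D_2$, Lemma \ref{lem.partial.a+b}(3) gives $(\partial_\alpha+\partial_\beta)\fb^2\ge 0$ \emph{on} $\cJ$, Lemma \ref{lem.partial.a-b} gives $(\partial_\alpha-\partial_\beta)\fb^2<0$ on the upper wedge, and Lemma \ref{lem.cJ} puts the tangent $V_F$ of $\cJ$ in the open cone spanned by $(1,-1)$ and $(-1,-1)$ there; writing $V_F=s(1,-1)+t(-1,-1)$ with $s,t>0$ yields $\nabla\fb^2\cdot V_F=s(\partial_\alpha-\partial_\beta)\fb^2-t(\partial_\alpha+\partial_\beta)\fb^2<0$, so $\fb$ is strictly monotone along each half of $\cJ$, increasing from $\fb(\alpha_0,\alpha_0)=b_{\crit}$ (Lemma \ref{lem.b.crit}) to $\infty$ at the endpoints of $\cJ$. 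This gives all three assertions at once (no intersection for $b<b_{\crit}$, exactly one per wedge for $b_{\crit}<b<b_{\max}$, none on the diagonal) using only information along $\cJ$ itself, and it is in the same spirit as the paper's argument, which combines the transversality of the level curves to the lines $\alpha+\beta=c$ (Lemma \ref{lem.partial.a-b}) with the cone condition on $\cJ$ to conclude at most one intersection per wedge.
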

\begin{proof}
Note that a tangent vector of a level curve of $\fb^2$ at $(0, \beta)$ is given by
$(1, -\frac{2\cos^3\beta -2\cos^2\beta+1}{1+\cos \beta})$, for which the second component
is between $-1$ and $0$. 
It follows from Lemma \ref{lem.partial.a-b}  that the minimum of $\fb(\alpha, \beta)$ can be only obtained along the diagonal, 
along which we have $\fb(\beta, \beta)= 1+\frac{1}{2\cos\beta}$.
Moreover, in the upper wedge $\alpha<\beta$, each level curve intersects
the lines $\alpha+\beta=c$ transversely. In particular, it intersects each line $\alpha+\beta=c$ at most once.
It follows that each level curve intersects $\cJ$ at most once in the upper wedge.
Then by symmetry,  a level curve 
crosses the diagonal and then intersects the curve $\cJ$ at most once in the lower wedge $\alpha>\beta$.
\end{proof}

In order to obtain a better understanding of the level curves of $\fb$, we will study $(\pa_{\alpha} +\pa_{\beta})\fb^2$. We will consider the two parts in  
$\fb^2=X_{\ell r}^2 +Y_{\ell r}^2$ separately, see \eqref{eq.OlOr}.
\begin{align}
\label{eq: partial_aplusb,X}(\partial_\alpha+\partial_{\beta})X^2_{\ell r}=& \frac{2}{\sin(2\alpha +2\beta)} \big(\cos(2\alpha +\beta) + \cos(\alpha +2\beta)\\
\nonumber&-4 \cot (2\alpha +2\beta)(\sin \alpha \cos 2\beta+\cos 2\alpha \sin \beta)-\sin2\alpha \sin \beta-\sin \alpha \sin 2\beta\big )\\
\nonumber&\Big(\cos (\alpha-\beta)+\frac{1}{\sin(2\alpha +2\beta)}(\sin \alpha \cos 2\beta+\cos 2\alpha \sin \beta)\Big), \\
(\partial_\alpha+\partial_{\beta})Y^2_{\ell r}
=&\frac{\sin^2\frac{\alpha-\beta}{2}\sin \frac{\alpha +\beta}{2}}{4\cos^2 \frac{\alpha +\beta}{2} \cos^3(\alpha +\beta)} \big(\cos (2\alpha +2\beta)-4 \cos (\alpha-\beta)-3 \cos 2\alpha-3 \cos 2\beta+1\big) \nonumber\\
& \big(\cos (2\alpha +\beta)+\cos (\alpha +2\beta)-2\sin\alpha\sin\beta+\cos \alpha+\cos \beta\big). \label{eq: partial_aplusb,Y}
\end{align}

For convenience we introduce two subsets of the domain $D$ given in Eq.~\eqref{def.domD}:
\begin{align}
D_1 &:=\{(\alpha, \beta) \in D: \alpha+\beta \ge \frac{\pi}{3}\}; \\
D_2 &:=\{(\alpha, \beta) \in D:(\cos\alpha+\cos\beta)\cos(\alpha+\beta)-\sin\alpha\sin\beta\leq 0\}.
\end{align}
Note that $D_2 \subset D_1$ since $(\cos\alpha+\cos\beta)\cos(\alpha+\beta)-\sin\alpha\sin\beta \ge  0$
for $(\alpha, \beta) \in D$ with $\alpha+\beta=\frac{\pi}{3}$.
\begin{lemma}\label{lem.partial.a+b}
(1) $(\partial_\alpha+\partial_\beta)X^2_{\ell r}> 0$ for every $(\alpha,\beta)\in D_1$.

(2) $(\partial_\alpha+\partial_\beta)Y^2_{\ell r}\geq 0$ for $(\alpha,\beta)\in D_2$.

(3) $(\partial_\alpha+\partial_\beta)\fb^2 \geq 0$ for $(\alpha,\beta)\in D_2$.
\end{lemma}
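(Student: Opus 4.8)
The plan is to prove the three statements in order, as each feeds the next. For (1), I would start from the factored expression \eqref{eq: partial_aplusb,X}. The second factor, namely $\cos(\alpha-\beta)+\frac{1}{\sin(2\alpha+2\beta)}(\sin\alpha\cos2\beta+\cos2\alpha\sin\beta)$, is manifestly positive on $D$: indeed $\cos(\alpha-\beta)>0$ there, $\sin(2\alpha+2\beta)>0$, and (by observation (1) in the list after $P_2^\ell$ is computed, i.e. the positivity of the $x$-component of $P_2^\ell$) the quantity $\sin\beta\cos2\alpha+\sin\alpha\cos2\beta$ is positive on $D$. So the sign of $(\partial_\alpha+\partial_\beta)X^2_{\ell r}$ is the sign of the first factor. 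I would then multiply that first factor through by $\sin(2\alpha+2\beta)>0$ to clear the cotangent and reduce to showing a pure trigonometric polynomial is positive on $D_1=\{\alpha+\beta\geq \pi/3\}$; using sum-to-product identities and the substitution $u=\alpha+\beta\in[\pi/3,\pi/2)$, $v=\alpha-\beta$, this should reduce to a one-variable estimate in $u$ for each fixed $v$, with the worst case at $v=0$ or at the boundary $u=\pi/3$. The restriction $\alpha+\beta\geq\pi/3$ is exactly what makes the relevant cosines have the right sign, so I expect this to be the bookkeeping-heavy but routine part.

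For (2), I again use the factored form \eqref{eq: partial_aplusb,Y}. The prefactor $\frac{\sin^2\frac{\alpha-\beta}{2}\sin\frac{\alpha+\beta}{2}}{4\cos^2\frac{\alpha+\beta}{2}\cos^3(\alpha+\beta)}$ is nonnegative on $D$ and vanishes precisely on the diagonal $\alpha=\beta$; note $\cos(\alpha+\beta)>0$ on $D$, so no sign change hides there. So I must show the product of the two bracketed factors is nonnegative on $D_2$. The first bracket, $\cos(2\alpha+2\beta)-4\cos(\alpha-\beta)-3\cos2\alpha-3\cos2\beta+1$, I would rewrite using double-angle formulas as a polynomial in $\cos\alpha,\cos\beta$ (or in $\cos\frac{\alpha+\beta}{2},\cos\frac{\alpha-\beta}{2}$) and check it is $\le 0$ on all of $D$ — this looks like a clean sign, since at $\alpha=\beta=0$ it equals $1-4-3-3+1=-8<0$ and it should not change sign on $D$. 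The second bracket, $\cos(2\alpha+\beta)+\cos(\alpha+2\beta)-2\sin\alpha\sin\beta+\cos\alpha+\cos\beta$, I claim is $\le 0$ exactly on $D_2$: writing $\cos(2\alpha+\beta)+\cos(\alpha+2\beta)=2\cos\frac{3(\alpha+\beta)}{2}\cos\frac{\alpha-\beta}{2}$ and $\cos\alpha+\cos\beta=2\cos\frac{\alpha+\beta}{2}\cos\frac{\alpha-\beta}{2}$, and relating $-2\sin\alpha\sin\beta = \cos(\alpha+\beta)-\cos(\alpha-\beta)$, one should be able to massage this bracket into a positive multiple of $F_{\cJ}(\alpha,\beta)=(\cos\alpha+\cos\beta)\cos(\alpha+\beta)-\sin\alpha\sin\beta$ (up to terms of a definite sign), so that its sign on $D$ is governed by $F_{\cJ}$, and hence it is $\le 0$ precisely on $D_2$. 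Thus on $D_2$ both brackets are $\le 0$, their product is $\ge 0$, and (2) follows. The algebraic identity matching the second bracket to $F_{\cJ}$ is the one delicate point here, and I would verify it by the half-angle substitution.

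For (3), I combine (1) and (2): since $D_2\subseteq D_1$ (noted in the text, because $(\cos\alpha+\cos\beta)\cos(\alpha+\beta)-\sin\alpha\sin\beta\ge0$ when $\alpha+\beta=\pi/3$, and this quantity is decreasing along $\partial_\alpha+\partial_\beta$ by \eqref{eq.v2FJ}-type reasoning), on $D_2$ we have $(\partial_\alpha+\partial_\beta)X^2_{\ell r}>0$ from (1) and $(\partial_\alpha+\partial_\beta)Y^2_{\ell r}\ge 0$ from (2); adding these and using $\fb^2=X_{\ell r}^2+Y_{\ell r}^2$ gives $(\partial_\alpha+\partial_\beta)\fb^2>0\ge 0$ on $D_2$. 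I expect the main obstacle to be establishing $D_2\subseteq D_1$ cleanly and the trigonometric identity in (2) linking the second bracket to $F_{\cJ}$; the positivity estimate in (1) is the longest computation but should succumb to the $u=\alpha+\beta$, $v=\alpha-\beta$ reduction with the constraint $u\ge\pi/3$ doing the real work.
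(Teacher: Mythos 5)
Your proposal is correct in outline and follows essentially the paper's route: reduce (1) to positivity of the bracketed factor using $X_{\ell r}>0$ and $\sin(2\alpha+2\beta)>0$; for (2) use nonnegativity of the prefactor, show the first bracket is $\le 0$, and identify the second bracket with $F_{\cJ}$ (it is in fact \emph{exactly} $2F_{\cJ}$, by the sum-to-product identities you indicate, so its sign on $D_2$ is immediate from the definition of $D_2$ -- the ``delicate point'' you worry about is trivial); and deduce (3) from (1), (2) and $D_2\subset D_1$. The two places where you deviate are in how the sign claims are verified. For the crux of (1), you leave the positivity of $\cP\cX_1-4\cot(2\alpha+2\beta)\cP\cX_2+\cP\cX_3$ on $D_1$ as ``routine bookkeeping''; the paper proves it by showing that along each line $\alpha+\beta=\mathrm{const}\ge\tfrac{\pi}{3}$ each of the three pieces $\cP\cX_1,\cP\cX_2,\cP\cX_3$ is minimized at $\alpha=\beta$ (this is where $\alpha+\beta\ge\tfrac{\pi}{3}$ enters, giving $\cos\tfrac{3(\alpha+\beta)}{2}\le 0$ and $\cot(2\alpha+2\beta)<0$), and the common minimum value is $2\cos 2\beta\,\sin\beta\,\tan\beta>0$; so your guess that the worst case is at $v=\alpha-\beta=0$ is exactly right, but as written it is an unverified assertion rather than a proof of the hardest step. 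For $\cP\cY_1\le 0$, your direct-algebra plan does work and is arguably cleaner than the paper's per-line maximum argument: with $c=\cos(\alpha+\beta)$, $d=\cos(\alpha-\beta)$ one gets $\cP\cY_1=2c^2-2d(2+3c)\le 2c^2-2c(2+3c)=-4c(1+c)<0$ on $D$, using $d\ge c$; the paper instead maximizes along lines $\alpha+\beta=\mathrm{const}$ at $\alpha=0$ or $\beta=0$, where $\cP\cY_1=-4\cos\beta-2\cos2\beta-2<0$.
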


\begin{proof}
(1) The expression of $(\pa_{\alpha}+\pa_{\beta})X_{\ell r}^2$ given in  \eqref{eq: partial_aplusb,X}
can be rewritten as
\begin{align*}
(\pa_{\alpha}+\pa_{\beta})X_{\ell r}^2=&
\frac{2X_{\ell r}}{\sin(2\alpha+2\beta)}(\cP\cX_1 -4 \cot (2\alpha+2\beta)\cP\cX_2+\cP\cX_3),
\end{align*}
where
\begin{align*}
&\cP\cX_1=\cos(2\alpha+\beta)+\cos(\alpha+2\beta);\\
&\cP\cX_2= \sin \alpha \cos 2\beta+\cos 2\alpha \sin\beta;\\
&\cP\cX_3=-\sin2\alpha \sin \beta-\sin\alpha \sin 2\beta.
\end{align*} 
Note that $X_{\ell r}>0$ and $\sin(2\alpha+2\beta)>0$ for any $(\alpha, \beta) \in D$.
To prove $(\partial_\alpha+\partial_{\beta})X^2_{\ell r}\geq 0$, it suffices to show that 
$\cP\cX_1-4 \cot (2\alpha+2\beta)\cP\cX_2+\cP\cX_3\geq 0$. 

Note that for any $(\alpha, \beta) \in D_1$,
\begin{align*}
(\partial_{\alpha}-\partial_\beta)\cP\cX_1=&\sin (\alpha+2\beta)-\sin (2\alpha+\beta)=2\cos\frac{3(\alpha+\beta)}{2}\sin \frac{\beta-\alpha}{2};\\
(\partial_{\alpha}-\partial_\beta)\cP\cX_2=&-\cos 2\alpha \cos \beta+\cos \alpha \cos 2\beta
+4 \sin \alpha \sin \beta (\cos \beta-\cos \alpha)\\
=&(\cos \beta-\cos \alpha)(2\cos \beta \cos \alpha+1+4\sin \alpha\sin \beta)\\
=&(\cos \beta-\cos \alpha)(1+3\cos(\alpha-\beta)- \cos(\alpha+\beta));\\
(\partial_{\alpha}-\partial_\beta)\cP\cX_3
=&2\big( \sin \alpha \cos 2\beta
-\cos 2\alpha \sin \beta\big)+\big(\sin 2\alpha \cos \beta-\cos \alpha \sin 2\beta\big)\\
=&2(\sin \alpha-\sin \beta)(3+4\sin \alpha\sin \beta).
\end{align*}
It follows that, for each $j=1,2,3$, $(\partial_{\alpha}-\partial_\beta)\cP\cX_j<0$ when $\alpha<\beta$
and $>0$ when $\alpha>\beta$.
Therefore, the minimum of $\cP\cX_j$ along the line $\alpha+\beta=\text{cst}$ is achieved at $\alpha=\beta$. 
Since $\cot (2\alpha+2\beta)$ is a negative constant along the line $\alpha+\beta=\text{cst}$,
 the minimum of  $\cP\cX_1-4 \cot (2\alpha+2\beta)\cP\cX_2+\cP\cX_3$ is achieved at $\alpha=\beta$, at where we have 
\begin{align*}
(\cP\cX_1- 4\cot(2\alpha+2\beta)\cP\cX_2+\cP\cX_3)|_{\alpha=\beta}
=2 \cos 2\beta \sin \beta \tan \beta>0.
\end{align*}
This completes the proof of (1).

(2). Let $(\alpha, \beta) \in D_2$, and 
\begin{align*}
\cP\cY_1=&-4 \cos (\alpha-\beta)+\cos (2\alpha+2\beta)-3 \cos2\alpha-3 \cos2\beta+1,\\
\cP\cY_2=&\cos (2\alpha+\beta)+\cos (\alpha+2\beta)-2\sin\alpha\sin\beta+\cos \alpha+\cos\beta \\
=&2(\cos\alpha+\cos\beta)\cos(\alpha+\beta)- 2\sin\alpha\sin\beta. 
\end{align*}
Note that $\cP\cY_2\le0$ on $D_2$ by the our choice of the domain $D_2$.
From the expression (\ref{eq: partial_aplusb,Y}), 
we have $(\partial_\alpha+\partial_{\beta})Y^2_{\ell r}\geq 0$ if and only if $\cP\cY_1\cP\cY_2\geq 0$.
Moreover, $(\partial_\alpha+\partial_{\beta})Y^2_{\ell r}= 0$  if and only if either $\alpha=\beta$ or $\cP\cY_1\cP\cY_2= 0$. Note that 
\begin{align*}
(\partial_{\alpha}-\partial_{\beta})\cP\cY_1=&8 \sin (\alpha-\beta)+6 \sin 2\alpha -6 \sin2\beta \\
=&\sin(\alpha-\beta)(8+12\cos(\alpha+\beta)).
\end{align*}
For $\alpha<\beta$ (resp. $\alpha>\beta$) in the the region $D_2$, 
$(\partial_{\alpha}-\partial_{\beta})\cP\cY_1<0$ (resp.  $(\partial_{\alpha}-\partial_{\beta})\cP\cY_1>0$), hence for fixed $\alpha+\beta$, the maximum is obtained when $\alpha=0$ (or $\beta=0$):
\begin{align*}
\cP\cY_1(0, \beta)=-4\cos \beta -2\cos 2\beta -2<0.
\end{align*}
Therefore $\cP\cY_1<0$ on $D_2$.

(3) It follows directly from (1) and (2).
\end{proof}

It follows from the above proof that for $(\alpha,\beta)\in D_2$, $(\partial_\alpha+\partial_\beta)Y^2_{\ell r}=0$
if and only if $Y_{\ell r}=0$, which is equivalent to either \eqref{eq.green.curve} or $\alpha=\beta$.

\section{Generalized billiard maps}\label{sec.gen.b.map}

Let $\cQ(b)$ be the lemon table with $1.5<b<2$, $x=(p(x),v(x)) \in M_b$ be a point in the phase space of the lemon billiards, where
$p(x) \in \pa \cQ(b)$ is the base point of $x$ and $v(x)$ is the direction of the trajectory of $x$.
Let $L(x)=\{p(x)+t v(x): t\in \bR\}$ be the oriented line corresponding to the point $x$.
Let $d_{\ell}(x)$ and $d_{r}(x)$ be the oriented distances from the centers $O_{\ell}$
and $O_r$ to the line $L(x)$, respectively.
After one reflection on the left arc, the quantity $d_{r}':=d_{r}(F_b x)$ does not change,
while $d_{\ell}' :=d_{\ell}(F_b x)$ changes. The case with reflections on the right arc is similar.
Now we find an explicit formula for $(d_{\ell}', d_{r}')$ in terms of $(d_{\ell}, d_{r})$.

\begin{figure}[htbp]
\includegraphics[scale=0.7]{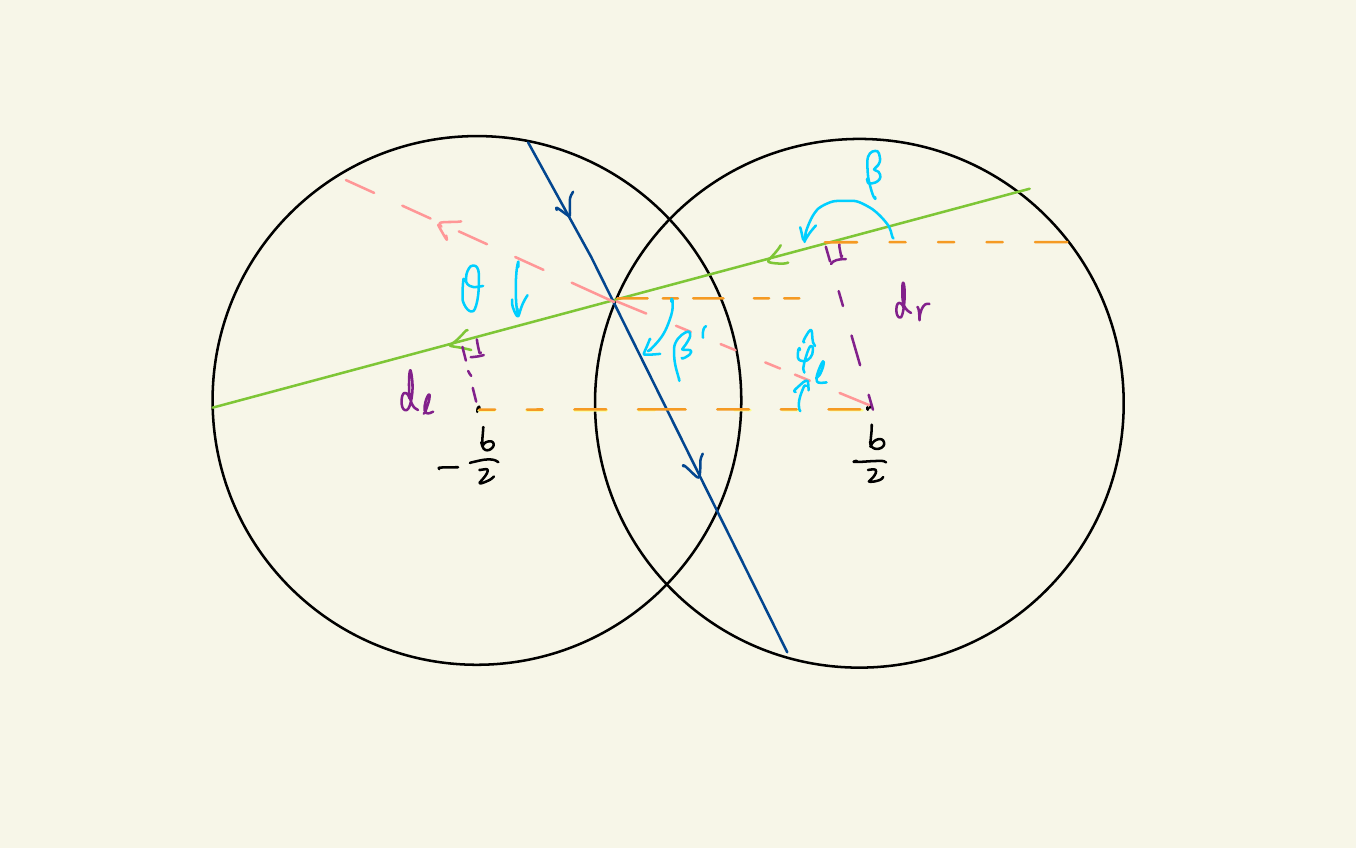}
\caption{A trajectory on the lemon table $\cQ(b)$.}\label{def.dldr}
\end{figure}

For certainty, consider a trajectory starting on the right arc and reflecting on the left arc, see Fig.~\ref{def.dldr} for an illustration.
Let $\varphi_{\ell}$ be the reflection position on the left arc, $\hat\varphi_{\ell}= \pi -\varphi_{\ell}$, $\theta$ be the angle from the outer normal direction at $p(\varphi_{\ell})$ to the direction of the trajectory. Then 
\begin{enumerate}
\item the angle $\beta$ from the positive horizontal direction to the direction of the 
trajectory is given by $\beta=\varphi_{\ell}+\theta$;

\item the normal direction of the trajectory is 
$(\cos(\beta+\frac{\pi}{2}), \sin(\beta+\frac{\pi}{2}))$;
  
\item the angle $\beta'$ from the positive horizontal direction to the direction of the 
reflected trajectory is given by $\beta'=(\varphi_{\ell}-\pi)-\theta$;

\item the normal direction of the reflected trajectory is 
$(\cos(\beta'+\frac{\pi}{2}), \sin(\beta'+\frac{\pi}{2}))$. 
\end{enumerate}

It follows that
\begin{align}
d_r=&(-\cos\hat{\varphi}_\ell, \sin\hat{\varphi}_\ell)\cdot (\cos(\frac{\pi}{2}+\beta), \sin(\frac{\pi}{2}+\beta)); \label{def.dl}\\
d_\ell=&(b-\cos\hat{\varphi}_\ell, \sin\hat{\varphi}_\ell)\cdot (\cos(\frac{\pi}{2}+\beta), \sin(\frac{\pi}{2}+\beta)). \label{def.dr}
\end{align}
Note that \eqref{def.dl} is consistent with $d_r=-\sin\theta$ and is handy for expressing  the billiard
map in terms of $(d_{\ell},d_r)$.
Also observe that $\sin\beta=\frac{d_r-d_\ell }{b}$.
After reflecting on the left arc, $d_{r}$ remains the same, 
$\beta'=\beta-2\theta-\pi$ and 
\begin{align}
d_\ell'&=(b-\cos\hat{\varphi}_\ell, \sin\hat{\varphi}_\ell)\cdot (\cos(\frac{\pi}{2}+\beta'), \sin(\frac{\pi}{2}+\beta')) \nonumber\\
&=-b\sin\beta'+\sin(\hat{\varphi}_\ell+\beta')=b\sin(\beta-2\theta)-\sin\theta \nonumber\\
&=b(\sin\beta(2\cos^2\theta-1)-2\cos\beta\sin\theta\cos\theta)-\sin\theta \nonumber\\
&=b(\frac{d_r-d_\ell}{b}(2(1-d_r^2)-1)-2\sqrt{1-(\frac{d_r-d_\ell}{b})^2}d_r\sqrt{1-d_r^2})+d_r \nonumber\\
&=(d_r-d_\ell)(1-2d_r^2)-2d_r\sqrt{(b^2-(d_r-d_\ell)^2)(1-d_r^2)}+d_r. \label{eq:dell'}
\end{align}
In other words, the reflection on the arc $\Gamma_{\ell}$ can be written as
$\cL_{b}(d_{\ell},d_{r})=(d_{\ell}', d_r)$, where $d_{\ell}'$ is given by \eqref{eq:dell'}.
The formula for the reflection on the arc $\Gamma_{r}$ can be derived in a similar way. 
To emphasize the difference of  reflections on the left arc and the right arc, we denote it by $\cR_{b}$. 
Alternatively, we can use the symmetry of the lemon table:  the two maps $\cL_{b}$ and $\cR_{b}$ commute with the reflection $\cI: (d_{\ell}, d_r) \mapsto (d_r, d_{\ell})$ with respect to the diagonal:
\begin{align}
\cI\circ \cR_{b} = \cL_{b}\circ \cI,  \label{eq.lr.sym}
\end{align}
It follows that $\cR_{b}(d_\ell, d_r)=(d_\ell, d_r')$, where
\begin{align}
d_r'=(d_\ell-d_r)(1-2d_\ell^2)-2d_\ell\sqrt{(b^2-(d_\ell-d_r)^2)(1-d_\ell^2)}+d_\ell. 
\label{eq:dr'}
\end{align}

Now we specify the domains of the two reflection maps $\cL_{b}$ and $\cR_{b}$
by identifying the points at where they are not defined and/or not differentiable.
We will start with the map $\cL_{b}$. 
Note that $|d_r| <1$ since $\cL_{b}$ represents a reflection on the left arc.
Moreover, the branched locus of the map $\cL_{b}$ consists of two segments $L_{b}$ and $L_{-b}$, where
\begin{align}
L_{\pm b}=\{(d_{\ell}, d_r) \in \bR^2: d_{r} - d_{\ell} = \pm b, |d_r|<1\}. \label{eq.Lb}
\end{align}
Each point in $L_{\pm b}$ corresponds to a vertical oriented line whose distance to the center $O_r$ is less than 1.
From now on, we will restrict our discussion to the parallelogram
\begin{align}
P(\cL_b) = \{(d_{\ell}, d_r) \in \bR^2: |d_{r} - d_{\ell}| < b, |d_r|<1\}.
\end{align} 
It is worth pointing out that both factors within the square root in Eq.~\eqref{eq:dell'} are positive on the parallelogram $P(\cL_{b})$.
To find the domain on which the map $\cL_{b}$ is smooth, we note that 
\begin{align}
\label{eq: partial d_l', d_l}&\frac{\partial d_\ell'}{\partial d_\ell}=-(1-2d_r^2)+\frac{2d_r(1-d_r^2)(d_\ell-d_r)}{\sqrt{(b^2-(d_r-d_\ell)^2)(1-d_r^2)}}.
\end{align}
It follows that $\frac{\partial d_{\ell}'}{\partial d_{\ell}}=0$ when
\begin{align*}
&(1-2d_r^2)^2(b^2-(d_\ell-d_r)^2)=4d_r^2(1-d_r^2)(d_r-d_\ell)^2,
\end{align*}
which is equivalent to $(d_r-d_\ell)^2=b^2(1-2d_r^2)^2$, or equally,
$d_{\ell}=\pm b(1-2d_r^2)+d_r$.
Moreover,  plugging in $d_{\ell}=b(1-2d_{r}^2)+d_r$ into the right-hand-side of \eqref{eq: partial d_l', d_l}, we have $d_r > 0$. Similar,  plugging in $d_{\ell}=-b(1-2d_{r}^2)+d_r$ into the right-hand-side of \eqref{eq: partial d_l', d_l}, we have $d_r<0$.
So the map $\cL_{b}$ is singular at the following curves:
\begin{align}
\cS_{1,\pm}(b)& :=\{(d_{\ell}, d_r)\in P(\cL_{b}): \pm d_r >0, d_\ell-d_r=\pm b(1-2d_r^2)\}.
\label{eq:C_1.pm}
\end{align}
Note that $L_{b} \cap \cS_{1,+}(b) =\{(1-b, 1)\}$, $L_{b} \cap \cS_{1,-}(b) =\{(-b,0)\}$,
$L_{-b} \cap \cS_{1,+}(b) =\{(b, 0)\}$ and $L_{-b} \cap \cS_{1,-}(b) =\{(b-1,-1)\}$, 
see Fig.~\ref{fig.sing}. The geometric meaning of these singularity curves 
$\cS_{1, \pm}(b)$ will be clear after Proposition~\ref{pro.involution}.

Similarly, 
we consider the parallelogram
$P(\cR_b) = \{(d_{\ell}, d_r) \in \bR^2: |d_{r} - d_{\ell}| < b, |d_{\ell}|<1\}$ and find
\begin{align}
\label{eq: partial d_r', d_r}&\frac{\partial d_r'}{\partial d_r}=-(1-2d_\ell^2)+\frac{2d_\ell(1-d_\ell^2)(d_r-d_\ell)}{\sqrt{(b^2-(d_\ell-d_r)^2)(1-d_\ell^2)}}.
\end{align}
It follows that the map $\cR_{b}$ is singular at the following curves:
\begin{align}
\cS_{2,\pm}(b)&=\{(d_{\ell}, d_r)\in P(\cR_b): \pm d_\ell>0, d_r-d_\ell=\pm b(1-2d_\ell^2)\}.\label{eq:C_2.pm}
\end{align}

\begin{figure}[htbp]
\begin{overpic}[height=6in, unit=0.1in]{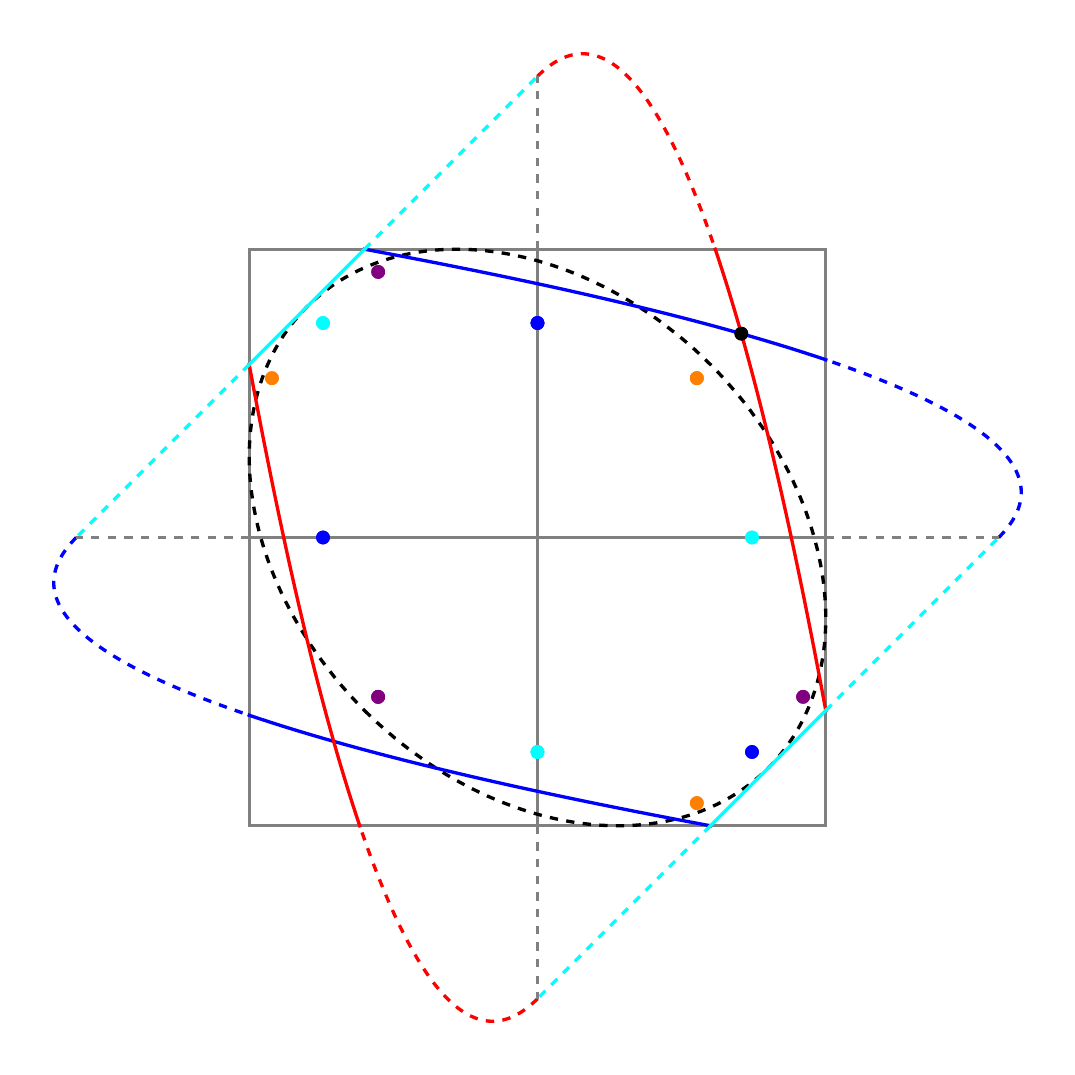}
\put (42, 42) {$S_0$} %
\put (37, 37) {$E_0$} %
\put (27.5, 41.5) {$P_0$} %
\put (41.5, 28) {$Q_0$} %
\put (18, 40) {$Q_1$} %
\put (17, 31) {$P_1$} %
\put (27.5, 18) {$Q_2$} %
\put (39.5, 18.5) {$P_2$} %
\put (15, 43) {$L_b$} %
\put (43, 15) {$L_{-b}$} %
\put (36, 43) {$\cS_{1,+}$} %
\put (22, 16) {$\cS_{1,-}$} %
\put (43, 36) {$\cS_{2,+}$} %
\put (15, 22) {$\cS_{2,-}$} %
\put (10.5, 30.5) {$\text{Cor}_b$} %
\end{overpic}
\caption{Plots of the singularity curves $L_{\pm b}$ (cyan), $\cS_{1,\pm}(b)$ (blue), $\cS_{2,\pm}(b)$ (red) for the generalized billiard maps $(\cL_{b}, \cR_{b})$.  The curve $\text{Cor}_{b}$ (dashed black) is added just  for comparison. Here $b=1.6$.}\label{fig.sing}
\end{figure}

Note that the two curves $\cS_{j, +}(b)$ and $\cS_{j,-}(b)$ are center-symmetric, $j=1,2$,
and $\cI(\cS_{1,\pm}(b)) = \cS_{2,\pm}(b)$. Moreover,
$\cS_{1,+}(b)\cap \cS_{2,+}(b) =\{S_0\}$, where $S_0=(2^{-1/2}, 2^{-1/2})$  is independent of the parameter $b$.

\begin{definition}\label{def.domain}
Let $\cD(\cL_b) \subset P(\cL_{b})$ be the domain bounded between the two curves $\cS_{1,+}(b)$ and $\cS_{1,-}(b)$, and $\cD(\cR_b)\subset P(\cR_{b})$ be the domain bounded between the two curves $\cS_{2,+}(b)$ and $\cS_{2,-}(b)$. 
Let $\cD_b = \cD(\cL_b)\cap \cD(\cR_b)$.
\end{definition}
It follows that $\cL_{b}$ is well defined and a local diffeomorphism on $D(\cL_{b})$,
so is  $\cR_{b}$ on $D(\cL_{b})$. 
For convenience, we introduce the ambient phase space $\cM=[-1, 1] \times [-1,1]$,
which consists   oriented lines that intersect both circles.
It is clear that $\cD_{b} \subset \cM$, see Fig.~\ref{fig.sing}.

\begin{remark}\label{rem.abs.bill}
The two maps $\cL_{b}$ and $\cR_{b}$ are induced from the billiard map $F_b$ on a lemon table $\cQ(b)$. However, the connection between the maps is subtle.
For example, the restricted domain $\cD_{b}$ is different from the admissible phase space of the billiard map. Moreover, the composition $\cR_{b}\circ \cL_{b}$, when exist, is \emph{not} always the same as the composition of the billiard map. This is due to the fact that $\cR_{b}\circ \cL_{b}$ is the map that reflects on the left arc and right arc of the table in an alternating manner (even if the reflection point is outside the lemon table) while the billiard map can have multiple consecutive reflections on one side of the table. It is important to note that they do agree if the corresponding billiard trajectory reflects alternatively on the two arcs $\Gamma_{\ell}$ and $\Gamma_r$ of $\pa \cQ(b)$. 
More precisely, let $\text{Cor}_{b} \subset \cM$ be the set of oriented lines intersecting one of the two corners $A$ or $B$ on the billiard table $\cQ(b)$,  see the dashed black curve in Fig.~\ref{fig.sing} and Fig.~\ref{fig.phase156}. Then the curve $\text{Cor}_{b} \subset \cM$ separates the oriented lines intersecting both arcs of $\pa \cQ(b)$ from those intersecting only one arc of  $\pa \cQ(b)$.  
\end{remark}

\subsection{Basic properties of the generalized maps}
Below we list some basic properties of the maps $\cL_{b}$ and $\cR_{b}$.
See Definition~\ref{def.domain} for the domains of these two maps.
\begin{proposition}\label{pro.involution}
$\cL_{b}\circ \cL_{b}=Id$ on its domain $D(\cL_{b})$ and $\cR_{b}\circ \cR_{b}=Id$ on $D(\cR_{b})$. Moreover,
$\cS_{1,\pm }(b) \subset \cL_{b}(L_{\pm b})$, $\cL_{b}(\cS_{1,\pm }(b)) \subset  L_{\pm b}$,
$\cS_{2,\pm }(b) \subset \cR_{b}(L_{\pm b})$, and  $\cR_{b}(\cS_{2,\pm }(b)) \subset L_{\mp b}$.
\end{proposition}
\begin{proof}
Given a point $(d_{\ell}, d_{r}) \in D(\cL_{b})$, $\cL_{b}\circ \cL_{b}(d_{\ell}, d_{r})$
just means that the second reflection of the orient line is at the same arc and undoes the first reflection.
So it recovers the initial point $(d_{\ell}, d_{r})$. The reasoning for $\cR_{b}\circ \cR_{b}=Id$ on $D(\cR_{b})$
is the same.

Note that $\cL_{b}$ preserves the $d_r$ coordinate.
Then  $\cS_{1, +}(b) \subset \cL_{b}(L_{b})$ follows directly from Eq.~\eqref{eq:dell'}, while $\cL_{b}(\cS_{1, +}(b)) \subset  L_{b}$ follows by plugging
the expression $d_{\ell} = b(1-2d_r^2) + d_r$ in Eq.~\eqref{eq:dell'}.
The verification of the remaining equalities are similar and hence omitted.
\end{proof}

Collecting terms, we conclude that $\cL_{b}: D(\cL_b) \to D(\cL_b)$ is a diffeomorphism,
so is $\cR_{b}: D(\cR_{b}) \to D(\cR_{b})$.

The symmetries of the lemon table lead to additional symmetries of the generalized maps $\cL_{b}$ and $\cR_{b}$ besides Eq.~\eqref{eq.lr.sym}: 
\begin{enumerate}
\item $F_b$ commutes with the reflection of the table $\cQ(b)$ about the $x$-axis,
which implies that $\cL_{b}$ and $\cR_{b}$ commute with the center symmetry $g:(d_{\ell}, d_r) \mapsto (-d_{\ell}, -d_r)$. That is, $\cL_{b}\circ g = g\circ \cL_{b}$ and $\cR_{b}\circ g = g\circ \cR_{b}$;

\item $F_b$ commutes with the reflection of the table $\cQ(b)$ about the axis through the two corners of $\cQ(b)$,
which implies that $\cL_{b}$ and $\cR_{b}$ commute with $h: (d_{\ell}, d_r) \mapsto (-d_r, -d_{\ell})$ in the sense that $\cL_{b}\circ h = h\circ \cR_{b}$.
\end{enumerate}

Note that a point $(d_{\ell}, d_r) \in \cM$ is fixed by $\cL_{b}$ if and only only if 
\begin{align*}
d_{\ell}=(d_r-d_\ell)(1-2d_r^2)-2d_r\sqrt{(b^2-(d_r-d_\ell)^2)(1-d_r^2)}+d_r,
\end{align*}
or equally, $d_{\ell}=(1-b)d_r$. Such a point corresponds to a billiard trajectory
intersecting the arc $\Gamma_{\ell}$ at the point $B_{\ell}=(b-1,0)$ on the table $\cQ(b)$.
Similarly,  a point $(d_{\ell}, d_r) \in \cM$ is fixed by $\cR_{b}$ if and only if 
$d_{r}=(1-b)d_{\ell}$, which corresponds to a billiard trajectory
intersecting the arc $\Gamma_{r}$ at the point $B_r=(1,0)$ on the table $\cQ(b)$.

\begin{remark}\label{rem.period6}
It is easy to see that in the new phase space $\cM$, there are 
\begin{enumerate}
\item two elliptic periodic orbits of period $6$: one is given by 
\begin{enumerate}
\item $E_0=\big(\big(1-\frac{1}{4(b-1)^2}\big)^{1/2}, \big(1-\frac{1}{4(b-1)^2}\big)^{1/2}\big)$, 

\item $E_1=\cL_{b}(E_0)=(\frac{1}{1-b}\big(1-\frac{1}{4(b-1)^2}\big)^{1/2}, \big(1-\frac{1}{4(b-1)^2}\big)^{1/2}) \in \Fix(\cR_{b})$;

\item $E_2=\cR_{b}(E_1)=E_1$ since $E_1\in \Fix(\cR_{b})$;

\item $E_3=\cL_{b}(E_2)=E_0$ back to $E_0$;

\item $E_4=\cR_{b}(E_0)=(\big(1-\frac{1}{4(b-1)^2}\big)^{1/2}, \frac{1}{1-b}\big(1-\frac{1}{4(b-1)^2}\big)^{1/2}) \in \Fix(\cL_{b})$;

\item $E_5=\cL_{b}(E_4) =E_4$ since $E_4\in \Fix(\cL_{b})$; 
\end{enumerate}
the other one is the center-symmetric image of the first one.
They correspond to the periodic points shown in Fig.~\ref{fig.ell.po}.
See Fig.~\ref{fig.sing} and Fig.~\ref{fig.phase156};

\item one hyperbolic periodic points of period $6$: 
\begin{enumerate} 
\item $P_0=\big(0,\big(1-\frac{1}{4 b^2-8}\big)^{1/2}\big)$, 

\item $Q_1=\cL_{b}(P_0)=(-\big(1-\frac{1}{4 b^2-8}\big)^{1/2}, \big(1-\frac{1}{4 b^2-8}\big)^{1/2})$, 

\item $P_1=\cR_{b}(Q_1)=(-\big(1-\frac{1}{4 b^2-8}\big)^{1/2},0)$,

\item $Q_0=\cL_{b}(P_1)=\big(\big(1-\frac{1}{4 b^2-8}\big)^{1/2}, 0\big)$,

\item $P_2=\cR_{b}(Q_0)=(\big(1-\frac{1}{4 b^2-8}\big)^{1/2}, -\big(1-\frac{1}{4 b^2-8}\big)^{1/2})$,

\item $Q_2=\cL_{b}(P_2)=(0, -\big(1-\frac{1}{4 b^2-8}\big)^{1/2})$.
\end{enumerate}
It correspond to the periodic points shown in Fig.~\ref{fig.hyp.po}.  The indices of the hyperbolic periodic orbit may looks
strange. It is because that we will mainly use the two points $P_0$ and $Q_0$ that are contained in the first quadrant,
and the above indexing is compatible in the sense $P_j=(\cR_{b}\cL_{b})^j P_0$ and $Q_j=(\cR_{b}\cL_{b})^j Q_0$, $j \in \bZ/3\bZ$. See  Fig.~\ref{fig.sing} and Fig.~\ref{fig.phase156}.
\end{enumerate}
\end{remark}

\begin{figure}[htbp]
\begin{overpic}[height=3.5in, unit=0.1in]{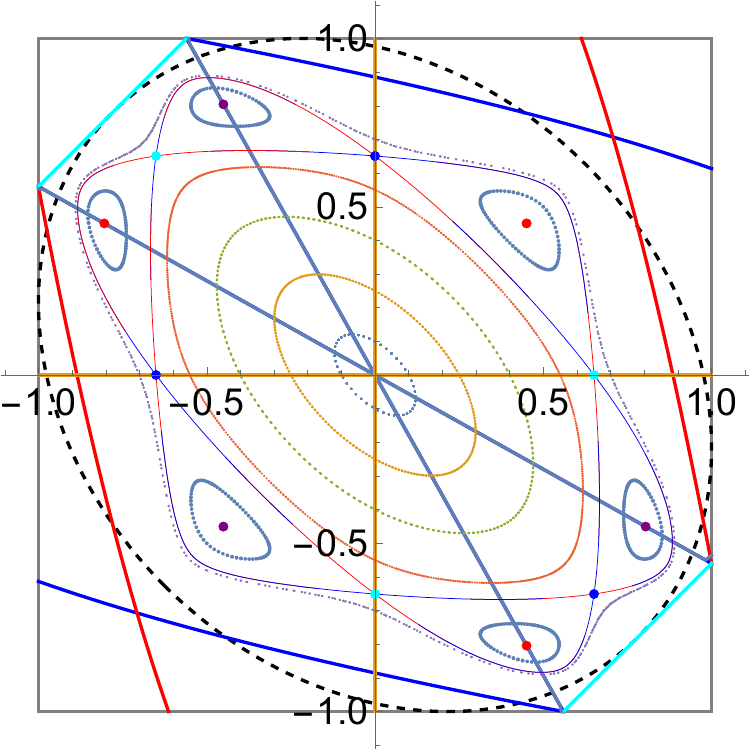}
\put (19, 28) {$P_0$} %
\put (24, 24) {$E_0$} %
\put (30, 18) {$Q_0$} %
\put (8, 18) {$P_1$} %
\put (8, 28) {$Q_1$} %
\put (28, 8) {$P_2$} %
\put (19, 7) {$Q_2$} %
\end{overpic}
\caption{Phase portrait of the generalized billiard maps $(\cL_{b}, \cR_{b})$ for $b=1.56$.
The points in blue and cyan form a hyperbolic periodic orbit of period $6$, and the points in red and purple are two elliptic periodic orbits of period $6$.
The two two lines going through the origin are $\text{Fix}(\cL_{b})$ (more steep) and $\text{Fix}(\cR_{b})$ (less steep).} \label{fig.phase156}
\end{figure}

From now on we will restrict the domains of two maps  $\cL_{b}$ and $\cR_{b}$ to their common domain $\cD_{b}$ (see Definition \ref{def.domain}) and it is safe to treat them as undefined outside $\cD_{b}$ (note that $\cD_{b}$ is not invariant for any of the two maps). 
It follows from Proposition \ref{pro.involution} that the restrictions 
$\cL_{b}|_{\cD_{b}}$ and $\cR_{b}|_{\cD_{b}}$  are diffeomorphisms from the common domain $\cD_{b}$ onto their corresponding images. 
For convenience, we introduce short notations for some frequently used combinations:
\begin{align}
\Phi_{b}:=\cL_{b} \cR_{b}\cL_{b}, \quad \Psi_{b}:=\cR_{b}\cL_{b} \cR_{b},
\quad \Theta_b:=(\cR_{b}\cL_{b})^3=\Psi_b\Phi_b. \label{def.phipsif}
\end{align}

\begin{remark}\label{rem.res.rl}
The domains of the compositions and iterations ($\Phi_b$, $\Psi_b$ and $\Theta_b$ for example), admittedly complicated, are open subsets of $\cD_{b}$.
We want to emphasize that we are interested in the subsets contained in $\cM$ that are dynamically invariant under both $\cL_{b}$ and $\cR_{b}$, on which both are diffeomorphisms.  
\end{remark}

Note that a point $(d_{\ell}, d_r) \in \cM$ is fixed by $\Phi_{b}$ if and only only if 
$\cR_{b}\cL_{b}(d_{\ell}, d_r) =\cL_{b} (d_{\ell}, d_r)$. That is, $\cL_{b}(d_{\ell}, d_r) \in \Fix(\cR_{b})$.
It follows that $\Fix(\Phi_{b})$ is the graph of the following function
\begin{align}
d_{\ell}=\frac{d_{r}}{b-1}\big(-1+2 b (1-d_{r}^2)-2b\sqrt{((1-b)^2 -d_r^2)(1-d_{r}^2) } \big), 
1-b< d_{r} <b-1.
\end{align}
Note that $\Fix(\Phi_{b})$ intersects the diagonal $d_{\ell}=d_{r}$ at exactly three points: the origin $(0,0)$, and elliptic points $\pm E_0(b)$. 
Similarly, we have $\Fix(\Psi_{b})=\cR_{b}(\Fix(\cL_{b}))$, which is the reflection of $\Fix(\Phi_{b})$
about the diagonal and intersects the diagonal at the same points as $\Fix(\Phi_{b})$.

\begin{remark}\label{rem.existenceIM}
We give a brief description on the construction and properties of the stable and unstable manifolds
of the hyperbolic periodic orbit $\{(P_j, Q_j): j =0 ,1, 2\}$.
The local stable manifolds at these points  exist, are analytic and depend analytically on the parameter $b$ since these are local properties, see \cite[\S 6.2]{KH95}. Generally speaking,  global stable manifolds are immersed curves and are obtained by taking the corresponding preimages under the map. In our case, the domain $\cD_{b}$ on which
the maps $\cL_{b}$ and $\cR_{b}$ are defined is not invariant.
Therefore, the construction of global stable manifolds can unfold in two distinct scenarios:
\begin{enumerate}
\item It can be continued infinitely, as observed when the parameter $b$ remains small, yielding analytic immersed curves.

\item Alternatively, it may be punctuated by singularities after a finite number of preimage steps, producing piecewise analytic curves in the case of large $b$.
\end{enumerate}
In the latter scenario, our attention is directed toward the connected components encompassing  hyperbolic periodic points. These components can be perceived as maximally extended local stable manifolds.
We claim that at least one of these maximally extended local stable manifolds extends to intersect the boundary $\pa \cD_b$. 
\end{remark}
\begin{proof}[Proof of the claim]
To prove this claim, we will argue by contradiction: suppose that none of the maximally extended stable manifolds intersect the boundary $\pa \cD_{b}$.
Given two points $A, B$ on a stable manifold $W^s(P)$, we will denote by $(A,B)$ the part of the stable manifold between the two points $A$ and $B$.
Let $A_0 \in W^s_{\text{max}}(P_0) \subset \cD_b$ be one endpoint and $B_2\in W^s_{\text{max}}(Q_2) \subset \cD_b$ be the corresponding endpoint in the sense that the stable branches $(P_0, A_0) \cap \cR_b((Q_2, B_2)) \neq\emptyset$. There are three possibilities:
\begin{enumerate}
\item $(P_0, A_0) \subsetneq  \cR_b((Q_2, B_2))$: then $W^s_{\text{max}}(P_0)$ can be extended from beyond $A_0$, contradicting the definition of $W^s_{\text{max}}(P_0)$;

\item $(P_0, A_0) \supsetneq  \cR_b((Q_2, B_2))$: then $W^s_{\text{max}}(Q_2)$ can be extended beyond $B_2$, contradicting the definition of $W^s_{\text{max}}(Q_2)$;

\item $(P_0, A_0) =  \cR_b((Q_2, B_2))$, which implies $A_0 = \cR_b(B_2)$.
\end{enumerate} 
We apply this argument to all six corresponding branches of the  maximally extended stable manifolds of the six hyperbolic periodic points (since none of them reach the boundary $\pa \cD_b$ under the working hypothesis), and conclude that their endpoints form a new periodic orbit of period 6, which is absurd since there is no new periodic orbits on any stable manifolds besides the hyperbolic periodic orbit itself. This conclude the proof.
\end{proof}

Consider the  periodic point $E_0(b)$, which corresponds to the elliptic periodic  point of $F_b$ of period $6$ given in Section \ref{sec.el.hy},
for $1.5< b< \frac{1+\sqrt{5}}{2}\approx 1.618$.
At $b=\frac{1+\sqrt{5}}{2}$, the corresponding billiard trajectory hits the corners of the table $\cQ(b)$ and 
ceases to exist for $b\ge \frac{1+\sqrt{5}}{2}$.
However, this point $E_0(b)$ continues to exist and remains as a $\Theta_b$-fixed point for 
$b \ge \frac{1+\sqrt{5}}{2}$. 
At $b_{\max}:=1+ 2^{-1/2}$, $E_0(b_{\max})=(2^{-1/2},2^{-1/2})$ is
the intersection of the singular curves $\cS_{1,+}(b_{\max})$ and $\cS_{2,+}(b_{\max})$. 
From now on we will assume  $1.5 < b \le b_{\max}$. Recall that the parameter $b_{\crit}\approx 1.63477$ is the solution of $f(b)=-1$ (see Eq.~\eqref{eq.tr.e3} for the definition of $f$)
and is also the critical level curve of the function $\fb(\alpha,\beta)=b$ intersecting the curve $\cJ$, see Lemma \ref{lem.b.crit} and Lemma \ref{lem.fb.level}.
See Remark \ref{rem.iterate} about our convention about classifications of periodic points with symmetries.
\begin{lemma}\label{lem.E0.eph}
The $\Theta_b$-fixed point  $E_0(b)$ is  elliptic for $1.5 < b < b_{\crit}$, parabolic for $b = b_{\crit}$, 
and hyperbolic for $b_{\crit}< b \le b_{\max}$.
\end{lemma}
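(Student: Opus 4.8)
The plan is to pin down the linearization $D_{E_0(b)}\Theta_b$ in terms of the single rational function $f$ of \eqref{eq.tr.e3}, and then read off the trichotomy from the signs of $f(b)\pm 1$ on $(1.5,b_{\max}]$.

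First, by the discussion around Remark \ref{rem.period6}, $E_0(b)$ is a common fixed point of $\Phi_b,\Psi_b$ and $\Theta_b=\Psi_b\Phi_b$ lying on the diagonal $\{d_\ell=d_r\}$, and it lies, together with its entire $\Theta_b$-orbit, in the nonsingular region $\cD_b$ for each $1.5<b<b_{\max}$, so that $\Theta_b$ is a local diffeomorphism near it. For $1.5<b<\tfrac{1+\sqrt5}{2}$ the germ of $\Theta_b=(\cR_b\cL_b)^3$ at $E_0(b)$ is that of $F_b^6$ at the point $e_0(b)$ of the elliptic orbit $\cO_6^e(b)$ (Lemma \ref{lem.ab.special}, case $\alpha=\beta$). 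Since $D_{e_0(b)}F_b^6=(D_{e_0(b)}F_b^3)^2$ (noted in Section \ref{sec.ell.po}) and $\det D_{e_0(b)}F_b^3=1$, Cayley--Hamilton gives
\[
\tfrac12\text{Tr}\big(D_{E_0(b)}\Theta_b\big)=2f(b)^2-1,\qquad 1.5<b<\tfrac{1+\sqrt5}{2}.
\]
Both sides are rational functions of $b$ — the left side because $\cR_b,\cL_b$ are given by the explicit algebraic formulas \eqref{eq:dell'}, \eqref{eq:dr'} and $E_0(b)$ has algebraic coordinates — so the identity holds for all $1.5<b<b_{\max}$, hence (in the limit, with $\text{Tr}\to+\infty$) also at $b=b_{\max}$, where $E_0$ collides with the singular crossing $S_0$. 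Equivalently, one can evaluate the tangent map directly along the diagonal $\alpha=\beta$ of the construction of Section \ref{sec.prl} using the mirror equation \eqref{eq.mirror}, recovering the same $f$.

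It follows that the eigenvalues of $D_{E_0(b)}\Theta_b$ are $\mu(b)^{\pm2}$, where $\mu(b)+\mu(b)^{-1}=2f(b)$. Thus $E_0(b)$ is elliptic when $|f(b)|<1$ (the eigenvalues $e^{\pm2i\arccos f(b)}$ being then non-real, except at the isolated parameter where $f(b)=0$, at which the linear part is $-\mathrm{Id}$), parabolic when $f(b)\in\{0,\pm1\}$, and \emph{positive} hyperbolic when $|f(b)|>1$ — since then $\mu(b)<0$, so $\mu(b)^2>0$. It therefore suffices to prove that $|f(b)|<1$ on $(1.5,b_{\crit})$, that $f(b_{\crit})=-1$, and that $f(b)<-1$ on $(b_{\crit},b_{\max}]$. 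For this I would use the substitution $b=1+\tfrac1{2x}$, under which $x=\tfrac1{2(b-1)}$ sweeps $(\tfrac1{\sqrt2},1)$ as $b$ sweeps $(1.5,b_{\max})$. From the computation in the proof of Lemma \ref{lem.b.crit},
\[
f\!\Big(1+\tfrac1{2x}\Big)+1=-\frac{(4x^3+x^2-2x-1)(4x^4-2x^3-3x^2+2x+1)}{2x^5(1-2x^2)},
\]
and a parallel factorization gives
\[
f\!\Big(1+\tfrac1{2x}\Big)-1=-\frac{(x-1)^3(8x^4+20x^3+18x^2+7x+1)}{2x^5(1-2x^2)}.
\]
On $x\in(\tfrac1{\sqrt2},1)$ the denominator $2x^5(1-2x^2)$ is negative, $(x-1)^3<0$, and the two quartic factors are strictly positive (the second has positive coefficients; the first equals $2x^3(2x-1)-(3x+1)(x-1)$, a sum of two strictly positive terms on this range); hence $f(b)<1$ throughout $(1.5,b_{\max})$. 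The cubic $4x^3+x^2-2x-1$, with derivative $2(3x-1)(2x+1)$, is strictly increasing on $(\tfrac13,\infty)\supset(\tfrac1{\sqrt2},1)$, equals $-\tfrac12$ at $x=\tfrac1{\sqrt2}$ and $2$ at $x=1$, hence has a unique zero $x_0$ in $(\tfrac1{\sqrt2},1)$; by Lemma \ref{lem.b.crit} this is $x_0=\cos\alpha_0=\tfrac1{2(b_{\crit}-1)}$, where $\fb(\alpha_0,\alpha_0)=b_{\crit}$. So $f(b)+1$ has the sign of $x-x_0$: positive for $1.5<b<b_{\crit}$, zero at $b=b_{\crit}$, negative for $b_{\crit}<b\le b_{\max}$. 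Combining, $-1<f(b)<1$ on $(1.5,b_{\crit})$, $f(b_{\crit})=-1$, $f(b)<-1$ on $(b_{\crit},b_{\max}]$, which with the trichotomy above proves the lemma.

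The main obstacle is not conceptual but the algebraic bookkeeping: checking the two factorizations of $f(b)\pm1$, the signs of the quartic factors on the compact interval $(\tfrac1{\sqrt2},1)$, and especially identifying the relevant root of the cubic factor with $x_0$ — here Lemma \ref{lem.b.crit} is the key input. One should also confirm that the identity $\tfrac12\text{Tr}(D_{E_0(b)}\Theta_b)=2f(b)^2-1$ really extends past $b=\tfrac{1+\sqrt5}{2}$, which rests on $E_0(b)$ and its $\Theta_b$-iterates staying off the branch and singular loci $L_{\pm b}\cup\cS_{1,\pm}(b)\cup\cS_{2,\pm}(b)$ for $1.5<b<b_{\max}$. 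Finally, at the parameter in $(1.5,\tfrac{1+\sqrt5}{2})$ where $f$ vanishes the linearization is $-\mathrm{Id}$; the orbit is nonetheless nonlinearly stable there, by the nonvanishing of the twist coefficient $\tau_1(F_b^6,e_0(b))$ recorded in Section \ref{sec.ell.po} together with the resonant form of Moser's theorem.
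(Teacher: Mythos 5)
Your proposal is correct in substance and ends, as the paper does, with the trichotomy read off from the sign of $f(b)\pm 1$, but the route to the key trace identity is genuinely different. The paper works entirely inside the $(d_\ell,d_r)$-coordinates: it computes the Jacobian $A=D_{E_0}\Psi_b$ explicitly from \eqref{eq:dell'}--\eqref{eq:dr'}, uses the symmetry $\Theta_b=(\Psi_b\cI)^2$ with $\cI(E_0)=E_0$ to reduce to $\hat A=AJ$, and observes by direct algebra that $\tfrac12\mathrm{tr}\,\hat A=f(b)$, an identity valid on the whole range $1.5<b\le b_{\max}$ with no continuation argument. You instead transfer the billiard computation of Section \ref{sec.ell.po} ($D_{e_0}F_b^6=(D_{e_0}F_b^3)^2$ plus Cayley--Hamilton, giving $\tfrac12\mathrm{Tr}\,D_{E_0}\Theta_b=2f(b)^2-1$) on $1.5<b<\tfrac{1+\sqrt5}{2}$ and extend past that parameter by continuation; this is fine in spirit, but note the left side is an \emph{algebraic} (real-analytic) function of $b$, not a rational one, so the correct justification is the identity principle on the connected interval, together with the check (which you gesture at) that the $\Theta_b$-orbit of $E_0(b)$ avoids $L_{\pm b}$ and $\{|d_\ast|=1\}$ for $1.5<b<b_{\max}$. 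On the other hand, your proof supplies something the paper only asserts: the actual verification that $-1<f<1$ on $(1.5,b_{\crit})$ and $f<-1$ on $(b_{\crit},b_{\max})$, via the substitution $x=\tfrac1{2(b-1)}$, the paper's factorization of $f+1$ from Lemma \ref{lem.b.crit}, and your factorization of $f-1$ (which I checked is consistent: the difference of the two numerators is $8x^7-4x^5$, as required); the sign arguments for the cubic and the two quartics on $(2^{-1/2},1)$ are correct.

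Two shared soft spots, neither fatal and both present in the paper as well: at $b=b_{\max}$ the function $f$ has a pole (since $2b^2-4b+1\to 0$) and $E_0(b_{\max})=S_0$ lies on the singular curves, so ``hyperbolic at $b=b_{\max}$'' is only justified as a limit statement, exactly as in your hand-wavy ``$\mathrm{Tr}\to+\infty$'' remark and in the paper's proof, which in fact only treats $b<b_{\max}$; and at the isolated parameter in $(1.5,b_{\crit})$ where $f(b)=0$ the linearization of $\Theta_b$ is $-\mathrm{Id}$, so $E_0$ is not elliptic in the strict linear sense of Section \ref{sec.pre} there --- you flag this and invoke the twist coefficient for nonlinear stability, which is arguably more careful than the paper's silent inclusion of that parameter in the elliptic range.
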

\begin{proof}
Using the reflection symmetry \eqref{eq.lr.sym} we have 
\begin{align}
\Phi_{b}&=\cL_{b} \cR_{b} \cL_{b} = \cI \cR_{b} \cL_{b} \cR_{b}  \cI= \cI \Psi_{b} \cI, 
\label{eq.phiIphiI} \\
\Theta_b &= (\cR_{b}  \cL_{b})^3=\Psi_{b} \circ \Phi_{b} =(\Psi_{b} \cI)^2.\label{eq.phipsi}
\end{align}

Let $A=(a_{ij})$ be the Jacobian matrix of $\Psi_b=\cR_{b} \cL_{b} \cR_{b}$ at 
$E_0$. 
Since $\cI(E_0)=E_0$, we have $D_{E_0}\Theta_b =(AJ)^2$.
So we just need to find the trace of the  matrix $\hat{A}=AJ$. Note that 
\begin{align}
a_{11} &= \frac{b (-8 b^3+24 b^2-20 b+3)}{b-1}; \label{eq.Psi.S0.11} \\
a_{12} &= \frac{-2 b^2+4 b-1}{b-1}; \label{eq.Psi.S0.12} \\
a_{21} &= \frac{(8 b^4-24 b^3+20 b^2-4 b+1)(8 b^4-24 b^3+20 b^2-2 b-1)}{(b-1)(2 b^2-4 b+1)}; \label{eq.Psi.S0.21}  \\
a_{22} &= \frac{b (8 b^3-24 b^2 +20 b-3)}{b-1}. \label{eq.Psi.S0.22}
\end{align}
It follows from \eqref{eq.Psi.S0.12} and \eqref{eq.Psi.S0.21} that
\begin{align}
\frac{1}{2}\mathrm{tr}\hat{A} & = \frac{a_{12}+a_{21}}{2}
= \frac{32 b^7 -160 b^6+288 b^5 - 216 b^4+ 54 b^3+ 2 b^2-4b+1}{2 b^2-4 b+1}. \label{traceEll}
\end{align}
Note that $\frac{1}{2}\mathrm{tr}\hat{A}=f(b)$,  the function given in Eq.~\eqref{eq.tr.e3}. 
Let $b_{\crit} \approx 1.63477$ be the unique solution of $f(b)=-1$ on the interval
$1.5 < b \le  b_{\max}$. Then $-1<f(b)<1$ for $1.5<b< b_{\crit}$ and $f(b)<-1$ for $b_{\crit}< b < b_{\max}$. This completes the proof.
\end{proof}

It follows from Lemma \ref{lem.E0.eph} that there are new periodic points 
bifurcating from the corresponding point $E_0(b)$ for $b_{\crit}< b \le b_{\max}$.
See Lemma \ref{lem.per.bif} for more details.
Numerical results suggest that there is no other periodic orbit of period $6$ beyond the ones we have found for all $1.5< b < b_{\crit}$. We are able to verify this rigorously for $1.5<b<1.5+\delta$ for some $\delta>0$. See Proposition \ref{pro.no.new.po6}.

\subsection{The set of points with parallel trajectories}
Let $\cM^{+}=\{(d_{\ell}, d_{r})\in \cM: d_{\ell}\ge 0, d_{r}\ge 0\} \backslash \{(0,0)\}$ be the first quadrant of the phase space $\cM$,
$\cC_{\Phi_{b}}$ be the set of points $(d_{\ell}, d_{r})\in \cM^{+}$ such that $(d_{\ell}', d_{r}'):=\Phi_{b}(d_{\ell}, d_{r})$
satisfies $d_{\ell}'-d_{r}' = d_{r}-d_{\ell}$, and $\cC_{\Psi_{b}}$ be the set of points $(d_{\ell}, d_{r})\in \cM^{+}$  such that $(d_{\ell}', d_{r}'):=\Psi_{b}(d_{\ell}, d_{r})$
satisfies $d_{\ell}'-d_{r}' = d_{r}-d_{\ell}$. Analytically, we have
\begin{align}
\cC_{\Phi_{b}}&=\{(d_{\ell}, d_{r})\in \cM^{+}: (1,-1)\cdot \Phi_{b}(d_{\ell}, d_{r}) =d_{r}-d_{\ell}\}; \label{eq.cphi}\\
\cC_{\Psi_{b}}&=\{(d_{\ell}, d_{r})\in \cM^{+}: (1,-1)\cdot\Psi_{b}(d_{\ell}, d_{r}) =d_{r}-d_{\ell}\}. \label{eq.cpsi}
\end{align}
Geometrically, these two conditions mean the final trajectory $(d_{\ell}', d_{r}')$ under $\Phi_{b}=\cL_{b} \cR_{b}\cL_{b}$ and $\Psi_{b}=\cR_{b}\cL_{b} \cR_{b}$ is parallel to the initial trajectory $(d_{\ell}, d_{r})$ on the billiard table $\cQ(b)$, respectively.
This is exactly the set of points studied in Section~\ref{sec.prl}. See also Eq.~\eqref{eq.equal.diff}.
Recall that for a given point $(\alpha, \beta)$ in the domain $D$ defined in \eqref{def.domD}, consider the corresponding parallel orbit coming from right (with angle $\beta$ on top) on the table $\cQ(b)$, where $b=\fb(\alpha, \beta)$ is given in \eqref{bsquare}. 
See Fig.~\ref{figure:parallel}.
Consider the map on the domain $D$ induced by the $\ccM$-shaped orbits:
\begin{align}
H_{\sccM}: D \to \cM, (\alpha, \beta)\mapsto \Big(\sin\beta-\sin (\beta-\alpha)+\frac{\sin2\alpha\sin\beta-\sin\alpha \sin2\beta}{\sin (2\alpha+2 \beta)}, \sin\beta \Big). \label{abdldr}
\end{align} 
We also consider  the map on the domain $D$ induced by the $\crM$-shaped orbits:
\begin{align}
H_{\scrM}: D \to \cM, (\alpha, \beta)\mapsto \Big(\sin\alpha, \sin\alpha+\sin (\beta-\alpha)-\frac{\sin2\alpha\sin\beta-\sin\alpha \sin2\beta}{\sin (2\alpha+2 \beta)}\Big). 
\end{align}
Let $(d_\ell', d_r')=\Phi_{b}(d_\ell, d_r)$. Then the point $(d_\ell, d_r)=H_{\sccM}(\alpha, \beta)$ is characterized by
\begin{align}
d_{\ell}' - d_r'=d_r-d_\ell. \label{char.prl}
\end{align}
Therefore, we have $\cC_{\Phi_b}=H_{\sccM}(\fb^{-1}(b))$.
Similarly, we obtain  $\cC_{\Psi_b}=H_{\scrM}(\fb^{-1}(b))$.
It follows directly from the construction of the \ccM- and \crM-orbits that $\Phi_{b}(\cC_{\Phi_b})=\cC_{\Phi_b}$ and $\Psi_{b}(\cC_{\Psi_b})=\cC_{\Psi_b}$, respectively.
Rotating the table by $\pi$, we have $\cI(\cC_{\Psi_b})=\cC_{\Phi_b}$ and vice versa.
Let $\cC_{\Phi_b}'=H_{\sccM}(\fb^{-1}(b)\cap D')$, where $D'=\{(\alpha, \beta) \in D: \alpha \le \beta\}$. Then it follows from Lemma~\ref{lemma: dl leq dr} that $\Phi_b(\cC_{\Phi_b}')$ (up to its two endpoints) is contained in the interior of the domain bounded by $\cC_{\Phi_b}'$, $\cI(\cC_{\Phi_b}')$ and the two axes.  See Fig.~\ref{fig.prl} for illustrations of the two curves $\cC_{\Phi_b}$ and $\cC_{\Psi_b}$ for various choices of the parameter $b$. Note that the dashed parts of both curves will be omitted from consideration in \eqref{eq.C.prl} when constructing the set $\cC_{\prl, b}$ for $b> b_{\crit}$.

\begin{figure}[htbp]
\begin{overpic}[width=2in, unit=0.1in]{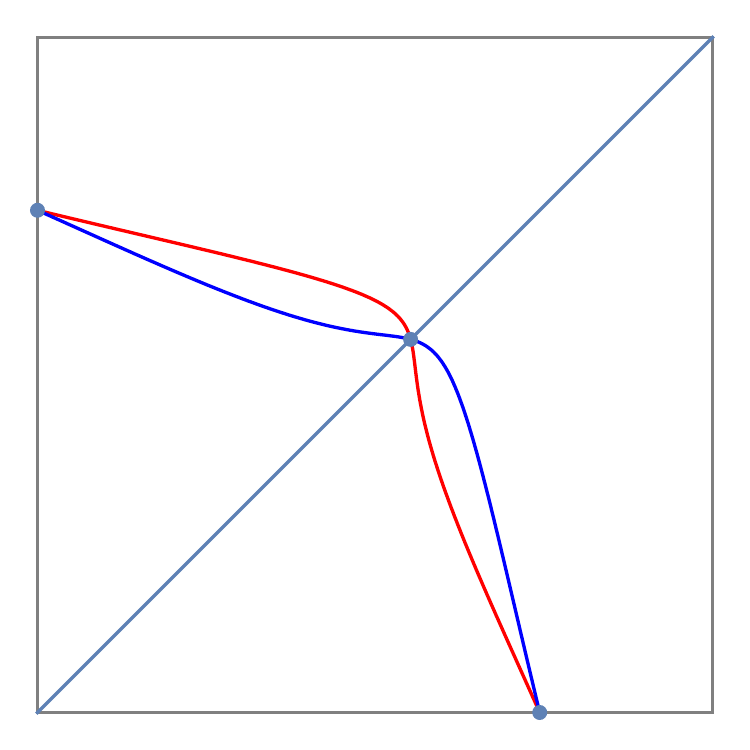}
\put (-1,14) {$P_0$} %
\put (13.5,-0.5) {$Q_0$} %
\put (12,10) {$E_0$} %
\end{overpic}
\quad
\begin{overpic}[width=2in, unit=0.1in]{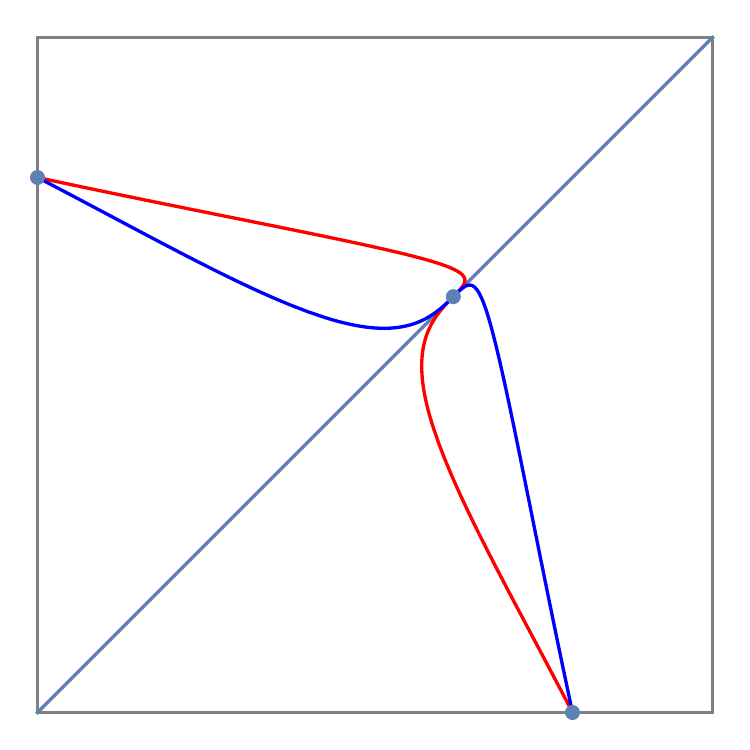}
\put (-1,14.5) {$P_0$} %
\put (14,-0.5) {$Q_0$} %
\end{overpic}
\quad
\begin{overpic}[width=2in, unit=0.1in]{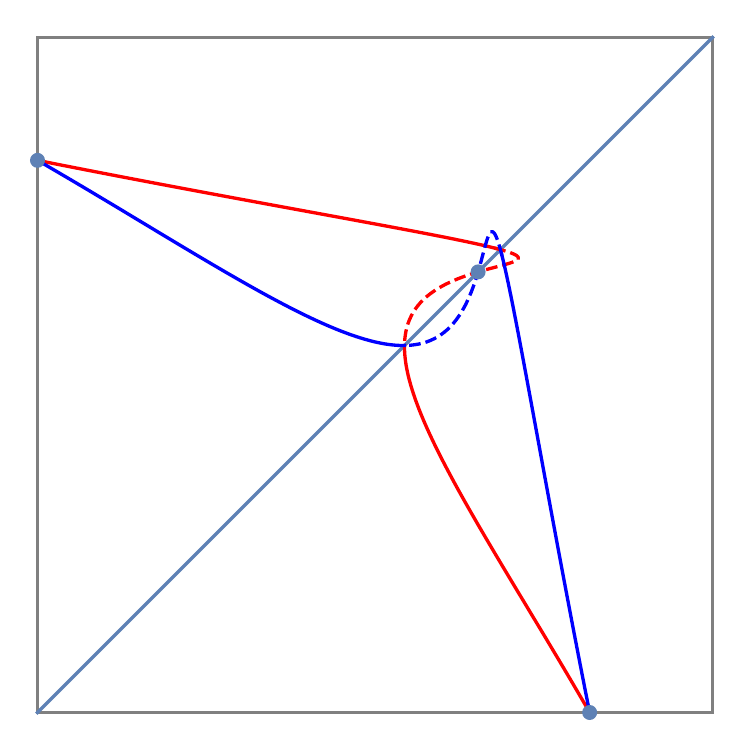}
\put (-1,15) {$P_0$} %
\put (14.5,-0.5) {$Q_0$} %
\end{overpic}
\caption{The curves $\cC_{\Phi_b}$ (red)
and $\cC_{\Psi_b}$ (blue)  for $b=1.6$ (left),
$b=1.63477$ (middle) and $b=1.66$ (right).} \label{fig.prl}
\end{figure}

Note that both curves $\cC_{\Phi_b}$ and $\cC_{\Psi_b}$ end at the corresponding hyperbolic periodic points $P_0(b)$ and $Q_0(b)$.
For example, for $(0,\beta) \in \fb^{-1}(b)$, we have $b^2=\fb(0,\beta)^2=2+ \frac{1}{4\cos^2\beta}$, and hence
$H_{\sccM}(0, \beta)=H_{\scrM}(0, \beta)= (0,\sin\beta)=(0, (1- \frac{1}{4(b^2-2)})^{1/2})=P_0(b)$. In the same way we can check that 
\begin{enumerate}
\item for  $(\alpha, 0) \in \fb^{-1}(b)$: $b^2=2+ \frac{1}{4\cos^2\alpha}$ and  $H_{\sccM}(\alpha, 0)=H_{\scrM}(\alpha, 0)=(\sin\alpha, 0)=Q_0(b)$, 

\item for $(\alpha, \alpha) \in  \fb^{-1}(b)$: $b=1+\frac{1}{2\cos\alpha}$ and $H_{\sccM}(\alpha, \alpha) =H_{\scrM}(\alpha, \alpha)=(\sin\alpha, \sin\alpha) =E_0(b)$.
\end{enumerate}
It follows that for each $1.5<b < b_{\crit}$, both  $\cC_{\Phi_b}$ and $\cC_{\Psi_b}$ are  simple smooth curves
in the first quadrant  $\cM^{+} \subset \cM$ connecting $P_0$ to $Q_0$  with a simple transverse intersection with the diagonal at the elliptic periodic point $E_0$.
Note that for $b\in [b_{\crit}, b_{\max}]$, and for each point $(\alpha, \beta) \in \fb^{-1}(b) \cap \cJ$, the $y$-component of the center $O_{r}$ satisfies $Y_{\ell r}=0$, 
see Fig.~\ref{figure:teardrop}. Then the line $O_{\ell} O_r$ is parallel to the trajectories $Q_1P_1^r$ and $P_3^r Q_3$, see Fig.~\ref{figure:parallel0}, which implies $H_{\sccM}(\alpha, \beta)\in \{d_{\ell}=d_{r}\}$. It follows that
at $b=b_{\crit}$, the intersection of   $\cC_{\Phi_b}$ and $\cC_{\Psi_b}$ at $E_0$ becomes tangential
while for  $b_{\crit} < b \le b_{\max}$, 
there are three intersections between  $\cC_{\Phi_b}$ ($\cC_{\Psi_b}$, resp.) and the diagonal $\{d_{\ell}=d_{r}\}$:
the first and the last intersections, say $E_1(b)$ and $E_2(b)$, correspond to the intersection $\fb^{-1}(b)\cap \cJ$, while the intermediate intersection is the elliptic periodic point $E_0(b)$ corresponding to the intersection $\fb^{-1}(b)\cap \{\beta=\alpha\}$. 

\begin{lemma}\label{lem.per.bif}
Let  $b_{\crit} < b \le b_{\max}$, and $E_1(b)$ and $E_2(b)$ be the two intersection points
of $\cC_{\Phi_b}$ (or equally, $\cC_{\Phi_b}$) with the diagonal. Then 
$\Phi_{b}(E_1(b))=\Psi_{b}(E_1(b))= E_2(b)$ and vice versa. In particular, both points  $E_1(b)$ and $E_2(b)$ are fixed points of  $\Theta_b$.
\end{lemma}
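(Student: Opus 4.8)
The key is to unpack what the two intersection points $E_1(b)$ and $E_2(b)$ mean in terms of the parallel-orbit construction of Section~\ref{sec.prl}. By the discussion preceding the lemma, $E_1(b)$ and $E_2(b)$ both lie on $\cC_{\Phi_b}=H_{\sccM}(\fb^{-1}(b))$ and on the diagonal $\{d_\ell=d_r\}$, and among the three diagonal intersections the two ``extreme'' ones correspond precisely to the two points of $\fb^{-1}(b)\cap\cJ$ (the middle one being $E_0(b)$, coming from $\fb^{-1}(b)\cap\{\alpha=\beta\}$). So the plan is: first write $E_i(b)=H_{\sccM}(\alpha_i,\beta_i)$ where $(\alpha_i,\beta_i)\in\fb^{-1}(b)\cap\cJ$, $i=1,2$, with $\alpha_1<\beta_1$ and $\alpha_2>\beta_2$ (these are swapped by the reflection about $\{\alpha=\beta\}$, which is a symmetry of both $\fb$ and $\cJ$ by Lemma~\ref{lem.cJ}).

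Next I would spell out the billiard-geometric content. Recall $\cC_{\Phi_b}$ is by definition the locus where the outgoing trajectory of $\Phi_b=\cL_b\cR_b\cL_b$ is parallel to the incoming one, i.e.\ $d_\ell'-d_r'=d_r-d_\ell$ with $(d_\ell',d_r')=\Phi_b(d_\ell,d_r)$; and $\Phi_b$ preserves $\cC_{\Phi_b}$. When $(\alpha,\beta)\in\cJ$, equation~\eqref{eq.green.curve} together with \eqref{meaningJ} forces $Y_{\ell r}=0$, so the center segment $O_\ell O_r$ is horizontal and hence parallel to the two parallel legs $Q_1P_1^r$ and $P_3^rQ_3$ of the $\crM$-orbit; this is exactly the condition that the trajectory is \emph{symmetric} under the reflection of the table about the horizontal axis through $O_\ell,O_r$. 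Concretely this means $H_{\sccM}(\alpha,\beta)$ lies on the diagonal (as already noted in the text), i.e.\ $d_\ell=d_r$, and moreover this horizontal-mirror symmetry of the orbit interchanges its first half with its second half. The first half is governed by $\Phi_b$ and the second half, read in reverse, is governed by $\Psi_b$. Tracking the reflection carefully: flipping the $\crM$-orbit of parameter $(\alpha,\beta)$ about the horizontal axis produces the $\crM$-orbit of parameter $(\beta,\alpha)$ traversed in the opposite sense, and on the level of $(d_\ell,d_r)$ this flip is $(d_\ell,d_r)\mapsto(-d_\ell,-d_r)$, which is a symmetry of $\cL_b,\cR_b$ (first symmetry listed after Proposition~\ref{eq: F_l, C_1,+}). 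Combined with reversing time, one deduces $\Phi_b(E_1(b))=E_2(b)$: the point at ``angle $(\alpha_1,\beta_1)$'' maps under $\Phi_b$ to the point at ``angle $(\beta_1,\alpha_1)$,'' which is $(\alpha_2,\beta_2)$ since $\cJ$ and $\fb^{-1}(b)$ are both symmetric about $\{\alpha=\beta\}$ and $\fb^{-1}(b)\cap\cJ$ has exactly these two points (Lemma~\ref{lem.fb.level}). The same argument with $\cI$ in place of the vertical flip, using $\cI(\cC_{\Psi_b})=\cC_{\Phi_b}$ and $\Phi_b=\cI\Psi_b\cI$ (see \eqref{eq.phiIphiI}) and $\cI(\text{diagonal})=\text{diagonal}$, gives $\Psi_b(E_1(b))=E_2(b)$ as well; running $\Phi_b$ (or $\Psi_b$) once more, by the same symmetry, sends $E_2(b)$ back to $E_1(b)$, giving ``vice versa.''

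Finally, since $\Theta_b=\Psi_b\Phi_b$ by \eqref{def.phipsif}/\eqref{eq.phipsi}, we get $\Theta_b(E_1(b))=\Psi_b(\Phi_b(E_1(b)))=\Psi_b(E_2(b))=E_1(b)$, and symmetrically $\Theta_b(E_2(b))=E_2(b)$, so both are fixed by $\Theta_b$ (equivalently, they form a $\Theta_b$-periodic pair of period dividing $2$ under $\Phi_b$, hence are genuine period-$6$ points of the alternating reflection dynamics).

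\emph{Main obstacle.} The delicate step is the bookkeeping that identifies the horizontal-axis reflection of the table with the map $(d_\ell,d_r)\mapsto(-d_\ell,-d_r)$ \emph{composed with a time reversal}, and then checking that this composition sends the $\Phi_b$-orbit of $H_{\sccM}(\alpha,\beta)$ to the $\Phi_b$-orbit of $H_{\sccM}(\beta,\alpha)$ in the right direction — i.e.\ that it exchanges $E_1(b)$ and $E_2(b)$ rather than fixing each. Making this precise requires matching up which legs of the six-segment orbit correspond to the three factors of $\Phi_b$ versus $\Psi_b$, and verifying that the flip really does reverse the order of the legs. Everything else is either a direct consequence of the already-established structure of $\cJ$, $\fb^{-1}(b)$, and the curves $\cC_{\Phi_b},\cC_{\Psi_b}$, or a one-line symmetry manipulation using \eqref{eq.phiIphiI} and \eqref{eq.phipsi}.
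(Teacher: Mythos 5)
Your proposal is correct in outline and follows essentially the same route as the paper: identify $E_1(b),E_2(b)$ with the images under $H_{\sccM}$ of the two points of $\fb^{-1}(b)\cap\cJ$, use that $\cJ=\{Y_{\ell r}=0\}$ forces the two parallel legs of the orbit to be parallel to $O_\ell O_r$ (hence diagonal phase points $(\sin\alpha,\sin\alpha)$ and $(\sin\beta,\sin\beta)$), prove one of the two relations from the parallel-orbit construction, transfer it to the other map by the left--right symmetry of the table, and finish with $\Theta_b=\Psi_b\Phi_b$. The only structural difference is the order: the paper reads off $\Psi_b(\sin\alpha,\sin\alpha)=(\sin\beta,\sin\beta)$ directly from the $\Psi_b$-realizing parallel orbit (no flip needed) and then gets the $\Phi_b$-relation from the reflection about $x=\tfrac b2$, whereas you establish the $\Phi_b$-relation first (via horizontal flip plus time reversal) and then deduce the $\Psi_b$-relation from $\Phi_b=\cI\Psi_b\cI$ together with $\cI(E_i)=E_i$; that last deduction is clean and encodes exactly the same symmetry the paper uses geometrically.

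Two intermediate statements in your argument are inaccurate as written, and the step you defer does need to be closed. For $b>b_{\crit}$ the relevant $(\alpha,\beta)\in\cJ$ has $\alpha\neq\beta$, so the orbit is \emph{not} symmetric under the horizontal reflection, and the reflection does not interchange ``the first half with the second half'' of a single orbit (nor is any half ``governed by $\Psi_b$''); what is true, and what you state correctly afterwards, is that flip composed with time reversal carries the $(\alpha,\beta)$-orbit to the $(\beta,\alpha)$-orbit. The sign bookkeeping you flag as the main obstacle then closes in one line: both the horizontal flip and orientation reversal act on coordinates by $(d_\ell,d_r)\mapsto(-d_\ell,-d_r)$, so their composition is the identity on $(d_\ell,d_r)$; hence the final leg of the $(\alpha,\beta)$-orbit has the same coordinates as the initial leg of the $(\beta,\alpha)$-orbit, i.e.\ $\Phi_b(E_1)=E_2$. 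Alternatively you can bypass the bookkeeping altogether: since the image leg is parallel to $O_\ell O_r$, $\Phi_b(E_1)$ lies on the diagonal and on $\cC_{\Phi_b}$, so $\Phi_b(E_1)\in\{E_0,E_1,E_2\}$; it cannot be $E_0$ because $\Phi_b$ is injective with $\Phi_b(E_0)=E_0$, and cannot be $E_1$ because $\Fix(\Phi_b)$ meets the diagonal only at $(0,0)$ and $\pm E_0$; hence $\Phi_b(E_1)=E_2$. With either repair your argument is complete and at the same level of rigor as the paper's own proof.
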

\begin{proof}
Let  $b_{\crit} < b \le b_{\max}$  be given. There exists a unique pair $(\alpha, \beta)\in D' \cap \cJ$
such that $b=\fb(\alpha, \beta)$.
Note that the curve $\cJ$ is characterized by $Y_{\ell r}=0$.
In particular,  the line $O_{\ell} O_r$ is parallel with the trajectories $Q_1P_1^r$ (corresponding to $E_1$) and $P_3^r Q_3$ (corresponding to $E_2$). Then we have 
$E_1=(\sin\beta, \sin\beta)$, $E_2=(\sin\alpha, \sin\alpha)$
and $\Psi_b(\sin\alpha, \sin\alpha)=(\sin\beta, \sin\beta)$ and vice versa.
Note that the trajectory obtained by applying  the reflection of the table $\cQ(b)$  about the vertical line $x=\frac{b}{2}$ is the same (just with the reversed direction). It follows that 
$\Phi_b(\sin\alpha, \sin\alpha)=(\sin\beta, \sin\beta)$ and vice versa. This completes the proof by noting that $\Theta_b=\Psi_b \Phi_b$.
\end{proof}

We need to know a little bit more about the stable and unstable directions of the hyperbolic periodic points $P_0(b)$ and $Q_0(b)$. 
Moreover, we will find the tangent directions of the two curves $\cC_{\Phi_b}$ and $\cC_{\Psi_b}$ at the two endpoints $P_0$ and $Q_0$ and show that they are contained in the cone of the stable and unstable directions of the hyperbolic periodic points $P_0$ and $Q_0$ in the first quadrant.
We will focus  on the point $P_0$, since what happens at $Q_0$ is just a reflection about the diagonal. 
\begin{lemma}\label{lem.tan.cprl}
Let $m_{s,u}(b)$ the slopes of the stale and unstable tangent space at $P_0(b)$,
$m_{\sccM}(b)$ and $m_{\scrM}(b)$ the the slopes of the two curves $\cC_{\Phi_b}$ and $\cC_{\Psi_b}$  at $P_0(b)$, respectively. Then we have
\begin{align}
m_u(b) < m_{\scrM}(b) < m_{\sccM}(b) < m_s(b) <0
\end{align}
for any $1.5 < b \le b_{\max}$.
\end{lemma}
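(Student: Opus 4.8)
The plan is to compute everything explicitly at the periodic point $P_0(b)=(0,(1-\frac{1}{4b^2-8})^{1/2})$, where the formulas simplify because the first coordinate vanishes. First I would find the stable and unstable slopes $m_{s,u}(b)$: these are the eigendirections of the Jacobian $D_{P_0}\Theta_b=D_{P_0}(\cR_b\cL_b)^3$. Rather than cube a general matrix, I would use the symmetry $\Theta_b=\Psi_b\Phi_b$ together with the reflection symmetry $\cI$ and the relation $\cI(\text{stuff})$; in fact, since $P_0$ lies on a symmetry axis in a suitable sense, the stable/unstable directions of $\Theta_b$ are related to the fixed lines of the involutions. Concretely, $\cL_b$ and $\cR_b$ are involutions (Proposition~\ref{eq: F_l, C_1,+}), so $\Theta_b^{-1}=\cL_b\Theta_b\cL_b$ type reversing symmetries hold; this forces $D_{P_0}\Theta_b$ to be reversible, and the stable/unstable slopes can be read off from the trace computation \eqref{traceEll}–style expressions restricted to the hyperbolic orbit. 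I would just compute $D_{P_0}\cL_b$, $D_{Q_1}\cR_b$, etc. from the explicit partials \eqref{eq: partial d_l', d_l}–\eqref{eq: partial d_r', d_r} (and the off-diagonal partials, which are also explicit rational-in-$\sqrt{\cdot}$ functions of $b$), multiply the six matrices, and diagonalize — all of this is a finite symbolic computation in one variable $b$, valid on $1.5<b\le b_{\max}$ since $\cO_6^h(b)$ is (positive) hyperbolic there by \eqref{eq.hyp.tr} and its continuation.

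Next I would compute the slopes $m_{\sccM}(b)$ and $m_{\scrM}(b)$ of the curves $\cC_{\Phi_b}$ and $\cC_{\Psi_b}$ at $P_0(b)$. By the discussion preceding the lemma, these curves are the images $H_{\sccM}(\fb^{-1}(b))$ and $H_{\scrM}(\fb^{-1}(b))$, and $P_0(b)$ corresponds to the boundary point $(0,\beta)\in\fb^{-1}(b)$ with $\cos\beta=\frac{1}{2\sqrt{b^2-2}}$. So the tangent slope is obtained by the chain rule: differentiate the parametrizations $H_{\sccM}$, $H_{\scrM}$ (given explicitly in \eqref{abdldr} and the line after) along the level curve $\fb^{-1}(b)$, whose tangent direction at $(0,\beta)$ I already recorded in the proof of Lemma~\ref{lem.fb.level} as $\bigl(1,-\frac{2\cos^3\beta-2\cos^2\beta+1}{1+\cos\beta}\bigr)$. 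This gives $m_{\sccM}$ and $m_{\scrM}$ as explicit functions of $\beta$ (hence of $b$). The equality $H_{\sccM}=H_{\scrM}$ on the boundary $\{\alpha=0\}$ does not mean the derivatives agree, since $\cC_{\Phi_b}$ and $\cC_{\Psi_b}$ approach $P_0$ from genuinely different directions determined by the $\alpha$-derivatives of $H_{\sccM}$ and $H_{\scrM}$, and these differ.

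Having all four slopes as concrete algebraic functions of $b$ on $1.5<b\le b_{\max}$, the inequality $m_u(b)<m_{\scrM}(b)<m_{\sccM}(b)<m_s(b)<0$ reduces to four one-variable inequalities. I would verify them by: (i) checking the ordering at the endpoint $b=1.5$ (where $P_0$ degenerates into $\cO_2(1.5)$ — here I may need to take a limit and argue by continuity on $[1.5,b_{\max}]$, or start at $b=1.5+\epsilon$), and at $b=b_{\max}$; (ii) showing the differences $m_s-m_{\sccM}$, $m_{\sccM}-m_{\scrM}$, $m_{\scrM}-m_u$, $-m_s$ are each nonvanishing on the open interval, either by factoring the numerators (the natural candidate roots are $b=\sqrt2$, $b=3/2$, $b=\frac{1+\sqrt5}{2}$, $b_{\crit}$, which should all lie outside $(1.5,b_{\max})$ or be removable) or, failing a clean factorization, by a sign analysis of a single auxiliary polynomial after clearing the square roots $\sqrt{b^2-2}$. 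The main obstacle I anticipate is step (ii): the expressions for $m_{s,u}$ involve $\sqrt{4b^2-7-4\sqrt{b^2-2}}$-type nested radicals coming from the eigenvalues of $D_{P_0}\Theta_b$, so comparing $m_s$ with the rational-in-$\sqrt{b^2-2}$ slope $m_{\sccM}$ requires squaring carefully (and tracking signs, which is why the lemma also asserts $m_s<0$ — that sign control is what makes the squaring step reversible). I would organize this by reducing each comparison to "$A(b)<B(b)$ with $A,B>0$ on the interval", squaring once, and reducing to positivity of one explicit polynomial in $b$ and $\sqrt{b^2-2}$, which can be handled by the same substitution $u=\sqrt{b^2-2}\in(\tfrac12,\sqrt{b_{\max}^2-2})$ used elsewhere in the paper.
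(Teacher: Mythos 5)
Your proposal follows essentially the same route as the paper: the curve slopes are obtained exactly as you describe, by pushing the tangent vector of $\fb^{-1}(b)$ at $(0,\beta)$ forward under $DH_{\sccM}$ and $DH_{\scrM}$, and the stable/unstable slopes come from an explicit Jacobian computation at the hyperbolic point, after which each comparison is reduced (in the variable $\eta=\cos\beta=\tfrac{1}{2\sqrt{b^2-2}}$) to a sign-controlled squaring that removes the radical, precisely the difficulty and remedy you anticipate. The only organizational difference is that the paper exploits the symmetry $\Theta_b=(\Psi_{b}\cI)^2$ to work with the single $2\times 2$ matrix $\hat A=D_{Q_0}\Psi_{b}\,J$, whose eigendirections coincide with those of $D_{P_0}\Theta_b$, rather than multiplying six one-step Jacobians.
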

\begin{proof}
The tangent vector of the level curve $\fb^{-1}(b)$ at the point  $(0, \beta) \in \fb^{-1}(b)$
with normalized first component is given by
\begin{align}
T_b(0,\beta) = (1, -\frac{2\cos^3\beta -2\cos^2\beta+1}{\sin\beta}\tan\frac{\beta}{2})
=(1, -\frac{2\eta^3 -2\eta^2+1}{\eta+1}),
\end{align}
where $\eta=\cos\beta=\frac{1}{2(b^2 -2)^{1/2}}$ varies from $1$ to $0.522933$
as $b$ varies from $1.5$ to $b_{\max}$. 

(1) The tangent map of $H_{\sccM}$ at $(0, \beta)$ is given by 
\begin{align}
DH_{\sccM}(0,\beta) =\begin{bmatrix}\cos\beta+\frac{1}{\cos\beta} -1 & 0 \\ 0 & \cos \beta \end{bmatrix}
=\begin{bmatrix} \eta+\frac{1}{\eta} - 1 & 0 \\  0 & \eta \end{bmatrix}.
\end{align}
It follows that a tangent vector of  $\cC_{\Phi_b}$ at the point $P_0=H_{\sccM}(0,\beta)$ is given by
\begin{align}
T_{\sccM, b}:=DH_{\sccM}(0,\beta) T_b(0,\beta) &=\begin{bmatrix} \eta+\frac{1}{\eta} - 1 & 0 \\  0 & \eta \end{bmatrix}
\begin{bmatrix}1 \\ -\frac{2\eta^3 -2\eta^2+1}{\eta+1} \end{bmatrix}
=\begin{bmatrix} \eta+\frac{1}{\eta} - 1 \\  -\frac{\eta(2\eta^3 -2\eta^2+1)}{\eta+1}
\end{bmatrix}.
\end{align}
For an easy comparison, we will normalize the first component  of $T_{\sccM, b}$
and rewrite it as $T_{\sccM, b}=(1, m_{\sccM}(\eta))$, where  
\begin{align}
m_{\sccM}(\eta)=  -\frac{\eta^2(2\eta^3 -2\eta^2+1)}{\eta^3+1}. \label{eq.mPhi}
\end{align}
Note that $m_{\sccM} \to -\frac{1}{2}$ when $\eta\to 1$. See Fig.~\ref{fig.comp.slop} for  a plot
of the function $m_{\sccM}(\eta)$.

(2) The tangent map of $H_{\scrM}$ at $(0, \beta)$ is given by 
\begin{align}
DH_{\scrM}(0,\beta) =\begin{bmatrix}1 & 0 \\ \cos\beta+\frac{1}{\cos\beta} & -\cos \beta \end{bmatrix}
=\begin{bmatrix} 1 & 0 \\ 2- \eta-\frac{1}{\eta} & \eta \end{bmatrix}.
\end{align}
It follows that a tangent vector of  $\cC_{\Psi_b}$ at the point $P_0=H_{\scrM}(0,\beta)$ is given by
\begin{align}
T_{\scrM, b}:=DH_{\scrM}(0,\beta) T_b(0,\beta) &=\begin{bmatrix}1 & 0 \\ 2- \eta-\frac{1}{\eta} & \eta \end{bmatrix}
\begin{bmatrix}1 \\ -\frac{2\eta^3 -2\eta^2+1}{\eta+1} \end{bmatrix}
=\begin{bmatrix} 1 \\ \frac{-2 \eta^5+2 \eta^4-\eta^3+\eta-1}{\eta(\eta+1)}
\end{bmatrix}.
\end{align}
Denote $T_{\scrM, b}=(1, m_{\scrM}(\eta))$, where  
\begin{align}
m_{\scrM}(\eta)=  \frac{-2 \eta^5+2 \eta^4-\eta^3+\eta-1}{\eta(\eta+1)}. \label{eq.mPsi}
\end{align}
Note that $m_y \to -\frac{1}{2}$ when $\eta\to 1$. See Fig.~\ref{fig.comp.slop} for  a plot
of the function $m_{\scrM}(\eta)$.

(3) 
Let $A=(a_{ij})_{1\leq i,j\leq 2}:=D_{Q_0}\Psi_{b}$ be the Jacobian matrix of $\Psi_{b}=\cR_{b} \cL_{b} \cR_{b}$ at $\cI(P_0)=Q_0$.
Since $\Psi_{b} \cI(P_0)=P_0$, it follows from \eqref{eq.phiIphiI} that $D_{P_0}\Phi_{b}=J A J$, 
where  $J=\begin{bmatrix}0&1\\ 1&0 \end{bmatrix}$. 
Similarly, it follows from \eqref{eq.phipsi} that 
\begin{align*}
D_{P_0}\Theta_b=D_{Q_0}\Psi_{b} \; D_{P_0}\Phi_{b}
=A  J A  J = \hat{A}^2,
\end{align*}
where 
$\hat{A}=AJ=\begin{bmatrix}a_{12}&a_{11}\\ a_{22}&a_{21} \end{bmatrix}$.
Since the point $P_0$ is a hyperbolic periodic point of period $6$, the two matrices $\hat{A}$ and $D_{P_0}\Theta_b$ share the same eigenspaces. 
After some simplification, we get 
\begin{align}
a_{11} &=-\frac{2 \left(b^2-2\right) \left(16 b^4-64 b^2+63\right)}{2 b^2-3}; \label{eq.dQ11} \\
a_{12} &= - \frac{8 b^4-36 b^2+39}{2 b^2 -3};  \label{eq.dQ12} \\
a_{21} &=2(16 b^4 -64 b^2 +63)\sqrt{b^2-2} -\frac{(8 b^2-15) (8b^4 - 32b^2 + 31)}{2b^2-3};  \label{eq.dQ21} \\
a_{22} &=\frac{8 b^4-36 b^2+39}{\sqrt{b^2-2}} -\frac{2(8 b^4 - 35 b^2 + 36)}{2 b^2-3}.  \label{eq.dQ22}
\end{align}

Using the relation $b^2=2+\frac{1}{4\eta^2}$, we can rewrite the matrix
$A=(a_{ij})_{1\leq i,j\leq 2}$ as 
\begin{align*}
\frac{1}{\eta^5(1+2\eta^2)}
\begin{bmatrix}
\eta (\eta^4-1) & \eta^3 (2 \eta^4+2 \eta^2-1) \\
2 \eta^7-2 \eta^6+4 \eta^5-\eta^4-\eta^3+2 \eta^2-2 \eta+1 & -4 \eta^8+8 \eta^7-6 \eta^6+3 \eta^5-2 \eta^3+\eta^2
\end{bmatrix}.
\end{align*}
Then the stable and unstable eigenvalues $\lambda_{s,u}(\eta)$ and the corresponding slopes of their eigenvectors $m_{s,u}(\eta)$ of the matrix $\hat A= A J$ are given by
\begin{align}
\lambda_s(\eta)&= \frac{4 \eta^7-2 \eta^6+6 \eta^5-\eta^4-2 \eta^3+2 \eta^2-2 \eta+1- \sqrt{\Delta}}{2 (2 \eta^7+\eta^5)}, \\
\lambda_u(\eta)&= \frac{4 \eta^7-2 \eta^6+6 \eta^5-\eta^4-2 \eta^3+2 \eta^2-2 \eta+1+\sqrt{\Delta}}{2 (2 \eta^7+\eta^5)}, \\ 
m_s(\eta)&= -\frac{2 \eta^2 (1 - \eta - \eta^2 + 2 \eta^3 - 4 \eta^4 + 4 \eta^5)}{(1 + \eta) (1 + \eta^2) (1 - 2 \eta + 2 \eta^2)+(2 \eta^2+1)\sqrt{\Gamma}}, \\
m_u(\eta)&= -\frac{2 \eta^2 (1 - \eta - \eta^2 + 2 \eta^3 - 4 \eta^4 + 4 \eta^5)}{(1 + \eta) (1 + \eta^2) (1 - 2 \eta + 2 \eta^2)-(2 \eta^2+1)\sqrt{\Gamma}},
\end{align}
where 
\begin{align*}
\Delta &=(4 \eta^7 -2 \eta^6 +6 \eta^5 -\eta^4 -2 \eta^3 +2 \eta^2 -2 \eta +1)^2-4\eta^{10} (4 \eta^{4}+4 \eta^{2}+1), \\
\Gamma &=(1-\eta) (4 \eta^6+3 \eta^5+\eta^4+2 \eta^3-2 \eta^2-\eta+1).
\end{align*}
See Fig.~\ref{fig.comp.slop} for the plots of the functions $m_{s}(\eta)$ and $m_{u}(\eta)$.
To complete the proof, it suffices to note that

(i) $m_{\sccM}(\eta)- m_{\scrM}(\eta) 
=\frac{(1-\eta)^2}{\eta^4+\eta} (2 \eta^5-2 \eta^4+\eta^3+\eta^2+1) >0$ for $0.5<\eta<1$.

(ii) To estimate $m_{s}(\eta)- m_{\scrM}(\eta)$, note that both denominators are positive. We multiply it by the common denominator and eliminate the common factor $\eta^2$:
\begin{align}
&-2(1 - \eta - \eta^2 + 2 \eta^3 - 4 \eta^4 + 4 \eta^5)(\eta^3+1) \nonumber \\
&+((1 + \eta) (1 + \eta^2) (1 - 2 \eta + 2 \eta^2)+(2 \eta^2+1)\sqrt{\Gamma})(2\eta^3 -2\eta^2+1) \nonumber \\
=&(2 \eta^2+1)\Big( (1-\eta)(2\eta^5+3\eta^2-1) +(2\eta^3 -2\eta^2+1)\sqrt{\Gamma}\Big). \label{slope.sta.phi}
\end{align}
Set $p_1(\eta):=(1-\eta)(2\eta^5+3\eta^2-1)$ and $q_1(\eta):=(2\eta^3 -2\eta^2+1)\sqrt{\Gamma}$. 
It is easy to see that 
\begin{align}
q_1(\eta)^2 - p_1(\eta)^2=4\eta^3 (1-\eta^2)(\eta^3-\eta^2+1) (4 \eta^5-4 \eta^4+2 \eta^3-\eta^2-\eta+1). \label{eq.q1p1}
\end{align}
Note that all factors on the right side of \eqref{eq.q1p1} are positive on the interval $(0.5, 1)$.
It follows that $|q_1(\eta)| > |p_1(\eta)|$ on the interval  $(0.5, 1)$.
Combining with the fact that $p_1(0.5)=-0.09375$ and $q_1(0.5)\approx 0.363092$,
we have $q_1(\eta)=|q_1(\eta)| > |p_1(\eta)| \ge -p_1(\eta)$  on the interval  $(0.5, 1)$.
It follows that  $p_1(\eta)+q_1(\eta)>0$,  or  equally, $m_{s}(\eta) > m_{\scrM}(\eta)$ for $0.5<\eta<1$.

(iii) To estimate $m_{\sccM}(\eta) - m_{u}(\eta)$, we multiply it by the common denominator:
\begin{align*}
&(-2 \eta^5+2 \eta^4-\eta^3+\eta-1)((1 + \eta) (1 + \eta^2) (1 - 2 \eta + 2 \eta^2)-(2 \eta^2+1)\sqrt{\Gamma})  \\
&+2 \eta^2 (1 - \eta - \eta^2 + 2 \eta^3 - 4 \eta^4 + 4 \eta^5)\eta(\eta+1)  \\
=&(1 + 2 \eta^2) \Big((1 - \eta^2)  (-1 + 2 \eta - \eta^2 - \eta^3 + 3 \eta^4 -  6 \eta^5 + 2 \eta^6)+(2 \eta^5-2 \eta^4+\eta^3-\eta+1)\sqrt{\Gamma}\Big).
\end{align*}
Let $p_2(\eta)=(1 - \eta^2)  (-1 + 2 \eta - \eta^2 - \eta^3 + 3 \eta^4 -  6 \eta^5 + 2 \eta^6)$
and $q_2(\eta)=(2 \eta^5-2 \eta^4+\eta^3-\eta+1)\sqrt{\Gamma}$.
It is easy to see that 
\begin{align*}
q_2(\eta)^2 - p_2(\eta)^2
&=4\eta^7 (1-\eta^2)(\eta^3-\eta^2+1) (4 \eta^5-4 \eta^4+2 \eta^3-\eta^2-\eta+1) \\
&=\eta^4(q_1(\eta)^2 - p_1(\eta)^2)>0. 
\end{align*}
It follows that $|q_2(\eta)| > |p_2(\eta)|$ on the interval  $(0.5, 1)$.
Combining with the fact that $p_2(0.5)=-0.2578125$ and $q_2(0.5) \approx 0.272319$,
we have $q_2(\eta)=|q_2(\eta)| > |p_2(\eta)| \ge -p_2(\eta)$  on the interval  $(0.5, 1)$.
It follows that  $p_2(\eta)+q_2(\eta)>0$,  or  equally, $m_{\sccM}(\eta) > m_{u}(\eta)$ for $0.5<\eta<1$.

Collecting terms, we have that $m_u(b) < m_{\scrM}(b) < m_{\sccM}(b) < m_s(b)<0$ for $1.5<b \le b_{\max}$.
\end{proof}

\begin{figure}[htbp]
\includegraphics[height=2in]{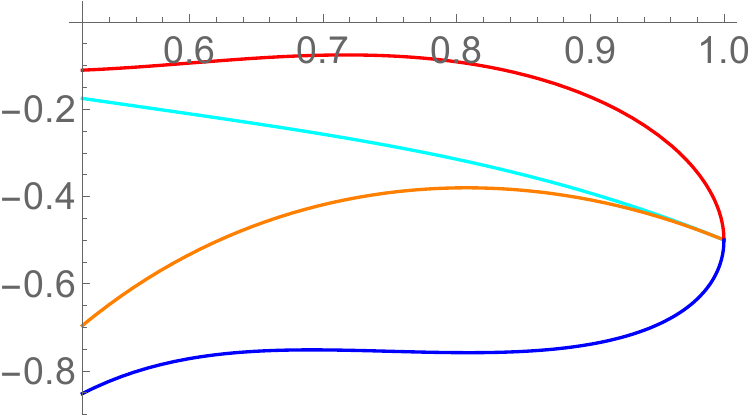}
\caption{Plot of slopes from top to bottom: $m_s(\eta)$ (red), $m_{\sccM}(\eta)$ (cyan), $m_{\scrM}(\eta)$ (orange) and $m_u(\eta)$ (blue) as $0.52<\eta <1$.} \label{fig.comp.slop}
\end{figure}

Let  $\cC(E^u_{P_0},E^s_{P_0})$ be the cone in the plane $T_{P_0}M$ from
$E^u_{P_0}$ to $E^s_{P_0}$ (counterclockwise). It follows from Lemma \ref{lem.tan.cprl}
that the tangent vectors of the two curves $\cC_{\Phi_b}$ and $\cC_{\Psi_b}$ at $P_0$ are contained in the cone $\cC(E^u_{P_0},E^s_{P_0})$. The same conclusion holds at the point $Q_0$.

Let  $1.5<b \le b_{\max}$,  $D(\Phi_{b})$ be the set of points $(d_{\ell},d_r) \in \cM^{+}$ such that $(d_{\ell}', d_{r}')=\Phi_{b}(d_{\ell}, d_r)$ is well-defined. It is possible that $D(\Phi_{b})$ has multiple connected components. 
Note that $\cC_{\Phi_b} \subset D(\Phi_{b})$ and $\cC_{\Phi_b}$ is invariant under $\Phi_{b}$. Let $D_0(\Phi_{b}) \subset D(\Phi_{b})$ be the connected component of $D(\Phi_{b})$ containing $\cC_{\Phi_b}$.
Then the curve $\cC_{\Phi_b}$ divides $D_0(\Phi_{b})$ into two parts: 
denote the exterior part by $D_{\text{ext}}(\Phi_{b})$, and the interior part by $D_{\text{int}}(\Phi_{b})$. 
\begin{lemma}\label{lem.mono}
Let $1.5<b \le b_{\max}$. Then
\begin{align}
d_{\ell}' - d_{r}' > d_r - d_{\ell} \label{eq.mono}
\end{align}
 for any $(d_{\ell}, d_r) \in D_{\text{ext}}(\Phi_{b})$, where $(d_{\ell}', d_{r}')=\Phi_{b}(d_{\ell}, d_r)$.
The reversed inequality hold for points in $D_{\text{int}}(\Phi_{b})$.
\end{lemma}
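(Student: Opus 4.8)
The plan is to reduce the statement to the sign of a single real-analytic function on a connected set. Set
\begin{align*}
G(d_{\ell}, d_{r}):=(1,-1)\cdot \Phi_{b}(d_{\ell}, d_r)-(d_r-d_{\ell})=d_{\ell}'-d_{r}'-d_{r}+d_{\ell}
\end{align*}
for $(d_\ell,d_r)\in D_0(\Phi_{b})$, where $(d_\ell',d_r')=\Phi_b(d_\ell,d_r)$. Since $D_0(\Phi_{b})$ is, by construction, a connected component of the set on which $\Phi_{b}=\cL_{b}\cR_{b}\cL_{b}$ is defined and which avoids the branch locus $L_{\pm b}\cup\cS_{1,\pm}(b)\cup\cS_{2,\pm}(b)$, the map $\Phi_{b}$, and hence $G$, is real-analytic on $D_0(\Phi_{b})$; and by the defining relation \eqref{eq.cphi} (equivalently \eqref{char.prl}) the zero set of $G$ inside $D_0(\Phi_{b})$ is precisely $\cC_{\Phi_b}$.

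Next I would invoke the topological picture established just before the lemma: $\cC_{\Phi_b}=H_{\sccM}(\fb^{-1}(b))$ is a simple arc in $\cM^{+}$ joining $P_0(b)\in\{d_\ell=0\}$ to $Q_0(b)\in\{d_r=0\}$, so it is a cross-cut of $D_0(\Phi_{b})$ and splits it into the two open components $D_{\text{ext}}(\Phi_{b})$ and $D_{\text{int}}(\Phi_{b})$. Because $G$ is continuous and vanishes nowhere on either of these connected sets, the intermediate value theorem forces $G$ to keep a constant sign on each one. Thus the lemma reduces to identifying which sign occurs on which side; along the way one should also check that for $b\ge b_{\crit}$ the extra crossings of $\cC_{\Phi_b}$ with the diagonal do not cut $D_0(\Phi_{b})$ into more than two pieces.

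To pin down the sign on $D_{\text{ext}}(\Phi_{b})$ I would compute the derivative of $G$ transverse to $\cC_{\Phi_b}$: using $H_{\sccM}$ to transport a transversal of the level curves of $\fb$ in $D$ and differentiating, the leading term is governed by the variation of $\fb$, whose sign is controlled by the monotonicity estimates of Section~\ref{sec.prl} (such as Lemma~\ref{lem.partial.a-b} and the sign of $(\pa_{\alpha}+\pa_{\beta})\fb^2$); this should give $d_\ell'-d_r'-(d_r-d_\ell)>0$ on the exterior side, with the interior case following from the opposite orientation. Since the sign is already known to be constant, an equivalent shortcut is to evaluate $G$ at one explicit test point of $D_{\text{ext}}(\Phi_{b})$ — for instance a point with one coordinate close to $0$, where $\Phi_b$ is easily computed from \eqref{eq:dell'} and \eqref{eq:dr'} — and verify the sign there. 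I expect the main obstacle to be exactly this last step of bookkeeping: establishing rigorously that the chosen transversal (or test point) lands in $D_{\text{ext}}(\Phi_{b})$ and not in $D_{\text{int}}(\Phi_{b})$, which requires a careful description of how the arc $\cC_{\Phi_b}$ is embedded in $D_0(\Phi_{b})$ relative to the origin and the corner $(1,1)$ of $\cM^{+}$.
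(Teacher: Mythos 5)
Your skeleton coincides with the paper's: the function $G(d_{\ell},d_r)=d_{\ell}'-d_{r}'-(d_r-d_{\ell})$ vanishes exactly on $\cC_{\Phi_b}$, is continuous on the connected sets $D_{\text{ext}}(\Phi_{b})$ and $D_{\text{int}}(\Phi_{b})$, hence has constant sign on each, and the whole content of the lemma is the determination of that sign. That last step is exactly where your proposal stops, and it is a genuine gap rather than bookkeeping. Your first suggested route — differentiating $G$ transversally to $\cC_{\Phi_b}$ and reading the sign off the variation of $\fb$ — does not work as stated: under $H_{\sccM}$ the neighbouring level curves $\fb^{-1}(b')$ are the parallel-trajectory sets $\cC_{\Phi_{b'}}$ for \emph{other} parameters $b'$, i.e.\ zero sets of the analogous function built from $\Phi_{b'}$, not level sets of $G$ for the fixed map $\Phi_b$; so the monotonicity estimates for $\fb$ (Lemma~\ref{lem.partial.a-b} etc.) give no direct control of the transverse derivative of $G$. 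Your second route (evaluate $G$ at one explicit test point) is the right one, but you leave both the evaluation and the identification of the side as an acknowledged obstacle, and you would additionally need either a $b$-parametric computation or an argument that the sign cannot flip as $b$ varies, which you do not address.

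The paper settles both points with one observation: since $\Phi_b$ is smooth on $D_{\text{ext}}(\Phi_{b})$, which varies continuously and never meets the zero set of $G$, the sign is constant in the parameter as well, so a single check at $b=1.6$ suffices; and the check is done infinitesimally at $E_0$ along the diagonal. For $\eps>0$ small, the point $E_0+\eps(1,1)$ lies in $D_{\text{ext}}(\Phi_{b})$ (the curve $\cC_{\Phi_b}$ crosses the diagonal transversally at $E_0$, and the exterior side is the one toward $S_0$), it satisfies $d_r-d_{\ell}=0$, and
\begin{align*}
d_{\ell}'-d_{r}' \approx \eps\,(1,-1)\cdot D_{E_0}\Phi_b(1,1) \approx 1.79\,\eps>0,
\end{align*}
where $D_{E_0}\Phi_b=J\,D_{E_0}\Psi_b\,J$ is available for free from the computation in Lemma~\ref{lem.E0.eph}. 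This choice of test point makes the ``which side of the arc'' issue you flagged trivial and requires no new computation of $\Phi_b$. With such a verification supplied your argument closes; as written, the proof is incomplete.
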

\begin{proof}
Let $1.5<b \le b_{\max}$ be fixed. Note that the equality $d_{\ell}' - d_{r}'= d_r - d_{\ell}$ holds if and only if $(d_{\ell}, d_r)\in \cC_{\Phi_b}$.
Then the inequality $d_{\ell}' - d_{r,\delta}' \neq d_r - d_{\ell}$ holds 
for any
$(d_{\ell}, d_r) \in D_{\text{ext}}(\Phi_{b})$, where $(d_{\ell}', d_{r}')=\Phi_{b}(d_{\ell}, d_r)$.
Since $D_{\text{ext}}(\Phi_{b})$ is connected on which $\Phi_{b}$ is smooth,
 the direction of the inequality  \eqref{eq.mono} is fixed with respect to the coordinates $(d_{\ell}, d_r) \in D_{\text{ext}}(\Phi_{b})$ and with respect to 
the parameter $b$.
So we only need to check the direction of the inequality at one point $(d_{\ell}, d_r) \in D_{\text{ext}}(\Phi_{b})$ for one $b\in (1.5, b_{\max})$, say $b=1.6$.
See the first picture in Fig.~\ref{fig.prl} for $b=1.6$.
In the proof of Lemma \ref{lem.E0.eph}, we have obtained the tangent matrix 
$D_{E_0}\Psi_b =\begin{bmatrix} a_{11} & a_{12} \\ a_{21} & a_{22} \end{bmatrix}$.
It follows that $D_{E_0}\Phi_{1.6}=J D_{E_0}\Psi_{1.6} J=\begin{bmatrix} a_{22}(1.6) & a_{21}(1.6) \\ a_{12}(1.6) & a_{11}(1.6)\end{bmatrix} =\begin{bmatrix} 0.874667 & 0.503482 \\ 0.466667 & -0.874667\end{bmatrix}$.
Then the image of the tangent vector $v=(1,1) \in T_{E_0}\cM$ satisfies 
\begin{align}
D_{E_0}\Phi_b(v) &=\begin{bmatrix} a_{22}(1.6)+ a_{21}(1.6) \\ a_{12}(1.6)+ a_{11}(1.6)\end{bmatrix} =\begin{bmatrix}1.37815 \\ -0.408\end{bmatrix}.
\end{align}
Therefore, $d_{\ell}' - d_{r}' > d_r - d_{\ell}$ for $(d_{\ell}, d_r) \in D_{\text{ext}}(\Phi_{b})$. The case about points in $D_{\text{int}}(\Phi_{b})$ can be proved in the same way. This completes the proof.
\end{proof}


There is a similar property as Lemma \ref{lem.mono} with the curve $\cC_{\Psi_b}$ and $\Psi_{b}$
for $1.5<b \le b_{\max}$. More precisely, let $D(\Psi_{b}) \subset \cM^{+}$ be the part of the domain of $\Psi_{b}$ in the first quadrant $\cM^{+}$. It is an open subset and contains the curve $\cC_{\Psi_b}$. Let $D_{0}(\Psi_{b})\subset D(\Psi_{b})$ be the connected component of $D(\Psi_{b})$ containing $\cC_{\Psi_b}$. 
Then $\cC_{\Psi_b}$ divides $D_0(\Psi_{b})$ into two parts: 
denote the exterior part by $D_{\text{ext}}(\Psi_{b})$, and the interior part by $D_{\text{int}}(\Psi_{b})$. 
\begin{lemma}\label{lem.mono3}
Let $1.5<b \le b_{\max}$. Then
\begin{align}
d_{\ell}' - d_{r}' < d_r - d_{\ell} \label{eq.mono3}
\end{align}
 for any $(d_{\ell}, d_r) \in D_{\text{ext}}(\Psi_{b})$, where $(d_{\ell}', d_{r}')=\Psi_{b}(d_{\ell}, d_r)$.
The reversed inequality holds for points in $D_{\text{int}}(\Psi_{b})$.
\end{lemma}
The proof is omitted, since it follows from Lemma \ref{lem.mono} by applying the involution $\cI$.

\begin{lemma}\label{lemma: F, GF}
For $b\in (1.5, b_{\max})$, there does \emph{not} exist $(d_\ell, d_r)\in \cS_{1,+}(b)$ such that 
\begin{align}\label{eq: lemma F, GF}
\cL_{b}\circ \cI (d_\ell, d_r) = \cR_{b}\circ \cL_{b}(d_\ell, d_r).
\end{align}
\end{lemma}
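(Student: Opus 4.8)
The plan is to argue by contradiction: assume some $(d_\ell,d_r)\in\cS_{1,+}(b)$ satisfies \eqref{eq: lemma F, GF}, and reduce this identity to a pair of explicit scalar equations. Since $\cL_b$ fixes the second coordinate, Proposition~\ref{eq: F_l, C_1,+} (which gives $\cL_b(\cS_{1,+}(b))\subset L_b$) forces $\cL_b(d_\ell,d_r)=(d_r-b,\,d_r)$; and since the square root in \eqref{eq:dr'} vanishes on $L_b$, one computes directly $\cR_b\cL_b(d_\ell,d_r)=\bigl(d_r-b,\;2b(d_r-b)^2+d_r-2b\bigr)$. For the other side I would use the symmetry $\cL_b\circ\cI=\cI\circ\cR_b$ from \eqref{eq.lr.sym}: writing $\cR_b(d_\ell,d_r)=(d_\ell,d_r^{\circ})$ gives $\cL_b\cI(d_\ell,d_r)=\cI\cR_b(d_\ell,d_r)=(d_r^{\circ},d_\ell)$. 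Matching the two coordinates in \eqref{eq: lemma F, GF} then produces the two conditions
\[
\text{(a)}\quad \cR_b(d_\ell,d_r)=(d_\ell,\,d_r-b),
\qquad\qquad
\text{(b)}\quad d_\ell=2b(d_r-b)^2+d_r-2b .
\]

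Next I would turn each of (a) and (b) into a constraint on $d_r$. Substituting the defining relation $d_\ell-d_r=b(1-2d_r^2)$ of $\cS_{1,+}(b)$ into (b) and cancelling a factor of $b$ collapses it to the single quadratic relation $q(d_r)=0$, where $q(t):=4t^2-4bt+2b^2-3$. The key step is to squeeze a second constraint out of (a): the point $(d_\ell,d_r)$ lies in the domain of $\cR_b$ (on $\cS_{1,+}(b)$ one has $|d_\ell-d_r|=b\,|1-2d_r^2|\le b$), so the involution relation $\cR_b\circ\cR_b=\mathrm{Id}$ of Proposition~\ref{eq: F_l, C_1,+} — which presupposes that $\cR_b$ maps its domain into itself — forces the image $(d_\ell,d_r-b)=\cR_b(d_\ell,d_r)$ to lie again in the domain of $\cR_b$, i.e.\ $b^2-\bigl(d_\ell-(d_r-b)\bigr)^2\ge 0$. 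Since $d_\ell-(d_r-b)=(d_\ell-d_r)+b=2b(1-d_r^2)$, this reads $1-4(1-d_r^2)^2\ge 0$, whence $d_r^2\ge\tfrac12$; combined with $d_r>0$ and $|d_r|\le1$ we obtain $d_r\in[2^{-1/2},1]$.

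To conclude, I would observe that the convex function $q$ is strictly negative at both endpoints of $[2^{-1/2},1]$ for every $b\in(1.5,b_{\max})$: indeed $q(2^{-1/2})=2b^2-2\sqrt2\,b-1$, whose only positive root is exactly $b_{\max}=1+2^{-1/2}$, and $q(1)=2b^2-4b+1$, which is negative precisely for $b\in(1-2^{-1/2},\,b_{\max})$. A convex function negative at both ends of an interval is negative throughout it, so $q(d_r)<0$ for every $d_r\in[2^{-1/2},1]$, contradicting $q(d_r)=0$. Hence no such $(d_\ell,d_r)$ exists.

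The step I expect to be the main obstacle is the derivation of $d_r^2\ge\tfrac12$. If one used only (b), the quadratic $q$ would have the two roots $\tfrac12\bigl(b\pm\sqrt{3-b^2}\bigr)$; the ``$+$'' root is easily discarded since it exceeds $1$, but the ``$-$'' root $\tfrac12\bigl(b-\sqrt{3-b^2}\bigr)$ genuinely lies in $[-1,1]$, and for $b$ close to $b_{\max}$ the whole point $(d_\ell,d_r)$ sits inside $\cM$ (degenerating to the corner $S_0=(2^{-1/2},2^{-1/2})$ as $b\to b_{\max}$), so membership in $\cM$ by itself does not exclude it. The missing information must come from (a), and the involution argument above is the efficient way to extract it; the alternative — substituting the root into the first-coordinate equation $d_r^{\circ}=d_r-b$ directly — forces one through a bulkier square-root identity that then has to be squared and factored.
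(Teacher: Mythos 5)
Your proposal is correct, and after the same initial reduction it finishes by a genuinely different route than the paper. Both arguments reduce \eqref{eq: lemma F, GF} to the same three scalar conditions: membership in $\cS_{1,+}(b)$, the second–coordinate equation $d_\ell=2b(d_r-b)^2+d_r-2b$, and the first–coordinate equation $\cR_{b}(d_\ell,d_r)=(d_\ell,d_r-b)$; and both solve the first two for $d_r=\tfrac12\bigl(b\pm\sqrt{3-b^2}\bigr)$. The paper then substitutes these roots into the full radical equation coming from the first coordinate and determines all $b$ solving the resulting equations \eqref{eq: l,r,p,b}--\eqref{eq: l,r,m,b}, none of which lie in $(1.5,b_{\max})$ --- a computation that essentially requires computer algebra. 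You instead keep from the first coordinate only the necessary condition that the image point $(d_\ell,d_r-b)$ satisfy $|d_\ell-(d_r-b)|\le b$, which on $\cS_{1,+}(b)$ reads $2b(1-d_r^2)\le b$, i.e.\ $d_r\ge 2^{-1/2}$; the quadratic $q(t)=4t^2-4bt+2b^2-3$ is then strictly negative on $[2^{-1/2},1]$ for all $b\in(1.5,b_{\max})$, since $q(2^{-1/2})=2b^2-2\sqrt2\,b-1$ and $q(1)=2b^2-4b+1$ have their positive roots exactly at $b_{\max}$ and $1\pm2^{-1/2}$. This is hand–checkable and also explains why the obstruction disappears precisely at $b=b_{\max}$ (the candidate degenerates to $S_0$). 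The one soft spot is your justification of the key inequality via the involution $\cR_{b}\circ\cR_{b}=\mathrm{Id}$ of Proposition~\ref{eq: F_l, C_1,+}: that proposition is proved geometrically and, taken literally as an identity for the formula \eqref{eq:dr'} on all of $\cM$, it can fail outside the geometrically meaningful region (e.g.\ near the corner $(1,1)$ the second application reflects at the other intersection point), so leaning on its implicit domain statement is fragile. Fortunately the bound you need holds unconditionally and has a one–line direct proof: with $u=d_\ell-d_r$, \eqref{eq:dr'} gives $d_r'-d_\ell=(1-2d_\ell^2)u-2d_\ell\sqrt{1-d_\ell^2}\,\sqrt{b^2-u^2}$, and since $(1-2d_\ell^2)^2+\bigl(2d_\ell\sqrt{1-d_\ell^2}\bigr)^2=1$, Cauchy--Schwarz yields $|d_r'-d_\ell|\le b$ (equivalently, the image corresponds to an honest oriented line, whose signed distances to the two centers differ by at most $b$). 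With that substitution, and the trivial remark that $|d_\ell|=1$ is impossible because then $\cR_{b}$ fixes the point and the first–coordinate equation would force $b=0$, your argument is complete and, if anything, cleaner and easier to verify than the paper's.
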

\begin{proof}
Suppose on the contrary that there is a solution $(d_\ell, d_r)\in \cS_{1,+}(b)$ of \eqref{eq: lemma F, GF}. It follows that
\begin{align*}
&d_\ell-d_r=b(1-2d_r^2); \\
&d_\ell= \cR_{b}(d_r-b,d_r)_2=d_r-b-b(1-2(d_r-b)^2)=2b(d_r-b)^2+d_r-2b; \\
&(d_\ell-d_r)(1-2d_\ell^2)-2d_\ell\sqrt{(b^2-(d_\ell-d_r)^2)(1-d_\ell^2)}+d_\ell=d_r-b.
\end{align*} 
Solving $d_r$ from the first equation, we have $d_{r,\pm}=\frac{b\pm \sqrt{3-b^2}}{2}$.
We plug it into the second equation and obtain
\begin{align*}
d_{\ell,\pm}:=d_{r,\pm}+b(1-2d_{r,\pm}^2)=\pm\frac{1}{2}(1-2b^2)\sqrt{3-b^2}.
\end{align*}
Plugging $(d_{\ell,\pm},d_{r, \pm})$ into the third equation, we get, respectively
\begin{align}
\label{eq: l,r,p,b}&\scriptstyle{\frac{1}{4} \big(-\left(1-2 b^2\right)^3 \left(3-b^2\right)^{3/2}+4 \left(1-2 b^2\right) \sqrt{3-b^2}-4 \sqrt{3-b^2}-\left(1-2 b^2\right)^2 \left(b^2-3\right) \left(\sqrt{3-b^2}+b\right)}\\
\nonumber&\scriptstyle{+\left(2 b^2-1\right) \sqrt{3-b^2} \sqrt{b^2 \left(4 b^6-16 b^4+13 b^2+1\right) \left(4 b^4-12 b^2-4 \sqrt{3-b^2} b+3\right)}\big)}=0,\\
\label{eq: l,r,m,b}&\scriptstyle{\frac{1}{4} \big(\left(1-2 b^2\right)^3 \left(3-b^2\right)^{3/2}+4 \left(2 b^2-1\right) \sqrt{3-b^2}+4 \sqrt{3-b^2}+\left(b^2-3\right) \left(1-2 b^2\right)^2 \left(\sqrt{3-b^2}-b\right)}\\
\nonumber&\scriptstyle{+\left(1-2 b^2\right) \sqrt{3-b^2} \sqrt{b^2 \left(4 b^6-16 b^4+13 b^2+1\right) \left(4 b^4-12 b^2+4 \sqrt{3-b^2} b+3\right)}\big)}=0.
\end{align}
The solutions to the Equations \eqref{eq: l,r,p,b} and \eqref{eq: l,r,m,b} are
\begin{align*}
&b=0,-\sqrt{\frac{3}{2}}, \pm\sqrt{3},\frac{1}{2}(2-\sqrt{2}), \frac{1}{2}(-2-\sqrt{2});\\
&b=0,\sqrt{\frac{3}{2}}, \pm \sqrt{3}, \frac{1}{2}(-2\pm\sqrt{2}), \frac{1}{2}(2+\sqrt{2}),
\end{align*}
respectively. Since none of the solutions is in $(1.5, 1+ 2^{-1/2})$, the lemma follows. 
\end{proof}

\section{Heteroclinic intersections and topological entropy}\label{sec.homoclinic}

Let $1.5<b \le b_{\max}$, $F_b$ be the billiard map on the lemon table  $\cQ(b)$.
As we have seen in Section \ref{pre.lemon}, the periodic orbit $\cO_2(b)$ is elliptic and $D_P F_b^2$ is conjugate to the rotation matrix $R_{\theta}$, where $\theta=\arccos(2(b-1)^2 -1)$.
It follows that  the rotation number $\rho(F_{b}^2, P)$ of $F_b^2$ at $P$ satisfies $\rho(F_{1.5}^2, P)=\frac{1}{3}$ and is decreasing with respect to $b$ for $b>1.5$.
Let $\rho>\frac{1}{3}$ be a Diophantine number that is sufficiently close to $\frac{1}{3}$, $\delta_{\rho}>0$ be a positive number given by Proposition \ref{pro.delta.rho}.
For any $1.5 < b < 1.5+\delta_{\rho}$, let $C_{\rho}(b)$ be the union of two $F_b^2$-invariant circles of rotation number $\rho$ surrounding the elliptic periodic points $P, F_b(P)\in \cO_2(b)$, and $D_{\rho}(b)\subset M$ be the union of two $F_b^2$-invariant disks enclosed by $C_{\rho}(b)$. Since $\rho>\frac{1}{3}$, $D_{\rho}(b)$ contains both periodic orbits $\cO_6^e(b)$ and $\cO_6^h(b)$ constructed in Section \ref{sec.el.hy}.
In the phase space $\cM$, the elliptic periodic orbit $\cO_2(b)$ corresponds to the origin $(0,0)$,
$C_{\rho}(b)$ corresponds to a circle $\cC_{\rho}(b)$ surrounding the origin $(0,0)$ that is invariant under both $\cL_{b}$ and $\cR_{b}$,
and $D_{\rho}(b)$ corresponds to a disk  $\cD_{\rho}(b)$ containing the origin $(0,0)$ that is invariant under both $\cL_{b}$ and $\cR_{b}$.
In particular, $\cD_{\rho}(b)$ contains four periodic orbits of period $6$ containing the points $P_0$, $Q_0$, $E_0$ and $-E_0$, respectively.

\subsection{Some characterizations of the system for $b$ close to $1.5$}
Let $\cC_{\Phi_{b}}^{\sym} \subset \cM^+$ be the set of points $(d_{\ell}, d_{r}) \in \cM^+$ such that
\begin{align}
d_\ell' -d_r' =d_\ell-d_r,\label{eq.cphi.sym}
\end{align}
where $(d_{\ell}', d_{r}')=\Phi_{b} (d_{\ell}, d_{r})$.
Geometrically, $\cC_{\Phi_{b}}^{\sym}$ consists of points in $\cM^+$ whose reflection aims at the point  $B_r=(1,0) \in \Gamma_r$.
It follows that $\cC_{\Phi_{b}}^{\sym}=\cL_{b}(\{(-x, (b-1)x): 0\le x\le 1\})$.
Note that $\cC_{\Phi_{b}}^{\sym}$ intersects the diagonal exactly once at $E_0$,
since any such intersection point
leads to a parallel orbit that is symmetric with respect to both axes and must be the periodic point $E_0$. Similarly we can define $\cC_{\Psi_{b}}^{\sym}$. It is easy to see that $\cI(\cC_{\Phi_{b}}^{\sym})=\cC_{\Psi_{b}}^{\sym}$. See Fig.~\ref{fig.csym}.

\begin{figure}[htbp]
\begin{overpic}[width=2in, unit=0.1in]{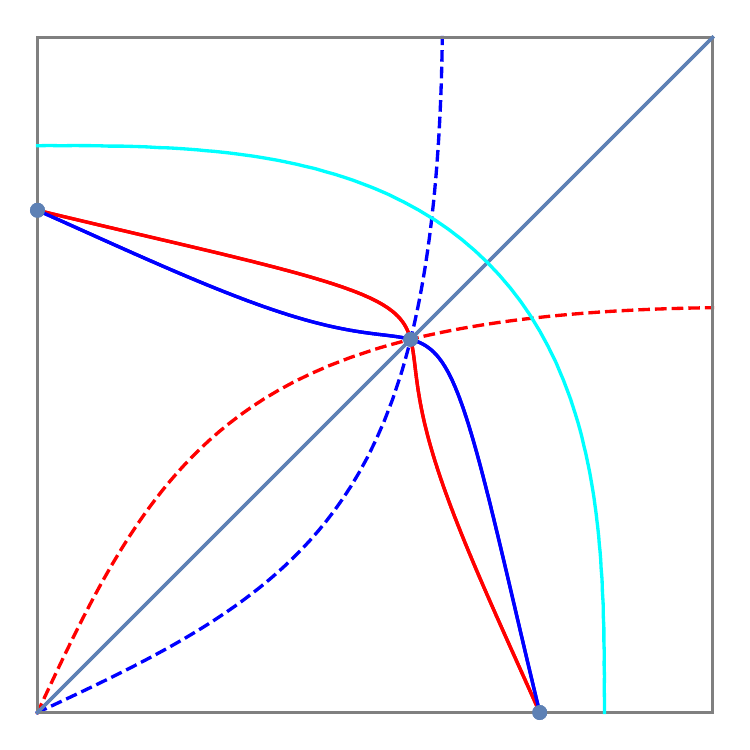}
\put (-1,-0.5) {$(0,0)$} %
\put (12,10) {$E_0$} %
\put (-1,14) {$P_0$} %
\put (14,-1) {$Q_0$} %
\put (5,16.5) {$C_{\rho}(b)$} %
\put (15,10) {$\cC_{\Phi_b}^{\sym}$} %
\put (12,16) {$\cC_{\Psi_b}^{\sym}$} %
\end{overpic}
\caption{The curves $\cC_{\Phi_b}$ (red)
and $\cC_{\Psi_b}$ (blue), $\cC_{\Phi_{b}}^{\sym}$ (red and dashed) and  $\cC_{\Psi_{b}}^{\sym}$ (blue and dashed), where $b=1.6$.   The cyan curve is an illustration of part of the Diophantine invariant curve $C_{\rho}(b)$ in the first quadrant.} \label{fig.csym}
\end{figure}

\begin{lemma}\label{lem.cphi.sym}
(1) Both $\cC_{\Phi_{b}}^{\sym}\cap \cC_{\Phi_{b}}=\{E_0(b)\}$
and $\cC_{\Psi_{b}}^{\sym}\cap \cC_{\Psi_{b}}=\{E_0(b)\}$ hold  for any $1.5<b  <b_{\crit}$. 

(2) There exists a positive number $\delta_{C}>0$ such that
$\cC_{\Phi_{b}}^{\sym}\cap \cC_{\Psi_{b}}=\{E_0(b)\}$ 
and $\cC_{\Psi_{b}}^{\sym}\cap \cC_{\Phi_{b}}=\{E_0(b)\}$  for any $1.5<b \le  1.5+\delta_C$. 
\end{lemma}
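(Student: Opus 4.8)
The plan is to treat the two parts separately, both by combining the explicit descriptions of the four curves with the monotonicity information (Lemma \ref{lem.mono}, Lemma \ref{lem.mono3}) and the slope estimates at the endpoints (Lemma \ref{lem.tan.cprl}).

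For part (1), recall that $\cC_{\Phi_b}^{\sym}=\cL_b(\{(-x,(b-1)x):0\le x\le 1\})$ consists of the points in $\cM^+$ whose $\Phi_b$-image satisfies $d_\ell'-d_r'=d_\ell-d_r$, while $\cC_{\Phi_b}$ (from Section \ref{sec.gen.b.map}) consists of the points whose $\Phi_b$-image satisfies $d_\ell'-d_r'=d_r-d_\ell$. Thus a point in $\cC_{\Phi_b}^{\sym}\cap\cC_{\Phi_b}$ must satisfy both, forcing $d_r-d_\ell=d_\ell-d_r$, i.e. $d_\ell=d_r$: the intersection lies on the diagonal. Since $\cC_{\Phi_b}$ for $1.5<b<b_{\crit}$ meets the diagonal only at $E_0(b)$ (this was established after Lemma \ref{lem.per.bif}), and $E_0(b)\in\cC_{\Phi_b}^{\sym}$ because the corresponding parallel orbit is symmetric about both axes, we get $\cC_{\Phi_b}^{\sym}\cap\cC_{\Phi_b}=\{E_0(b)\}$. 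The statement for $\cC_{\Psi_b}^{\sym}\cap\cC_{\Psi_b}$ follows by applying the involution $\cI$, since $\cI$ exchanges $\Phi_b$ with $\Psi_b$, hence $\cC_{\Phi_b}$ with $\cC_{\Psi_b}$ and $\cC_{\Phi_b}^{\sym}$ with $\cC_{\Psi_b}^{\sym}$, and fixes the diagonal and $E_0(b)$.

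For part (2) the coincidence of conditions is no longer available: a point of $\cC_{\Phi_b}^{\sym}\cap\cC_{\Psi_b}$ satisfies $(1,-1)\cdot\Phi_b=d_\ell-d_r$ and $(1,-1)\cdot\Psi_b=d_r-d_\ell$, which are two genuinely different constraints, so one cannot immediately pin the intersection to the diagonal. The approach here is a continuity/compactness argument based at $b=1.5$. At $b=1.5$ all of $P_0(b)$, $Q_0(b)$, $E_0(b)$ collapse to the origin, and the four curves $\cC_{\Phi_b},\cC_{\Psi_b},\cC_{\Phi_b}^{\sym},\cC_{\Psi_b}^{\sym}$ shrink toward $(0,0)$; away from the origin one controls them uniformly via the $H_{\sccM},H_{\scrM}$ parametrizations and the explicit formula for $\cC_{\Phi_b}^{\sym}=\cL_b(\{(-x,(b-1)x)\})$. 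Since at $b=1.5$ (in the normalized/blown-up picture) the relevant curves are transverse to each other except where they are forced to meet, and since $\cC_{\Phi_b}^{\sym}$ and $\cC_{\Psi_b}$ both pass through $E_0(b)$ with tangent lines that are distinct — $\cC_{\Psi_b}$ has slope $m_{\scrM}(b)$ at its endpoints and crosses the diagonal transversally at $E_0$, while $\cC_{\Phi_b}^{\sym}$ meets the diagonal transversally at $E_0$ with a different slope — one gets a transverse intersection at $E_0(b)$ for $b$ near $1.5$. I would then argue that there are no further intersections: using Lemma \ref{lem.mono} (the sign of $d_\ell'-d_r'-(d_r-d_\ell)$ is constant on each of the two components into which $\cC_{\Phi_b}$ cuts its domain) together with the analogous statement for $\cC_{\Psi_b}$, and the slope ordering $m_u<m_{\scrM}<m_{\sccM}<m_s<0$ at $P_0$ and $Q_0$ from Lemma \ref{lem.tan.cprl}, one confines $\cC_{\Psi_b}$ and $\cC_{\Phi_b}^{\sym}$ to disjoint regions of $\cM^+$ except at $E_0$. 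A compactness argument over the closure of $\cM^+$ minus a shrinking neighborhood of the origin then yields a uniform $\delta_C>0$ for which the only intersection is $E_0(b)$; the statement $\cC_{\Psi_b}^{\sym}\cap\cC_{\Phi_b}=\{E_0(b)\}$ follows by applying $\cI$.

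The main obstacle I anticipate is part (2): establishing that $\cC_{\Phi_b}^{\sym}$ and $\cC_{\Psi_b}$ do not acquire extra intersections as $b$ moves off $1.5$. The difficulty is that, unlike in part (1), there is no algebraic identity collapsing the two defining conditions, so one must genuinely separate the curves geometrically. The cleanest route is probably to reduce everything to the blown-up picture at $b=1.5$, check transversality of the limiting configuration at the origin and the non-tangency at $E_0$ (via the slope computations already available), and invoke an openness/persistence argument — the hard bookkeeping being to make the "away from the origin" and "near the origin" estimates patch together into a single explicit $\delta_C$. If a fully rigorous transversality-at-the-origin argument proves awkward, an alternative is to exploit the monotonicity lemmas directly: show that on $\cM^+\setminus\{E_0\}$ the quantity $(1,-1)\cdot\Phi_b(d_\ell,d_r)-(d_\ell-d_r)$ and the quantity $(1,-1)\cdot\Psi_b(d_\ell,d_r)-(d_r-d_\ell)$ cannot both vanish, by tracking their signs along the level curves of $\fb$ and using the curve $\cJ$ and Lemma \ref{lemma: F, GF} to rule out the boundary/singular cases.
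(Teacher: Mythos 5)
Your part (1) is correct and is exactly the paper's argument: a point of $\cC_{\Phi_{b}}^{\sym}\cap \cC_{\Phi_{b}}$ satisfies $d_\ell'-d_r'=d_\ell-d_r=d_r-d_\ell$, hence lies on the diagonal, and for $1.5<b<b_{\crit}$ the curve $\cC_{\Phi_b}$ meets the diagonal only at $E_0(b)$; the statement for $\Psi_b$ follows by the involution $\cI$. No issues there.

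Part (2), however, has a genuine gap. The decisive ingredient in the paper is a quantitative slope statement \emph{near the origin}: since the tangent map of $\Phi_{b}$ at the origin for $b=1.5$ is the flip $(d_\ell,d_r)\mapsto(d_r,d_\ell)$, the defining equation \eqref{eq.cphi.sym} of $\cC_{\Phi_b}^{\sym}$ linearizes at $(0,0)$ to $d_\ell=d_r$; hence for $b$ close to $1.5$, in a fixed neighborhood $U$ of the origin, $\cC_{\Phi_b}^{\sym}$ is a $C^1$-small perturbation of the diagonal (slope close to $+1$), while $\cC_{\Psi_b}$ has strictly negative slope. Two such curves can cross at most once in $U$, and since for $b$ near $1.5$ the points $P_0,Q_0,E_0$ and the entire curve $\cC_{\Psi_b}$ collapse into $U$, that single crossing is $E_0$. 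Your proposal never establishes this slope control or any substitute for it. Transversality at $E_0$ with ``distinct tangent lines'' (which you assert rather than derive) is a purely local statement at $E_0$ and cannot rule out further intersections elsewhere. Your confinement argument via Lemmas \ref{lem.mono}, \ref{lem.mono3} and the slope ordering of Lemma \ref{lem.tan.cprl} does not close this: those lemmas compare $(1,-1)\cdot\Phi_b$ (resp.\ $(1,-1)\cdot\Psi_b$) with $d_r-d_\ell$, which locates $\cC_{\Phi_b}^{\sym}$ relative to $\cC_{\Phi_b}$ (inside $\cC_{\Phi_b}$ above the diagonal, outside below), but gives no direct information on its position relative to $\cC_{\Psi_b}$ — and that is exactly what part (2) asks for. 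Finally, the compactness argument ``over $\cM^+$ minus a shrinking neighborhood of the origin'' is aimed at the wrong regime: as $b\downarrow 1.5$ all potential intersections occur \emph{inside} the shrinking neighborhood, where your compactness yields nothing and where the linearization argument is indispensable. Your closing suggestion of blowing up at $b=1.5$ and invoking persistence gestures toward the correct mechanism, but it is left unexecuted, so as written part (2) is not proved.
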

\begin{proof}
(1) It is clear that $E_0(b)\in \cC_{\Phi_{b}}^{\sym}\cap \cC_{\Phi_{b}}$.
It suffices to show that $(d_{\ell}, d_{r})=E_0$ for any $(d_{\ell}, d_{r})\in \cC_{\Phi_{b}}^{\sym}\cap \cC_{\Phi_{b}}$. Note that such a point satisfies
\begin{align}
d_\ell' -d_r' =d_\ell-d_r = d_r - d_\ell,
\end{align}
where $(d_{\ell}', d_{r}')=\Phi_{b} (d_{\ell}, d_{r})$.
It follows that $d_\ell =d_r$, and hence $(d_{\ell}, d_{r})\in \cC_{\Phi_{b}}\cap \Delta =\{E_0(b)\}$ for all $1.5<b\le b_{\crit}$. The second intersection follows in the same way.

(2) Note that the tangent maps of $\cL_{b}$, $\cR_{b}$ and $\Phi_{b}$ at the origin for $b_0=1.5$:
\begin{align*}
T_0 \cL_{b_0}=\begin{bmatrix} -1 & -1 \\  0 & 1 \end{bmatrix},\quad 
T_0\cR_{b_0}=\begin{bmatrix} 1 & 0 \\ -1 & -1 \end{bmatrix}, \quad
T_0\Phi_{b}=\begin{bmatrix} 0 & 1 \\  1 & 0 \end{bmatrix}.
\end{align*}
In this case, the \emph{linearized} equation of (\ref{eq.cphi.sym}) at $(0,0)$ is 
\begin{align*}
d_\ell=d_r,
\end{align*}
which gives the diagonal. 
Since for $b$ close to $1.5$ and in a small neighborhood of the origin, Eq.~(\ref{eq.cphi.sym}) is a $C^\infty$-perturbation of $d_\ell-d_r=0$,  $\cC_{\Phi_b}^{\sym}$ is cut out smoothly as a $C^\infty$-perturbation of the diagonal.  
It follows that for any $\eps>0$, there exist a number $\delta>0$ and a neighborhood $U\subset \cM$ of the origin $(0,0)$ such that for every $1.5 < b < 1.5 +\delta$, $\cC_{\Phi_b}^{\sym} \cap U$ is a smooth (properly embedded) curve that is $\eps$-close to the diagonal under $C^1$ norm. 
Similarly, there exists a number $\delta'>0$ such that for each $1.5<b<1.5+\delta'$, 
the slope of the curve $\cC_{\Psi_b}$ is strictly negative.  It follows such two curves can intersect only once for $1.5<b< 1.5+ \min\{\delta,  \delta'\}$, which corresponds to the point $E_0$. This completes the proof. 
\end{proof}

Let $\delta_{\rho}$ be the constant given in Proposition~\ref{pro.delta.rho}
and $\delta_{C}$ be given in Lemma~\ref{lem.cphi.sym}.
For convenience we introduce the constant 
\begin{align}
\delta_0=\min\{\delta_{\rho},  \delta_{C}\}. \label{eq.delta0}
\end{align}

\begin{proposition}\label{pro.no.new.po6}
Let $b\in(1.5, 1.5+\delta_0)$. Then there is no other periodic orbit of period $6$ that is contained in the domain $\cD_{\rho}(b)$ beside the hyperbolic and elliptic periodic orbits found in Section \ref{sec.el.hy}.
\end{proposition}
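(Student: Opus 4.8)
\emph{Step 1: reduction to a type (I,I) orbit of the generalized map.}
Let $\cO$ be a periodic orbit of period $6$ of $F_b$ contained in $\cD_{\rho}(b)$, and suppose $\cO\neq \cO_6^e(b),\cO_6^h(b)$. Viewed through $F_b^2$, the orbit $\cO$ consists of two periodic orbits of period $3$ inside the invariant disks bounded by $C_{\rho}(b)$; since $\rho>\tfrac13$ and $\cO_2(b)$ is the \emph{unique} period-$2$ orbit, the only possibility for the Carath\'eodory-type rotation number about $\cO_2(b)$ is $\tfrac13$, so $\cO$ makes three reflections on each arc and alternates between $\Gamma_{\ell}$ and $\Gamma_r$. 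Hence the analysis of Section~\ref{sec.topo} applies: Lemmas~\ref{lem.II.II}, \ref{lem.O.II} and \ref{lem.I.III} rule out the types (II,II), (O,II) and (I,III), so $\cO$ is of type (I,I) (possibly with the double collisions of $\cO_6^h(b)$). Because $\cO$ alternates between the two arcs, it coincides with an orbit of the alternating composition $\cdots\cR_b\cL_b\cR_b\cdots$ (Remark~\ref{rem.abs.bill}); in the coordinates $(d_\ell,d_r)$ it is therefore a period-$6$ orbit of $\cL_b,\cR_b$ lying in $\cD_{\rho}(b)$, and each of its six points is fixed by $\Theta_b=(\cR_b\cL_b)^3$. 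Let $x_0$ be its \emph{top} segment, the oriented line through the two highest reflection points, and $x_3$ its \emph{bottom} segment. By \eqref{eq.dlrj0} both $x_0$ and $x_3$ lie in $\overline{\cM^{+}}\cap\cD_{\rho}(b)$.

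\emph{Step 2: the involution identity.}
By Proposition~\ref{eq: F_l, C_1,+} the maps $\cL_b$ and $\cR_b$ are involutions, hence so are $\Phi_b=\cL_b\cR_b\cL_b$ and $\Psi_b=\cR_b\cL_b\cR_b$. Since $\Theta_b=\Psi_b\Phi_b$, a point $x$ is fixed by $\Theta_b$ if and only if $\Phi_b(x)=\Psi_b(x)$. Applied to $x_0$ this gives $x_3=\Phi_b(x_0)=\Psi_b(x_0)$, and symmetrically $x_0=\Phi_b(x_3)=\Psi_b(x_3)$. We will exploit this together with the symmetries $\Phi_b=\cI\Psi_b\cI$ of \eqref{eq.phiIphiI} and the fact that $\cI$ and the center symmetry $(d_\ell,d_r)\mapsto(-d_\ell,-d_r)$ both commute with $\Phi_b$ and $\Psi_b$.

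\emph{Step 3: trapping $x_0$.}
Since the defining conditions of $\cC_{\Phi_b}$ and $\cC_{\Psi_b}$ both compare the quantity $(1,-1)\cdot\Phi_b(x_0)=(1,-1)\cdot\Psi_b(x_0)$ against $d_r(x_0)-d_\ell(x_0)$, Step~2 forces $x_0\in\cC_{\Phi_b}$ if and only if $x_0\in\cC_{\Psi_b}$. If $x_0$ lies on these curves, then $x_0\in\cC_{\Phi_b}\cap\cC_{\Psi_b}$; for $b$ close to $1.5$ one checks, by an argument parallel to Lemma~\ref{lem.cphi.sym} (using that for such $b$ the curve $\cC_{\Psi_b}$ has slope strictly between $m_u$ and $m_s<0$ by Lemma~\ref{lem.tan.cprl} and $\cC_{\Phi_b}=\cI(\cC_{\Psi_b})$), that $\cC_{\Phi_b}\cap\cC_{\Psi_b}=\{E_0(b),P_0(b),Q_0(b)\}$; the cases $x_0=P_0$ or $x_0=Q_0$ would put $\cO=\cO_6^h(b)$, so $x_0=E_0(b)$ and $\cO=\cO_6^e(b)$, a contradiction. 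If instead $x_0\notin\cC_{\Phi_b}$, then (using $x_0\in\cD_{\rho}(b)\subset D_0(\Phi_b)\cap D_0(\Psi_b)$) $x_0$ lies in $D_{\mathrm{ext}}(\Phi_b)$ or $D_{\mathrm{int}}(\Phi_b)$; Lemmas~\ref{lem.mono} and \ref{lem.mono3}, applied to the pair $x_3=\Phi_b(x_0)$, $x_0=\Psi_b(x_3)$, determine the sign of $\big(d_r(x_3)-d_\ell(x_3)\big)-\big(d_\ell(x_0)-d_r(x_0)\big)$. Running the same comparison against the symmetric curves $\cC_{\Phi_b}^{\sym}$, $\cC_{\Psi_b}^{\sym}$, and using Lemma~\ref{lem.cphi.sym} so that the four curves $\cC_{\Phi_b},\cC_{\Psi_b},\cC_{\Phi_b}^{\sym},\cC_{\Psi_b}^{\sym}$ pairwise meet only at $E_0(b)$ inside $\cD_{\rho}(b)$ for $b\le 1.5+\delta_C$, together with Lemma~\ref{lemma: F, GF} to discard the degenerate configurations sitting on the singular curves $\cS_{1,\pm}(b)$, yields a contradiction. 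Hence no such $\cO$ exists, which proves the proposition for $b\in(1.5,1.5+\delta_0)$.

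\emph{The main obstacle} is Step~3: pinning down that a $\Theta_b$-fixed top segment cannot lie strictly to one side of $\cC_{\Phi_b}$ requires simultaneously tracking the positions of $x_0$ and $x_3$ relative to all four curves and combining the several monotonicity inequalities with the separation of these curves near $1.5$ — this is exactly where the smallness constant $\delta_C$, and hence $\delta_0=\min\{\delta_{\rho},\delta_M,\delta_C\}$, enters. The remaining inputs (the combinatorial exclusion of the non-(I,I) types and the involution identity) are comparatively routine.
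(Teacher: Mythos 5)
Your Steps 1 and 2 match the paper's route (reduction to an alternating orbit meeting the first quadrant, and the involution identity $\Phi_b(x_0)=\Psi_b(x_0)$ for a $\Theta_b$-fixed point), but Step 3 has a genuine gap exactly at the point you flag as "the main obstacle": it is asserted, not proved. The union $\cC_{\Phi_b}\cup\cC_{\Psi_b}$ cuts $\cD_{\rho}^{+}(b)$ into four components. In the two components lying outside both curves or inside both curves, your scheme works: there the comparisons coming from \eqref{eq.cphi} and \eqref{eq.cpsi} (equivalently Lemmas \ref{lem.mono} and \ref{lem.mono3}) have \emph{opposite} signs, so $\Phi_b(x_0)=\Psi_b(x_0)$ forces equality and hence $x_0\in\cC_{\Phi_b}\cap\cC_{\Psi_b}=\{P_0,Q_0,E_0\}$. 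But in the two components \emph{between} the curves (upper-left and lower-right of the lens they bound), both comparisons point the same way, so Lemmas \ref{lem.mono} and \ref{lem.mono3} are mutually consistent and produce no contradiction; "running the same comparison against $\cC_{\Phi_b}^{\sym}$, $\cC_{\Psi_b}^{\sym}$ \dots yields a contradiction" is precisely the step that needs an argument, and your proposal never supplies it.

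The paper closes this case with an involution trick that your sketch does not contain: the function $g(x)=(1,-1)\cdot\Phi_b(x)-(d_\ell-d_r)(x)$ vanishes exactly on $\cC_{\Phi_b}^{\sym}$, hence has constant sign on each of the two components of $\cD_{\rho}^{+}(b)\setminus\cC_{\Phi_b}^{\sym}$ (positive on the $P_0$-side, negative on the $Q_0$-side); Lemma \ref{lem.cphi.sym}(2) (this is where $\delta_C$, hence $\delta_0$, enters) guarantees that the upper-left between-component lies entirely in the $P_0$-side and the lower-right one in the $Q_0$-side. If $x_0$ were a period-$6$ point in, say, the upper-left component, then $x_3=\Phi_b(x_0)$ lies in the same component, so $g(x_0)>0$ and $g(x_3)>0$; since $\Phi_b$ is an involution these two inequalities read $d_\ell(x_3)-d_r(x_3)>d_\ell(x_0)-d_r(x_0)$ and $d_\ell(x_0)-d_r(x_0)>d_\ell(x_3)-d_r(x_3)$, a contradiction. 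Without this (or an equivalent) argument your proof does not go through. Two minor further points: your appeal to Lemma \ref{lemma: F, GF} is out of place here --- $\cD_{\rho}(b)$ contains no singular points, and that lemma plays no role in this proposition --- and the identity $\cC_{\Phi_b}\cap\cC_{\Psi_b}=\{P_0,Q_0,E_0\}$ should be justified from the description of these curves in Section \ref{sec.gen.b.map} (each is a simple curve from $P_0$ to $Q_0$ meeting the diagonal only at $E_0$, and they are exchanged by $\cI$) rather than re-derived "parallel to Lemma \ref{lem.cphi.sym}".
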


\begin{proof}
Let $b\in (1.5, 1.5+\delta_0)$.
It follows from Eq.~\eqref{eq.dlrj0} that any periodic orbit of period $6$ in the domain $\cD_{\rho}(b)$
must intersect the first quadrant $\cD_{\rho}^{+}(b):=\cD_{\rho}(b) \cap \cM^{+}$. Moreover, if $(d_{\ell}, d_{r}) \in \cD_{\rho}^{+}(b)$ is a periodic point of period $6$, then $\Psi_{b} \Phi_{b}(d_{\ell}, d_{r})=(d_{\ell}, d_{r})$ and $\Phi_{b}(d_{\ell}, d_{r})=\Psi_{b}(d_{\ell}, d_{r}) \in \cD_{\rho}^{+}(b)$. 
To prove Proposition \ref{pro.no.new.po6},
we will divide the domain $\cD_{\rho}^{+}(b)$ into smaller components and consider them separately. More precisely, the union $\cC_{\Phi_{b}} \cup \cC_{\Psi_{b}}$ divides $\cD_{\rho}^{+}(b)$ into four connected components: 
\begin{enumerate}[label = (\roman*)]
\item $\cD_{\rho}^{+}(b, 1)$ -- the closed component containing the curve $\cC_{\rho}(b)\cap M^{+}$;

\item $\cD_{\rho}^{+}(b, 2)$ -- the closed (modulo the origin) component neighboring to the origin; 

\item $\cD_{\rho}^{+}(b, 3)$ --  the upper-left open component of the domain enclosed by $\cC_{\Phi_{b}} \cup \cC_{\Psi_{b}}$;

\item $\cD_{\rho}^{+}(b, 4)$ -- the lower-right open component of the domain enclosed by $\cC_{\Phi_{b}} \cup \cC_{\Psi_{b}}$.
\end{enumerate}

(1). Note that 
any point $(d_{\ell}, d_{r}) \in \cD_{\rho}^{+}(b, 1)$ satisfies $(1,-1)\cdot\Phi_{b}(d_{\ell}, d_{r})+d_{\ell}- d_{r}\ge 0$ and $(1,-1)\cdot\Psi_{b}(d_{\ell}, d_{r})+d_{\ell}- d_{r}\le 0$.
Moreover, at most one of the two inequalities is an equality except at $P_0$, $Q_0$ or $E_0$.
The reversed inequalities hold for points in $\cD_{\rho}^{+}(b, 2)$.
Let $(d_{\ell}, d_{r}) \in \cD_{\rho}^{+}(b, 1)$ be a periodic point of period $6$.
Then $\Phi_{b}(d_{\ell}, d_{r})= \Psi_{b}(d_{\ell}, d_{r})=:(d_{\ell}', d_{r}')$. It follows that $d_{\ell}'+ d_{r}'+d_{\ell}- d_{r} \ge 0$ and $d_{\ell}'+ d_{r}'+d_{\ell}- d_{r}\le 0$ hold simultaneously.
Hence $d_{\ell}'+ d_{r}'+d_{\ell}- d_{r}=0$, and 
\begin{align*}
(d_{\ell},d_{r}) \in \cC_{\Phi_{b}} \cap \cC_{\Psi_{b}} =\{P_0, Q_0, E_0\}.
\end{align*}
The case when   $(d_{\ell}, d_{r}) \in \cD_{\rho}^{+}(b, 2)$ can be proved in the same way.
Therefore, there is no other periodic point of period $6$ in the two components $\cD_{\rho}^{+}(b, 1) \cup \cD_{\rho}^{+}(b, 2)$ besides  $P_0$, $Q_0$ and $E_0$.

(2). Consider the remaining open components
$\cD_{\rho}^{+}(b, 3)$ and $\cD_{\rho}^{+}(b, 4)$. Note that  
\begin{enumerate}[label = (\alph*)]
\item a point $(d_{\ell}, d_{r}) \in \cD_{\rho}^{+}(b, 3)$ satisfies $(1,-1)\cdot\Phi_{b}(d_{\ell}, d_{r})+d_{\ell}- d_{r}< 0$
and $(1,-1)\cdot\Psi_{b}(d_{\ell}, d_{r})+d_{\ell}- d_{r} < 0$;

\item a point $(d_{\ell}, d_{r}) \in \cD_{\rho}^{+}(b, 4)$ satisfies $(1,-1)\cdot\Phi_{b}(d_{\ell}, d_{r})+d_{\ell}- d_{r} > 0$
and $(1,-1)\cdot\Psi_{b}(d_{\ell}, d_{r})+d_{\ell}- d_{r} > 0$.
\end{enumerate}
It follows that any periodic point $(d_{\ell}, d_{r})$ of period $6$ in $\cD_{\rho}^{+}(b, 3)$ satisfies 
$\Phi_{b}(d_{\ell}, d_{r}) \in \cD_{\rho}^{+}(b, 3)$, since $\Phi(d_{\ell}, d_{r})=\Psi(d_{\ell}, d_{r})$. Therefore, $(d_{\ell}, d_{r})\in \cD_{\rho}^{+}(b, 3)\cap \Phi_{b}D_{\rho}^{+}(b, 3)$. Similarly, any periodic point $(d_{\ell}, d_{r})$ of period $6$ in $\cD_{\rho}^{+}(b, 4)$ satisfies  $(d_{\ell}, d_{r})\in \cD_{\rho}^{+}(b, 4)\cap \Phi_{b}D_{\rho}^{+}(b, 4)$.
Next we will show that there is no periodic point of period $6$ in the two open components $\cD_{\rho}^{+}(b, 3) \cup \cD_{\rho}^{+}(b, 4)$.

The curve $\cC_{\Phi_b}^{sym}$ divides the domain $\cD_{\rho}^{+}(b)$ into two components: one contains $P_0$ and the other one contains $Q_0$. Denote them by $\cD_{\rho}^{+}(b,P_0)$ and $\cD_{\rho}^{+}(b,Q_0)$, respectively. 
Let $(d_{\ell}', d_{r}') =\Phi_b(d_{\ell}, d_{r})$.
Since $d_{\ell}' -d_{r}' - d_{\ell} + d_{r} \neq 0$ on $\cD_{\rho}^{+}(b,P_0)$ and it is positive at $P_0$, it follows that  $d_{\ell}' -d_{r}' - d_{\ell} + d_{r} > 0$ on $\cD_{\rho}^{+}(b,P_0)$.
Similarly, we have  $d_{\ell}' -d_{r}' - d_{\ell} + d_{r} < 0$ on $\cD_{\rho}^{+}(b,Q_0)$.

It follows from Lemma \ref{lem.cphi.sym} that $\cD_{\rho}^{+}(b,3) \subset \cD_{\rho}^{+}(b,P_0)$ and $\cD_{\rho}^{+}(b,4) \subset \cD_{\rho}^{+}(b,Q_0)$.
Any periodic point of period $6$ in  $\cD_{\rho}^{+}(b,3)$, say $(d_{\ell}, d_{r})$, satisfies that
$(d_{\ell}', d_{r}') =\Phi_b(d_{\ell}, d_{r}) \in \cD_{\rho}^{+}(b,3) \subset \cD_{\rho}^{+}(b,P_0)$.
Since $\Phi_b$ is an involution, $\Phi_b(d_{\ell}', d_{r}') =(d_{\ell}, d_{r})$.
It follows that $d_{\ell} -d_{r} - d_{\ell}' + d_{r}' > 0$ since $(d_{\ell}', d_{r}') \in \cD_{\rho}^{+}(b,P_0)$. This contradicts the hypothesis that $(d_{\ell}, d_{r}) \in \cD_{\rho}^{+}(b,P_0)$.
In the same way we have that there is no periodic point of period $6$ in $\cD_{\rho}^{+}(b,4)$. This completes the proof.
\end{proof}

Let  $1.5 < b<1.5+\delta_0$. We have that the two hyperbolic periodic orbits of $P_0$ and $Q_0$
are contained in the invariant domain $\cD_{\rho}(b)$ bounded by the invariant circle $\cC_{\rho}(b)$ of rotation number $\rho$. Note that there is no singular points in $\cD_{\rho}(b)$, and hence every point in it alternates bounces between the two arcs $\Gamma_{\ell}$ and $\Gamma_r$ of the boundary  $\pa \cQ(b)$.
It follows that the iterates of the billiard map $F_b$ on the domain $D_{\rho}(b)$ agree with the iterates of $\cR_{b}$ and $\cL_{b}$ on the domain $\cD_{\rho}(b)$. 
Note that the stable and unstable manifolds of the hyperbolic periodic orbit of $P_0$ are contained in $\cD_{\rho}(b)$. It follows that iterates of the billiard map $F_b$ on the stable and unstable manifolds of the hyperbolic periodic orbits of period $6$ agree with the iterates of $\cR_{b}$ and $\cL_{b}$.

\begin{lemma}\label{lem.union.inv}
The union of the stable and unstable manifolds of the hyperbolic periodic orbit of period $6$ of $P_0$  is invariant under $\cL_{b}$ and $\cR_{b}$. 
\end{lemma}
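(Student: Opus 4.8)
The plan is to reduce the statement to two formal features of the generalized maps: that $\cL_{b}$ and $\cR_{b}$ are involutions (Proposition~\ref{eq: F_l, C_1,+}) which permute the six periodic points, and that each of them conjugates $\Theta_{b}=(\cR_{b}\cL_{b})^{3}$ to $\Theta_{b}^{-1}$. First I would record the bookkeeping. By Remark~\ref{rem.period6} the period-$6$ orbit is $\{P_{0},Q_{1},P_{1},Q_{0},P_{2},Q_{2}\}$, and from $\cL_{b}^{2}=\cR_{b}^{2}=\mathrm{Id}$ together with $Q_{1}=\cL_{b}(P_{0})$, $Q_{0}=\cL_{b}(P_{1})$, $Q_{2}=\cL_{b}(P_{2})$, $P_{1}=\cR_{b}(Q_{1})$, $P_{2}=\cR_{b}(Q_{0})$, $P_{0}=\cR_{b}(Q_{2})$ one reads off that $\cL_{b}$ induces the permutation $P_{0}\leftrightarrow Q_{1}$, $P_{1}\leftrightarrow Q_{0}$, $P_{2}\leftrightarrow Q_{2}$ and $\cR_{b}$ the permutation $P_{0}\leftrightarrow Q_{2}$, $P_{1}\leftrightarrow Q_{1}$, $P_{2}\leftrightarrow Q_{0}$; also $\{P_{0},P_{1},P_{2}\}$ and $\{Q_{0},Q_{1},Q_{2}\}$ are two period-$3$ orbits of $\cR_{b}\cL_{b}$, so all six points are fixed points of $\Theta_{b}$. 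Because $\cD_{\rho}(b)$ contains no singular points and is invariant under $\cL_{b}$ and $\cR_{b}$, both maps restrict to diffeomorphisms of $\cD_{\rho}(b)$ on which $\Theta_{b}$ agrees with $F_{b}^{6}$; moreover the stable and unstable manifolds of the period-$6$ orbit lie in $\cD_{\rho}(b)$. Writing $W$ for the union over the six points $x$ of $W^{s}(x)\cup W^{u}(x)$, and noting that near each point $F_{b}^{6}$ equals $\Theta_{b}$ or $\Theta_{b}^{-1}$ while inversion swaps stable and unstable manifolds, one sees that $W^{s}(x)\cup W^{u}(x)$ is the union of the $\Theta_{b}$-stable and the $\Theta_{b}$-unstable manifolds of $x$.

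Next I would prove $\cL_{b}(W)=W$. Since $\cL_{b}\circ\cL_{b}=\mathrm{Id}$, we have $\cL_{b}\,\Theta_{b}\,\cL_{b}=\cL_{b}\,(\cR_{b}\cL_{b})^{3}\,\cL_{b}=(\cL_{b}\cR_{b})^{3}=\Theta_{b}^{-1}$, so $\cL_{b}$ conjugates the $\Theta_{b}$-dynamics on $\cD_{\rho}(b)$ to the $\Theta_{b}^{-1}$-dynamics. Hence for a $\Theta_{b}$-fixed point $x$, the point $\cL_{b}(x)$ is again fixed by $\Theta_{b}$, and $\cL_{b}$ carries the $\Theta_{b}$-stable manifold of $x$ onto the $\Theta_{b}^{-1}$-stable manifold of $\cL_{b}(x)$, i.e.\ onto the $\Theta_{b}$-unstable manifold of $\cL_{b}(x)$, and carries the $\Theta_{b}$-unstable manifold of $x$ onto the $\Theta_{b}$-stable manifold of $\cL_{b}(x)$; therefore $\cL_{b}\bigl(W^{s}(x)\cup W^{u}(x)\bigr)=W^{s}(\cL_{b}x)\cup W^{u}(\cL_{b}x)$. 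Taking the union over the six fixed points, using that $\cL_{b}$ permutes them and that $W\subset\cD_{\rho}(b)$ lies in the domain of $\cL_{b}$, gives $\cL_{b}(W)\subseteq W$, and since $\cL_{b}$ is an involution, $\cL_{b}(W)=W$.

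Finally, $\cR_{b}(W)=W$ follows in exactly the same way from $\cR_{b}\,\Theta_{b}\,\cR_{b}=(\cL_{b}\cR_{b})^{3}=\Theta_{b}^{-1}$ and the corresponding permutation of the six points; alternatively one may note $\cR_{b}=(\cR_{b}\cL_{b})\circ\cL_{b}$, that $\cR_{b}\cL_{b}$ commutes with $\Theta_{b}$ and permutes its six fixed points, hence preserves $W$, and that $\cL_{b}(W)=W$ was just shown. I do not expect a genuine obstacle in this lemma; the only point that needs care is the identification of the $F_{b}$-invariant manifolds of the period-$6$ orbit with the invariant manifolds of the generalized maps $\cL_{b},\cR_{b}$ — this is precisely why the facts that these manifolds are contained in $\cD_{\rho}(b)$ and that $\cD_{\rho}(b)$ carries no singular points, recorded just before the statement, are used essentially.
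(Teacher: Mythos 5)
Your proposal is correct and takes essentially the same route as the paper: the paper's proof also rests on the fact that $\cL_{b}$ (resp.\ $\cR_{b}$) is a time-reversal involution conjugating the sixth iterate to its inverse — written there as $F_b^{-6n}(\cL_{b}P)=\cL_{b}(F_b^{6n}P)\to \cL_{b}(P_0)=Q_1$, so that $\cL_{b}W^s(P_0)=W^u(Q_1)$ and similarly for the other branches — which is exactly your identity $\cL_{b}\Theta_{b}\cL_{b}=\Theta_{b}^{-1}$ combined with the permutation of the six periodic points. Your extra care in identifying the $F_b$-invariant manifolds with those of the generalized maps via $\cD_{\rho}(b)$ matches the discussion the paper places immediately before the lemma.
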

\begin{proof}
Let $P\in W^s(P_0)$. That is $F^{6n}(P) \to P_0$ as $n\to \infty$. 
Then $\cL_{b}(P)\in \cL_{b}W^s(P_0)$ satisfies that 
$F^{-6n}(\cL_{b}(P))=\cL_{b}(F^{6n}(P)) \to \cL_{b}(P_0)=Q_1$ as $n\to \infty$.
It follows that $\cL_{b}W^s(P_0)=W^u(Q_1)$. Similarly, we have $\cR_{b}W^u(Q_1)=W^s(P_1)$
and $\Phi_{b}(W^s(P_0))=W^u(Q_0)$. The completes the proof.
Alternatively, note that the involution $I:(\varphi, \theta)\mapsto (\varphi, \pi-\theta)$ on the phase space $M_b$ of the billiard map $F_b$ amounts to reversing the direction of the trajectories. In particular, it takes a stable manifold of a hyperbolic periodic point to the unstable manifold of the reversed hyperbolic periodic point and vice versa. It follows that the union of the stable and unstable manifolds of the the hyperbolic periodic orbit of $P_0$ is invariant under the involution $I$. Note that $\cL_{b}$ and $\cR_{b}$ are the realizations of the involution $I$ on the left and right arc, respectively. Hence the lemma follows. 
\end{proof}

\begin{definition}\label{def.Z}
Let $Z\subset (1.5, 1.5+\delta_0)$ be the set of parameters $b$ such that there is no homoclinic intersection for the hyperbolic periodic point $P_0(b)$. 
\end{definition}
It is easy to see that the point $P_0(b)$  in this definition can be replaced by any of the remaining $5$ hyperbolic periodic points given in Remark \ref{rem.period6}. 
Numerical results \cite{CMZZ} indicate that $Z=\emptyset$, and Theorem \ref{thm.main} is equivalent to a slightly weaker version that $Z$ has no limit point in $(1.5, 1.5+\delta_0)$. 

\begin{proposition}\label{pro.homo.entr}
The  topological entropy $h_{top}(F_b)$ of the billiard map $F_b$ is positive for every $b\in (1.5, 1.5+\delta_0)\backslash Z$. 
\end{proposition}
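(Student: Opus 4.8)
The plan is to show that $b\notin Z$ forces a \emph{crossing} homoclinic intersection of a hyperbolic periodic point of period $6$, and then to read off positive entropy from Proposition~\ref{pro.BW.crossing}. Throughout I would work with $g:=F_b^6$, for which each of the six points of $\cO_6^h(b)$ — in particular $P_0(b)$ and $Q_0(b)$ — is a \emph{positive} hyperbolic fixed point; positivity was recorded just after \eqref{eq.hyp.tr}, so $g$ fixes each of the four branches of $W^s(P_0)$ and $W^u(P_0)$, all of which lie inside the invariant disk $\cD_\rho(b)$ by Lemma~\ref{lem.union.inv}. The boundary of $\cD_\rho(b)$ is the Diophantine invariant circle $\cC_\rho(b)$, which one may take to be smooth, and on $\cD_\rho(b)$ the map $g$ coincides with $(\cR_b\cL_b)^3$.

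First I would verify that $g|_{\cD_\rho(b)}$ is a \emph{general} symplectic diffeomorphism in the sense of Section~\ref{sec.pre}: its fixed points in $\cD_\rho(b)$ are exactly the two elliptic points of $\cO_2(b)$ (nonlinearly stable by Section~\ref{pre.lemon}), the period-$6$ orbit of the elliptic point $E_0(b)$ (nonlinearly stable by Moser's Twist Mapping Theorem, using $b<1.5+\delta_M$), and the period-$6$ orbits of the positive hyperbolic points $P_0(b)$, $Q_0(b)$; there is no further period-$6$ orbit in $\cD_\rho(b)$ by Proposition~\ref{pro.no.new.po6}, no period-$3$ orbit since $\cO_2(b)$ is not a $3$-bifurcation point in the sense of Meyer (Section~\ref{sec.el.hy}), and no interior fixed point of $F_b$ itself. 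In particular $g|_{\cD_\rho(b)}$ meets the hypotheses of Propositions~\ref{pro.no.conn}, \ref{pro.conn.rec} and~\ref{pro.neighbor.acc}.

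Now suppose, for contradiction, that $P_0(b)$ has no crossing homoclinic intersection. Since $b\notin Z$, there is a homoclinic intersection $q\neq P_0$; let $\Gamma^u\subset W^u(P_0)$ and $\Gamma^s\subset W^s(P_0)$ be the branches through $q$. As the intersection is of the countable type, $\Gamma^u\neq\Gamma^s$, so neither branch is a saddle connection: if $\Gamma^u$ coincided with a stable branch $\Gamma^s(P')$ of a hyperbolic fixed point $P'$ of $g$, then $q\in W^s(P')\cap W^s(P_0)$ would force $P'=P_0$ and hence $\Gamma^u=\Gamma^s$, a contradiction (and symmetrically for $\Gamma^s$, using disjointness of distinct unstable manifolds). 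By Proposition~\ref{pro.conn.rec}, both $\Gamma^u$ and $\Gamma^s$ are therefore recurrent. Recurrence of an intersecting stable–unstable pair of branches of $P_0$ forces an adjacent pair of branches of $W^{s,u}(P_0)$ to accumulate on each other on the quadrant between them — this is obtained exactly as in \cite{XZ14} — so Proposition~\ref{pro.neighbor.acc} produces a crossing homoclinic intersection of $P_0(b)$, contradicting our assumption. Hence $P_0(b)$ does have a crossing homoclinic intersection, and Proposition~\ref{pro.BW.crossing} applied to the surface diffeomorphism $g=F_b^6$ at the fixed point $P_0(b)$ gives $h_{top}(F_b^6)>0$; since $h_{top}(F_b^6)=6\,h_{top}(F_b)$, we conclude $h_{top}(F_b)>0$ for every $b\in(1.5,1.5+\delta_0)\setminus Z$.

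The main obstacle is the verification in the second paragraph: one must know that $F_b^6$ restricted to $\cD_\rho(b)$ is a general symplectic map with \emph{precisely} the fixed points listed above, which is exactly where Proposition~\ref{pro.no.new.po6}, the nonlinear stability of $E_0(b)$ via Moser's Theorem, and Meyer's non-bifurcation fact are all needed simultaneously; and one must justify that a recurrent, intersecting stable–unstable pair genuinely supplies the accumulation hypothesis of Proposition~\ref{pro.neighbor.acc}. Once these structural points are in place, the rest is a formal chain of the cited propositions together with the identity $h_{top}(F_b^6)=6\,h_{top}(F_b)$.
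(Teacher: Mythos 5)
Your overall scheme (reduce to producing a \emph{crossing} homoclinic point for $P_0(b)$ and then invoke Proposition~\ref{pro.BW.crossing} together with $h_{top}(F_b^6)=6\,h_{top}(F_b)$) is the same as the paper's, but the step where you manufacture the crossing is different and contains a genuine gap. You assume there is a homoclinic point $q\in\Gamma^u\cap\Gamma^s$ but no crossing one, deduce from Proposition~\ref{pro.conn.rec} that both branches are recurrent, and then assert that recurrence of this intersecting pair ``forces an adjacent pair of branches to accumulate on each other on the quadrant between them, exactly as in \cite{XZ14}.'' That implication is neither proved nor available in this generality. What the intersection gives you for free is that $F_b^{6n}(q)\in\Gamma^u$ accumulates on $P_0$ along $\Gamma^s$ (and symmetrically for $\Gamma^s$ along $\Gamma^u$); in the absence of crossings, each such visit only places points of $\Gamma^u$ in \emph{one} of the two quadrants adjacent to $\Gamma^s$ --- either $Q(\Gamma^s,\Gamma^u)$ or the quadrant bounded by $\Gamma^s$ and the \emph{other} unstable branch --- and nothing forces $\Gamma^u$ and $\Gamma^s$ to select the common quadrant $Q(\Gamma^s,\Gamma^u)$. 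Proposition~\ref{pro.neighbor.acc} needs precisely that both conditions $p\in\overline{K\cap Q(K,L)}$ and $p\in\overline{L\cap Q(K,L)}$ hold for the \emph{same} quadrant. In \cite{XZ14}, and in this paper's own use of the recurrence dichotomy (Step~6 of the proof of Proposition~\ref{pro.connection.PQ}), the accumulation hypothesis is extracted only in a constrained situation where the two branches are trapped in a wedge containing no other branches; it is not a formal consequence of ``recurrent $+$ intersecting.'' So, as written, your contradiction argument does not close.

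The paper's proof avoids this issue and is much shorter: for $b\notin Z$ it first rules out that the intersection is a homoclinic loop, by applying Proposition~\ref{pro.franks} to the disk bounded by the putative loop, enumerating the possible $\Theta_b$-fixed points inside it (those of Remark~\ref{rem.period6} and the origin) and eliminating each via the symmetry $\cI$; then, since the branches are analytic, the intersection set is discrete, and area preservation produces a topologically \emph{crossing} homoclinic point by \cite[Theorem 3.1]{BW95}; Proposition~\ref{pro.BW.crossing} then gives $h_{top}(F_b)>0$. If you replace your recurrence step by this appeal to \cite{BW95} (or supply an actual proof of the accumulation claim), your argument goes through; note also that the lengthy verification that $F_b^6|_{\cD_{\rho}(b)}$ is a ``general'' symplectic map becomes unnecessary on that route (it is only needed for Propositions~\ref{pro.conn.rec} and \ref{pro.no.conn}), while your final bookkeeping $h_{top}(F_b^6)=6\,h_{top}(F_b)$ is fine. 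Be aware, too, that dismissing the loop case ``by definition of countable type'' presumes the strict reading of homoclinic intersection; the paper deliberately excludes the loop rather than relying on that reading.
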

\begin{proof}
Let $b\in (1.5, 1.5+\delta_0)\backslash Z$ be given. That is, there is a homoclinic intersection for the hyperbolic $\Theta_b$-fixed point $P_0(b)$. We claim that it cannot be a homoclinic loop. 
Suppose on the contrary that it is a homoclinic loop, say $\eta(P_0)$.
Let $D(P_0)\subset \cM$ be the domain bounded by the loop $\eta(P_0)$. Then by Proposition~\ref{pro.franks}, there must be a $\Theta_b$-fixed point contained in the interior of $D(P_0)$, say $T$. Note that the only points fixed by $\Theta_b$ are those given in Remark \ref{rem.period6} and the origin $(0,0)$. If $T=(0,0)$ or the  elliptic periodic point $E_0$, then by symmetry, $\cI(\eta(P_0))$ is a homoclinic loop for $Q_0$ and also encloses the same point $T$, a contradiction. Similarly, there is a contradiction for any other choice of $T$.

Since both stable and unstable manifolds of $P_0$ are analytic, they can only intersect at discrete points. Moreover, since the system is conservative, there must be an intersection that is a homoclinic intersection of $P_0$ that is topological crossing. See \cite[Theorem 3.1]{BW95}. Then it follows from Proposition \ref{pro.BW.crossing} that $h_{top}(F_b)>0$.
\end{proof}

\subsection{Local phase portraits}
Let $\Delta=\{d_{\ell}=d_r\}$ be the diagonal of the phase space, $P_0$ and $Q_0$
be the two hyperbolic periodic points in the first quadrant, $E_0$ be the elliptic periodic point  in the first quadrant, see Fig.~\ref{fig.sing}. 
Let $\Gamma^s_{b,P_0,+}$ and $\Gamma^u_{b, P_0,+}$ be the two branches of the stable and unstable manifolds of $P_0$ that contains the local manifolds that enter the first quadrant, respectively. Similarly, we define  $\Gamma^s_{b,Q_0,+}$ and $\Gamma^u_{b, Q_0,+}$.
See Fig.~\ref{fig.phase156}. Recall that the definition of $Z\subset (1.5, 1.5+\delta_0)$ is given in 
Definition~\ref{def.Z}. 

\begin{proposition}\label{pro.connection.PQ}
Let $b\in Z$.
Then $\Gamma^s_{b,P_0,+}=\Gamma^u_{b,Q_0,+}$ is a heteroclinic connection between $P_0$ and $Q_0$.
Moreover, this connection intersects the diagonal exactly once, and the intersection is perpendicular.
The same holds for $\Gamma^u_{b, P_0,+}=\Gamma^s_{b,Q_0,+}$. 
Furthermore,  both connections are contained in the first quadrant $\cM^{+}$
and their union encloses the elliptic periodic point $E_0$.
\end{proposition}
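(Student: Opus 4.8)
The plan is to work throughout on the $\Theta_b$-invariant disk $\cD_\rho(b)$, on which $\Theta_b=\Psi_b\Phi_b$ agrees with $F_b^6$ and which contains all six hyperbolic periodic points of period $6$ together with $E_0$. I first record the symmetries that will be used. The maps $\Phi_b$, $\Psi_b$ and $\cI$ are involutions with $\Phi_b\Theta_b\Phi_b=\Psi_b\Theta_b\Psi_b=\cI\Theta_b\cI=\Theta_b^{-1}$, and each of them interchanges $P_0$ and $Q_0$ (for instance $\Phi_b(P_0)=\cL_b\cR_b\cL_b(P_0)=Q_0$ by Remark~\ref{rem.period6}, $\Psi_b=\cI\Phi_b\cI$, and $\cI(P_0)=Q_0$). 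Consequently $\Phi_b(W^s(P_0))=W^u(Q_0)$ and $\cI(W^s(P_0))=W^u(Q_0)$; since $\cI$ preserves the quadrant $\cM^{+}$, the latter yields $\cI(\Gamma^s_{b,P_0,+})=\Gamma^u_{b,Q_0,+}$ and $\cI(\Gamma^u_{b,P_0,+})=\Gamma^s_{b,Q_0,+}$. Finally, by Lemma~\ref{lem.union.inv} the union $W$ of the stable and unstable manifolds of the whole period-$6$ orbit is invariant under $\cL_b$, $\cR_b$, hence under $\cI$ and $\Theta_b$; and since the orbit is positive hyperbolic, $\Theta_b$ fixes each of its branches.

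The first substantive step is to prove that, for $b\in Z$, every branch of $W$ is a heteroclinic connection joining two hyperbolic periodic points. By Proposition~\ref{pro.conn.rec} each branch is a saddle connection or recurrent; recurrence is impossible, since by the analysis underlying Propositions~\ref{pro.conn.rec} and~\ref{pro.neighbor.acc} a recurrent branch of $P_0$ accumulates, together with an adjacent branch of $P_0$, on the quadrant between them, so $P_0$ would admit a crossing homoclinic intersection, contradicting $b\in Z$. Thus every branch is a saddle connection, and the Brouwer--Franks argument of the proof of Proposition~\ref{pro.homo.entr} (a homoclinic loop bounds a topological disk, which by Proposition~\ref{pro.franks} and Poincar\'e recurrence contains a $\Theta_b$-fixed point, and a case analysis of the finitely many $\Theta_b$-fixed points of $\cD_\rho(b)$ using $\cI,\Phi_b,\Psi_b$ yields a contradiction) shows that no branch is a homoclinic loop. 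Hence $\Gamma^s_{b,P_0,+}$ is an embedded arc joining $P_0$ to one of the other five hyperbolic periodic points.

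The heart of the proof is to confine $\Gamma^s_{b,P_0,+}$ to $\cM^{+}$. By the slope comparison of Lemma~\ref{lem.tan.cprl}, near $P_0$ the branch $\Gamma^s_{b,P_0,+}$ enters $\cM^{+}$ on the far side of the $\Phi_b$-invariant curve $\cC_{\Phi_b}$ from the unstable direction, and symmetrically for $\Gamma^u_{b,P_0,+}$ with respect to $\cC_{\Psi_b}$; combining this with the monotonicity Lemmas~\ref{lem.mono} and~\ref{lem.mono3} (which say that $\Phi_b$, $\Psi_b$ push points off their invariant curves in a fixed direction) and with the separating curve $\cC_{\Phi_b}^{\sym}\subset\Fix(\Phi_b)$ from the proof of Proposition~\ref{pro.no.new.po6}, one shows that $\Gamma^s_{b,P_0,+}$ cannot cross these barrier curves and therefore remains in $\cD_\rho^{+}(b)\subset\cM^{+}$. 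Once this is known the identification is immediate: $\Gamma^s_{b,P_0,+}$ is a heteroclinic connection from $P_0$ to some hyperbolic periodic point $r$, and since $\Gamma^s_{b,P_0,+}\subset\cM^{+}$ we have $r\in\cM^{+}$, so $r\in\{P_0,Q_0\}$; as a homoclinic loop has been excluded, $r=Q_0$. Thus $\Gamma^s_{b,P_0,+}$ is simultaneously a branch of $W^s(P_0)$ and of $W^u(Q_0)$, and since it approaches both endpoints from inside $\cM^{+}$ it coincides with $\Gamma^u_{b,Q_0,+}$. Applying $\cI$ gives $\Gamma^u_{b,P_0,+}=\Gamma^s_{b,Q_0,+}$ likewise.

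It remains to treat the diagonal crossing and the enclosure. Set $\gamma_1:=\Gamma^s_{b,P_0,+}=\Gamma^u_{b,Q_0,+}$ and $\gamma_2:=\Gamma^u_{b,P_0,+}=\Gamma^s_{b,Q_0,+}$, and parametrize the closed arc $\overline{\gamma_1}$ by an analytic embedding $\gamma:[0,1]\to\cM^{+}$ with $\gamma(0)=P_0$, $\gamma(1)=Q_0$. Since $\cI(\gamma_1)=\gamma_1$ as sets and $\cI$ exchanges the endpoints, $r:=\gamma^{-1}\circ\cI\circ\gamma$ is an orientation-reversing involution of $[0,1]$; it has a unique fixed point $t^{*}$, with $r'(t^{*})=-1$, and $\gamma(t)\in\Delta=\Fix(\cI)$ forces $r(t)=t$, i.e. $t=t^{*}$. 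Hence $\gamma_1$ meets $\Delta$ exactly once, at $w=\gamma(t^{*})$, and differentiating $\cI\circ\gamma=\gamma\circ r$ at $t^{*}$ gives $D\cI(\gamma'(t^{*}))=-\gamma'(t^{*})$, so that $\gamma'(t^{*})$ spans the $(-1)$-eigenspace of the reflection $D\cI$ and is orthogonal to $\Delta$; the same holds for $\gamma_2$. For $b\in Z$ the open arcs $\gamma_1,\gamma_2$ are disjoint (a common point would be a homoclinic intersection of $P_0$), so $\overline{\gamma_1}\cup\overline{\gamma_2}$ is a $\Theta_b$-invariant Jordan curve lying in $\cM^{+}$ and meeting the axes only at $P_0,Q_0$. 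By Proposition~\ref{pro.franks} (again via Poincar\'e recurrence) the Jordan domain it bounds contains a $\Theta_b$-fixed point, necessarily $E_0$, the only $\Theta_b$-fixed point in the open first quadrant of $\cD_\rho(b)$. Finally, an invariant circle around $E_0$ produced by Moser's twist mapping theorem cannot meet $\gamma_1$ — a common point would lie on this invariant circle and converge under $\Theta_b$ to $P_0$, which is impossible — nor $\gamma_2$; hence the entire elliptic island around $E_0$ is contained in the Jordan domain, as claimed. The main obstacle in this plan is the confinement-and-identification step of the third paragraph: making the exclusion of recurrence rigorous, and above all proving that $\Gamma^s_{b,P_0,+}$ cannot leak out of $\cM^{+}$, is where soft topology is insufficient and one must exploit the barrier curves $\cC_{\Phi_b},\cC_{\Psi_b},\cC_{\Phi_b}^{\sym}$ and the monotonicity lemmas of Section~\ref{sec.prl}.
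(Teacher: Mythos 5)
Your symmetry bookkeeping, the involution argument for the unique perpendicular crossing of the diagonal (once the identification $\Gamma^s_{b,P_0,+}=\Gamma^u_{b,Q_0,+}$ is in hand), and the use of Proposition \ref{pro.franks} to place $E_0$ inside the Jordan curve all match the paper's reasoning and are fine. But the two load-bearing steps are not actually carried out, and one of them is justified incorrectly. First, in your second paragraph you exclude recurrence by asserting that a recurrent branch of $P_0$ ``accumulates, together with an adjacent branch of $P_0$, on the quadrant between them.'' That accumulation is the \emph{hypothesis} of Proposition \ref{pro.neighbor.acc}, not a consequence of recurrence of a single branch, and neither Proposition \ref{pro.conn.rec} nor \ref{pro.neighbor.acc} yields it in general. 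The paper never needs such a global claim: it invokes the dichotomy of Proposition \ref{pro.conn.rec} only \emph{after} the branch has been confined to the closed wedge $\{0\le d_\ell\le d_r\}$ (Step 6), where the only other branch present is $\Gamma^u_{b,P_0,+}$, the only enclosed $\Theta_b$-fixed points would contradict Proposition \ref{pro.no.new.po6}, and the adjacent-pair accumulation can then be argued.

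Second, the confinement of $\Gamma^s_{b,P_0,+}$ to $\cM^{+}$ — which you yourself call the main obstacle — is only gestured at. The barriers you propose ($\cC_{\Phi_b}$, $\cC_{\Psi_b}$, $\cC^{\sym}_{\Phi_b}$ together with Lemmas \ref{lem.mono} and \ref{lem.mono3}) do not obviously prevent a stable branch from crossing those curves, and they say nothing about leaking through the axis $\{d_\ell=0\}$; worse, the statement that the connection avoids $\cC_{\prl,b}$ is Lemma \ref{lem.prl}, which in the paper is \emph{deduced from} the already-established connection via the symmetric parametrization of Lemma \ref{rhob}, so using it as input to Proposition \ref{pro.connection.PQ} would be circular. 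The paper's actual confinement argument (Steps 2--5) is of a different nature: it examines the first intersection of $\Gamma^s_{b,P_0,+}$ (resp.\ $\Gamma^u_{b,P_0,+}$) with $\{d_\ell=0\}$, uses the auxiliary curves $\eta_1^{+}=\cL_b(\{0\}\times[0,1])$, $\eta_2^{-}$, $\eta_3^{+}$, $\eta_4^{-}$ and the reflection symmetry about the anti-diagonal $d_\ell+d_r=0$ to manufacture a crossing homoclinic point whenever the stable branch would exit above the height $\big(1-\tfrac{1}{4b^2-8}\big)^{1/2}$ or the unstable branch below it, and then plays the stable and unstable constraints against each other to rule out any exit at all. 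Relatedly, the paper's Step 1 (a single diagonal intersection already forces $\Gamma^s_{b,P_0,+}=\Gamma^u_{b,Q_0,+}$, via a cycle of crossing heteroclinic intersections and the lambda-lemma) is what makes the identification work without first knowing that the branch is a saddle connection; your alternative route to the identification rests on the flawed recurrence exclusion above. Without a substitute for Steps 2--5 and a corrected treatment of recurrence, the proposal does not close.
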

\begin{proof}
Let $b\in Z$ be fixed. We divide the proof into several steps:

\noindent\textbf{Step 1.} We claim that $\Gamma^s_{b, P_0,+}$ can intersect the diagonal $\Delta$ at most once.
Moreover, such an intersection point, if exists,  implies that
$\Gamma^s_{b, P_0,+}=\Gamma^u_{b, Q_0,+}$ is a heteroclinic connection.
More precisely, $\Gamma^s_{b, P_0,+}=[P_0,P]^{s}_{b} \cup [Q_0,P]^u_{b}$,
and $P$ is the intersection point of  $\Gamma^s_{b, P_0,+}$ with $\Delta$.
The same holds for $\Gamma^u_{b, P_0,+}$. 

\begin{proof}[Proof of Claim]
We only need to prove the claim for $\Gamma^s_{b, P_0,+}$.
Suppose  $P\in \Gamma^s_{b, P_0,+}\cap \Delta$ is the first intersection. Then by symmetry, $\Gamma^u_{b, Q_0,+}$ will intersect $\Delta$
at the same point $P$ for the first time. It follows that $P$ is a heteroclinic point between $P_0$ and $Q_0$. 
Suppose on the contrary that $\Gamma^s_{b, P_0,+}\neq \Gamma^u_{b, Q_0,+}$.
Then there exists a point $A \in\Gamma^s_{b, P_0,+}\cap \{(d_{\ell}, d_r) \in M^+: d_{r}<d_{\ell}\} \backslash \Gamma^u_{b, Q_0,+}$. Then by symmetry, the point $A'=\cI(A) \in\Gamma^u_{b, Q_0,+}\backslash \Gamma^s_{b, P_0,+}$. It follows that $\Gamma^u_{b, Q_0,+}$ and $ \Gamma^s_{b, P_0,+}$ have a crossing intersection between $A$ and $A'$. Such a crossing leads to a crossing heteroclinic intersection between the corresponding branches of $P_1$ and $Q_1$ (applying $\cL_{b}$) and  a crossing heteroclinic intersection between the corresponding branches of $P_2$ and $Q_2$ (applying $\cR_{b}$). Then by center-symmetry, we get crossing heteroclinic intersections of the corresponding branches of $(P_1, Q_2)$, $(P_0, Q_1)$ and $(P_2, Q_0)$. Together they form a cycle of crossing heteroclinic intersections that lie on the same side. Then the point $P_0$ admits a crossing homoclinic intersection by the lambda-lemma. This contradicts the choice $b\in Z$.
\end{proof}

\noindent\textbf{Step 2.} 
Suppose that $\Gamma^s_{b, P_0,+}\cap \{d_\ell=0\}-\{P_0\} \neq \emptyset$. 
Let $T=(0,d_r^\dagger)\neq P_0$ be the first intersection of $\Gamma^s_{b, P_0,+}$ with $\{d_\ell=0\}$.
We claim that $0<d_r^\dagger< \big(1-\frac{1}{4b^2-8}\big)^{1/2}$.

\begin{proof}[Proof of the Claim] 
It follows from Step (1) that $[P_0, T]^{s}_{b} \cap \Delta=\emptyset$.
In particular, $[P_0, T]^{s}_{b}$ is contained in the wedge $\{0<d_{\ell} < d_r<1\}$.
So there are two possibilities for $d_r^\dagger$: 
either $d_r^\dagger> \big(1-\frac{1}{4b^2-8}\big)^{1/2}$ or $0<d_r^\dagger< \big(1-\frac{1}{4b^2-8}\big)^{1/2}$.
It remains to show that it is impossible to have $d_r^\dagger> \big(1-\frac{1}{4b^2-8}\big)^{1/2}$.

Suppose on the contrary that $d_r^\dagger> \big(1-\frac{1}{4b^2-8}\big)^{1/2}$. 
Consider the following two parametric curves: 
\begin{align*}
\eta_1^{+} :=&\cL_{b}(\{0\}\times [0,1])=\{(2d_r(1-d_r^2-\sqrt{(1-d_r^2)(b^2-d_r^2)}),d_r): d_r\in [0,1]\};\\
\eta_2^{-} :=&\cR_{b}([-1,0]\times \{0\})=\{(d_r, 2d_r(1-d_r^2-\sqrt{(1-d_r^2)(b^2-d_r^2)})): d_r\in [-1,0]\}.
\end{align*}
See Fig.~\ref{fig.cross}. Note that $Q_1=\cL_{b}(P_0) \in \eta_1^{+}$
and $Q_1= \cR_{b}(P_1) \in \eta_2^{-}$.
Moreover, it is easy to check that $\eta_{1}^+$ is below the line $d_{\ell}+d_r=0$ in the region 
$0 < d_r < \big(1-\frac{1}{4b^2-8}\big)^{1/2}$
and above the line  $d_{\ell}+d_r=0$ in the region 
$\big(1-\frac{1}{4b^2-8}\big)^{1/2} < d_{r} <1$.
The pushforward stable arc $\cL_{b}([P_0, T]^{s}_{b_0})$ is an unstable arc of the hyperbolic periodic point $Q_1$, while the reflection of $\cL_{b}([P_0, T]^{s}_{b_0})$ along the line  about $d_r+d_\ell=0$ is a stable arc of $Q_1$. Since $\cL_{b}$ is orientation-reversing inside the domain $\cR_{b}$, the stable arc $\cL_{b}([P_0, T]^{s}_{b_0})$ connects $Q_1$ and $\cL_{b}(T)$ through the left side of $\eta_1^{+}$ and has to intersect the line $d_r+d_\ell=0$ via a point $T' \neq Q_1$.
Then the unstable arc, being the reflection of the stable arc with respect to $d_r+d_\ell=0$,
intersect the line $d_r+d_\ell=0$ at  the same point $T'$.
It turns out that $T'$ is a crossing homoclinic intersection for $Q_1$. It follows that $\cL_{b}(T')$ is a crossing homoclinic intersection for $P_0$, contradicting $b\in Z$. This completes the proof of the claim.
\end{proof}

\begin{figure}[htbp]
\includegraphics[height=2in]{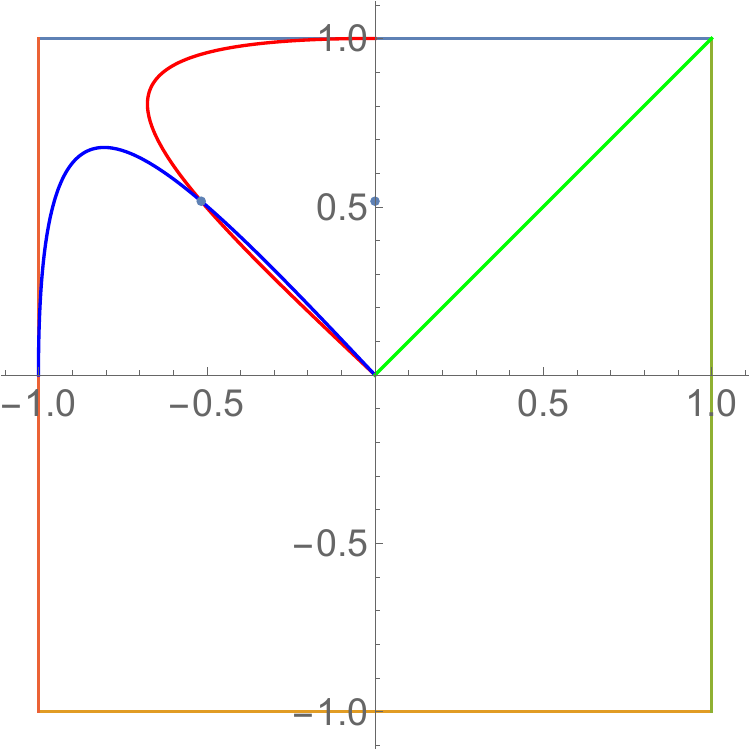}
\quad
\includegraphics[height=2in]{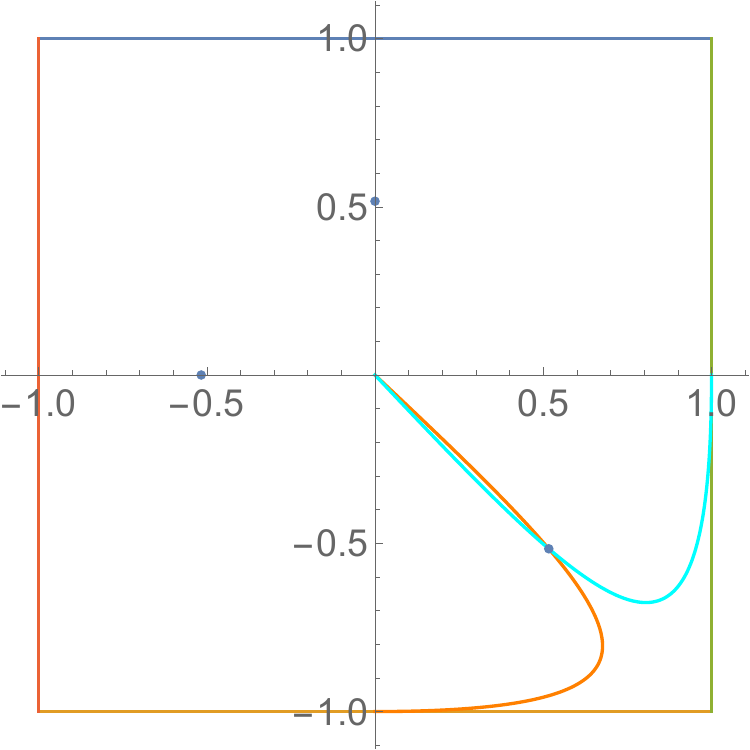}
\caption{On the left: $\eta_1^{+}= \cL_{b}(\{0\}\times [0,1])$ (red), 
$\eta_2^{-}= \cR_{b}([-1,0]\times \{0\})$ (blue).
On the right: $\eta_3^{+}= \cL_{b} \cR_{b}(\{0\}\times [0,1])$ (orange), 
$\eta_4^{-}= \cR_{b} \cL_{b}([-1,0]\times \{0\})$ (cyan).
$b=1.53$ in both plots. } \label{fig.cross}
\end{figure}

\noindent\textbf{Step 3.} 
Suppose $\Gamma^u_{b_0, P_0,+}\cap \{d_\ell=0\}-\{P_0\}\neq\emptyset$. 
Let $T=(0,d_r^\ast)\neq P_0$ be the first intersection of $\Gamma^u_{b_0, P_0,+}$ with $\{d_\ell=0\}$. 
We claim that $d_r^\ast> \big(1-\frac{1}{4b^2-8}\big)^{1/2}$ .

\begin{proof}[Proof of the Claim]
Using the same argument as in Step 2, 
we have that $[P_0,T]^u_{b_0}$ is contained in the wedge $\{0<d_{\ell} < d_r<1\}$.
So there are two possibilities for $d_r^\ast$: 
either $d_r^\ast> \big(1-\frac{1}{4b^2-8}\big)^{1/2}$ or $0<d_r^\ast< \big(1-\frac{1}{4b^2-8}\big)^{1/2}$.
It remains to show that it is impossible to have $0<d_r^\ast< \big(1-\frac{1}{4b^2-8}\big)^{1/2}$.

Suppose on the contrary that $0<d_r^\ast< \big(1-\frac{1}{4b^2-8}\big)^{1/2}$.
Let $\eta_3^{+}=\{\cL_{b} \cR_{b}(0,d_r): d_r\in [0,1]\}$, 
and $\eta_4^{-}=\{\cR_{b} \cL_{b} (d_r, 0): d_r\in [-1,0]\}$, which are symmetric with respect to the line
 $d_{\ell}+d_r=0$.
Moreover,  and $P_2=\cR_{b} \cL_{b}(Q_0)\in \eta_4^{-}$.
it is easy to check that $\eta_{3}^+$ is above the line $d_{\ell}+d_r=0$ in the region 
$-\big(1-\frac{1}{4b^2-8}\big)^{1/2} < d_{r} <0$
and below the line  $d_{\ell}+d_r=0$ in the region 
$-1 < d_{r} < -\big(1-\frac{1}{4b^2-8}\big)^{1/2}$.
The pushforward arc $\cL_{b} \cR_{b}([P_0,T]^u_{b_0})$ is an unstable arc of the hyperbolic periodic
point  $P_2=\cL_{b} \cR_{b}(P_0) \in \eta_3^{+}$,
while its reflection is a stable arc of $P_2$. Moreover, the unstable arc $\cL_{b} \cR_{b}([P_0,T]^u_{b_0})$
connects $P_2$ to $\cL_{b} \cR_{b}(T)$ through the left side of $\eta_3^{+}$ and has to intersect the line
$d_{\ell}+d_r=0$ at a point $T'\neq P_2$. Then the stable arc intersects the line $d_{\ell}+d_r=0$ at the same point $T'$, which makes $T'$ a crossing homoclinic intersection of $P_2$. 
By center symmetry, $-T'$ is a crossing homoclinic intersection of $Q_1$, and hence $\cL_{b}(-T')$ is a crossing homoclinic intersection of $P_0$, contradicting $b\in Z$.   This completes the proof of the claim.
\end{proof}

\noindent\textbf{Step 4.} 
The conclusion $0<d_r^\dagger< \big(1-\frac{1}{4b^2-8}\big)^{1/2}$ in Step 2 is also impossible,
since it would force an intersection of the unstable branch $\Gamma^u_{b_0, P_0,+}$ and the segment $\{d_\ell=0\}$
at some point with $0<d_r^\ast< \big(1-\frac{1}{4b^2-8}\big)^{1/2}$, which has been excluded in Step 3. 
If follows that $\Gamma^s_{b_0, P_0,+}$ and $\{d_\ell=0\}$ do not intersect at all.

\noindent\textbf{Step 5.} 
Similarly, the conclusion $d_r^\ast> \big(1-\frac{1}{4b^2-8}\big)^{1/2}$ in Step 3 is also impossible,
since it would force an intersection of the stable branch $\Gamma^s_{b_0, P_0,+}$ and the segment $\{d_\ell=0\}$ at some point 
with $d_r^\dagger > \big(1-\frac{1}{4b^2-8}\big)^{1/2}$, which has been excluded in Step 2. 
If follows that $\Gamma^u_{b_0, P_0,+}$ and $\{d_\ell=0\}$ do not intersect at all.

\noindent\textbf{Step 6.} We claim that $\Gamma^s_{b,P_0,+}\cap \Delta \neq \emptyset$ 
and $\Gamma^u_{b,P_0,+}\cap \Delta \neq\emptyset$.

\begin{proof}[Proof of the claim]
Suppose on the contrary that the claim is false. Without loss of generality
we assume $\Gamma^s_{b,P_0,+}\cap \Delta = \emptyset$. 
Then the whole branch $\Gamma^s_{b,P_0,+}$ is contained in the open wedge $\{(d_{\ell}, d_{r}): d_r> d_\ell> 0\}$. In the closed wedge $\{(d_{\ell}, d_{r}): d_r \ge d_\ell \ge 0\}$, there are three points that are fixed by $\Theta_b$: the only hyperbolic periodic point   $P_0$,
two elliptic periodic points: the origin $(0,0)$ and the point $E_0$.
It follows from Proposition \ref{pro.no.conn} that
\begin{enumerate}[label = (\roman*)]
\item either  $\Gamma^s_{b,P_0,+}$ is a saddle connection:
in this case, it has to be a homoclinic loop for $P_0$. Let $D$ be the simply connected domain enclosed by the homoclinic loop $\Gamma^s_{b,P_0,+}$. Note that $D$ is $\Theta_b$-invariant and is nonwandering.
It follows from Proposition \ref{pro.franks} that there is an additional $\Theta_b$-fixed point in $D$, contradicting Proposition \ref{pro.no.new.po6}.

\item or $\Gamma^s_{b,P_0,+}$ is recurrent and accumulates on both adjacent branches through the adjacent sectors of $P_0$, contradicting the fact that $\Gamma^s_{b,P_0,+}$
is contained in the wedge  $\{(d_{\ell}, d_{r}): d_r> d_\ell> 0\}$.
\end{enumerate}
We have derived a contradiction for each possible case under the assumption that the claim is false. This completes the proof of the claim.
\end{proof}

Collecting terms, we have that $\Gamma^s_{b,P_0,+}$ intersects the diagonal $\Delta$ at some point $P$, which is also the only intersection point of $\Gamma^u_{b,Q_0,+}$ with the diagonal $\Delta$ due to the 
symmetry of the system. 
It follows from Step 1 that  $\Gamma^s_{b,P_0,+} =\Gamma^u_{b,Q_0,+}$. That is, it is a heteroclinic connection and is  contained in the first quadrant. For convenience, we may denote this connection by either $\Gamma^s_{b,P_0,Q_0}$ or $\Gamma^u_{b,Q_0,P_0}$.
Applying symmetry again, its intersection with the diagonal is perpendicular.

Using a similar argument, we have that $\Gamma^u_{b,P_0,+}=\Gamma^s_{b,Q_0,+}$ is  another heteroclinic connection of $P_0$ and $Q_0$
and  is contained in the first quadrant. Then the union $\Gamma_{b,P_0,Q_0}:=\Gamma^s_{b,P_0,Q_0}\cup \Gamma^u_{b,P_0,Q_0}$ is a simple closed $\Theta_b$-invariant curve which encloses a simply connected $\Theta_b$-invariant domain $D$ in the first quadrant. 
It follows from Proposition \ref{pro.franks} that there is a $\Theta_b$-fixed point  $p\in D$. 
Since there is exactly one such point in the interior of first quadrant-- the elliptic periodic point $E_0$,
we conclude that the $\Theta_b$-fixed point contained in $D$ is exactly the elliptic periodic point $E_0$. 
This completes the proof.
\end{proof}

\begin{corollary}\label{cor.cycle.con}
For every $b\in Z$, the stable and unstable branches of all six hyperbolic periodic points $P_j$ and $Q_j$, $j=0, 1, 2$, form two invariant cycles of heteroclinic connections with its neighboring points, and each of the six elliptic periodic points is contained in a pair of heteroclinic connections
of neighboring hyperbolic periodic points. 
\end{corollary}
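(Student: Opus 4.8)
The plan is to deduce the corollary from Proposition~\ref{pro.connection.PQ} by transporting the two heteroclinic connections between $P_0$ and $Q_0$ around the two period-$6$ orbits of hyperbolic points using the symmetries of the generalized maps. So fix $b\in Z$. Proposition~\ref{pro.connection.PQ} supplies two heteroclinic connections, $\gamma_1:=\Gamma^s_{b,P_0,+}=\Gamma^u_{b,Q_0,+}$ (whose orbits run from $Q_0$ to $P_0$) and $\gamma_2:=\Gamma^u_{b,P_0,+}=\Gamma^s_{b,Q_0,+}$ (running from $P_0$ to $Q_0$); both lie in the first quadrant $\cM^{+}$, each meets the diagonal perpendicularly at a single point, and $\gamma_1\cup\gamma_2$ is an embedded circle bounding the elliptic island around $E_0$.

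First I would set up the equivariance. As in the proof of Lemma~\ref{lem.union.inv}, on the domain where it is defined each of $\cL_b$ and $\cR_b$ is an orientation-reversing involution realizing the time-reversal $I:(\varphi,\theta)\mapsto(\varphi,\pi-\theta)$, so it interchanges the stable and unstable manifold of a hyperbolic point while sending $p$ to its image; the center symmetry $\sigma:(d_{\ell},d_r)\mapsto(-d_{\ell},-d_r)$ is an orientation- and time-preserving involution; and $\cL_b,\cR_b,\sigma,\cI$ commute appropriately and permute the periodic points as in Remark~\ref{rem.period6}. In particular $\cL_b$ conjugates $\Theta_b$ to $\Theta_b^{-1}$ (since $\cL_b,\cR_b$ are involutions), so it carries the elliptic island around $E_0$ to the island around $\cL_b(E_0)=E_1$, and likewise for the other symmetries.

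Next I would apply these symmetries to the seed circle $\gamma_1\cup\gamma_2$. Applying $\cL_b$ produces an embedded circle, the union of two heteroclinic connections between $\cL_b(P_0)=Q_1$ and $\cL_b(Q_0)=P_1$, bounding the island around $E_1$; applying $\cR_b$ does the same for the pair $\{\cR_b(P_0),\cR_b(Q_0)\}=\{Q_2,P_2\}$ and the island around $\cR_b(E_0)$; applying $\sigma$, then $\sigma\circ\cL_b$, then $\sigma\circ\cR_b$ produces the remaining pairs $\{Q_2,P_1\}$, $\{P_2,Q_0\}$, $\{P_0,Q_1\}$ and the remaining three islands. Reading off the incident pairs, the six hyperbolic points $P_0,Q_1,P_1,Q_2,P_2,Q_0$ are the successive vertices of a single $6$-cycle in which consecutive vertices are joined by two heteroclinic connections whose union bounds exactly one of the six elliptic islands of Remark~\ref{rem.period6}. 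Choosing at each vertex the connection along which orbits leave on the unstable branch yields one closed chain of heteroclinic connections running once around the cycle; the opposite choice yields the second; and each chain is invariant under $\cR_b\cL_b$ (hence under $\Theta_b$ and under $F_b$ on $D_{\rho}(b)$), since $\cR_b\cL_b$ is time-preserving, cyclically permutes the six hyperbolic points, and preserves the union of their invariant manifolds by Lemma~\ref{lem.union.inv}.

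The step I expect to be the main obstacle is the combinatorial bookkeeping: one must track, through each of $\cL_b$, $\cR_b$, $\sigma$, and along the $\cR_b\cL_b$-orbit, which branch of $W^s$ and $W^u$ (and which of the labelled ``$+$'' branches) is carried to which, in order to verify that the six transported circles fit together into the two asserted cycles and that the enclosed regions are precisely the six elliptic islands. Because $\cL_b,\cR_b,\sigma,\cI$ are homeomorphisms on the relevant domains and the seed $\gamma_1\cup\gamma_2$ is an embedded circle meeting the diagonal transversally at a single point, every transported configuration retains these properties, so once the labels are matched the argument reduces to a diagram chase with no further estimates.
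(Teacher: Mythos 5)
Your proposal is correct and follows essentially the same route as the paper: both take the seed pair of connections $\Gamma^s_{b,P_0,Q_0}$ and $\Gamma^u_{b,P_0,Q_0}$ from Proposition~\ref{pro.connection.PQ} and transport them by $\cL_b$, $\cR_b$, and the center symmetry to obtain the six pairs of connections and the enclosed elliptic islands. Your extra bookkeeping about which branches map to which (using that $\cL_b,\cR_b$ are time-reversing involutions, as in Lemma~\ref{lem.union.inv}) only makes explicit what the paper leaves implicit.
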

\begin{proof}
Let $\Gamma^{s}_{b,P_0, Q_0}$ and $\Gamma^{u}_{b,P_0, Q_0}$ be the pair of heteroclinic connections between $P_0$ and $Q_0$ given by   Proposition \ref{pro.connection.PQ}.
Then we get a pair of  heteroclinic connections between $P_1$ and $Q_1$ (applying $\cL_b$)
and a pair of  heteroclinic connections between $P_2$ and $Q_2$ (applying $\cR_b$).
See Fig.~\ref{fig.phase156} for an illustration. By center symmetry, we get a pair of heteroclinic connections between $P_{j}$ and $Q_{j+1}$, $j=0, 1, 2$, where  the indices are modulo 3. The statements about the elliptic periodic points can be proved in the same way. This completes the proof.
\end{proof}

\begin{remark}\label{rem.app.split}
Despise the appearance, Fig.~\ref{fig.phase156} shouldn't be interpreted as an evidence on the existence of heteroclinic connections.
After zooming in, one can see that the heteroclinic intersections are actually transverse with small angles.
See \cite{GeLa} for a recent survey on the splitting of separatrices. 
\end{remark}

\subsection{Hypothesis for Contradiction} \label{sec.hypo.cont}
Recall that the subset $Z\subset (1.5, 1.5 +\delta_0)$ given in Definition~\ref{def.Z} consists of parameters $b$ such that $P_0(b)$ has no homoclinic point.
We have showed that for each $b\in Z$,
$\Gamma_{b,P_0,+}^s =\Gamma_{b,Q_0,+}^u$ is a heteroclinic connection between $P_0$ and $Q_0$
that is contained in the first quadrant, and intersect the diagonal exactly once, say $T_b=(d_b, d_b)$.
Let $U\subset  (1.5, 1.5 +\delta_0)$ be the set of parameters such that both 
$\Gamma_{b,P_0,+}^s$ and $\Gamma_{b,Q_0,+}^u$ cross the diagonal. 
Clearly $U$ is open.
It follows from Proposition \ref{pro.connection.PQ}  that $U \supset Z$.
Let $U'\subset U$ be the union of the connection components of $U$ that intersect $Z$. 
For each $b\in U'$, let $T_{b,P_0}=(d_{b,P_0}, d_{b,P_0})$ be the first intersection of $\Gamma_{b,P_0,+}^s$
with the diagonal and $T_{b,Q_0}=(d_{b,Q_0}, d_{b,Q_0})$ for $\Gamma_{b,Q_0,+}^u$.
Let $\eta_{b,P_0}:(d_{b,P_0}-\eps, d_{b,P_0}+\eps) \to \bR$ be the function with $\eta(d_{b,P_0})=d_{b,P_0}$ and whose graph is 
a short segment of the stable manifold $\Gamma_{b,P_0,+}^s$ around $T_{b,P_0}$.
Similarly, define  $\eta_{b,Q_0}:(d_{b,Q_0}-\eps, d_{b,Q_0}+\eps) \to \bR$ for $\Gamma_{b,Q_0,+}^u$.
Since both invariant manifolds $\Gamma_{b,P_0, +}^s$ and $\Gamma_{b,Q_0,+}^u$ depend analytically on $b$, 
the difference  $\eta_{b,P_0}-\eta_{b,Q_0}$ also depends analytically for $b\in U$.

\vskip.1in

\noindent\textbf{(HC) Hypothesis for contradiction.} The set $Z$ has a limit point in $(1.5, 1.5 +\delta_0)$.

\vskip.1in

Note that $T_{b,P_0}=T_{b,Q_0}=T_{b}$ and  $\eta_{b,P_0}-\eta_{b,Q_0}=0$ for every $b\in Z$.
For each $b\in U'$, consider the Taylor series of the function $\eta_{b,P_0}-\eta_{b,Q_0}$, say
\begin{align}
\eta_{b,P_0}(z)-\eta_{b,Q_0}(z)=\sum_{n=1}^{\infty}c_n(b)(z-d_{b,P_0})^n.
\end{align}
By analytical dependence on the parameter $b$, we have that each term $c_n(b)$ is an analytic function of $b$ over $U'$ that vanishes on $Z$. It follows that  $c_n(b)=0$ for all $b\in U'$ and for every $n\ge 1$ and hence the function $\eta_{b,P_0}-\eta_{b,Q_0}$ is identically zero for each $b\in U'$.
The crossing condition between the branches of the invariant manifolds of $P_0$ and $Q_0$ and the diagonal is an open condition
while the equation $\eta_{b,P_0}-\eta_{b,Q_0}=0$ defines a closed condition.
It follows that $U'=U=(1.5, 1.5 +\delta_0)$, and hence $\Gamma_{b,P_0, +}^s=\Gamma_{b,Q_0,+}^u$ for every $b\in (1.5, 1.5 +\delta_0)$.
Applying the same argument to the branches $\Gamma_{b,P_0, +}^u$ and $\Gamma_{b,Q_0,+}^s$ and then the argument in Corollary \ref{cor.cycle.con}, we have that
there are two cycles of six heteroclinic connections between the six hyperbolic periodic points for $b\in  (1.5, 1.5 +\delta_0)$. See Fig.~\ref{fig.phase156} for an illustration with the phase portrait for $b=1.56$, and \cite{dOD87} for methods of plotting stable and unstable manifolds of hyperbolic manifolds using normal forms.
\begin{definition}\label{def.HCcycles}
Denote by $\cC^{\text{int}}_{b}$ the interior cycle  of heteroclinic connections containing $\Gamma^u_{b,P_0,Q_0}$ 
and by  $\cC^{\text{ext}}_{b}$ the exterior cycle of heteroclinic connections containing $\Gamma^s_{b,P_0,Q_0}$. 
Moreover, denote by $\cD_{b}^{\text{int}}$ and $\cD_{b}^{\text{ext}}$ the domains
enclosed by $\cC_{b}^{\text{int}}$ and $\cC_{b}^{\text{ext}}$, respectively.
\end{definition} 
Note that $\cD_{b}^{\text{int}} \subset \cD_{b}^{\text{ext}} \subset \cD_b$,
and both $\cD_{b}^{\text{int}}$ and $\cD_{b}^{\text{ext}}$ are invariant domains under both maps $\cL_{b}$ and $\cR_{b}$. See Definition~\ref{def.domain} for the definition of the domain $\cD_b$.

Now we study what happens for $b\in [1.5 +\delta_0, b_{\max})$, where $b_{\max}=1+ 2^{-1/2}$.
By the analytic dependence of the invariant manifolds on the parameter $b$, the two cycles $\cC^{\text{ext}}_{b}$ and $\cC^{\text{int}}_{b}$ will persist as we increase $b$ from $b=1.5 +\delta_0$ till it is broken. 
If no breakdown happens for every $b\in [1.5 +\delta_0, b_{\max})$, then the connection cycles
exist for all such cases. We will deal with this case later. For now we assume that breakdowns of the heteroclinic cycles do happen for some parameters $b\in [1.5 +\delta_0, b_{\max})$, and 
denote by $b_0\in [1.5+ \delta_0, b_{\max})$ be the infimum of such parameters.
Observe that $b_0$ is actually a minimum, since the domain $\cD_b$ is open and the existence of the interior and exterior cycles is an open condition by the analytic dependence.
 
Because of the analytic dependence again, a breakdown of a connection on the interior cycle can only happen when the connection oscillates indefinitely\footnote{Indefinite oscillations are possible since there might be new $\Theta_b$-fixed points that are either degenerate or non-stable elliptic, in which case the prime-end analysis is no long applicable.}, while a breakdown of a connection on the exterior cycle can  happen either when the connection oscillates indefinitely or when a connection falls onto one singularity curve of the domain. The indefinite oscillation breakdown for both interior and exterior cycles are similar. We will focus on the case that the cycle $\cC^{\text{ext}}_{b}$ breaks down at $b_0$, and show necessary changes to the case when only the cycle $\cC^{\text{int}}_{b}$ breaks down at $b_0$. 
We first show that all of conclusions in Proposition \ref{pro.connection.PQ} continue to hold up to the breakpoint.
\begin{proposition}\label{pro.cont.fq}
Under the hypothesis (HC),
both connections $\Gamma^s_{b, P_0,Q_0}$ and $\Gamma^u_{b, P_0,Q_0}$ are contained in the first quadrant $\cM^{+}$, intersect the diagonal exactly once  and the intersection is perpendicular, and their union encloses the  point $E_0$ for every $1.5<b< b_0$ .
\end{proposition}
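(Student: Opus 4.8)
The plan is to argue by connectedness in the parameter $b$ on the interval $(1.5,b_0)$. Recall that the material preceding the proposition already establishes, under (HC), that the conclusion holds on the nonempty interval $(1.5,1.5+\delta_0)$, and that for every $b<b_0$ no breakdown has occurred, so the branches $\Gamma^s_{b,P_0,+}$, $\Gamma^u_{b,P_0,+}$, $\Gamma^s_{b,Q_0,+}$, $\Gamma^u_{b,Q_0,+}$ remain well-defined embedded arcs that cross $\Delta$ and depend analytically on $b$. Since the real-analytic function of $b$ measuring the signed separation of $\Gamma^s_{b,P_0,+}$ and $\Gamma^u_{b,Q_0,+}$ at their crossing with $\Delta$ vanishes on $(1.5,1.5+\delta_0)$, it vanishes on all of $(1.5,b_0)$; hence $\Gamma^s_{b,P_0,+}=\Gamma^u_{b,Q_0,+}$, and likewise $\Gamma^u_{b,P_0,+}=\Gamma^s_{b,Q_0,+}$, for every $b\in(1.5,b_0)$. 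In particular each connection $\Gamma^s_{b,P_0,Q_0}$ and $\Gamma^u_{b,P_0,Q_0}$ is invariant under the involution $\cI$ (which interchanges the two equal branches, as $\cI(P_0)=Q_0$), so any transversal crossing with $\Delta=\Fix(\cI)$ is automatically perpendicular.

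It then remains to propagate over $b\in(1.5,b_0)$ the three properties: (i) $\Gamma^s_{b,P_0,Q_0},\Gamma^u_{b,P_0,Q_0}\subset\cM^{+}$; (ii) each meets $\Delta$ exactly once; (iii) their union encloses $E_0$. Let $W\subset(1.5,b_0)$ be the set of $b$ for which (i)--(iii) hold; it is nonempty, since the arguments giving Corollary \ref{cor.cycle.con} show it contains $(1.5,1.5+\delta_0)$. Openness of $W$ is immediate: for $b\in W$ the connection is a compact embedded arc lying in the open first quadrant away from its two endpoints on the axes, meeting $\Delta$ transversally at a single point and staying a definite distance from $\Delta$ elsewhere, and bounding together with the other connection a closed disk with $E_0$ in its interior; all of these are $C^1$-open conditions, while the arcs vary analytically (in particular continuously) with $b$.

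The crux is closedness of $W$ in $(1.5,b_0)$. Let $b_\ast<b_0$ be a limit point of $W$. Since $b_\ast<b_0$ there is no breakdown at $b_\ast$, so $\Gamma^s_{b_\ast,P_0,Q_0}$ and $\Gamma^u_{b_\ast,P_0,Q_0}$ are genuine heteroclinic connections --- compact embedded arcs, not oscillating indefinitely and not on a singularity curve --- and they are the $C^1$-limits of the connections for $b\nearrow b_\ast$ in $W$. Passing to the limit, $\Gamma_i(b_\ast)\subset\overline{\cM^{+}}$; as the invariant manifold of $P_0$ (or $Q_0$) the arc avoids the fixed point $(0,0)$, so $\Gamma_i(b_\ast)\subset\cM^{+}$, and the crossing points converge to a perpendicular crossing with $\Delta$. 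What must be excluded is a degeneration of the limit: $\Gamma_i(b_\ast)$ touching an axis away from $P_0,Q_0$; $\Gamma_i(b_\ast)$ meeting $\Delta$ at more than one point, or tangentially; or $E_0$ slipping to the boundary of the enclosed region. Each case is handled exactly as in Steps 2--5 of the proof of Proposition \ref{pro.connection.PQ}: the offending sub-arc, pushed forward by $\cL_{b_\ast}$, $\cR_{b_\ast}$ or $\cL_{b_\ast}\cR_{b_\ast}$ and reflected by the relevant symmetry used there, produces a crossing homoclinic intersection for one of the six hyperbolic periodic points, hence --- propagating by the $\cL_b,\cR_b$- and center-symmetries as in Corollary \ref{cor.cycle.con} --- a crossing homoclinic intersection for $P_0$; the resulting crossing of $\Gamma^s_{b_\ast,P_0,+}=\Gamma^u_{b_\ast,Q_0,+}$ with $\Gamma^u_{b_\ast,P_0,+}=\Gamma^s_{b_\ast,Q_0,+}$ bounds a lens carried into itself by $\Theta_{b_\ast}$, so by Proposition \ref{pro.franks} that lens contains a $\Theta_{b_\ast}$-fixed point; but the $\cI$- and center-symmetries of the lens, together with the list of $\Theta_{b_\ast}$-fixed points (the origin, $\pm E_0$, the $P_j,Q_j$, and, when $b_\ast>b_{\crit}$, the bifurcated pair $\pm E_1,\pm E_2$ on the diagonal produced in Lemma \ref{lem.per.bif}), forbid any such point from lying in so small a symmetric lens, a contradiction. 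Therefore $b_\ast\in W$, so $W$ is closed, and $W=(1.5,b_0)$.

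The step I expect to be the main obstacle is precisely this closedness argument: since Proposition \ref{pro.no.new.po6} is available only on $(1.5,1.5+\delta_0)$, for $b_\ast\ge 1.5+\delta_0$ one cannot simply invoke ``no new periodic orbit of period $6$''; instead one must locate all $\Theta_{b_\ast}$-fixed points --- in particular the pair $E_1(b_\ast),E_2(b_\ast)$ born at $b_{\crit}$ --- relative to the two connection curves and the diagonal, and exploit the $\cI$- and center-symmetries of any putative lens or enclosed sub-disk to exclude that such a point can fall inside it. Once this bookkeeping is in place, the open/closed dichotomy on the connected interval $(1.5,b_0)$ finishes the proof.
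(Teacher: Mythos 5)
Your overall strategy --- analytic continuation of the connections plus an open--closed argument in $b$, reusing the mechanisms of Proposition \ref{pro.connection.PQ} --- is in the same spirit as the paper, which simply re-examines Steps 1--6 of that proof for the continued connections and handles the enclosure of $E_0$ by analytic dependence. The perpendicularity-from-$\cI$-symmetry observation and the openness part of your argument are fine. The genuine gap is in your closedness step, i.e.\ exactly the step you flag: after producing a crossing homoclinic intersection for $P_0$ at the limit parameter $b_\ast$, you try to reach a contradiction via a ``lens carried into itself by $\Theta_{b_\ast}$'' together with Proposition \ref{pro.franks} and an unverified claim that the symmetries and the census of $\Theta_{b_\ast}$-fixed points forbid a fixed point inside the lens. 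Neither half of this is justified: a lens bounded by a stable arc and an unstable arc at a homoclinic crossing is in general \emph{not} $\Theta_{b_\ast}$-invariant (it is mapped to another lens farther along the connection), so Proposition \ref{pro.franks} does not apply as you invoke it; and nothing in your argument controls the size or location of the lens, so excluding $E_0$, or the bifurcated fixed points $E_1(b_\ast),E_2(b_\ast)$ when $b_\ast>b_{\crit}$, from its interior is precisely the bookkeeping you admit you have not done. Since Proposition \ref{pro.no.new.po6} is unavailable beyond $1.5+\delta_0$, this leaves the contradiction unproved.

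The gap is avoidable, and the paper's route shows how. For $b_\ast<b_0$ there is no breakdown, so (as you note) $\Gamma^s_{b_\ast,P_0,+}=\Gamma^u_{b_\ast,Q_0,+}$ and $\Gamma^u_{b_\ast,P_0,+}=\Gamma^s_{b_\ast,Q_0,+}$ are genuine saddle connections, and likewise for the branches of all six hyperbolic points after applying $\cL_{b_\ast}$, $\cR_{b_\ast}$ and the center symmetry. A crossing homoclinic intersection produced by the Step 2/Step 3 constructions would then be an interior intersection point of two such connections; such a point would be forward $\Theta_{b_\ast}$-asymptotic to two \emph{distinct} fixed points (e.g.\ to $P_0$ along one branch and to $Q_0$ along the other), which is impossible. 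This immediate contradiction replaces ``contradicting $b\in Z$'' in Steps 2--3 and makes the lens argument unnecessary; this is in effect what the paper means by ``the arguments in Step 2 and Step 3 work the same.'' Similarly, ``$E_0$ slipping to the boundary'' is not a Step 2--5 issue at all: $E_0$ is a $\Theta_b$-fixed point and therefore can never lie on the stable or unstable manifold of $P_0$ or $Q_0$; combined with the continuous (indeed analytic) dependence of $E_0(b)$ and of the closed curve $\Gamma^s_{b,P_0,Q_0}\cup\Gamma^u_{b,P_0,Q_0}$ on $b$, the enclosure persists on all of $(1.5,b_0)$, which is exactly how the paper concludes.
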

\begin{proof}
We just need to examine the steps in the proof of Proposition \ref{pro.connection.PQ}.
All of the conclusions in Step 1 and Step 6 hold automatically for the analytic continuation of the heteroclinic connections  for $1.5<b< b_0$. The arguments in Step 2 and Step 3 work the same. The conclusions in Step 4 and Step 5 follow directly from Step 2 and Step 3.
Since both the point $E_0(b)$ and the curve $\Gamma^s_{b, P_0,Q_0}\cup \Gamma^u_{b, P_0,Q_0}$ depend analytically on $b$, and the point $E_0(b)$ is enclosed by the curve for $1.5<b<1.5+\delta_0$, it continues to hold for $1.5+\delta_0 \le b< b_0$.
Therefore, all conclusions in Proposition \ref{pro.connection.PQ} continue to hold for $1.5<b< b_0$.
\end{proof}

Note that for $1.5 < b < b_0$, the cycle $\cC^{\text{ext}}_{b}$ consists of the global stable/unstable manifolds of hyperbolic periodic points and is invariant under both maps $\cL_{b}$
and $\cR_{b}$. 
In the case that $\cC^{\text{ext}}_{b}$ breaks at $b=b_0$, we will denote $\ds \cC^{\text{ext}}_{b_0}=\lim_{b\nearrow b_0} \cC^{\text{ext}}_{b}$, where the limit of a family of subsets $\{E_b \subset M: b < b_0\}$ is defined as
\begin{align}
\lim_{b \nearrow b_0} E_b = \overline{\bigcup_{b< b_0} E_b \times \{b\}} \bigcap (M\times \{b_0\}) \subset M\times \bR_{b_0},
\end{align}
where we identify $M$ with $M\times\{b_0\}$. 
In the same way we can define the limit of the six saddle connections of the cycle $\cC^{\text{ext}}_{b}$. For example, $\ds \Gamma^s_{b_0,P_0, Q_0}=\lim_{b\nearrow b_0} \Gamma^s_{b_0,P_0, Q_0}$.
Note that
$\Gamma^s_{b_0,P_0, Q_0}$ is a continuum containing both points $P_0$ and $Q_0$, is contained in the first quadrant, is symmetric with respect to the diagonal, and is invariant under both $\Phi_{b}$ and $\Psi_{b}$.
For convenience, we will call $\Gamma^s_{b_0,P_0, Q_0}$ a continuum connection. 
The continuum connection  for $\Gamma^u_{b_0,P_0, Q_0}$ (if it breaks down) can be defined in the same way and it has the same property.
\begin{lemma}\label{lem.break.all}
If $\cC^{\text{ext}}_{b_0} \cap \pa \cD_{b_0} \neq \emptyset$,
then $\Gamma^s_{b_0,P_0, Q_0}\cap \pa \cD_{b_0} \neq \emptyset$.
\end{lemma}
\begin{proof}
Suppose $\cC^{\text{ext}}_{b_0} \cap \pa \cD_{b_0} \neq \emptyset$.
Then some of the six connections intersect the boundary $\cD_{b_0}$.
Because of the center symmetry, we only need to consider two cases:
\begin{enumerate}
\item  $\Gamma^{s}_{b_0,P_1, Q_1} \cap \pa \cD_{b_0} \neq \emptyset$:
note that $\Gamma^{s}_{b,P_1, Q_1}=\cL_{b}(\Gamma^{s}_{b,P_0, Q_0})$
is contained in the second quadrant. By taking limit, we see that $\Gamma^{s}_{b_0,P_1, Q_1}$
is also contained in the second quadrant and may intersect  three pieces of the boundary
(see Fig.~\ref{fig.phase156}):
\begin{enumerate}
\item $\Gamma^{s}_{b_0,P_1, Q_1} \cap L_{b_0} \neq \emptyset$: applying $\cL_{b_0}$,
we get that $\Gamma^{s}_{b_0,P_0, Q_0} \cap \cS_{1,+} \neq \emptyset$;

\item $\Gamma^{s}_{b_0,P_1, Q_1} \cap \cS_{1,+} \neq \emptyset$: applying $\cL_{b_0}$,
we get that $\Gamma^{s}_{b_0,P_0, Q_0} \cap L_{b_0}\neq \emptyset$ (an empty case since $\Gamma^{s}_{b_0,P_0, Q_0}$ is contained in the first quadrant);

\item $\Gamma^{s}_{b_0,P_1, Q_1} \cap \cS_{2,-} \neq \emptyset$: applying $\cR_{b_0}$,
we get that $\Gamma^{s}_{b_0,P_1, Q_1} \cap L_{b_0} \neq \emptyset$. It reduces to Case (a).
\end{enumerate}

\item $\Gamma^{s}_{b,P_2, Q_2} \cap \pa \cD_{b_0} \neq \emptyset$: note that $\Gamma^{s}_{b,P_2, Q_2}=\cR_{b}(\Gamma^{s}_{b,P_0, Q_0})$
is contained in the fourth quadrant. By taking limit, we see that $\Gamma^{s}_{b_0,P_2, Q_2}$
is also contained in the fourth quadrant . The discussion is similar to Case (1) and is omitted.
\end{enumerate}
Collecting terms, we conclude that $\Gamma^s_{b_0,P_0, Q_0}\cap \pa \cD_{b_0} \neq \emptyset$. This completes the proof.
\end{proof}

Although not needed, it is easy to see that Lemma \ref{lem.break.all} can be strengthened such that all six connections of the cycle $\cC^{\text{ext}}_{b_0}$ intersect the boundary $\pa \cD_{b_0}$ simultaneously.
It follows from Lemma \ref{lem.break.all} that $b_0$ is the first parameter that either the heteroclinic connection $\Gamma^s_{b,P_0,Q_0}$ or the connection $\Gamma^u_{b,P_0,Q_0}$ breaks down (maybe both). 
In the following we will discuss possible scenarios when the breakdown of the heteroclinic
connection $\Gamma^s_{b,P_0, Q_0}$ or $\Gamma^u_{b,P_0,Q_0}$ happens. We will exclude these possibilities and show that the two cycles of heteroclinic
connections continues to exist for all $b< b_{\max}=1+2^{-1/2}$.
Then we will show that the heteroclinic connection $\Gamma^s_{b,P_0, Q_0}$ must have been broken for any $b$
sufficiently close to $b_{\max}$ due to geometric reasons.
This contradiction shows that the hypothesis (HC) is wrong that $Z$ has a limit point in $(1.5, 1.5 +\delta_0)$.  Combining with Proposition \ref{pro.homo.entr}, we complete the proof of Theorem~\ref{thm.main}.

\begin{definition}\label{def.rho}
Let $b\in (1.5, b_0)$,
$\rho_b: t\in [-1,1]\hookrightarrow \Gamma^s_{b,P_0, Q_0}$ be a parametrization for the connection $\Gamma^s_{b,P_0, Q_0}$ with $\rho_b(-1)=Q_0$, $\rho_b(1)=P_0$ and $\rho_b(-t)=\cI(\rho_b(t))$. 
\end{definition}
Note that $\rho_b(0) =T_b$ is the unique intersection of $\Gamma^s_{b,P_0, Q_0}$ with the diagonal.   
Similar parametrization exists for the connection $\Gamma^u_{b,P_0, Q_0}$ with  $\hat\rho_b(-1)=P_0$, $\hat\rho_b(1)=Q_0$ and $\hat\rho_b(-t)=\cI(\hat\rho_b(t))$.

\begin{lemma}\label{rhob}
Let $1.5<b<b_0$.
Then  $\rho_b^{-1}(\Phi_{b}(\rho_b(t)))<-t$ and $\rho_b^{-1}(\Psi_{b}(\rho_b(-t)))>t$ for any $t\in [0,1)$. 
Similarly,  $\hat\rho_b^{-1}(\Phi_{b}(\hat\rho_b(t)))<-t$ and $\hat\rho_b^{-1}(\Psi_{b}(\hat\rho_b(-t)))>t$ for any $t\in [0,1)$. 
\end{lemma}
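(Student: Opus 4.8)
The plan is to deduce the lemma from the monotonicity results already established for the maps $\Phi_b$ and $\Psi_b$ off their ``parallel'' curves, namely Lemma \ref{lem.mono} and Lemma \ref{lem.mono3}, combined with the invariance of $\Gamma^s_{b,P_0,Q_0}$ under $\Phi_b$ and $\Psi_b$ (which follows from Proposition \ref{pro.connection.PQ}, since $\Gamma^s_{b,P_0,Q_0}=\Gamma^u_{b,Q_0,P_0}$ and these are invariant under $\cL_b$, $\cR_b$ and hence under the triple compositions). First I would observe that the signed quantity $g(d_\ell,d_r):=d_r-d_\ell$ is, up to sign, the parametrization-independent coordinate that distinguishes the two sides of the diagonal: $g>0$ on the wedge $\{d_r>d_\ell\}$ and $g<0$ on $\{d_r<d_\ell\}$. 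Along $\Gamma^s_{b,P_0,Q_0}$ the point $\rho_b(1)=P_0=(0,(1-\tfrac1{4b^2-8})^{1/2})$ has $g>0$ while $\rho_b(-1)=Q_0$ has $g<0$, and $\rho_b(0)=T_b$ is the unique point with $g=0$ (Proposition \ref{pro.connection.PQ}). The key monotonicity claim I would prove is that $t\mapsto g(\rho_b(t))$ is strictly monotone on $[-1,1]$: this uses that $\Gamma^s_{b,P_0,Q_0}$ meets the diagonal exactly once and transversally (perpendicularly), together with the fact that the connection is an embedded arc symmetric under $\cI$, so $g\circ\rho_b$ changes sign exactly once and, being analytic with a single transverse zero, cannot turn around. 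Hence $g(\rho_b(t))$ and $t$ have the same sign and $g\circ\rho_b$ is a homeomorphism of $[-1,1]$ onto its image that is increasing in $|t|$ on each half.

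Next I would use the invariance. Since $\Phi_b$ maps $\Gamma^s_{b,P_0,Q_0}$ to itself (it equals $\cL_b\cR_b\cL_b$ and each of $\cL_b,\cR_b$ preserves the union of stable/unstable manifolds, hence preserves this heteroclinic connection), the composition $s:=\rho_b^{-1}\circ\Phi_b\circ\rho_b$ is a well-defined homeomorphism of $[-1,1]$. It fixes $\pm1$ (as $\Phi_b$ fixes $P_0$ and $Q_0$) and fixes $0$ only if $\rho_b(0)=T_b\in\cC_{\Phi_b}$; but $T_b$ lies on the diagonal and $\cC_{\Phi_b}$ meets the diagonal only at $E_0$ (and at $\pm E_0$), so for $b<b_{\crit}$ we have $T_b\neq E_0$ unless it coincides, which I would rule out using that $T_b$ is characterized by a perpendicular reflection aiming at $B_r$ rather than being the symmetric parallel orbit. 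The decisive input is then Lemma \ref{lem.mono}: the connection $\Gamma^s_{b,P_0,Q_0}$ lies (except at $P_0$, $Q_0$, and its intersection point) in $D_{\text{ext}}(\Phi_b)\cup D_{\text{int}}(\Phi_b)$, and I need to show the relevant half of it lies in the exterior region $D_{\text{ext}}(\Phi_b)$. Concretely, near $P_0$ the tangent direction of $\Gamma^s_{b,P_0,Q_0}$ (an unstable direction in the first quadrant coordinates, by Proposition \ref{pro.connection.PQ} $\Gamma^s_{b,P_0,+}=\Gamma^u_{b,Q_0,+}$, so near $P_0$ it is the stable branch of $P_0$) is squeezed between $m_u$ and $m_{\scrM}$, hence on the exterior side of $\cC_{\Phi_b}$ by Lemma \ref{lem.tan.cprl}; since the portion of $\Gamma^s_{b,P_0,Q_0}$ from $T_b$ to $P_0$ does not cross $\cC_{\Phi_b}$ (the curves $\cC_{\Phi_b}$ and $\Gamma^s_{b,P_0,Q_0}$ can only meet at common fixed points of $\Phi_b$, i.e. $P_0,Q_0,E_0$, and $E_0$ is inside the region bounded by $\Gamma^s_{b,P_0,Q_0}\cup\Gamma^u_{b,P_0,Q_0}$, not on the arc), this whole sub-arc lies in $D_{\text{ext}}(\Phi_b)$. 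Then Lemma \ref{lem.mono} gives $d'_\ell-d'_r>d_r-d_\ell$, i.e. $g(\Phi_b(\rho_b(t)))<-g(\rho_b(t))$ for $t\in(0,1)$; combined with the monotonicity of $g\circ\rho_b$ established above and $g(\rho_b(-t))=-g(\rho_b(t))$ (from $\rho_b(-t)=\cI(\rho_b(t))$), this yields $\rho_b^{-1}(\Phi_b(\rho_b(t)))<-t$, which is the first claimed inequality. The inequality $\rho_b^{-1}(\Psi_b(\rho_b(-t)))>t$ follows by the symmetry $\Psi_b=\cI\Phi_b\cI$ together with $\rho_b(-t)=\cI(\rho_b(t))$, and the statements for $\hat\rho_b$ and $\Gamma^u_{b,P_0,Q_0}$ are proved the same way using Lemma \ref{lem.mono3} in place of Lemma \ref{lem.mono} and the slope bounds $m_{\sccM}<m_s$ from Lemma \ref{lem.tan.cprl}.

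I expect the main obstacle to be the bookkeeping that pins down \emph{which} side of $\cC_{\Phi_b}$ (interior vs. exterior) the arc $[T_b,P_0]\subset\Gamma^s_{b,P_0,Q_0}$ lies on, and the verification that this arc does not cross $\cC_{\Phi_b}$ at all. This is where one genuinely needs the tangent-direction comparison of Lemma \ref{lem.tan.cprl} at the endpoint $P_0$ (to get the correct side locally) together with a connectedness/no-crossing argument (the two analytic curves meet only at fixed points of $\Phi_b$, and the third such point $E_0$ is strictly interior to the enclosed elliptic island, hence not on the boundary arc). A secondary technical point is justifying the strict monotonicity of $g\circ\rho_b$; for this I would invoke that $\Gamma^s_{b,P_0,Q_0}$ is a simple analytic arc meeting $\Delta$ perpendicularly at a single point, so $g\circ\rho_b$ is analytic with exactly one simple zero and constant sign on each side, and an analytic function on an interval that is nonzero off one point and has a simple zero there is strictly monotone in a neighborhood of that point; extending monotonicity to the whole interval uses that $g\circ\rho_b$ has no other critical points, which one gets from the no-crossing-of-$\Delta$ property. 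Everything else is a routine chase through the definitions and the already-proven lemmas.
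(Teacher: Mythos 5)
Your route is genuinely different from the paper's, but it rests on two claims that you do not actually establish. First, the strict monotonicity of $t\mapsto g(\rho_b(t))$, $g:=d_r-d_\ell$, does not follow from what you cite: the fact that $\Gamma^s_{b,P_0,Q_0}$ meets the diagonal exactly once and perpendicularly only controls the \emph{zeros} of $g\circ\rho_b$, not its critical points, and an analytic function with a single simple zero can easily fail to be monotone. Your remark that the absence of further critical points ``one gets from the no-crossing-of-$\Delta$ property'' is a non sequitur: crossing a level set $\{g=c\}$ with $c\neq 0$ several times is not excluded by anything proved so far. Since your final step converts $g(\Phi_b(\rho_b(t)))<g(\rho_b(-t))$ into the parameter inequality $\rho_b^{-1}(\Phi_b(\rho_b(t)))<-t$ \emph{precisely} via this monotonicity, this is a real gap, not a technicality. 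Second, your placement of the arc from $T_b$ to $P_0$ in $D_{\mathrm{ext}}(\Phi_b)$ rests on the assertion that $\Gamma^s_{b,P_0,Q_0}$ and $\cC_{\Phi_b}$ ``can only meet at common fixed points of $\Phi_b$''; that is false in general -- invariance of both curves only makes the intersection set $\Phi_b$-invariant, and since $\Phi_b$ is an involution any pair $\{x,\Phi_b x\}$ could lie in it without being fixed. In the paper this no-crossing statement is exactly Lemma \ref{lem.prl}, which is proved \emph{after} Lemma \ref{rhob} and uses it, so importing it here is circular unless you supply an independent proof. (A smaller slip: near $P_0$ the branch in question is the stable branch, of slope $m_s$, and the relevant comparison from Lemma \ref{lem.tan.cprl} is $m_s>m_{\sccM}$; with your stated ``squeezed between $m_u$ and $m_{\scrM}$'' the arc would start on the \emph{interior} side of $\cC_{\Phi_b}$.)

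For contrast, the paper's proof avoids both issues: it considers $A_b=\{t\in[0,1):\rho_b^{-1}(\Phi_b(\rho_b(t)))<-t\}$, which is open, shows $0\in A_b$ (using Lemma \ref{lem.cphi.sym}, the sign information of Lemma \ref{lem.mono} on the diagonal, the fact from Proposition \ref{pro.connection.PQ} that the connection passes over the elliptic island through its northeastern corner, and a continuation argument for $b\ge 1.5+\delta_0$), and shows $A_b$ is closed because equality $\rho_b^{-1}(\Phi_b(\rho_b(t_*)))=-t_*$ would give $\Phi_b(\rho_b(t_*))=\cI(\rho_b(t_*))$, hence $\rho_b(t_*)\in\cC_{\Phi_b}$, which forces $\rho_b(t_*)\in\{P_0,E_0\}$ and is impossible for $t_*\in(0,1)$. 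Connectedness of $[0,1)$ then yields $A_b=[0,1)$, with no monotonicity of $g\circ\rho_b$ and no appeal to the full no-crossing statement. If you wish to salvage your approach, you must prove the non-intersection with $\cC_{\Phi_b}$ by an independent argument and replace the global monotonicity claim by one that compares parameters directly, e.g.\ by an open-closed argument of the above type.
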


\begin{proof}
Let $1.5<b< b_0$ be fixed.
We only need to prove the statement about $\rho_b$, since the same argument works for $\hat\rho_{b}$.
Moreover, we only need to prove the inequality about $\Phi_{b}$ since $\Gamma^s_{b,P_0, Q_0}$ is invariant under $\cI$ and the parametrization is symmetric.
Let $A_b=\{t\in [0,1): \rho_b^{-1}(\Phi_{b}(\rho_b(t)))<-t\}$, which is an open subset
of $[0,1)$ for each $1.5<b< b_0$. 
It suffices to show that $A_b$ is nonempty and closed. Then we have $A_b=[0,1)$.

\noindent{\bf Step 1.} $0\in A_b$: We start with the case  $b\in (1.5, 1.5+\delta_0)$.
It follows from Lemma \ref{lem.cphi.sym} that a point $(d,d)\in \cM^{+}$ satisfies $\Phi_{b}(d,d)\in \{d_\ell=d_r\}$ if and only if  $(d,d)=E_0$. 
Applying Lemma \ref{lem.mono}, we have 
\begin{align*}
\Phi_{b}(d,d)\begin{cases}&\in \{d_\ell<d_r\}, \text{ for }d<\sqrt{1-\frac{1}{4(b-1)^2}},\\
&=(d,d), \text{ for }d=\sqrt{1-\frac{1}{4(b-1)^2}},\\
&\in \{d_\ell >d_r\}, \text{ for } d> \sqrt{1-\frac{1}{4(b-1)^2}}. 
\end{cases}
\end{align*} 
It is proved in Proposition \ref{pro.connection.PQ} that $\Gamma^s_{b,P_0, Q_0}$  goes over the elliptic periodic point $E_0$ through the northeastern corner. 
This implies that $\rho_b^{-1}(\Phi_{b}(\rho_b(0)))<0$, or equally, $0\in A_b$ for each $b\in (1.5, 1.5+\delta_0)$. 

Now assume that there exists $\tilde{b}\in [1.5+\delta_0,  b_0)$ such that $\rho_{\tilde{b}}^{-1}(\Phi_{b}(\rho_{\tilde{b}}(0)))=0$, then $\rho_{\tilde{b}}(0)=T_{\tilde{b}}$ satisfies $\Phi_{\tilde{b}}(T_{\tilde{b}})=T_{\tilde{b}}$. In particular, $T_{\tilde{b}}$ satisfies \eqref{eq.cphi}
and hence $T_{\tilde{b}} \in \cC_{\Phi_{\tilde{b}}}\cap \{d_{\ell}=d_{r}\}$, which is $\{S_n\}$ for $\tilde{b}\le b_{\crit}$ and $\{ E_0(\tilde{b}), E_{1}(\tilde{b}), E_{2}(\tilde{b})\}$ for $b>b_{\crit}$. In either case, only $E_0(\tilde{b})$ is fixed by $\Phi_{\tilde{b}}$. It follows that $T_{\tilde{b}} = E_0(\tilde{b})$.
However, there cannot be a fixed/periodic point on any stable/unstable manifold, a contradiction.
It follows that $0\in A_b$ holds for each $b\in (1.5, \min\{ b_{\crit}, b_0\})$.

\noindent{\bf Step 2.} $A_b$ is a closed subset of $[0,1)$: Let $t_n\in A_b \to t_{\ast}\in (0,1)$.
Denote $(d_{\ell}^{\ast}, d_{r}^{\ast})=\rho_b(t_{\ast})$. Then $\rho_b^{-1}(\Phi_{b}(\rho_b(t_{\ast})))\le -t_{\ast}$.
If $\rho_b^{-1}(\Phi_{b}(\rho_b(t_{\ast})))=-t_{\ast}$, then $\Phi_{b}(\rho_b(t_{\ast}))=\rho_b(-t_{\ast})$ is symmetric with respect to the diagonal.
That is,  $\Phi_{b}(d_{\ell}^{\ast}, d_{r}^{\ast})=(d_{r}^{\ast}, d_{\ell}^{\ast})$.
It follows that $\rho_b(t_{\ast}) \in \cC_{\Phi_{b}}$. Combining with Lemma \ref{lem.mono}, we have that either $\rho_b(t_{\ast})=P_0$ or $\rho_b(t_{\ast})=E_0$, contradicting the hypothesis that $t_{\ast}\in (0,1)$.

Therefore,  $A_b$ is nonempty and closed, and hence  $A_b=[0,1)$.
This completes the proof.
\end{proof}

In the following we will consider the union of  the two curves  $\cC_{\Phi_b}$ and $\cC_{\Psi_b}$ defined in \eqref{eq.cphi} and \eqref{eq.cpsi}, respectively. We need to chop off some parts of these curves since there are multi-intersections of both curves with the diagonal for $b_{\crit} < b \le b_{\max}$. More precisely,  let  $\gamma_1(b) \subset \cC_{\Phi_b}$ be the part from $P_0$ to $E_1(b)=(\sin\beta, \sin\beta)$, $\gamma_2(b) \subset \cC_{\Phi_b}$  the part from $Q_0$ to $E_2(b)=(\sin\alpha, \sin\alpha)$, where $(\alpha, \beta)\in D' \cap \cJ$ is the unique pair with $b=\fb(\alpha, \beta)$. Note that this construction also works for $1.5<b\le b_{\crit}$, just with both $E_1(b)$ and $E_2(b)$ collapsing onto $E_0(b)$. Define $\gamma_3(b) :=\cI(\gamma_1(b)) \subset \cC_{\Psi_b}$ and $\gamma_4(b) :=\cI(\gamma_2(b)) \subset \cC_{\Psi_b}$. 
It follows from Lemma \ref{lem.per.bif} that $\Phi_{b}(\gamma_1(b))=\gamma_2(b)$
and $\Psi_{b}(\gamma_3(b))=\gamma_4(b)$.

\begin{remark}
Denote by $L_c:=\{d_r-d_{\ell}=c\}$  the line in the $(d_{\ell}, d_r)$ plane with slope 1 and intercept $c$. Then for each point $(d_{\ell}, d_r) \in \gamma_1(b) \cap L_c$, it follows from \eqref{char.prl} that the point $(d_{\ell}', d_r') = \Phi_b(d_{\ell}, d_r) \in \gamma_2(b) \cap L_{-c}$.
Combining with Lemma~\ref{lemma: dl leq dr}, we see that $(d_{\ell}', d_r')$ lies on the southwest side of the point $(d_r, d_{\ell})=\cI(d_{\ell}, d_r) \in \cI(\gamma_1(b)) =\gamma_3(b)$. It follows that $\gamma_2(b)$ is contained in the interior of the domain bounded by $\gamma_1(b)$, $\gamma_3(b)$ and the two axes. In particular, the curves $\gamma_i(b)$, $1\le i\le 4$, are mutually disjoint (except the endpoints). See Fig.~\ref{fig.prl}.
\end{remark} 

Next we introduce the curve
\begin{align}
\cC_{\prl, b} = \bigcup_{1\le j \le 4}\gamma_j(b). \label{eq.C.prl}
\end{align}
See Fig.~\ref{fig.prl}. 
Note that $\cC_{\prl, b} = \cC_{\Phi_b} \cup \cC_{\Psi_b}$  for $1.5 < b \le b_{\crit}$.
It follows from the definition that $\cC_{\prl, b}$ is symmetric with respect to the diagonal. 
Moreover, it follows from Lemma \ref{lem.foc.infty} that the curve $\gamma_1(b) $ is transverse to 
the lines $L_c$, $c\in \bR$. So $\gamma_1(b)$ intersects each line $L_c$ exactly once and is a simple curve. 
Therefore, $\cC_{\prl, b}$ is 
\begin{enumerate}
\item  a closed  curve with one simple self-intersection
at $E_0$ for $1.5 < b \le  b_{cirt}$;

\item a simple closed curve enclosing a domain containing $E_0$ for $b_{\crit} < b \le b_{\max}$.
\end{enumerate}

\begin{lemma}\label{lem.prl}
For each $1.5 < b \le b_{0}$, both connections $\Gamma^s_{b,P_0, Q_0}$  
and $\Gamma^u_{b, P_0, Q_0}$
do not intersect the curve $\cC_{\prl, b}$.
\end{lemma}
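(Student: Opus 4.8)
\textbf{Proof proposal for Lemma \ref{lem.prl}.}

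The plan is to prove the two connections cannot cross $\cC_{\prl,b}$ by a symmetry-and-monotonicity argument together with the control on tangent directions at the endpoints. First I would restrict attention to $\Gamma^s_{b,P_0,Q_0}$; the case of $\Gamma^u_{b,P_0,Q_0}$ is entirely parallel (or follows by applying the involution $\cI$, which swaps $\cC_{\Phi_b}$ and $\cC_{\Psi_b}$ and hence preserves $\cC_{\prl,b}$). By Proposition \ref{pro.cont.fq} (which holds for $1.5<b<b_0$, and at $b=b_0$ by taking the limiting continuum), $\Gamma^s_{b,P_0,Q_0}$ is contained in the first quadrant $\cM^{+}$, is symmetric about the diagonal $\Delta$, passes through $P_0$ and $Q_0$, and meets $\Delta$ exactly once, perpendicularly, at the point $T_b=\rho_b(0)$. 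On the other hand $\cC_{\prl,b}=\bigcup_{j=1}^4 \gamma_j(b)$, where $\gamma_1(b)\subset\cC_{\Phi_b}$ runs from $P_0$ to $E_1(b)$ on the diagonal, $\gamma_2(b)=\Phi_b(\gamma_1(b))$ from $Q_0$ to $E_2(b)$, and $\gamma_3(b),\gamma_4(b)$ are their $\cI$-images. Each $\gamma_j(b)$ is a simple curve transverse to all lines $L_c=\{d_r-d_\ell=c\}$ (Lemma \ref{lem.foc.infty}), and $\cC_{\prl,b}$ is symmetric about $\Delta$.

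The key observation is the following dichotomy near $P_0$: both $\Gamma^s_{b,P_0,Q_0}$ and $\gamma_1(b)$ emanate from $P_0$, and by Lemma \ref{lem.tan.cprl} (together with the discussion right after it) the tangent direction of $\cC_{\Phi_b}=\cC_{\Psi_b}$ at $P_0$ lies strictly inside the cone $\cC(E^u_{P_0},E^s_{P_0})$, in fact with $m_u(b)<m_{\scrM}(b)<m_{\sccM}(b)<m_s(b)<0$. So near $P_0$ the curve $\gamma_1(b)$ lies strictly between the unstable branch $\Gamma^u_{b,P_0,+}$ and the stable branch $\Gamma^s_{b,P_0,+}=\Gamma^s_{b,P_0,Q_0}$. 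In particular $\gamma_1(b)$ starts off inside the region bounded by $\Gamma^s_{b,P_0,Q_0}\cup\Gamma^u_{b,P_0,Q_0}$ (the ``elliptic island'' enclosure), and the same holds at $Q_0$ for $\gamma_2(b)$, at the $\cI$-image points for $\gamma_3,\gamma_4$. Then I would run a standard first-contact argument: suppose for contradiction that $\cC_{\prl,b}$ meets $\Gamma^s_{b,P_0,Q_0}$, and let $X$ be a point of intersection. The invariance $\Phi_b(\cC_{\Phi_b})=\cC_{\Phi_b}$, $\Psi_b(\cC_{\Psi_b})=\cC_{\Psi_b}$, and $\cI(\cC_{\prl,b})=\cC_{\prl,b}$, combined with $\Phi_b(\Gamma^s_{b,P_0,Q_0})=\Gamma^s_{b,P_0,Q_0}$ (both branches are $\Theta_b$-invariant and swapped appropriately — $\cL_b\Gamma^s_{b,P_0,+}=\Gamma^u_{b,Q_1,+}$, etc.), forces a whole $\cI$-orbit and $\Phi_b/\Psi_b$-orbit of such intersection points. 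The cleanest route is to transport everything to the parametrization $\rho_b$: write $X=\rho_b(t_X)$ and use that $\cC_{\prl,b}$ is characterized by the ``parallelism'' condition $d_\ell'-d_r'=d_r-d_\ell$ under $\Phi_b$ (resp. $\Psi_b$), whereas Lemma \ref{rhob} says $\rho_b^{-1}(\Phi_b(\rho_b(t)))<-t$ strictly for all $t\in[0,1)$, i.e. $\Gamma^s_{b,P_0,Q_0}$ lies strictly in $D_{\text{ext}}(\Phi_b)$ off of $P_0$. By Lemma \ref{lem.mono} the strict inequality $d_\ell'-d_r'>d_r-d_\ell$ holds on $D_{\text{ext}}(\Phi_b)$, while on $\cC_{\Phi_b}\supset\gamma_1(b)\cup\gamma_2(b)$ equality holds; hence $\Gamma^s_{b,P_0,Q_0}\cap(\gamma_1(b)\cup\gamma_2(b))\subseteq\{P_0,Q_0\}$. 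For the other half $\gamma_3(b)\cup\gamma_4(b)\subset\cC_{\Psi_b}$ I would instead invoke $\cI$-symmetry: $\Gamma^s_{b,P_0,Q_0}$ is $\cI$-invariant and already disjoint from $\cC_{\Phi_b}=\cI(\cC_{\Psi_b})$ away from the endpoints, so it is disjoint from $\cC_{\Psi_b}$ away from $\cI(P_0)=Q_0,\cI(Q_0)=P_0$ as well; and the endpoints of $\gamma_3,\gamma_4$ are $P_0,Q_0,E_1(b),E_2(b)$, none of which lie on $\Gamma^s_{b,P_0,Q_0}$ except $P_0,Q_0$ themselves.

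I should be careful about two points. The first is the behavior exactly at the shared endpoints $P_0$ and $Q_0$: the lemma asserts non-intersection, and strictly speaking $P_0\in\Gamma^s_{b,P_0,Q_0}\cap\cC_{\prl,b}$, so the statement must be read as ``the curves do not cross, i.e. share no point other than possibly the common endpoints $P_0,Q_0$'' — this is exactly what the first-contact argument delivers, and the tangent-cone estimate of Lemma \ref{lem.tan.cprl} guarantees the two curves leave $P_0$ in distinct directions without immediately re-meeting. The second, and the main obstacle, is making the first-contact / topological argument airtight at $b=b_0$ when $\Gamma^s_{b_0,P_0,Q_0}$ is only a continuum (possibly oscillating indefinitely near the singular curves) rather than a smooth arc: here the parametrization $\rho_b$ of Definition \ref{def.rho} may degenerate. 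I would handle this by working with $b<b_0$, where $\rho_b$ is a genuine smooth embedding and Lemma \ref{rhob} applies verbatim, establishing disjointness for all $1.5<b<b_0$; then the $b=b_0$ case follows by taking the limit $\cC_{\prl,b_0}=\lim_{b\nearrow b_0}\cC_{\prl,b}$ (the curves $\gamma_j(b)$ depend continuously on $b$, being images under the explicit analytic maps $H_{\sccM},H_{\scrM}$ of the level set $\fb^{-1}(b)$) together with $\Gamma^s_{b_0,P_0,Q_0}=\lim_{b\nearrow b_0}\Gamma^s_{b,P_0,Q_0}$, noting that a limit of disjoint closed sets can meet only in the limit, but any such limiting intersection point would be a point where the closed condition ``$d_\ell'-d_r'=d_r-d_\ell$ under $\Phi_{b_0}$'' meets the set $\Gamma^s_{b_0,P_0,Q_0}$, on which Lemma \ref{lem.mono} still gives the strict inequality by continuity except at fixed points of $\Phi_{b_0}$ lying on the connection — and the only such point in $\cM^+$ is $E_0(b_0)$, which is not on the connection since the connection encloses it.
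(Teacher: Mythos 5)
Your core step does not hold up: you read Lemma \ref{rhob} as saying (``i.e.'') that $\Gamma^s_{b,P_0,Q_0}$ lies strictly in $D_{\text{ext}}(\Phi_{b})$ off the endpoints, and then conclude via Lemma \ref{lem.mono} that the connection cannot meet $\gamma_1\cup\gamma_2$. But Lemma \ref{rhob} is only an ordering statement along the parametrized connection — it locates $\Phi_{b}(\rho_b(t))$ relative to the mirror parameter $-t$ — and says nothing about the sign of $(1,-1)\cdot\Phi_{b}(x)-(d_r-d_\ell)$ at points $x$ of the connection. To pass from the ordering to that sign you would need $d_r-d_\ell$ to be monotone along the connection, which is not known (the connection may oscillate in the anti-diagonal direction); and ``strictly contained in $D_{\text{ext}}(\Phi_{b})$ away from $P_0,Q_0$'' is essentially equivalent to the disjointness from $\cC_{\Phi_{b}}$ you are trying to prove, so as written the argument is circular. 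The paper's proof supplies the missing mechanism differently: take the \emph{first} intersection $A_0$ of the connection with $\cC_{\prl,b}$ after leaving $P_0$ (it lies on $\gamma_1$ and is not one of the $\Theta_b$-fixed points), push it by $\Phi_{b}$ to $B_0\in\gamma_2$, which is again an intersection point because both the connection and $\cC_{\Phi_b}$ are $\Phi_{b}$-invariant, and then use Lemma \ref{rhob} precisely as an ordering statement: $B_0=\rho_b(t_b)$ with $t_b<-t_a$, i.e.\ an intersection met from $Q_0$ strictly before the mirror point $\cI(A_0)$ — contradicting the $\cI$-symmetry of both the connection and $\cC_{\prl,b}$. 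That first-intersection-plus-symmetry argument is what your write-up gestures at (``first-contact'') but never executes.

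Two further points. First, your treatment of $b=b_0$ is too loose: a limiting intersection point would lie on $\cC_{\Phi_{b_0}}$, where Lemma \ref{lem.mono} gives \emph{equality}, so ``the strict inequality survives by continuity'' cannot rule it out; and the claim that $E_0$ cannot lie on the continuum ``since the connection encloses it'' only applies for $b<b_0$ — in the limit the enclosure can degenerate onto $E_0$. The paper needs a genuine case analysis here ($b_0>b_{\crit}$, $b_0<b_{\crit}$, $b_0=b_{\crit}$), using for the parabolic case $b_0=b_{\crit}$ the eigenvalue $-1$ at $E_0$ (points accumulating at the cusp between $\gamma_1$ and $\gamma_3$ are flipped near the cusp between $\gamma_2$ and $\gamma_4$), to force crossings that contradict the already-established statement for $b<b_0$. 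Second, a small slip: the case of $\Gamma^u_{b,P_0,Q_0}$ does not follow by applying $\cI$, since both connections are individually $\cI$-invariant; it follows by running the parallel argument with $\hat\rho_b$ and the corresponding half of Lemma \ref{rhob}, as you also suggest.
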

\begin{proof}
We only need to give the proof for the connection  $\Gamma^s_{b,P_0, Q_0}$,
since the same argument works for the  connection $\Gamma^u_{b, P_0,Q_0}$.
See Fig.~\ref{fig.prl} for plots of the curve $\cC_{\prl, b}$.

\noindent{\bf Step 1.} 
Suppose $\Gamma^s_{b,P_0, Q_0}\cap \cC_{\prl, b}\neq\emptyset$ for some 
$1.5 < b < b_{0}$.
It follows from Lemma \ref{lem.tan.cprl} that $\Gamma^s_{b,P_0, Q_0}$ starts at the exterior component of the complement of $\cC_{\prl, b}$.
Let $A_0$ be the first intersection of $\Gamma^s_{b,P_0, Q_0}$ with $\cC_{\prl, b}$ after leaving $P_0$. Then $A_0\in\gamma_1$.
Since there is no periodic point on $\Gamma^s_{b,P_0, Q_0}$, we have $A_0\neq E_0$ for $1.5<b \le b_{\crit}$
and $A_0 \notin\{E_1(b), E_2(b)\}$ for $b_{\crit} < b \le b_0$.
Since the point $A_0$ is not on the diagonal, $A_0 =\rho_b(t_a)$ for some $0<t_a \le 1$, where $\rho_b: [-1,1] \to \Gamma^s_{b,P_0, Q_0}$
is a symmetric parametrization given in Definition \ref{def.rho}.  
Let $B_0=\Phi_{b}(A_0) \in \gamma_2$.
Since $\Phi_{b}(\Gamma^s_{b,P_0, Q_0})=\Gamma^s_{b,P_0, Q_0}$, $B_0$ is also an intersection point of $\Gamma^s_{b,P_0, Q_0}$ with $\cC_{\prl, b}$. 
It follows from Lemma \ref{rhob} that $B_0=\rho_b(t_b)$ for some $-1<t_b < -t_a$.
So $B_0$ is an intersection of point of $\Gamma^s_{b,P_0, Q_0}$ with $\cC_{\prl, b}$
after leaving $Q_0$ that happens before the symmetric point $\cI(A_0)$.
This contradicts that fact that both $\Gamma^s_{b,P_0, Q_0}$ and $\cC_{\prl, b}$ are symmetric
with respect to the diagonal. Therefore, $\Gamma^s_{b,P_0, Q_0}\cap \cC_{\prl, b}=\emptyset$ for every
$1.5 < b < b_{0}$.

\noindent{\bf Step 2.}  Suppose $\Gamma^s_{b_0,P_0, Q_0}\cap \cC_{\prl, b_0}\neq\emptyset$. Note that $b_{0}>1.5 + \delta_0$.
By continuity, it follows from Step 1 that the continuum connection $\Gamma^s_{b_0,P_0, Q_0}$ lies at the exterior component of the complement of $\cC_{\prl, b_0}$.  We will divide the remaining discussion into three subcases:

(2a). $b_0 >b_{\crit}$. Denote by $A_0$  the first intersection of $\Gamma^s_{b_0,P_0, Q_0}$ and $\cC_{\prl, b_0}$
Note that $A_0\in \gamma_1$, and $A_0 \neq E_0$, since $E_0$ is enclosed in the interior of $\cC_{\prl, b_0}$.
It follows that $\Gamma^s_{b_0,P_0, Q_0}$ and $\cC_{\prl, b_0}$ also intersect at $B_0=\Phi_{b_0}(A_0) \in \gamma_2$,
which means $\Gamma^s_{b_0,P_0, Q_0}$ have crossed $\gamma_3$. It follows that the heteroclinic connection $\Gamma^s_{b,P_0, Q_0}$ has to cross both  curves $\cC_{\Phi_{b}}$ and $\cC_{\Psi_{b}}$ for $b< b_0$ sufficiently close to $b_0$, contradicting Step 1.

(2b). $b_0< b_{\crit}$.
As in Case (2a), it suffices to show that $E_0 \notin \Gamma^s_{b_0, P_0, Q_0}$.
Suppose on the contrary that $E_0 \in \Gamma^s_{b_0, P_0, Q_0}$.   
Recall that the point $E_0$ is an elliptic $\Theta_{b_0}$-fixed point.
Since $\Gamma^s_{b_0,P_0, Q_0}$ is $\Theta_{b_0}$-invariant,
it has to accumulate on $E_0$ through both sides of any smooth curve passing through $E_0$. 
This again implies that the continuum connection $\Gamma^s_{b_0,P_0, Q_0}$ has crossed both  curves $\cC_{\Phi_{b_0}}$ and $\cC_{\Psi_{b_0}}$, contradicting Step 1 as in Case (2a).

(2c) $b_{0}=b_{\crit}$. It suffices to show that $E_0 \notin \Gamma^s_{b_0, P_0, Q_0}$. 
Suppose on the contrary that $E_0 \in \Gamma^s_{b_0, P_0, Q_0}$.  
Since $\Gamma^s_{b_0, P_0, Q_0}$ is a continuum, there are points $T_n \in \Gamma^s_{b_0, P_0, Q_0} \to E_0$ through the cusp between
$\gamma_1$ and $\gamma_3$, see the middle plot in Fig.~\ref{fig.prl}.
Note that $E_0$ is a parabolic fixed point for $\Phi_{b_0}$ with
eigenvalues $\lambda_1=-1$ and $\lambda_2=1$, and the eigenvector for $\lambda_1=-1$
is $v_1=(1,1)$. This also follows from the fact that $\Phi_{b}(\gamma_1)=\gamma_2$.
Then the points $\Phi_{b_0}(T_n) \in \Phi_{b_0}(\Gamma^s_{b_0, P_0, Q_0})=\Gamma^s_{b_0, P_0, Q_0}$ get flipped to somewhere near the cusp between $\gamma_2=\Phi_{b_0}(\gamma_1)$ and $\gamma_4=\Phi_{b_0}(\gamma_3)$. This again contradicts Step 1 as in Case (2a).

Collecting terms, we have $\Gamma^s_{b_0,P_0, Q_0}\cap \cC_{\prl, b_0}= \emptyset$. This completes the proof of the lemma.
\end{proof}

\begin{proposition}\label{pro.alt}
If $\Gamma^s_{b_0,P_0, Q_0}$ is a continuum connection, then one of the following holds: 
\begin{enumerate}
\item $\Gamma^s_{b_0,P_0, Q_0} \cap  \pa \cD_{b_0} \neq\emptyset$ (see Definition \ref{def.domain} for the definition of the domain $\cD_{b}$);

\item there are $\Theta_{b_0}$-fixed points on $\Gamma^s_{b_0,P_0, Q_0}$ other than $P_0$
and $Q_0$. 
\end{enumerate}
Similarly, if $\Gamma^u_{b_0,P_0, Q_0}$ is a continuum connection, then there are $\Theta_{b_0}$-fixed points on $\Gamma^u_{b_0,P_0, Q_0}$ other than $P_0$
and $Q_0$
\end{proposition}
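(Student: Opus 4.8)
The plan is to run the prime-end analysis of Proposition~\ref{pro.connection.PQ}, now at the breakdown parameter $b_0$, and to argue by contradiction. Suppose $\Gamma^s_{b_0,P_0,Q_0}$ is a continuum connection for which \emph{neither} conclusion holds: it is disjoint from $\pa\cD_{b_0}$, and $P_0,Q_0$ are its only $\Theta_{b_0}$-fixed points. Being a compact set at positive distance from the closed set $\pa\cD_{b_0}$, a neighborhood of it carries well-defined smooth symplectic $\cL_{b_0},\cR_{b_0}$ and all iterates of $\Theta_{b_0}=(\cR_{b_0}\cL_{b_0})^3$, which there coincide with iterates of $F_{b_0}$. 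By Proposition~\ref{pro.cont.fq} (and continuity), the continuum still lies in the first quadrant and is $\cI$-symmetric, the branch $\Gamma^s_{b_0,P_0,+}$ meets $\{d_\ell=0\}$ only at $P_0$ (Steps 4--5 of Proposition~\ref{pro.connection.PQ}), and by Lemma~\ref{lem.prl} and Lemma~\ref{lem.tan.cprl} it is disjoint from $\cC_{\prl,b_0}$ and enters the exterior component of $\cM^+\setminus\cC_{\prl,b_0}$.

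First I would record that, since conclusion (2) fails, every $\Theta_{b_0}$-fixed point in the closed wedge $\{0\le d_\ell\le d_r\}$ is \emph{good} in Mather's sense: $P_0,Q_0$ are positive hyperbolic; the origin is nonlinearly stable; $E_0(b_0)$ is nonlinearly stable for $b_0<b_{\crit}$ and positive hyperbolic for $b_0>b_{\crit}$ (because $D_{E_0}\Theta_{b_0}=(AJ)^2$ is a square); and the bifurcating points $E_1(b_0),E_2(b_0)$, when present, are nonlinearly stable by a twist-coefficient computation parallel to Section~\ref{sec.ell.po}. Passing to a $\Theta_{b_0}$-invariant, simply connected domain with smooth boundary containing $\Gamma^s_{b_0,P_0,Q_0}$ (taking, e.g., an invariant circle from Proposition~\ref{pro.delta.rho} when available, or otherwise a suitably smoothed invariant region) makes $\Theta_{b_0}$ a general symplectic map, so Proposition~\ref{pro.conn.rec} applies to $\Gamma^s_{b_0,P_0,+}$: it is either a saddle connection or it is recurrent.

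Next I would eliminate both alternatives. If $\Gamma^s_{b_0,P_0,+}$ is recurrent, then by $\cI$-symmetry the adjacent unstable branch is recurrent too and the two accumulate on each other through the quadrant between them, so Proposition~\ref{pro.neighbor.acc} yields a crossing homoclinic intersection of $P_0$ at $b_0$; since this is an open condition it persists for $b$ slightly below $b_0$, contradicting the fact that for all $b<b_0$ the twenty-four stable/unstable branches of $P_0,Q_0,\dots$ together form the twelve heteroclinic connections of the intact cycles $\cC^{\text{ext}}_b\cup\cC^{\text{int}}_b$ and hence carry no homoclinic point. If instead $\Gamma^s_{b_0,P_0,+}$ is a saddle connection, it is heteroclinic to another hyperbolic fixed point — which can only be $Q_0$ (then $\Gamma^s_{b_0,P_0,+}=\Gamma^u_{b_0,Q_0,+}$ is a genuine heteroclinic connection crossing $\Delta$ once perpendicularly, which by openness persists for $b>b_0$, contradicting that $b_0$ is a breakdown) or $E_0$ when $E_0$ is hyperbolic (then $E_0\in\Gamma^s_{b_0,P_0,Q_0}$, contradicting the failure of (2)) — or else it is a homoclinic loop of $P_0$; but such a loop, with its $\cI$-image, would bound $\Theta_{b_0}$-invariant disks each containing, by Proposition~\ref{pro.franks}, an extra $\Theta_{b_0}$-fixed point, which, after one traces where that point lies, is incompatible with the failure of (2) together with Proposition~\ref{pro.no.new.po6}; alternatively, such a loop is excluded directly because $\Gamma^s_{b_0,P_0,+}$ returns neither across $\{d_\ell=0\}$ nor across $\cC_{\prl,b_0}$. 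For the statement about $\Gamma^u_{b_0,P_0,Q_0}$ the argument is identical, with the simplification that $\Gamma^u_{b_0,P_0,+}$ lies on the interior cycle $\cC^{\text{int}}_b$, which is enclosed by the exterior cycle (or by its limiting continuum) and therefore stays away from $\pa\cD_{b_0}$ — so possibility (1) is vacuous and only (2) can survive.

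I expect the recurrence case to be the crux: turning "$\Gamma^s_{b_0,P_0,+}$ recurrent" into a contradiction hinges on the fact that an intact pair of heteroclinic cycles genuinely rules out homoclinic intersections of the $P_j$ for $b<b_0$, which must be extracted carefully from the analytic-continuation set-up preceding the proposition. A secondary technical point is manufacturing the $\Theta_{b_0}$-invariant smooth-boundary domain needed for Mather's theorems once the invariant curve $\cC_\rho(b_0)$ of Proposition~\ref{pro.delta.rho} is unavailable (i.e.\ for $b_0\ge 1.5+\delta_\rho$), and a third is handling the homoclinic-loop sub-case cleanly, since there the enclosed fixed point need not a priori lie \emph{on} the continuum.
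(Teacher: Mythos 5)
Your proposal takes a genuinely different route from the paper, and as written it has a real gap. The paper's proof does \emph{not} invoke the recurrence dichotomy at $b_0$: it collapses the component of the complement of the continuum containing the origin, passes to the prime-end compactification of a small neighborhood $V_1$ of the continuum chosen to contain no $\Theta_{b_0}$-fixed points other than $P_0,Q_0$, observes that the prime ends $\hat P_0,\hat Q_0$ are fixed so the Carath\'eodory rotation number vanishes, and then applies Proposition~\ref{pro.rot.zero} to produce a genuine heteroclinic connection at $b_0$, contradicting the choice of $b_0$. Your plan instead rests on Propositions~\ref{pro.conn.rec} and~\ref{pro.neighbor.acc}, which require $\Theta_{b_0}$ to be a \emph{general} symplectic map on an invariant simply connected domain with smooth boundary: finitely many fixed points, each either nonlinearly stable elliptic or positive hyperbolic. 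At $b_0\in[1.5+\delta_0,b_{\max}]$ these hypotheses are not available and may genuinely fail: Proposition~\ref{pro.no.new.po6} controls the fixed-point set only for $b\in(1.5,1.5+\delta_0)$ and only inside $\cD_{\rho}(b)$; at $b_0=b_{\crit}$ the point $E_0$ is parabolic; the nonlinear stability of $E_1(b_0),E_2(b_0)$ is nowhere established (your ``parallel twist-coefficient computation'' is asserted, not carried out, and would also need nonresonance); and the invariant curve of Proposition~\ref{pro.delta.rho} is only guaranteed for $b<1.5+\delta_{\rho}$, so the invariant smooth-boundary domain you need is not constructed. The footnote in Section~\ref{sec.hypo.cont} flags exactly this obstruction — new $\Theta_b$-fixed points that are degenerate or non-stable elliptic make that style of prime-end/Mather analysis inapplicable — which is precisely why the paper's argument localizes to a fixed-point-free neighborhood of the continuum and uses only the rotation-number theorem, whose hypotheses do not involve the fixed points elsewhere in the domain.

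Two further steps do not hold as written. In the recurrent case, $\cI$-symmetry sends $\Gamma^s_{b_0,P_0,+}$ to $\Gamma^u_{b_0,Q_0,+}$, not to $\Gamma^u_{b_0,P_0,+}$, so recurrence of the adjacent unstable branch of $P_0$ is not ``by symmetry''; and even granting recurrence of both adjacent branches, the hypothesis of Proposition~\ref{pro.neighbor.acc} — mutual accumulation \emph{through the quadrant between them} — does not follow, because the wedge-confinement used in Step 6 of the proof of Proposition~\ref{pro.connection.PQ} is unavailable here (the continuum crosses the diagonal, so its omega-set need not approach $P_0$ through that quadrant). In the saddle-connection case, the sub-case $\Gamma^s_{b_0,P_0,+}=\Gamma^u_{b_0,Q_0,+}$ cannot be dismissed by ``openness'': saddle connections are not an open phenomenon and do not persist for $b>b_0$; the correct observation is simply that an actual connection at $b_0$ contradicts the standing assumption that the connection breaks down at $b_0$. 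With the Mather-hypothesis issue unresolved and the accumulation step unjustified, the proposal does not yet yield the proposition; the paper's localized prime-end rotation-number argument is the device that circumvents both difficulties.
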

\begin{proof}
We only need to consider the case that  $\Gamma^s_{b_0,P_0, Q_0}$ is a continuum connection.
We argue by contradiction. Suppose on the contrary that $\Gamma^s_{b_0,P_0, Q_0} \cap  \pa \cD_{b_0}=\emptyset$ and there is no other $\Theta_{b_0}$-fixed point on $\Gamma^s_{b_0,P_0, Q_0}$  besides $P_0$ and $Q_0$. 
It follows from Lemma~\ref{lem.break.all} that the limiting exterior cycle $\cC^{\text{ext}}_{b_0} \cap \pa \cD_{b_0} =\emptyset$. That is, $\cC^{\text{ext}}_{b_0}$ is a continuum that is invariant under both maps $\cL_{b_0}$ and $\cR_{b_0}$.
It follows that the limiting connection $\Gamma^s_{b_0,P_0, Q_0}$ is a continuum that is invariant under the map $\Theta_{b_0}$.
Applying Proposition~\ref{pro.sec.end} with $f=\Theta_{b_0}$, $A=\Gamma^s_{b_0,P_0, Q_0}$, $e=\hat{P}_0$ and $q =P_0$, we get that the prime end $\hat{P}_0$ is a sector end. The remaining of the proof follows closely Mather's argument \cite[\S 10]{Mat81}
and is included for completion.
Let $\Sigma$ be the sector of $P_0$ to which the prime end $\hat{P}_0$ is associated,
$\Gamma_1$ and $\Gamma_2$ be the two branches of $P_0$ that border $\Sigma$.
Then the corresponding arc  $\hat\Gamma_1$ of $\Gamma_1$ is a $\hat\Theta_{b_0}$-invariant  arc on the added circle $S^1$ of prime ends . One of the two endpoints of $\hat\Gamma_1$ is $e$. Denote the other endpoint by $e_1$. It is possible that $e_1 =e$. Note that $e_1$ is fixed by $\hat\Theta_{b_0}$. Let $q_1 \in \Gamma^s_{b_0,P_0, Q_0}$ be a principal point of $e_1$, which must be a fixed point of $\Theta_{b_0}$. 
It follows from our hypothesis that $q_1$ is either $P_0$ or $Q_0$.
Applying Proposition~\ref{pro.sec.end} again to $e_1$, we see that $e_1$ is also a sector end.
Let $\Sigma'$ be the sector of $q_1$ to which the prime end $e_1$ is associated,
$\Gamma_1'$ and $\Gamma_2'$ be the two branches of $q_1$ that border $\Sigma'$.
Then one of these two branches must be $\Gamma_1$. That is, $\Gamma_1$ is a saddle connection from $P_0$ to $q_1$. We divide the remaining discussion into two cases according to the point $q_1$:
\begin{enumerate}
\item $q_1 = Q_0$: then $\Gamma_1$ is a saddle connection between $P_0$ and $Q_0$
and hence $\Gamma^s_{b_0,P_0, Q_0} = \Gamma_1$ is a regular connection, contradicting the choice of the parameter $b_0$; 

\item $q_1 =P_0$, then $\Gamma_1$ is a homoclinic loop of $P_0$ contained in the first quadrant. By symmetry, $\cI(\Gamma_1)$   is a homoclinic loop of $Q_0$ contained in the first quadrant. Note that the union $\Gamma_1 \cup \cI(\Gamma_1)$ is not connected and must be disjoint with the diagonal. Such a set cannot be the limit of the saddle connections $\Gamma^s_{b,P_0, Q_0}$ that are connected and cross the diagonal.
\end{enumerate}
Collecting terms, we get a contradiction with the working hypothesis. This completes the proof. 
\end{proof}

Next we will show that none of  the two alternatives for the continuum connection $\Gamma^s_{b_0,P_0, Q_0}$  in Proposition \ref{pro.alt} can happen.
Similarly, there is no new fixed point on $\Gamma^u_{b_0,P_0, Q_0}$.
This leads to a contradiction and hence the working hypothesis (HC) must be wrong. 

\begin{lemma}\label{lem.no.cross}
The continuum connection $\Gamma^s_{b_0,P_0, Q_0}$ does not intersect $\pa \cD_{b_0}$.
\end{lemma}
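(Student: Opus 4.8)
The strategy is to argue by contradiction, assuming the continuum connection $\Gamma^s_{b_0,P_0, Q_0}$ touches $\pa \cD_{b_0}$, and then trace through the geometry of how $\Gamma^s_{b_0,P_0, Q_0}$ can reach the boundary of the domain $\cD_{b_0}$. Recall $\pa \cD_{b_0}$ consists of pieces of the segments $L_{\pm b_0}$ and of the four singular curves $\cS_{1,\pm}(b_0)$, $\cS_{2,\pm}(b_0)$. Since $\Gamma^s_{b_0,P_0, Q_0}$ is contained in the closed first quadrant $\cM^{+}$ (this is part of the statement of Proposition~\ref{pro.cont.fq}, which persists up to $b_0$ by continuity), the only portions of $\pa \cD_{b_0}$ that it could possibly meet are those lying in $\cM^{+}$: namely $\cS_{1,+}(b_0)$, $\cS_{2,+}(b_0)$, and a sub-segment of $L_{b_0}$ (the parts with $d_\ell \ge 0, d_r \ge 0$). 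So the first step is to enumerate precisely which boundary arcs are reachable, and note that $\cS_{1,+}$ and $\cS_{2,+}$ meet only at $S_0=(2^{-1/2}, 2^{-1/2})$, which lies on the diagonal.

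\textbf{Key steps.} First I would use the symmetry: $\Gamma^s_{b_0,P_0, Q_0}$ is $\cI$-invariant and the singular-curve picture is $\cI$-symmetric with $\cI(\cS_{1,+}) = \cS_{2,+}$, so it suffices to show $\Gamma^s_{b_0,P_0, Q_0}$ avoids $\cS_{1,+}(b_0)$ and the relevant part of $L_{b_0}$; the case of $\cS_{2,+}$ then follows by applying $\cI$. Second, by Proposition~\ref{eq: F_l, C_1,+} we have $\cL_{b_0}(\cS_{1,\pm}(b_0)) \subset L_{\pm b_0}$ and $\cR_{b_0}(\cS_{2,\pm}(b_0)) \subset L_{\mp b_0}$, which ties the singular-curve intersections to $L_{\pm b_0}$ intersections under the generalized maps; this is the mechanism by which ``a breakdown of one connection forces simultaneous breakdown of all six.'' Third, I would invoke Lemma~\ref{lemma: F, GF}, which says that for $b \in (1.5, b_{\max})$ there is no point of $\cS_{1,+}(b)$ satisfying $\cL_{b}\circ \cI(d_\ell, d_r) = \cR_{b}\circ \cL_{b}(d_\ell, d_r)$ — exactly the relation that a point of $\Gamma^s_{b_0,P_0,Q_0} \cap \cS_{1,+}(b_0)$ on the diagonal, or more precisely a point symmetric to its own image, would satisfy; this rules out the ``tangential/first-contact'' scenario. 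Finally, for a transverse first contact of $\Gamma^s_{b_0,P_0, Q_0}$ with $\cS_{1,+}(b_0)$ at a point $A$, I would push forward by $\cL_{b_0}$ to land on $L_{b_0}$, then by $\cR_{b_0}$ to pass to the neighboring connection, continuing around the cycle as in the discussion preceding Proposition~\ref{pro.cont.fq}; the point is that such a contact propagates to a contact of $\Gamma^s_{b,P_0,Q_0}$ with $\cS_{1,+}$ for all $b < b_0$ sufficiently close to $b_0$, or else forces $\Gamma^s_{b_0,P_0,Q_0}$ to cross the curve $\cC_{\prl, b_0}$, contradicting Lemma~\ref{lem.prl}. (Alternatively, since $\cC_{\prl, b_0}$ separates the parts of $\cM^{+}$ near the hyperbolic points from the singular curves $\cS_{1,+}, \cS_{2,+}$ outside of a neighborhood of $S_0$, and $\Gamma^s_{b_0,P_0,Q_0}$ must remain in the exterior complementary component of $\cC_{\prl, b_0}$ by Lemma~\ref{lem.prl}, reaching $\cS_{1,+}$ would require crossing $\cC_{\prl, b_0}$.)

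\textbf{Main obstacle.} The hard part will be handling the possibility that the contact with $\cS_{1,+}(b_0)$ or with $L_{b_0}$ happens exactly at the special point $S_0 = (2^{-1/2}, 2^{-1/2})$ — which is on the diagonal, is the common point of $\cS_{1,+}$ and $\cS_{2,+}$, and is the limit $E_0(b_{\max})$ of the fixed point $E_0(b)$. Since $b_0 < b_{\max}$ strictly (we are assuming a breakdown occurs before $b_{\max}$), one should be able to show $S_0 \ne T_{b_0} = \rho_{b_0}(0)$ and that $\Gamma^s_{b_0, P_0, Q_0}$ cannot reach $S_0$ without first crossing $\cC_{\prl, b_0}$; but this requires a careful check that the curve $\cC_{\prl, b_0}$ genuinely separates $S_0$ from the arc $\Gamma^s_{b_0,P_0, Q_0}$ for all $b_0$ in the relevant range, using the slope estimates of Lemma~\ref{lem.tan.cprl} at $P_0$ and $Q_0$ together with the transversality of $\gamma_1(b)$ to the lines $L_c$. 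A secondary subtlety is that at a transverse contact with $\cS_{1,+}$ the map $\cL_{b_0}$ is singular (its derivative degenerates there), so one must argue with the one-sided limit $\lim_{b \nearrow b_0}\Gamma^s_{b,P_0,Q_0}$ rather than with $\cL_{b_0}$ applied directly at the contact point; the continuity of the family of invariant manifolds in $b$, away from the breakdown parameter, is what licenses this and must be stated carefully.
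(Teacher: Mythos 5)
There is a genuine gap, and it lies exactly where your proof delegates the work. Your central mechanism is the parenthetical separation claim: that $\cC_{\prl,b_0}$ separates the region near $P_0,Q_0$ (where $\Gamma^s_{b_0,P_0,Q_0}$ lives) from the singular curves $\cS_{1,+},\cS_{2,+}$, so that Lemma \ref{lem.prl} alone forbids contact with $\pa\cD_{b_0}$. This is false: $P_0$ and $Q_0$ are the \emph{endpoints} of $\cC_{\prl,b_0}$, and by the slope comparison of Lemma \ref{lem.tan.cprl} the connection emanates from them into the \emph{exterior} component of $\cM^{+}\setminus\cC_{\prl,b_0}$, i.e.\ the same side on which $S_0$ and the arcs of $\cS_{1,+},\cS_{2,+}$ forming $\pa\cD_{b_0}\cap\cM^{+}$ sit. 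So avoiding $\cC_{\prl,b_0}$ does not prevent the connection from reaching $\cS_{1,+}$, and no soft topological argument of this kind can close the case. Your other mechanism is also off: a transverse contact at $b_0$ does not ``propagate to a contact of $\Gamma^s_{b,P_0,Q_0}$ with $\cS_{1,+}$ for all $b<b_0$ close to $b_0$'' — by the very definition of $b_0$ as the first breakdown parameter there is no contact for $b<b_0$, and first contact in the limit is precisely the scenario (alternative (1) of Proposition \ref{pro.alt}) that the lemma must exclude by other means. Your appeal to Lemma \ref{lemma: F, GF} for a ``tangential first contact'' is likewise not a substitute: that lemma excludes a specific algebraic coincidence on $\cS_{1,+}$, not the existence of a contact point of the continuum connection. (A minor further slip: $L_{b_0}=\{d_r-d_\ell=b_0\}$ does not meet the first quadrant, so the only reachable boundary pieces are the arcs of $\cS_{1,+}$ and $\cS_{2,+}$ near $S_0$.)

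What the paper actually does is quantitative and three-staged, and none of that content appears in your plan. Writing $R_0\in\cS_{1,+}(b_0)$ for the (tangential or accumulation) contact point, the paper excludes $1.5<b_0<b_2\approx 1.58885$ by computing $\cR_{b}\cL_{b}(S_0)$ and showing its $d_r$-coordinate is negative, incompatible with the connection being confined to the first quadrant (Proposition \ref{pro.cont.fq}); it excludes $b_2\le b_0\le \hat b_3=1.622$ by proving, via monotonicity in $b$ of explicit coordinate expressions, that $\Phi_{b}(\eta_{2,-})$ lies in the third quadrant, where $\eta_{2,-}=\cR_{b}\cL_{b}(\eta_{1,+})$ and $\eta_{1,+}\subset\cS_{1,+}$ is the only arc a contact could occur on; and it excludes $\hat b_3\le b_0\le b_{\max}$ by showing that $\cL_{b}(\cS_{2,-}(b))$ crosses $\cC_{\prl,b}$, so that the unstable branch, which cannot cross $\cL_{b}(\cS_{2,-}(b_0))$ without breaking a neighboring connection for $b$ slightly below $b_0$, would be forced to cross $\cC_{\prl,b_0}$ to reach $\pa\cD_{b_0}$ — only here does Lemma \ref{lem.prl} enter, and only in tandem with that crossing computation. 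To repair your proposal you would need to supply estimates of this kind; the symmetry reduction and the cycle-propagation bookkeeping you describe are consistent with the paper's setup but do not by themselves yield the contradiction.
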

\begin{proof}
Suppose on the contrary that $\Gamma^s_{b_0,P_0, Q_0}\cap \pa \cD_{b_0} \neq\emptyset$.
Then one of the following holds:
\begin{enumerate}
\item either the stable manifold $\Gamma^s_{b_0,P_0, +}$ is a finite curve that ends at a point on $\pa \cD_{b_0}$: denote such a point by $R_0(d_{\ell,0},d_{r,0})$. 
Then $R_0\in \cS_{1,+}$, and $\Gamma^s_{b_0,P_0, R_0}$ is contained in $\cD_{b_0}$
and intersects $\pa \cD_{b_0}$ tangentially at $R_0$;

\item or $\Gamma^s_{b_0,P_0, +}$ is an infinite curve with some accumulation points on $\pa \cD_{b_0}$: 
pick the accumulation point with smallest $d_{\ell}$-coordinate and denote it by $R_0(d_{\ell,0},d_{r,0})$. 
Then $R_0\in \cS_{1,+}$.
\end{enumerate}
 
Note that $R_0 \in \cS_{1,+}$ implies $d_{\ell,0} \le 2^{-1/2} \le d_{r,0}$. 
Let $T_0=(d_{r,0},d_{\ell,0})$, which is a tangential intersection point of $\Gamma^u_{b_0,Q_0,+}$ with $\cS_{2,+}$
in Case (1), and is an accumulation point of $\Gamma^u_{b_0,Q_0,+}$ on $\cS_{2,+}$ in Case (2). Since our arguments below work the same for both cases, we will not distinguish these two cases, denote the corresponding branches by $\Gamma^s_{b_0,P_0, \ast}$ and $\Gamma^u_{b_0,Q_0, \ast}$ and  call $R_0$ an intersection from now on. Denote
\begin{enumerate}
\item $R_1(d_{\ell,1},d_{r,0}):=\cL_{b}(R_0)$, which is the intersection of $\Gamma^u_{b_0, Q_1, \ast}=\cL_{b}\Gamma^s_{b_0, P_0, \ast}$
with $\cL_{b}\cS_{1,+}\subset L_{b}$;

\item $R_2(d_{\ell,1},d_{r,1}):=\cR_{b}(R_1)$, which is the intersection of $\Gamma^s_{b_0, P_1, \ast}=\cR_{b}\Gamma^u_{b_0, Q_1, \ast}$
with $ \cS_{2,-}\subset \cR_{b}(L_{b})$;

\item $T_1(d_{\ell,2},d_{\ell,0}):=\cL_{b}(T_0)$, which is the intersection of $\Gamma^u_{b_0, Q_0, \ast}=\cL_{b}\Gamma^s_{b_0, P_1, \ast}$
with $\cL_{b} \cS_{2,+}$. 
\end{enumerate}

We will derive a contradiction in three steps, each step excluding a possibility for $b_0$.
Putting them together, we conclude that the hypothesis  $\Gamma^s_{b_0,P_0, Q_0}\cap \pa \cD_{b_0} \neq\emptyset$ does not hold. Recall the point $S_0=(2^{-1/2},2^{-1/2})$ defined right before Definition \ref{def.domain}.

\noindent{\bf Step 1.}
Denote $\cL_{b}(S_0)=(d_{\ell,1},2^{-1/2})$ and $\cR_{b}\cL_{b}S_0=(d_{\ell,1}, d_{r,1})$, where 
$d_{\ell,1} = 2^{-1/2}- b$, and $d_{r,1}=d_{\ell,1}-b(1-2d_{\ell,1}^2)$.
Let $d_{\ell}(b)= -\frac{1+\sqrt{8 b^2+1}}{4 b}$ be the solution of equation $d_{\ell}-b(1-2d_{\ell}^2)=0$, and $b_2\approx 1.58885$ be the positive solution of $d_{\ell}(b)=2^{-1/2}- b$, or equally, $b -\frac{1+\sqrt{8 b^2+1}}{4 b}=2^{-1/2}$, see Fig.~\ref{fig.no.boundary}.
Then we have  $d_{r,1} =d_{\ell,1}-b(1-2d_{\ell,1}^2) < 0$ for $1.5 < b< b_2$.

It follows from Proposition \ref{pro.cont.fq} that the heteroclinic connection $\Gamma^s_{b, P_0, Q_0}$ is contained in the first quadrant for $1.5<b <b_0$. By taking limit $b\to b_0$, we conclude that  the continuum connection $\Gamma^s_{b_0, P_0, Q_0}$ is contained in the first quadrant, which implies 
 $\cL_{b}(\Gamma^s_{b_0, P_0, Q_0})=\Gamma^u_{b_0, Q_1, P_1}$ is contained in the  second quadrant. Since $d_{r,1} =d_{\ell,1}-b(1-2d_{\ell,1}^2) < 0$ for $1.5 < b< b_2$, it is impossible for $\Gamma^u_{b_0, Q_1, P_1}$ to have an intersection at $R_0 \in \cS_{1,+}$ if $b_0< b_2$.
This completes Step 1. \\

\noindent{\bf Step 2.}
Let $(d_{\ell,1}, d_{r,1})=\cR_{b} \cL_{b}(S_0)$, and 
$b_3\approx 1.62326$ be the parameter $b$ with $d_{\ell,3}:=(\Phi_{b}(d_{\ell,1}, d_{r,1}))_1=0$, see Fig.~\ref{fig.no.boundary}.
Then $d_{\ell,3} <0$ for $b_2 \le b <  b_3$.
Let $\hat d_r:=b-\frac{1+\sqrt{8 b^2+1}}{4 b} \in (2^{-1/2}, 1)$,
which satisfies $\cR_{b}  (\hat d_r-b, \hat d_r) =(\hat d_r-b, 0)$.
Denote $\eta_{1,+}=\{(d_{r}+b(1-2d_r^2), d_r): 2^{-1/2} < d_r < \hat d_r\} \subset \cS_{1,+}$, which is the part satisfying that $\eta_{2,-}:=\cR_{b}\cL_{b}(\eta_{1,+})$ is contained in the second quadrant.
In the following we will work with a slightly smaller bound $\hat b_3= 1.622 < b_3$. 

\begin{figure}[htbp]
\begin{overpic}[height=2in, unit=0.1in]{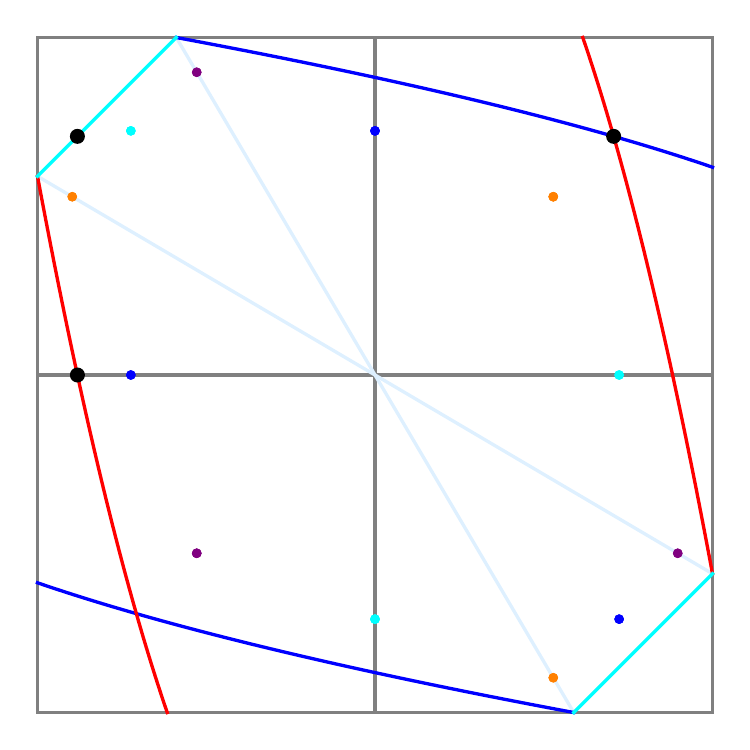}
\put (16.7, 16.7) {$S_0$} %
\end{overpic}
\begin{overpic}[height=2in, unit=0.1in]{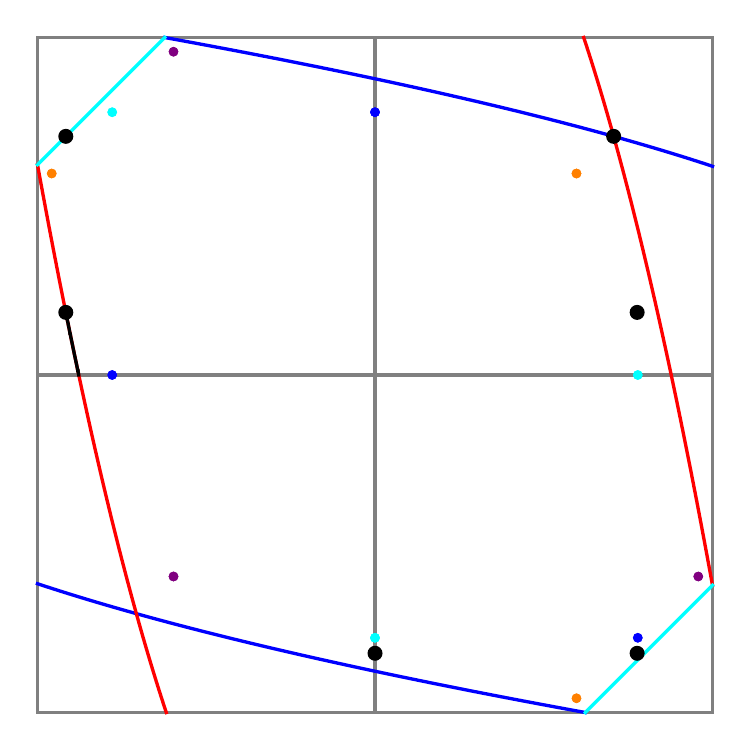}
\put (16.7, 16.7) {$S_0$} %
\put (2, 11) {$\eta_{2,-}$} %
\end{overpic}
\begin{overpic}[height=2in, unit=0.1in]{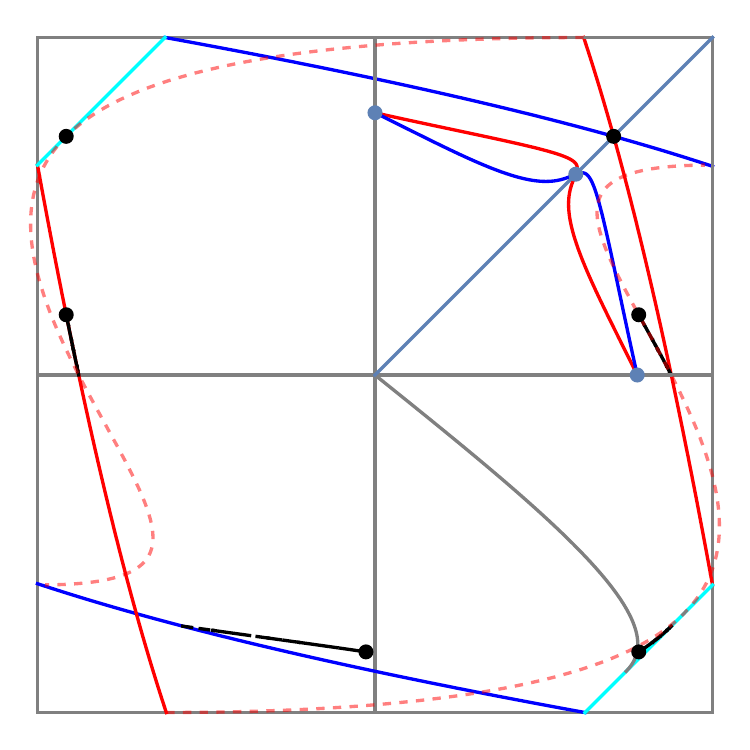}
\put (16.7, 16.7) {$S_0$} %
\put (2, 11) {$\eta_{2,-}$} %
\put (6, 3.5) {${\scriptstyle \Phi_b\eta_{2,-}}$} %
\end{overpic}
\caption{Some plots of the orbit of $S_0$. Left: $b=1.58885$. Middle: $b=1.62326$. Right: $b=1.622$. } \label{fig.no.boundary}
\end{figure}

\noindent{\bf Claim.} For $b_2 \le b \le \hat b_3=1.622$, the curve $\Phi_{b} (\eta_{2,-})$
is contained in the third quadrant.

\vskip.1in

\begin{proof}[Proof of the Claim]
Note that two endpoints of $\eta_{2,-}=\cR_{b}\cL_{b}(\eta_{1,+})$ are given by
\begin{align*}
&\cR_{b} \cL_{b}(S_0) =\cR_{b} (2^{-1/2} -b, 2^{-1/2})
= (2^{-1/2}-b, 2^{-1/2}-b - 2\sqrt{2}b^2 + 2b^3); \\
&\cR_{b} \cL_{b}(\hat d_r+b(1-2\hat d_r^2), \hat d_r) =\cR_{b}  (\hat d_r-b, \hat d_r)
=(\hat d_r-b, 0).
\end{align*}
Note that the quantity $2^{-1/2}-b - 2\sqrt{2}b^2 + 2b^3$ is monotone increasing with respect to 
$b$ on the interval $b_2 \le b \le \hat b_3$, on which it satisfies
$0\le 2^{-1/2}-b - 2\sqrt{2}b^2 + 2b^3 < 0.18$.
It is clear that $\cL_{b}(\hat d_r-b, 0)=(b- \hat d_r, 0)$ and $\cR_{b} (b- \hat d_r, 0)=(b- \hat d_r, - \hat d_r)$.
Moreover, the point $\cL_{b}(b- \hat d_r, - \hat d_r)$ is in the third quadrant. By the choice of $b_3$ and the fact that $\hat b_3 < b_3$, 
we have that $\Phi_{b}(2^{-1/2}-b, 2^{-1/2}-b - 2\sqrt{2}b^2 + 2b^3)$ is in the third quadrant
when  $b_2 \le b \le \hat b_3$.
So to prove the claim, it suffices to show that $\Phi_{b}(\eta_{2,-})$ 
does not intersect the line $d_{\ell}=0$, which is equivalent to 
that $\cR_{b} \cL_{b}(\eta_{2,-})$ does not intersect the curve 
$\{\cL_{b}(0,d_r):-\frac{1+\sqrt{8 b^2+1}}{4 b}< d_r < 0\}$ (see the gray curve in the four quadrant of the plot $b=1.622$ in Fig.~\ref{fig.no.boundary}).
We just need to compare the first coordinates of these two curves. 
Since the map $\cR_{b}$ does not change the first coordinate, 
we can even use $\cL_{b}(\eta_{2,-})$ instead of $\cR_{b} \cL_{b}(\eta_{2,-})$.

(1) Let $(d_{\ell},d_r)\in \eta_{2,-}$, where $d_{\ell}=-\frac{1+\sqrt{8 b^2+8 b d_r+1}}{4 b}$
and $0\le  d_r \le 2^{-1/2}-b - 2\sqrt{2}b^2 + 2b^3$. 
Then the component $\cL_{b}(d_{\ell}, d_r)_1$ consists of two parts $I+ I\!I$, where
\begin{align}
I&=d_r+(1-2 d_r^2) \Big(\frac{(8 b^2+8 b d_r+1)^{1/2}+1}{4 b}+d_r \Big), \label{eq.fl1I}\\
I\!I&=-2 d_r(1-d_r^2)^{1/2} \Big(b^2 - \Big(\frac{(8 b^2+8 b d_r+1)^{1/2}+1}{4 b}+d_r\Big)^2\Big)^{1/2}.  \label{eq.fl1II}
\end{align}
Note that for fixed $d_r$, it is easy to see that $I$ is monotone decreasing with respect to $b$
(as long as $0< d_r < 2^{-1/2}$).
For $I\!I$, it is also monotone decreasing since $b^2 - \Big(\frac{(8 b^2+8 b d_r+1)^{1/2}+1}{4 b}+d_r\Big)^2$ is monotone increasing. 
Therefore, the minimum of  $\cL_{b}(\eta_{2,-})_1$ is achieved when  $b=\hat b_3$.
Plugging in  $b=\hat b_3$, we have that $\cL_{b}(d_{\ell}, d_r)_1 \ge 0.78143$ for every $(d_{\ell},d_r)\in \eta_{2,-}$.

(2) It is easy to see that the component $\cL_{b}(0,d_r)_1=-2 d_r(\sqrt{(1-d_r^2)(b^2-d_r^2)}+d_r^2-1)$ is monotone increasing with respect $b$ on the domain $-1< d_r< 0$. Therefore, the maximum of  $\cL_{b}(0,d_r)_1$ is achieved when  $b=\hat b_3$. At the parameter $b=\hat b_3$, we find that $\cL_{b}(0,d_r)_1\le 0.778575$ for every $-1\le d_r \le 0$.

Collecting terms, we conclude that the max of $\cL_{b}(\{0\}\times [-1,0])_1$ is smaller than the minimum of $\cL_{b}(d_{\ell}, d_{\ell}- b(1- 2d_{\ell}^2))_1$ for $b_2 \le b \le \hat b_3$. It follows that these two curves do not intersect. This completes the proof of the claim.
\end{proof}

Note that an intersection of $\Gamma^s_{b_0,P_0, Q_0}$ with $\cS_{1,+}$, if happens,  lies in the curve $\eta_{1,+}$.
Moreover, $\Phi_{b} \cR_{b} \cL_{b}(\Gamma^s_{b_0,P_0, Q_0})$ is a continuum connection from $Q_2$ to $P_2$
and hence contained in the fourth quadrant.
Since $\Gamma^s_{b,P_0, Q_0}$ and $\cS_{1,+}$ do not intersect for $b_2<b \le \hat b_3$, it is impossible to have $b_2 \le b_0 \le \hat b_3$. This completes Step 2.

\vskip.1in

\noindent{\bf Step 3.} Next we show that it is impossible to have $\hat b_3 \le b_0 \le b_{\max}$. We claim that  the curve $\cL_{b}(\cS_{2,-}(b))$ intersects the curve $\cC_{\prl, b}$ 
for $\hat b_3 \le b \le b_{\max}$. See the plot $b=1.622$ in Fig.~\ref{fig.no.boundary}, in which the dashed curves are $\cL_{b}(\cS_{2, \pm}(b))$.

\vskip.1in

\begin{proof}[Proof of the claim]
Consider the point $(d_{\ell}, d_r)$ on $\cS_{2,-}(b)$ with $0 \le d_r \le b_{\max}-1$.
As we have seen in Eq.~\eqref{eq.fl1I} and \eqref{eq.fl1II} in Step 2, $\cL_{b}(d_{\ell}, d_r)_1$ is monotone decreasing with respect to $b$  for each fixed $d_r$. To prove the claim, it suffices to consider the case that $b=\hat b_3$.

The part $\gamma_3 \subset \cC_{\prl, b}$ 
is given by the reflected version of Eq.~\eqref{abdldr} (since Eq.~\eqref{abdldr} itself corresponds to $\gamma_1$), in which the coordinate $(\alpha, \beta)$ satisfies
\begin{align}
\fb^2=
1+\frac{\sin \alpha+\sin \beta}{\sin (\alpha+\beta)}
+\frac{\sin \alpha \sin \beta}{\sin^2(\alpha+\beta)}
+\frac{(\sin \alpha-\sin \beta)^2}{\sin^2(2 \alpha+2 \beta)} = 1.622^2. \label{bsq1622}
\end{align}
See \eqref{bsquare}.
For concreteness, we set $d_r=0.45$. Then it follows from \eqref{abdldr} and \eqref{bsq1622} that 
$\sin\alpha=0.37315$ and $\hat d_{\ell}=\sin\beta=0.679152$.

On the other hand, for $(d_{\ell},d_r) \in \cS_{2,-}$ with $d_r=0.45$, we have $d_{\ell}=-\frac{1+\sqrt{8b^2+8b d_{r} +1}}{4b} \approx -0.968056$ and $\cL_{b}(d_{\ell},d_r)_1 \approx 0.660888 < \hat d_{\ell}=0.679152$.
It follows that $\cL_{b}(\cS_{2,-}(b))$ already crosses the curve $\cC_{\prl, b}$ before reaching $d_r=0.45$ for all $\hat b_3 \le b \le b_{\max}$. This completes the proof of the claim.
\end{proof}

Note that  the unstable manifold $\Gamma^u_{b_0, Q_0, \ast}$ cannot cross the curve  $\cL_{b}(\cS_{2,-}(b_0))$,
since such a crossing would imply that $\Gamma^s_{b, P_1,Q_1}$ crosses the boundary $\cS_{2,-}(b)$ for any $b$ sufficiently close to $b_0$,
contradicting our choice of the parameter $b_0$.
If $\hat b_3 \le b_0 \le b_{\max}$,
then $\Gamma^u_{b_0,Q_0, \ast}$ has to follow $\cL_{b}(\cS_{2,-})$ and  intersect the curve $\cC_{\prl, b_0}$  in order for for it to reach the boundary $\pa \cD_{b_0}$, which contradicts Lemma \ref{lem.prl}. Therefore, it is impossible to have $\hat b_3 \le b_0 \le b_{\max}$. This completes Step 3.

Collecting the conclusions from these three steps, we have exhausted all possibilities for $1.5<b_0 \le b_{\max}$ under the hypothesis
that $\Gamma^s_{b_0, P_0, Q_0} \cap \pa \cD_{b_0} \neq\emptyset$.
This completes the proof of the lemma.
\end{proof}

\begin{lemma}\label{lem.no.pp}
There is no other $\Theta_{b_0}$-fixed point on $\Gamma^s_{b_0,P_0, Q_0}$ or  $\Gamma^u_{b_0,P_0, Q_0}$ besides $P_0$ and $Q_0$.
\end{lemma}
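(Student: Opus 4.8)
The plan is to derive a contradiction from the existence of a $\Theta_{b_0}$-fixed point $T$ lying on $\Gamma^s_{b_0,P_0, Q_0}$ with $T\neq P_0, Q_0$, by playing the monotonicity of $\Phi_{b_0}$ off the monotonicity of $\Psi_{b_0}$ \emph{at the point $T$ itself}; the argument for $\Gamma^u_{b_0,P_0,Q_0}$ is the mirror image. The starting remark is purely algebraic: since $T$ is fixed by $\Theta_{b_0}=\Psi_{b_0}\Phi_{b_0}$ and both $\Phi_{b_0}$ and $\Psi_{b_0}$ are involutions (Proposition \ref{eq: F_l, C_1,+}), one has $\Phi_{b_0}(T)=\Psi_{b_0}^{-1}(T)=\Psi_{b_0}(T)$. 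Write $(d_\ell^T, d_r^T)$ for $T$ and $(d_\ell', d_r')$ for this common image $\Phi_{b_0}(T)=\Psi_{b_0}(T)$. Note also that $\Gamma^s_{b_0,P_0,Q_0}$ is contained in the interior of $\cD_{b_0}$ by Lemma \ref{lem.no.cross}, so $\Phi_{b_0}$ and $\Psi_{b_0}$ are smooth in a neighbourhood of all points involved.

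The core preliminary step will be to locate $\Gamma^s_{b_0,P_0,Q_0}$ precisely relative to the curves $\cC_{\Phi_{b_0}}$ and $\cC_{\Psi_{b_0}}$. By the limiting form of Proposition \ref{pro.cont.fq} it lies in $\cM^{+}$, is symmetric under $\cI$, and, being a limit of the $\Phi_b$- and $\Psi_b$-invariant connections $\Gamma^s_{b,P_0,Q_0}$, it is invariant under $\Phi_{b_0}$ and $\Psi_{b_0}$. By Lemma \ref{lem.prl} it is disjoint from $\cC_{\prl, b_0}$ off its endpoints; and for $b_0>b_{\crit}$ the remaining (``middle'') arcs of $\cC_{\Phi_{b_0}}$ between $E_1(b_0)$ and $E_2(b_0)$, and of $\cC_{\Psi_{b_0}}$ between $\cI(E_1(b_0))$ and $\cI(E_2(b_0))$, are enclosed by the simple closed curve $\cC_{\prl, b_0}$, whereas $\Gamma^s_{b_0,P_0,Q_0}$ leaves $P_0$ into the exterior component (trivially so for $b_0\le b_{\crit}$). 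Hence $\Gamma^s_{b_0,P_0,Q_0}$ meets $\cC_{\Phi_{b_0}}\cup\cC_{\Psi_{b_0}}$ only at $\{P_0,Q_0\}$, so $\Gamma^s_{b_0,P_0,Q_0}\setminus\{P_0,Q_0\}$ is a connected subset of $D_0(\Phi_{b_0})\cap D_0(\Psi_{b_0})$ that avoids both curves. Now invoking Lemma \ref{lem.tan.cprl}, which says the stable slope $m_s(b_0)$ at $P_0$ strictly dominates both slopes $m_{\sccM}(b_0)$ of $\cC_{\Phi_{b_0}}$ and $m_{\scrM}(b_0)$ of $\cC_{\Psi_{b_0}}$ there, and the sign verification carried out in the proof of Lemma \ref{lem.mono}, one identifies the connected component containing $\Gamma^s_{b_0,P_0,Q_0}\setminus\{P_0,Q_0\}$ as the exterior domain $D_{\mathrm{ext}}(\Phi_{b_0})$, and by $\cI$-symmetry also $D_{\mathrm{ext}}(\Psi_{b_0})$ (for $\Gamma^u_{b_0,P_0,Q_0}$ one gets instead $D_{\mathrm{int}}(\Phi_{b_0})\cap D_{\mathrm{int}}(\Psi_{b_0})$, since the unstable slope $m_u(b_0)$ lies \emph{below} both).

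The contradiction is then immediate: since $T\in D_{\mathrm{ext}}(\Phi_{b_0})$ and $T\neq P_0,Q_0$, Lemma \ref{lem.mono} gives $d_\ell'-d_r'>d_r^T-d_\ell^T$, i.e. $(d_\ell'-d_r')+(d_\ell^T-d_r^T)>0$; since $T$ also lies in the exterior domain of $\cC_{\Psi_{b_0}}$, Lemma \ref{lem.mono3} gives $d_\ell'-d_r'<d_r^T-d_\ell^T$, i.e. $(d_\ell'-d_r')+(d_\ell^T-d_r^T)<0$; but $(d_\ell',d_r')=\Phi_{b_0}(T)=\Psi_{b_0}(T)$ makes these the same real number, which is absurd. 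Observe that this argument neither distinguishes whether $T$ is on the diagonal (so it also subsumes the potential fixed points $E_0(b_0), E_1(b_0), E_2(b_0)$) nor uses that $\Gamma^s_{b_0,P_0,Q_0}$ is a genuine connection rather than a broken continuum, so it handles $\Gamma^s$ and $\Gamma^u$ at once, completing the exclusion of alternative (2) in Proposition \ref{pro.alt}. I do not anticipate a genuine obstacle: essentially all the work has been front-loaded into Lemmas \ref{lem.mono}, \ref{lem.mono3}, \ref{lem.prl} and \ref{lem.tan.cprl}. The only delicate bookkeeping is the second paragraph — certifying that $\Gamma^s_{b_0,P_0,Q_0}$ (resp. $\Gamma^u_{b_0,P_0,Q_0}$) lies on the exterior (resp. interior) side of \emph{both} $\cC_{\Phi_{b_0}}$ and $\cC_{\Psi_{b_0}}$ — which hinges on matching Lemma \ref{lem.prl} with the slope comparison of Lemma \ref{lem.tan.cprl}, and, when $b_0>b_{\crit}$, with the description of $\cC_{\prl,b_0}$ as the simple closed curve $P_0E_1Q_0E_2$ enclosing $E_0$ together with the truncated middle arcs of $\cC_{\Phi_{b_0}}$ and $\cC_{\Psi_{b_0}}$.
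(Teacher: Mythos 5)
Your proposal is correct and follows essentially the same route as the paper: both arguments place the connection on the exterior (resp.\ interior) side of the parallel-trajectory curves via Lemma \ref{lem.prl} and the slope comparison, and then play Lemma \ref{lem.mono} against Lemma \ref{lem.mono3} at a putative $\Theta_{b_0}$-fixed point to get contradictory inequalities. The only (cosmetic) difference is that you apply both monotonicity lemmas at $T$ itself using $\Phi_{b_0}(T)=\Psi_{b_0}(T)$ (from the involution property), whereas the paper applies Lemma \ref{lem.mono} at $T_0$ and Lemma \ref{lem.mono3} at $T_1=\Phi_{b_0}(T_0)$ and closes the chain with $\Psi_{b_0}\Phi_{b_0}(T_0)=T_0$.
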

\begin{proof}
We only need to consider the case $\Gamma^s_{b_0,P_0, Q_0}$ is a continuum connection.
Suppose on the contrary that there were other $\Theta_{b_0}$-fixed points on $\Gamma^s_{b_0,P_0, Q_0}$. Pick one of them, say $T_0(d_{\ell,0}, d_{r,0})$. Since $\Gamma^s_{b_0,P_0, Q_0}$ is symmetric with respect to the diagonal, 
we assume $d_{\ell,0} \le d_{r,0}$ without losing any generality.
Let $T_1(d_{\ell,1}, d_{r,1}) := \Phi_{b}(T_0)$. Since $\Gamma^s_{b_0,P_0, Q_0}$ lies at the exterior of $\cC_{\prl, b_0}$, it follows from Lemma \ref{lem.mono} that
\begin{align}
d_{\ell,1}-d_{r,1}  > d_{r,0} -d_{\ell,0} \ge 0. \label{dlr10}
\end{align}
Note that $T_1=\Phi_{b}(T_0) \in \Phi_{b}(\Gamma^s_{b_0,P_0, Q_0}) =\Gamma^s_{b_0,P_0, Q_0}$,
which also  lies at the exterior of $\cC_{\prl, b_0}$.
Applying Lemma \ref{lem.mono3} to $T_2(d_{\ell,2}, d_{r,2}) := \Psi_{b}(T_1)$, we have
\begin{align}
d_{r,2} -d_{\ell,2} > d_{\ell,1}-d_{r,1}. \label{dlr21}
\end{align}
However, since $T_0$ is a periodic point of period $6$, we have $T_2=\Psi_{b}(T_1)=\Psi_{b}\Phi_{b}(T_0)=T_0$, and hence $d_{r,2} -d_{\ell,2}= d_{r,0} -d_{\ell,0}$,
contradicting \eqref{dlr10} and \eqref{dlr21}.
\end{proof}

\begin{proof}[Proof of Theorem \ref{thm.main}]
Let $Z$ be the set of parameters $b\in (1.5, 1.5+\delta_0)$ such that $P_0(b)$ has no homoclinic point. Suppose (HC) holds that  the set $Z$ has a limiting point in $(1.5, 1.5+\delta_0)$.
Collecting the results from Proposition \ref{pro.alt}, Lemma \ref{lem.no.cross} and Lemma~\ref{lem.no.pp}, we conclude that none of the heteroclinic connections
$\Gamma^s_{b,P_0, Q_0}$ or $\Gamma^u_{b,P_0, Q_0}$ breaks down at any $1.5+\delta< b < b_{\max}=1+2^{-1/2}$. 
So $b_0 \ge b_{\max}$. 
On the other hand,
as $b$ increases from $b_{\crit}$ to $b_{\max}$, the exterior corner point $E_1(b)\in \cC_{\prl, b}$ slides along the diagonal and already passes through the point $S_0$ at $b=b_{\sing}\approx 1.67892$.
It follows that the exterior part of the domain $\cD_{b}\cap \cM^{+}\backslash \cC_{\prl, b}$ breaks into two connected components containing $P_0$ and $Q_0$, respectively for every
$b\ge b_{\sing}$. Since $\Gamma^s_{b,P_0, Q_0}$ connects $P_0$ and $Q_0$ through the exterior part of the domain $\cD_{b}\cap \cM^{+}\backslash \cC_{\prl, b}$, we must have $b_0 \le b_{\sing}$, contradicting the fact that $b_0 \ge b_{\max}$.
Therefore, the  hypothesis (HC) does not hold. Combining with Proposition \ref{pro.homo.entr}, we complete the proof. 
\end{proof}

\begin{proof}[An alternative proof of Theorem \ref{thm.main}]
It suffices to show that it is impossible to have $b_0 \ge b_{\max}$.
Note that $\Gamma^s_{b,P_0, Q_0}$ does not intersect the set $\cC_{\prl, b} \cup\pa \cD_{b}$  for every $1.5 <b< b_{\max}$. It follows that the whole segment $I=\{(x,x):  0\le x\le 2^{-1/2}\}$ is contained in the domain $U$ that is invariant under both $\cL_{b}$ and $\cR_{b}$. In particular, $\cL_{b}(I) \subset U$. On the other hand, $\cL_{b}(x,x)=(x-2bx(1-x^2)^{1/2},x)$, and $\cL_{b_{\max}}(0.6,0.6)=(-1.03882, 0.6)$ already exceeds the domain $U$, contradicting the fact that $\cL_{b_{\max}}(I) \subset U$.	
\end{proof}

\begin{remark}
There are many dynamical systems with persistent existence of saddle connections. A classical example is the elliptic billiards \cite[\S~4]{Tab05}, where the stable and unstable manifolds of the periodic orbit along the major axis form a cycle of saddle connections. McMillan \cite{Mcm71} discovered a class of integrable planer mappings of the form $(q, p) \mapsto (p, -q+f(q))$, where the force function $f(q)$ is the quotient of two quadratic polynomials. Similar results can be found in \cite{Sur89}. See \cite[\S~2]{ZKN} for phase portraits of their mappings. 
For comparison, the strategy of the proof of Theorem \ref{thm.main} is to derive a contradiction from the hypothesis (HC) on the persistent existence of saddle connections. 
As we have indicated in Remark~\ref{rem.app.split}, such connections do not actually exist.
\end{remark}

\section*{Acknowledgment}
The authors extend their sincere gratitude to the anonymous referees for their invaluable comments and insightful suggestions. Their dedicated efforts have significantly enhanced the clarity and presentation of this paper.

\end{document}